\documentclass{amsart}

\pdfoutput=1

\usepackage{amsmath, amsthm, amssymb, amsfonts}
\usepackage{enumerate}
\usepackage{latexsym}
\usepackage{mathrsfs}
\usepackage{amscd}
\usepackage{hyperref}
\usepackage{enumitem}
\usepackage{braket}
\usepackage[all]{xy}
\usepackage{graphicx}
\usepackage{thmtools}
\usepackage{thm-restate}
\usepackage{hyperref}
\usepackage{cleveref}
\usepackage{tikz}
\usepackage{mathtools}

\newtheorem{theorem}{Theorem}[section]
\newtheorem{definition}[theorem]{Definition}
\newtheorem{lemma}[theorem]{Lemma}
\newtheorem{proposition}[theorem]{Proposition}
\newtheorem{corollary}[theorem]{Corollary}
\newtheorem{remark}[theorem]{Remark}

\newtheorem{assumption}[theorem]{Assumption}

\numberwithin{equation}{section}

\title{Wrapped Floer cohomology and Lagrangian correspondences}

\author[Yuan Gao]{Yuan Gao\textsuperscript{1}}

\address{\textsuperscript{1}Department of Mathematics, Stony Brook University, Stony Brook NY, 11794, USA}

\email{ygao@math.stonybrook.edu}

\begin{document}

\begin{abstract}
	We study Lagrangian correspondences between Liouville manifolds and construct functors between wrapped Fukaya categories. The study naturally brings up the question on comparing two versions of wrapped Fukaya categories of the product manifold, which we prove quasi-isomorphism on the level of wrapped Floer cohomology. To prove representability of these functors constructed from Lagrangian correspondences, we introduce the geometric compositions of Lagrangian correspondences under wrapping, as new classes of objects in the wrapped Fukaya category, which we prove to represent the functors by establishing a canonical isomorphism of quilted version of wrapped Floer cohomology under geometric composition.
\end{abstract}

\maketitle

\tableofcontents

\section{Introduction}
	Lagrangian Floer theory \cite{FOOO1} has laid the foundation from the symplectic geometry side for Kontsevich's celebrated Homological Mirror Symmetry Conjecture, which predicts certain equivalence between A-model category (the Fukaya category) and B-model category (derived category of coherent sheaves) for a mirror pair of Calabi-Yau manifolds. The story has been extended to more general situations, including Fano manifolds, varieties of general type as symplectic manifolds (for example the genus two curve in \cite{Seidel}), and certain non-compact symplectic manifolds (the punctured spheres in \cite{Abouzaid-Auroux-Efimov-Katzarkov-Orlov}). The corresponding B-model categories are typically triangulated category of singularities of a Landau-Ginzburg potential. \par
	Funtoriality properties are well established for B-model categories, usually defined in algebro-geometric terms, for example derived pushforward/pullback (of sheaves) associated to morphisms between the base varieties. Further thought on homological mirror symmetry should provide some clue to understanding functorial properties of Fukaya categories, and possibly proving functoriality of the homological mirror symmetry itself. However, there are not many morphisms between symplectic manifolds realized by maps in the usual sense, except for the very restrictive classes: symplectomorphisms, coverings and embeddings. To understand functoriality of A-model categories, it is necessary to change the viewpoint from morphisms to correspondences; the natural sources are Lagrangian correspondences between symplectic manifolds. \par
	In \cite{Wehrheim-Woodward2}, Wehrheim and Woodward study functors of Fukaya categories associated to Lagrangian correspondences, in the case of compact monotone symplectic manifolds and compact monotone Lagrangian correspondences. One of their main results is that any compact monotone Lagrangian correspondence from $M$ to $N$ gives rise to an $A_{\infty}$-functor from the compact monotone Fukaya category $\mathcal{F}(M)$ of $M$ to the dg-category of $A_{\infty}$-modules over $\mathcal{F}(N)$ (technically, on the level of cohomology categories). It is expected that Lagrangian correspondences should give rise to $A_{\infty}$-functors between compact unobstructed Fukaya categories, improving the above-mentioned result. Moreover, with a lot of recent developments, the story is expected to be extended to full generality for all unobstructed compact Lagrangian correspondences. Part of the main ideas are discussed in \cite{Fukaya2}, by using Lagrangian Floer theory for immersed Lagrangians, which is developed in \cite{Akaho-Joyce}. \par
	In the case of certain non-compact symplectic manifolds which satisfy appropriate convexity conditions, there is a variant of Fukaya category, called the wrapped Fukaya category, whose objects also include certain class of non-compact Lagrangian submanifolds, and whose morphisms spaces. This is introduced in \cite{Abouzaid-Seidel}, \cite{Abouzaid1} for Liouville manifolds. The objects of the wrapped Fukaya category $\mathcal{W}(M)$ of a Liouville manifold are closed exact Lagrangian submanifolds in the compact part of $M$, as well as non-compact exact Lagrangian submanifolds that are invariant under the Liouville flow over the cylindrical ends, called conical Lagrangian submanifolds. Compared to the Fukaya category of closed Lagrangian submanifolds in $M$, the wrapped Fukaya category captures additional information about Reeb dynamics on the contact manifold at infinity. From the standpoint of homological mirror symmetry, the wrapped Fukaya category seems more suitable for the role of A-model category for a non-compact symplectic manifold (see \cite{Abouzaid-Auroux-Efimov-Katzarkov-Orlov}) which is mirror to coherent sheaves on the mirror variety/family. \par
	Also, there are many interesting affine varieties that are related by quotients or birational transformations. Some of these affine varieties appear as the divisor complements of log Calabi-Yau pairs \cite{Auroux}, \cite{Ganatra-Pomerleano}, \cite{Gross-Hacking-Keel}, \cite{Pascaleff}, for which the structure of the symplectic cohomology and the wrapped Fukaya category can be understood well to some extend. This motivates us to find a symplectic analogue of the relations among these affine varieties in terms of Lagrangian correspondences, and try to understand how their wrapped Fukaya categories are related. \par
	In this series of reseach, we begin by concerning the foundational matters regarding $A_{\infty}$-functors between wrapped Fukaya categories from the viewpoint of Lagrangian correspondences. The very first step is to adapt the quilted Floer cohomology \cite{Wehrheim-Woodward1}, \cite{Wehrheim-Woodward2} developed by Wehrheim-Woodward to our setting. For each admissible Lagrangian correspondence $\mathcal{L} \subset M^{-} \times N$ in the sense of wrapped Floer theory (Definition \ref{def: admissible Lagrangian submanifolds in the product manifold}), the principle is that we should be able to construct a canonical $A_{\infty}$-functor between wrapped Fukaya categories. To carry out the construction, we introduce a quilted version of wrapped Floer cohomology, whose definition is a straightforward modification of quilted Floer cohomology \cite{Wehrheim-Woodward1} with large Hamiltonian perturbation. However, there is one essential difference: wrapped Floer theory in the product manifold involves certain class of non-compact Lagrangian submanifolds, and there are certain issues with what objects can be and should be included. Such problems require careful inspection and will be main portion of our discussion. \par
	In this paper, we focus on foundational matters, while staying on the cohomology-level. Chain-level refinements and the full $A_{\infty}$-structures will be dealt with in the upcoming work \cite{Gao}. Further applications will also be discussed there. \par

\subsection{Floer theory in product manifolds}
	The story of Lagrangian correspondences begins with studying Floer theory in product manifolds. There are several technical issues with wrapped Floer theory for the product symplectic manifold $M \times N$. First, the standard definition of wrapped Floer cohomology depends crucially on the convexity property of the symplectic manifold. However, the product of two convex symplectic manifolds might no longer be convex. It is the reason for which we consider only Liouville manifolds, which behave nicely under products - the product of two Liouville manifolds is again a Liouville manifold, and is therefore convex at infinity. Second, for the usual wrapped Floer theory to work, we need to make a choice of a cylindrical end $\Sigma \times [1, +\infty)$ for $M \times N$. There is a natural choice, as observed in \cite{Oancea}, which will be described in section \ref{product manifold}. Third, the sum of the chosen two admissible Hamiltonian functions $H_{M, N} = \pi_{M}^{*}H_{M} + \pi_{N}^{*}H_{N}$, which we call the split Hamiltonian, is a priori not admissible. There is a similar issue with the product almost complex structure $J_{M, N} = J_{M} \times J_{N}$. This is the main obstacle to studying Floer theory in the product Liouville manifold, and has been an annoying issue for quite a while. One of the main results of this paper is to prove well-definedness of wrapped Floer cohomology in the product manifold and show the following invariance property: \par

\begin{theorem}
\label{invariance for wrapped Floer cohomology in the product}
	Suppose $\mathcal{L}_{0}, \mathcal{L}_{1}$ are admissible Lagrangian submanifolds of $M \times N$, i.e. they are either product Lagrangian submanifolds, or conical Lagrangian submanifolds with respect to the cylindrical end $\Sigma \times [1, +\infty)$. The wrapped Floer cohomology with respect to the split Hamiltonian and the product almost complex structure
\begin{equation*}
HW^{*}(\mathcal{L}_{0}, \mathcal{L}_{1}; H_{M, N}, J_{M, N}),
\end{equation*}
and the one defined using an admissible admissible $K$ and an admissible almost complex structure $J$ with respect to the cylindrical end $\Sigma \times [1, +\infty)$
\begin{equation*}
HW^{*}(\mathcal{L}_{0}, \mathcal{L}_{1}; K, J),
\end{equation*}
are both well-defined. \par
	Moreover, given a split Hamiltonian $H_{M, N}$ and a product almost complex structure $J_{M, N}$, there is an admissible Hamiltonian $K$ and an admissible almost complex structure $J$ with respect to the cylindrical end $\Sigma \times [1, +\infty)$, as well as a cochain map
\begin{equation}
R: CW^{*}(\mathcal{L}_{0}, \mathcal{L}_{1}; H_{M, N}, J_{M, N}) \to CW^{*}(\mathcal{L}_{0}, \mathcal{L}_{1}; K, J)
\end{equation}
which respects the action filtration and induces an isomorphism on cohomology. 
\end{theorem}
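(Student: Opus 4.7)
The plan is three-fold: first, establish well-definedness of each of the two versions of wrapped Floer cohomology by proving suitable $C^0$-bounds on Floer trajectories; second, construct a specific admissible pair $(K,J)$ together with the cochain map $R$ via a monotone continuation; third, prove the induced map on cohomology is an isomorphism by an inverse continuation and chain-homotopy argument.

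For well-definedness, the nontrivial case is the split data $(H_{M,N}, J_{M,N})$, since the fully admissible case is handled by standard wrapped Floer theory on $M \times N$ as a Liouville manifold. The key technical input, following Oancea, is that the function $\rho = r_M^2 + r_N^2$ built from the individual radial coordinates is strictly $J_{M,N}$-plurisubharmonic on the union of the cylindrical ends, while $H_{M,N} = h_M(r_M) + h_N(r_N)$ with $h_M, h_N$ linear at infinity. Combined with the integrated maximum principle, this yields a $C^0$-bound for Floer strips with product Lagrangian boundary conditions, since in that case the strips literally split into factor-wise Floer trajectories each controlled by the usual wrapped Floer argument for $M$ and $N$. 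For conical Lagrangians with respect to the Liouville cylindrical end $\Sigma \times [1,+\infty)$, the boundary no longer splits, and one must verify separately that the plurisubharmonicity of $\rho$ remains compatible with the conical boundary term, using that conical Lagrangians are invariant under the product Liouville flow and $\rho$ is proper on level sets of the product radial coordinate.

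To construct $R$, I would pick $K$ of admissible slope at least as large as the effective slope of $H_{M,N}$ at infinity, and $J$ an admissible almost complex structure that agrees with $J_{M,N}$ on a prescribed large compact region. A monotone homotopy $(H_s, J_s)$ between the two pairs is then chosen with $\partial_s H_s \leq 0$ outside a compact set and with $J_s$ admissible-compatible in the cylindrical end for all $s$. The map $R$ is defined by counting rigid solutions of the continuation equation with boundary conditions $(\mathcal{L}_0, \mathcal{L}_1)$ and the usual asymptotic conditions. The same monotonicity yields the action-filtration property automatically: the standard energy identity for continuation trajectories, combined with $\partial_s H_s \leq 0$, forces the action of the output chord to be bounded above by that of the input chord, so $R$ respects the action filtration chord by chord.

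Finally, the isomorphism on cohomology follows by constructing a continuation map $R'$ in the reverse direction using a monotone homotopy from $(K,J)$ back to $(H_{M,N}, J_{M,N})$, and showing $R \circ R'$ and $R' \circ R$ are chain homotopic to the identity via the standard parametrized moduli space argument on the space of admissible homotopies. The main obstacle, and what I expect to occupy most of the technical work, is the parametrized maximum principle: along the homotopy the data is neither split nor admissible, and one must show that both the conical boundary condition and the continuation term $\partial_s H_s$ are compatible with some exhausting plurisubharmonic function on $M \times N$ that interpolates between $\rho = r_M^2 + r_N^2$ near $s=0$ and the Liouville coordinate on $\Sigma \times [1,+\infty)$ near $s=1$. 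Carrying this out carefully, in particular for conical Lagrangian boundary conditions, is the crux of the proof.
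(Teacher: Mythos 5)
Your overall plan — continuation map after a monotone homotopy, then an inverse continuation and chain homotopy — is the natural first attempt, but it is not what the paper does, and the obstacle you flag in your final paragraph is precisely why: it is not a routine technical step but the reason the continuation-map strategy breaks down here. Because the split Hamiltonian $H_{M,N}$ and any admissible $K$ are intrinsically incomparable for a monotone interpolation at infinity in \emph{both} directions (they dominate each other only after rescaling, and along the interpolation $H_s$ is neither split nor admissible, so no obvious plurisubharmonic exhaustion controls the continuation strips), you cannot run the maximum-principle estimate for both $R$ and the putative inverse $R'$ simultaneously. Your proposal does not supply the interpolating exhaustion — it merely names it as the crux — and so the argument as written has a genuine gap.

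The paper's route is quite different and is designed to avoid this. Rather than a continuation map, the map $R$ is an \emph{action-restriction map}: one constructs $K$ to \emph{agree} with $H_{M,N}$ on a large compact region $\{r_1\le A-\epsilon,\ r_2\le A-\epsilon\}$, deforms $H_{M,N}$ to $K$ in several carefully chosen steps (flattening each factor to a constant, flattening the arms, then replacing the constant plateau by a quadratic function of the product radial coordinate), and checks at each step that all newly created chords have arbitrarily positive action and hence fall outside a fixed action window $(-b,a]$. On the truncated complexes $CW^*_{(-b,a]}$ the underlying modules are then literally the same, and $R$ is the identity on generators. The real work is then Lemma \ref{proof of cochain map}: to show $R$ is a cochain map one proves that Floer trajectories for $(K,J)$ between chords in the action window do not escape the region where $K=H_{M,N}$, via a monotonicity (isoperimetric) estimate bounding below the area of any holomorphic piece traversing the annulus $\{A\le r_1\le A_1\}$. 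The isomorphism on cohomology then follows because, for any $b$, the composition $CW^*_{(-b,a]}(H_{M,N})\to CW^*_{(-b,a]}(K_b)\to CW^*_{(-5b,a]}(H_{M,N})$ is the natural inclusion, and one passes to the homotopy direct limit in $b$ (after a further rescaling homotopy that makes the target $K$ independent of $b$). None of this requires a parametrized maximum principle for the split-to-admissible interpolation.

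Two smaller points. First, your sentence that $H_{M,N}$ is built from $h_M, h_N$ ``linear at infinity'' is inaccurate for this paper: admissible Hamiltonians here are \emph{quadratic} in the radial coordinate, and the paper is explicit that this is a deliberate departure from Oancea's linear setup precisely to avoid doubly iterated limits. Second, your well-definedness argument for the split data via plurisubharmonicity of $r_M^2 + r_N^2$ is plausible, but the paper instead establishes compactness directly from the action-energy equality (Gromov compactness plus the fact that actions of chords tend to $-\infty$ as they move to infinity), which is a related but technically different argument and, importantly, the same kind of estimate that powers the action-restriction map. If you want to pursue the continuation-map route, you would need to make the interpolating exhaustion precise and show it handles conical boundary conditions as well as the moving term $\partial_s H_s$; absent that, the argument does not close.
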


	A problem of the same nature was considered in \cite{Oancea} in which he studied the K\"{u}nneth formula for symplectic cohomology. We adapt his strategy and extend the argument to the case involving Lagrangian submanifolds. However, there is a small technical difference. In \cite{Oancea} the definition of symplectic cohomology uses linear Hamiltonians, so the split Hamiltonian is indeed admissible; however, symplectic cohomology defined using linear Hamiltonians involves a limit, and it is not clear that the double iterated limit is equivalent to a single limit. For the purpose of defining a quilted version of wrapped Floer cohomology and carrying out the chain level construction of $A_{\infty}$-structure in the wrapped Fukaya category, it is more convenient to work with quadratic Hamiltonians, which do not involve taking limits. But the difficulty is transferred to the non-admissibility of split Hamiltonians, which we resolve in this paper. In addition to that, we also study multiplication structures on wrapped Floer cohomology and prove the isomorphism in Theorem \ref{invariance for wrapped Floer cohomology in the product} intertwines multiplication structures. \par

\begin{theorem}
\label{preserving multiplicative structure}
The action-restriction map $R$ preserves the multiplication structure on wrapped Floer cohomology. More concretely, there exists a cochain map
\begin{equation}
R^{2}: CW^{*}(\mathcal{L}_{0}, \mathcal{L}_{1}; H_{M, N}) \otimes CW^{*}(\mathcal{L}_{0}, \mathcal{L}_{1}; H_{M, N}) \to CW^{*-1}(\mathcal{L}_{0}, \mathcal{L}_{1}; K)
\end{equation}
of degree $-1$ such that the following diagram
\begin{equation}
\begin{CD}
CW^{*}(\mathcal{L}_{1}, \mathcal{L}_{2}; H_{M, N}) \otimes CW^{*}(\mathcal{L}_{0}, \mathcal{L}_{1}; H_{M, N}) @>m^{2}>> CW^{*}(\mathcal{L}_{0}, \mathcal{L}_{2}; H_{M, N})\\
@VV R \otimes R V @VV R V\\
CW^{*}(\mathcal{L}_{1}, \mathcal{L}_{2}; K) \otimes CW^{*}(\mathcal{L}_{0}, \mathcal{L}_{1}; K) @>m^{2}>> CW^{*}(\mathcal{L}_{0}, \mathcal{L}_{2}; K)
\end{CD}
\end{equation}
is homotopy commutative, with the cochain homotopy between the two possible compositions precisely given by $R^{2}$.
\end{theorem}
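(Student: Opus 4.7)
The plan is to realise $R^{2}$ as a count of pseudo-holomorphic triangles with domain-dependent Floer data that interpolates, through a single one-parameter family, between the two configurations computing the two compositions in the square. Fix a pair-of-pants surface $S$ with two input and one output strip-like ends. For each $\rho \in [0,1]$, equip $S$ with Hamiltonian and almost complex structure data that are of split type $(H_{M,N}, J_{M,N})$ on one portion of $S$ and of admissible type $(K, J)$ on the complement, the two regions being separated by a seam along which the data continuation-interpolates; the position of this seam is moved by $\rho$. At $\rho = 0$ the seam is placed just inside the output strip-like end, so that the surface factors as a pair-of-pants with split data glued to a continuation strip, which computes $R \circ m^{2}$. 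At $\rho = 1$ the seam is pushed into both input strip-like ends, so that the surface computes $m^{2} \circ (R \otimes R)$. Counting rigid points in the resulting $1$-parameter moduli space then defines a map
\begin{equation*}
R^{2} \colon CW^{*}(\mathcal{L}_{1}, \mathcal{L}_{2}; H_{M,N}) \otimes CW^{*}(\mathcal{L}_{0}, \mathcal{L}_{1}; H_{M,N}) \to CW^{*-1}(\mathcal{L}_{0}, \mathcal{L}_{2}; K)
\end{equation*}
of degree $-1$.

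The chain-homotopy identity is then extracted from the codimension-one boundary of the compactified parametrised moduli space. The endpoints $\rho = 0$ and $\rho = 1$ contribute, respectively, $R \circ m^{2}$ and $m^{2} \circ (R \otimes R)$. The remaining codimension-one strata come from strip breaking at each of the three strip-like ends of $S$: breaking at the output end (admissible data) produces $m^{1}_{K} \circ R^{2}$, while breaking at each of the two input ends (split data) yields $R^{2} \circ (m^{1}_{H} \otimes \mathrm{id})$ and $R^{2} \circ (\mathrm{id} \otimes m^{1}_{H})$. Signed counting of these boundary points gives
\begin{equation*}
m^{2} \circ (R \otimes R) \;-\; R \circ m^{2} \;=\; m^{1} \circ R^{2} \;+\; R^{2} \circ \bigl(m^{1} \otimes \mathrm{id} + \mathrm{id} \otimes m^{1}\bigr),
\end{equation*}
which is precisely the statement that the square in Theorem \ref{preserving multiplicative structure} commutes up to the chain homotopy $R^{2}$.

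The main obstacle, as throughout the paper, is compactness of this parametrised moduli space: the split datum $(H_{M,N}, J_{M,N})$ is not admissible with respect to the cylindrical end $\Sigma \times [1, +\infty)$ of $M \times N$, so the usual maximum-principle argument on the radial coordinate fails on the region of $S$ carrying split data. My plan is to import exactly the action-filtration mechanism used in the proof of Theorem \ref{invariance for wrapped Floer cohomology in the product}: \emph{a priori} bounds on the topological energy of triangles in terms of the actions of their asymptotic chords, combined with a product-type maximum principle applied separately on the $M$- and $N$-factors in the split region and with the standard maximum principle in the admissible region, will confine all solutions to a fixed compact subset. The interpolating seam can be arranged to remain bounded away from all strip-like ends uniformly for $\rho \in [0,1]$, so that the action-filtration estimates are uniform in $\rho$ and the parametric family introduces no new analytic difficulty; the fact that $R^{2}$ respects the action filtration follows from the very same energy bound.

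Once these $C^{0}$ bounds are in place, transversality for a generic choice of the interpolating data is standard, Gromov compactness applies, and the boundary analysis above yields the chain homotopy identity, completing the proof of Theorem \ref{preserving multiplicative structure}.
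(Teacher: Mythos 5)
Your moving-seam picture is a genuinely different route from the paper's, and the boundary bookkeeping you write down is formally correct, but there is a structural gap that the proposal does not address and that the paper is organized precisely to avoid. The paper's action-restriction map $R$ is \emph{not} realized as a single continuation map from $(H_{M,N}, J_{M,N})$ to a fixed admissible $(K, J)$. The admissible Hamiltonian one can write down at a given filtration level --- the $K_{b}$ of section \ref{section: technical proofs} --- must agree with $H_{M,N}$ on a compact region of radius $A = A_{b}$, and $A_{b}$ must grow without bound as the action window $(-b, a]$ widens, because the paper's estimates rule out the spurious chords created by the deformation only when they sit at actions larger than $a$, which in turn bounds $A_{b}$ from below in terms of $b$. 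Thus on the nose $R_{b} \colon CW^{*}_{(-b,a]}(H_{M,N}) \to CW^{*}_{(-b,a]}(K_{b})$ is an inclusion of truncated complexes; the chain map $R$ into a $b$-independent $CW^{*}(\mathcal{L}_0, \mathcal{L}_1; K)$ is only obtained after composing with the rescaling homotopy equivalences $h_{b}$ of subsection \ref{uniform} and passing to a homotopy direct limit. Your proposal presupposes a single fixed $K$ with a genuine chain map $R$ into $CW^{*}(\mathcal{L}_0, \mathcal{L}_1; K)$ realized by continuation strips; that object does not exist without the filtration-and-direct-limit machinery. Consequently the parametrized moduli space over a single surface $S$ with a single moving seam, and the single map ``$R^{2}$'' it would produce, cannot be the statement of the theorem --- one would still have to construct a compatible system $R^{2}_{b,2b}$, check that the appropriate squares homotopy-commute as $b$ varies (the diagram \eqref{cochain homotopy commutative diagram} in the paper), and pass to the limit.

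There is a second, independent difficulty you identify but do not resolve. For $s$ in the interpolation band of your surface, the Hamiltonian $K_{S,s}$ lies strictly between split and admissible; it is neither a product $H_{M}\oplus H_{N}$ nor an admissible function of the radial coordinate $r$ on $\Sigma \times [1,+\infty)$, so neither a product maximum principle nor the admissible maximum principle applies there. The paper sidesteps this entirely by arranging that \emph{every} fiber $K_{S,s}$ of the domain family is admissible (convex and quadratic in $r$ outside a compact set); the interaction with the split Hamiltonian is pushed into the algebra (the observation that $R_{b}$ is an inclusion, together with Lemma \ref{proof of cochain map}), not into the domain dependence of the Floer datum. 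To make your hybrid picture work you would have to run the full multi-step deformation of Section \ref{section: technical proofs} fiber-by-fiber across the seam band, verify that the delicate derivative bounds of the second step can be done smoothly in the $(\rho, s)$ parameters, and re-establish the $J_{M}$-holomorphic area lower bound (Lemma following Lemma \ref{proof of cochain map}) uniformly across the band --- none of which the proposal carries out or acknowledges as necessary. In short: the chain-homotopy identity and boundary analysis are correct as written, but the input objects $R$, $K$, $R^{2}$ do not exist at the level of generality assumed, and the compactness of the intermediate parametrized moduli space is exactly the hard part, not an import.
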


	An immediate corollary is the K\"{u}nneth formula for wrapped Floer cohomology: \par

\begin{corollary}
	Let $L_{0}, L_{1} \subset M$ and $L'_{0}, L'_{1}$ be conical Lagrangian submanifolds. Then the wrapped Floer cohomology
\begin{equation*}
HW^{*}(L_{0} \times L'_{0}, L_{1} \times L'_{1}; K, J)
\end{equation*}
is well-defined with respect to any admissible Hamiltonian $K$ and any admissible almost complex structure $J$ with respect to the cylindrical end $\Sigma \times [1, +\infty)$ of $M \times N$. And there is a quasi-isomorphism
\begin{equation}
CW^{*}(L_{0}, L_{1}; H_{M}, J_{M}) \otimes CW^{*}(L'_{0}, L'_{1}; H_{N}, J_{N}) \to CW^{*}(L_{0} \times L'_{0}, L_{1} \times L'_{1}; K, J),
\end{equation}
where the differential on the left-hand side is the tensor product differential.
\end{corollary}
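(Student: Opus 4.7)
The plan is to factor the proof through the split data $(H_{M,N}, J_{M,N})$. Well-definedness of $HW^*(L_0 \times L'_0, L_1 \times L'_1; K, J)$ for any admissible choice of $(K, J)$ is immediate from Theorem \ref{invariance for wrapped Floer cohomology in the product}: both $L_0 \times L'_0$ and $L_1 \times L'_1$ are product Lagrangians, hence admissible in the sense of Definition \ref{def: admissible Lagrangian submanifolds in the product manifold}. It therefore suffices to produce a chain isomorphism from the tensor product complex onto $CW^*(L_0 \times L'_0, L_1 \times L'_1; H_{M,N}, J_{M,N})$, and then compose with the cochain map $R$ provided by Theorem \ref{invariance for wrapped Floer cohomology in the product}.

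On the level of generators, since $H_{M,N} = \pi_M^* H_M + \pi_N^* H_N$ the Hamiltonian flow factors, so time-one chords from $L_0 \times L'_0$ to $L_1 \times L'_1$ are in bijection with pairs $(x, y)$ where $x$ is an $H_M$-chord from $L_0$ to $L_1$ and $y$ is an $H_N$-chord from $L'_0$ to $L'_1$. Gradings and actions add, yielding a bigraded identification of the underlying modules. The essential geometric step is that the Floer equation for the split data splits as well: a strip $u = (u_M, u_N) \colon \mathbb{R} \times [0,1] \to M \times N$ solves $\partial_s u + J_{M,N}(\partial_t u - X_{H_{M,N}}) = 0$ if and only if $u_M$ and $u_N$ separately solve the Floer equations for $(H_M, J_M)$ and $(H_N, J_N)$ with the respective boundary conditions. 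Energies add, $E(u) = E(u_M) + E(u_N)$, so Gromov compactness in the product reduces to compactness in each factor, where $(H_M, J_M)$ and $(H_N, J_N)$ are admissible. This yields an identification of moduli spaces $\mathcal{M}((x_0, y_0), (x_1, y_1)) \cong \mathcal{M}(x_0, x_1) \times \mathcal{M}(y_0, y_1)$, and hence of the Floer differential with the tensor product differential, up to the standard Koszul sign.

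Composing the resulting chain isomorphism with $R$ from Theorem \ref{invariance for wrapped Floer cohomology in the product} produces the desired quasi-isomorphism. The main technical point I expect to require care is transversality for the split data, since the product almost complex structure $J_{M,N}$ and split Hamiltonian $H_{M,N}$ are highly non-generic on $M \times N$. However, because both the equation and its linearization factor, this reduces to choosing generic time-dependent admissible perturbations of $(H_M, J_M)$ and $(H_N, J_N)$ in each factor, which is standard; the product moduli space is then transversely cut out as a product of transverse factors. The compactness issue that would normally be the serious obstacle for non-admissible data on the product is entirely bypassed here by the factorization, while the passage to an arbitrary admissible pair $(K, J)$ on $M \times N$ is absorbed into the quasi-isomorphism $R$ already provided by Theorem \ref{invariance for wrapped Floer cohomology in the product}.
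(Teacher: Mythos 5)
Your proof follows essentially the same route as the paper's: identify generators of $CW^{*}(L_{0}\times L'_{0}, L_{1}\times L'_{1}; H_{M,N}, J_{M,N})$ with pairs of chords, use that the split Floer data decouple the Floer equation so moduli spaces factor, and post-compose with the action-restriction map $R$ from Theorem \ref{invariance for wrapped Floer cohomology in the product}. One small point to tighten: the stated identification $\mathcal{M}((x_0,y_0),(x_1,y_1)) \cong \mathcal{M}(x_0,x_1)\times\mathcal{M}(y_0,y_1)$ holds at the level of \emph{parametrized} moduli spaces $\tilde{\mathcal{M}}$, and to extract the tensor differential one must, as the paper does, observe that the one-dimensional components of $\tilde{\mathcal{M}}$ decompose as a one-dimensional factor times a zero-dimensional factor of trivial (constant-in-$s$) solutions, after which the quotient by the single diagonal $\mathbb{R}$-action reproduces $m^{1}\otimes 1 + 1\otimes m^{1}$ with the Koszul sign.
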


	The statement of K\"{u}nneth formula can be improved to an $A_{\infty}$-version, identifying the $A_{\infty}$-tensor product of wrapped Fukaya categories $\mathcal{W}(M) \otimes \mathcal{W}(N)$ with that of the product manifold $\mathcal{W}(M \times N)$, under additional assumptions. In particular, all the $A_{\infty}$-structure maps are suitably packaged, and this equivalence respects such structures. Such improvement will be discussed in \cite{Gao}. \par

\subsection{Functors}
	One important output of the study of wrapped Floer theory in the product $M^{-} \times N$ is a quilted version of wrapped Floer cohomology for Lagrangian correspondences, discussed in section \ref{wrapped Floer theory for Lagrangian correspondences}. The quilted wrapped Floer cohomology is defined $HW^{*}(L, \mathcal{L}, L')$ for the usual class of Lagrangian submanifolds $L \subset M, L' \subset N$ and an admissible Lagrangian correspondence $\mathcal{L} \subset M^{-} \times N$. It allows us to build an $A_{\infty}$-functor from $\mathcal{W}(M)$ to $\mathcal{W}(N)^{l-mod}$, the dg-category of left $A_{\infty}$-modules over $\mathcal{W}(N)$, associated to an admissible Lagrangian correspondence $\mathcal{L} \subset M^{-} \times N$. While leaving the $A_{\infty}$-structures in the upcoming paper \cite{Gao}, we carry out the construction on the level of cohomology categories in section \ref{section: module-valued functors associated to Lagrangian correspondences}. \par

\begin{theorem} \label{functors associated to Lagrangian correspondences}
	Associated to each admissible Lagrangian correspondence $\mathcal{L} \subset M^{-} \times N$, quilted wrapped Floer cohomology gives rise to a functor
\begin{equation}\label{functor associated to a single Lagrangian correspondence}
\Phi_{\mathcal{L}}: H(\mathcal{W}(M)) \to l-Mod(H(\mathcal{W}(N)))
\end{equation}
to the category of left-modules over $H(\mathcal{W}(N))$. Moreover, the construction is functorial in the product manifold $M^{-} \times N$, meaning that this can be improved to a functor
\begin{equation}\label{functor in a categorical level}
\Phi: H(\mathcal{W}(M^{-} \times N)) \to Func(H(\mathcal{W}(M)), l-Mod(H(\mathcal{W}(N))))
\end{equation}
to the category of functors. In addition, both functors are cohomologically unital.
\end{theorem}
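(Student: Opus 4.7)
The plan is to construct $\Phi_{\mathcal{L}}$ first on objects and morphisms of $H(\mathcal{W}(M))$, then to upgrade the assignment $\mathcal{L} \mapsto \Phi_{\mathcal{L}}$ into the functor $\Phi$ of \eqref{functor in a categorical level}. On objects, I would send $L \in \mathrm{Ob}(\mathcal{W}(M))$ to the left $H(\mathcal{W}(N))$-module $L' \mapsto HW^{*}(L, \mathcal{L}, L')$, with the module structure given by a quilted pair-of-pants product: count rigid quilted surfaces having one unseamed strip carrying $L$, two boundary arcs carrying $L'_0, L'_1$ meeting at an incoming puncture labelled by a chord contributing to $HW^{*}(L'_1, L'_0)$, and a seam condition along $\mathcal{L}$. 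Associativity of this action on cohomology is checked by examining the codimension-one boundary of the moduli space of quilted ``squares'', exactly parallel to the associativity of $m^{2}$ in the usual wrapped category.

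Next, for a class $[\alpha] \in HW^{*}(L_0, L_1)$ I would define $\Phi_{\mathcal{L}}([\alpha]): \Phi_{\mathcal{L}}(L_1)(L') \to \Phi_{\mathcal{L}}(L_0)(L')$ via a quilted triangle product with an incoming puncture on the $M$-strip labelled by $\alpha$. That this is a morphism of $H(\mathcal{W}(N))$-modules, and that $\Phi_{\mathcal{L}}$ preserves composition, follow from standard degeneration analysis of one-parameter families of quilted surfaces: the codimension-one boundary produces precisely the homotopies required by the module axioms and by functoriality.

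For the upgrade to $\Phi$, the decisive tool is Theorem \ref{invariance for wrapped Floer cohomology in the product}, which provides a canonical cohomology-level identification of $HW^{*}(L, \mathcal{L}, L')$ with the ordinary wrapped Floer cohomology $HW^{*}(L \times L', \mathcal{L})$ in $M^{-} \times N$; Theorem \ref{preserving multiplicative structure} shows that this identification intertwines the relevant products. Through this translation, the natural transformation $\Phi_{\mathcal{L}_0} \to \Phi_{\mathcal{L}_1}$ associated with $[\beta] \in HW^{*}(\mathcal{L}_0, \mathcal{L}_1)$ is induced by the ordinary $m^{2}$-action $HW^{*}(\mathcal{L}_0, \mathcal{L}_1) \otimes HW^{*}(L \times L', \mathcal{L}_0) \to HW^{*}(L \times L', \mathcal{L}_1)$. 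Naturality in $L'$ reduces to associativity of $m^{2}$; functoriality of $\Phi$ reduces to associativity of the triple product; and cohomological unitality of both $\Phi_{\mathcal{L}}$ and $\Phi$ follows because cohomological units in $HW^{*}(L, L)$ and $HW^{*}(\mathcal{L}, \mathcal{L})$ act as identities under $m^{2}$.

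The main obstacle, and where I would concentrate effort, is in constructing the quilted moduli spaces themselves: choosing Floer data on each patch of the quilted surface that are simultaneously compatible across the seams and controlled at infinity, despite the non-admissibility of the split Hamiltonian and product almost complex structure on $M^{-} \times N$. Once the comparison results of the previous sections are in place, the $C^{0}$-bounds and transversality needed for the quilted strips, triangles, and squares reduce, via the action-restriction map $R$ and the product identification, to the corresponding statements for ordinary strips in the product manifold, which is precisely what the preceding results provide.
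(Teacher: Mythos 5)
Your outline captures the broad shape of the argument --- quilted pair-of-pants products for the module structure, quilted triangles for the $\mathcal{W}(M)$-morphism action, $m^{2}$-multiplication in $M^{-}\times N$ for the natural transformations, and cohomological units closing the loop --- and you are right to emphasize that the a priori $C^{0}$-control for the quilted moduli spaces comes from the same action--energy estimates established for the product manifold. The paper organizes things a bit differently (it first builds a bimodule-valued functor $H(\mathcal{W}(M^{-}\times N)) \to (H(\mathcal{W}(M)), H(\mathcal{W}(N)))^{bimod}$ and then invokes a general unital-category lemma identifying bimodules with module-valued functors), but that packaging is essentially cosmetic and your more direct route would work just as well at this level.

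There is, however, a genuine gap at the step you describe as \emph{``Naturality in $L'$ reduces to associativity of $m^{2}$; functoriality of $\Phi$ reduces to associativity of the triple product.''} The folding identification $HW^{*}(L, \mathcal{L}, L') \cong HW^{*}(\mathcal{L}, L \times L')$, together with Theorem \ref{invariance for wrapped Floer cohomology in the product} and Theorem \ref{preserving multiplicative structure}, only controls the differential $m^{1}$ and the triangle product $m^{2}$ between Floer complexes in the product manifold. The module action $m^{0|0|1}\colon HW^{*}(L, \mathcal{L}, L'_{0}) \otimes HW^{*}(L'_{0}, L'_{1}) \to HW^{*}(L, \mathcal{L}, L'_{1})$ is \emph{not} one of these: it is counted by a quilted surface with patches mapping to $M$ and $N$ separately, and its one factor lives in an ordinary wrapped Floer complex in $N$, not in $M^{-}\times N$. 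Trading this for a genuine $m^{2}$ of the product would require an additional ring homomorphism $HW^{*}(L'_{0}, L'_{1}) \to HW^{*}(L\times L'_{0}, L\times L'_{1})$ (something like $e_{L}\otimes \cdot$) together with a compatibility of that map with the K\"unneth identification and $m^{2}$; neither is established, and Theorem \ref{preserving multiplicative structure} alone does not furnish it. The paper explicitly flags this exact point --- it remarks that defining $\Phi_{\Gamma}$ via $m^{2}$ ``is in fact straightforward'' but that verifying the bimodule-homomorphism axioms this way ``is quite clear intuitively; however to rigorously implement such an idea'' one must switch to an alternative construction --- and then introduces the quilted surface $\underline{S}^{bh}$ (one patch in $M$, one in $N$, with one boundary seam split into two arcs carrying $\mathcal{L}_{0},\mathcal{L}_{1}$ and a quilted \emph{cylindrical} end carrying the $HW^{*}(\mathcal{L}_{0},\mathcal{L}_{1})$ input), together with its refinements $\underline{S}^{bh,k,l}$ obtained by adding $k$ punctures on the $L$-boundary and $l$ punctures on the $L'$-boundary. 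Counting rigid maps from these surfaces defines $\Phi, \Phi^{1,0}, \Phi^{0,1}, \Phi^{1,1}$ directly and the boundary strata of the one-dimensional moduli spaces supply precisely the homotopies needed to verify the bimodule-homomorphism identities and the naturality/functoriality you assert. You should replace the appeal to ``associativity of $m^{2}$'' with this direct quilted construction (or else supply the missing comparison of $m^{0|0|1}$ with a product-manifold $m^{2}$, which would amount to repeating that argument anyway).
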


	Attempting to obtain a functor to the honest wrapped Fukaya category $\mathcal{W}(N)$, we ask for representability of these functors, in the sense of \cite{Fukaya1}. The natural candidate to represent the module-valued functor is the geometric composition of Lagrangian correspondences. However, the geometric composition is not in general a conical Lagrangian submanifold/correspondence even if we assume it is embedded. Thus well-definedness of the wrapped Floer cohomology of the geometric composition is an essential difficulty. In section \ref{section: well-definedness of wrapped Floer cohomology of the geometric composition} we will explain well-definedness of wrapped Floer cohomology of the geometric compositions, and prove isomorphism of wrapped Floer cohomology under geometric composition. \par

\begin{theorem}
\label{geometric composition isomorphism}
	Suppose we are given Lagrangian submanifolds $L \subset M, L' \subset N$, as well as an admissible Lagrangian correspondence $\mathcal{L} \subset M^{-} \times N$. Suppose that the geometric composition $L \circ_{H_{M}} \mathcal{L}$ is a properly embedded Lagrangian submanifold of $N$. Then:
\begin{enumerate}[label=(\roman*)]

\item There is a well-defined wrapped Floer cohomology group $HW^{*}(L \circ_{H_{M}} \mathcal{L}, L')$, whose underlying cochain complex $CW^{*}(L \circ_{H_{M}} \mathcal{L}, L')$ and differential are defined as usual;

\item There is a quasi-isomorphism
\begin{equation}\label{isomorphism under geometric composition}
gc: CW^{*}(L, \mathcal{L}, L') \to CW^{*}(L \circ_{H_{M}} \mathcal{L}, L').
\end{equation}
\end{enumerate}

\end{theorem}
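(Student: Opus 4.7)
The strategy follows the strip-shrinking approach of Wehrheim-Woodward in the compact monotone setting, adapted to the wrapped Floer setting where non-compactness of the Lagrangians and non-admissibility of the geometric composition force new analytic input. For part (i), I would first identify the asymptotic geometry of $L \circ_{H_M} \mathcal{L}$: the conical behavior of $L$ combined with the controlled behavior of $\mathcal{L}$ on the cylindrical end $\Sigma \times [1,+\infty)$ of $M^- \times N$ produces, after applying the time-one flow of $H_M$, a model for the composition at infinity in $N$. Although this model need not be conical in the strict sense, its geometry is compatible with the maximum principle framework developed in Theorem \ref{invariance for wrapped Floer cohomology in the product}. The complex $CW^*(L \circ_{H_M} \mathcal{L}, L')$ can then be defined using a Hamiltonian and almost complex structure adapted to this asymptotic model, and independence of the choice follows from continuation arguments in the style of that theorem.

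I would then construct the chain map $gc$ via a quilted moduli space. The source end is the standard quilt of two strips in $M$ and $N$ joined by the seam condition $\mathcal{L}$, with boundary conditions $L$ and $L'$ respectively and wrapping Hamiltonian $H_M$ on the $M$-strip, whose asymptotics at $-\infty$ recover generators of $CW^*(L, \mathcal{L}, L')$. The target end is obtained by collapsing the width of the $M$-strip to zero, so that the asymptotics at $+\infty$ are generators of $CW^*(L \circ_{H_M} \mathcal{L}, L')$. Counting rigid solutions of the interpolating quilted Floer equation defines $gc$, and the chain map property is the standard boundary analysis of one-dimensional moduli spaces.

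To prove $gc$ is a quasi-isomorphism, I would construct a homotopy inverse via the reverse interpolation and deform the two compositions through the full strip-shrinking family, with the width $\delta$ of the $M$-strip varying from $1$ down to $0$, to produce chain homotopies to the identity. The main obstacle will be compactness of this strip-shrinking family: one must combine the figure-eight bubbling analysis of Wehrheim-Woodward with a maximum principle argument controlling escape to infinity, using the wrapping by $H_M$ to match the effective boundary condition in the shrinking limit with $L \circ_{H_M} \mathcal{L}$. A priori energy bounds from the action filtration, together with the maximum-principle techniques developed in the proof of Theorem \ref{invariance for wrapped Floer cohomology in the product}, should suffice to rule out sequences escaping to infinity, so that the limit of any sequence of quilted strips as $\delta \to 0$ is either a broken trajectory in the collapsed complex or a generically codimension-one figure-eight bubble that does not contribute to the boundary of the one-parameter family.
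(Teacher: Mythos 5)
Your approach is genuinely different from the paper's, and while it is plausible in outline, it runs into a difficulty the paper deliberately sidesteps. For part (i), the paper does not change the Hamiltonian to some adapted model; it keeps the standard admissible $H_N$ and instead writes down an explicit primitive $g = (f + F\circ(\psi_{H_M}\times id_N) + i_{X_{H_M}}\lambda_M)\circ\iota^{-1}$ for $L\circ_{H_M}\mathcal{L}$, then tracks the crucial term $i_{X_{H_M}}\lambda_M = 2R_1^2$ and shows it is constant along each level hypersurface $\partial M\times\{R_1\}$. This gives an a priori estimate $\mathcal{A}(\gamma)\approx -R^2$ for chords in the cylindrical end, and compactness then follows from the action-energy equality, not from a maximum principle adapted to an asymptotic model. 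The paper also emphasizes an asymmetry you miss: $HW^*(L\circ_{H_M}\mathcal{L},L')$ is tractable, but $HW^*(L',L\circ_{H_M}\mathcal{L})$ is in general \emph{not} well-defined because the term $+2R_1^2$ then enters with the wrong sign; the theorem only claims the former.

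For part (ii), you propose the Wehrheim--Woodward strip-shrinking scheme with figure-eight bubbling analysis and a homotopy inverse. The paper avoids strip-shrinking entirely. It instead uses a single fixed quilted surface $\underline{S}^{gc}$ with three ends: a positive quilted end for the input $(\gamma^0,\gamma^1)\in CW^*(L,\mathcal{L},L')$, a negative strip-like end in $N$ for the output in $CW^*(L\circ_{H_M}\mathcal{L},L')$, and a second positive quilted end constrained to the distinguished chord $e$ representing the cohomological unit of $HW^*(L\circ_{H_M}\mathcal{L},L\circ_{H_M}\mathcal{L})$. The quasi-isomorphism is then proved by an action-filtration argument: $e$ can be arranged to have action $|\mathcal{A}(\gamma_{min})|\le\epsilon$, Gromov monotonicity gives a uniform energy gap $\delta>\epsilon$ for non-constant solutions, and so $gc = gc_0 + gc_+$ where $gc_0$ is the natural identification of generators and $gc_+$ strictly increases action; $gc$ is therefore upper triangular with identity-like diagonal and is already a chain isomorphism. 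Your route buys nothing here: it re-imports the hardest open analytic problem in pseudoholomorphic quilts---ruling out figure-eight bubbles in codimension zero during strip-shrinking---into a new non-compact setting where even the compact-monotone version is delicate, and you give no argument that energy quantization for figure-eight bubbles persists under the wrapping Hamiltonian in the cylindrical end. The paper's unit-insertion plus filtration argument is both more elementary and tailored to exactly the analytic control that the wrapped setting does provide.
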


	Based on these results, we are going to prove the representability of the module-valued $A_{\infty}$-functors associated to Lagrangian correspondences in the upcoming work \cite{Gao}, in which certain non-compact immersed Lagrangian submanifolds are taken care of, and chain-level $A_{\infty}$-structures are constructed. This, combined with an $A_{\infty}$-analogue of Yoneda lemma, will imply that we may regard these $A_{\infty}$-functors as landing in $\mathcal{W}(N)$, up to quasi-isomorphism, as long as we allow certain immersed Lagrangian submanifolds as objects. \par
	Naturally, pushing the story one step further, we may want to study the compositions of these functors as we have introduced geometric compositions of Lagrangian correspondences. However, unlike in the case of compact Lagrangian submanifolds, wrapped Floer theory in multiple products of Liouville manifolds encounters more serious difficulty regarding classes of objects to be included, which have not yet been overcome and can be topics of future research. \par

%acknowledgements%
\paragraph{\textbf{Acknowledgements}}
	This work is part of a project during the author's PhD studies at Stony Brook University. The author is grateful for his advisor Kenji Fukaya for guidance and support, sharing his ideas in Lagrangian correspondences and motivating the project. The author would also like to thank Mark McLean for sharing his knowledge on symplectic cohomology, and helpful conversations regarding certain technical issues in this work. \par

\section{Geometry of Liouville manifolds and Lagrangian submanifolds}

\subsection{Liouville manifolds}
	Let $(M, \lambda)$ be a Liouville manifold, that is, $\omega = d\lambda$ is symplectic, and the vector field $Z$ defined by
\begin{displaymath}
i_{Z}\omega = \lambda
\end{displaymath}
generates a complete expanding flow. \par
	For practical purposes, we assume that $M$ is the completion of a Liouville domain $M_{0}$. That is, inside $M$ there is an open submanifold $int(M_{0})$ whose closure is a compact manifold with boundary, such that $Z$ points outward near $\partial M_{0}$, and the flow of $Z$ identifies $M \setminus int(M_{0})$ with the positive symplectization $\partial M_{0} \times [1, +\infty)$, where $\partial M_{0}$ is equipped with the contact form $\lambda|_{\partial M_{0}}$. \par
	By abuse of notation, we write $\partial M = \partial M_{0}$, and also
\begin{displaymath}
M = M_{0} \cup_{\partial M} \partial M \times [1, +\infty).
\end{displaymath}
Denote by $r$ the radial coordinate on $\partial M \times [1, +\infty)$, so that over the cylindrical end, $\lambda = r(\lambda|_{\partial M})$. In such a case, we shall sometimes call $M_{0}$ the interior part of $M$ (only in this paper), although this is not a standard terminology. \par
	Let us make the following assumption on the Liouville form $\lambda$: \par

\begin{assumption}
	All periodic Reeb orbits of $\lambda|_{\partial M}$ on $\partial M$ are non-degenerate.
\end{assumption}

	This is a generic condition, and for such a choice of $1$-form $\lambda$, there will be only finitely many Reeb chords on $\partial M$ shorter than any given constant. \par

\subsection{Product Liouville manifolds} \label{product manifold}
	As noted before, the product $M \times N$ of two Liouville manifolds is again a Liouville manifold, carrying the product symplectic form $\omega = \omega_{M} \oplus \omega_{N}$, the product Liouville form $\lambda = \lambda_{M} \times \lambda_{N}$, and the product Liouville vector field $\nu = \nu_{M} \oplus \nu_{N}$. Suppose that we have chosen cylindrical ends for $M$ and $N$ individually. Then there is a natural choice of cylindrical end for $M \times N$, as observed in \cite{Oancea}. \par
	We consider the following three subsets of $M \times N$:
\begin{align}
U_{1} &= \partial M \times [1, +\infty) \times \partial N \times [1, +\infty), \\
U_{2} &= M \times \partial N \times [1, +\infty), \\
U_{3} &= \partial M \times [1, +\infty) \times N.
\end{align}

	We still denote the radial coordinate on $\partial M \times [1, +\infty)$ by $r_1$, and that on $\partial N \times [1, +\infty)$ by $r_2$. Then $r_1, r_2$ can be thought of as functions on these regions (via pull back). Let $\Sigma$ be a hypersurface that is transverse to the Liouville vector field $\nu$, such that:
\begin{align*}
r_{1}|_{\Sigma \cap U_3} &\equiv \alpha, & r_{1}|_{\Sigma \cap U_1} &\in [1, \alpha], \\
r_{2}|_{\Sigma \cap U_2} &\equiv \beta & r_{2}|_{\Sigma \cap U_1} &\in [1, \beta]
\end{align*}
for some $\alpha, \beta >1$. The reason that we choose $\alpha, \beta$ to be bigger than $1$ is because we do not want the hypersurface $\Sigma$ to have corners (note $\partial (M_{0} \times N_{0}) = \partial M \times N_{0} \cup M_{0} \times \partial N$ has a corner because it is the boundary of the product of two manifolds with boundary). \par
	Let $\phi_{M}^{s_{1}}$ be the time-$(\ln{s_{1}})$ Liouville flow on $M$, and $\phi_{N}^{s_{2}}$ be the time-$(\ln{s_{2}})$ Liouville flow on $N$. Let $int(\Sigma)$ denote the compact part that $\Sigma$ separates from the non-compact part of $M \times N$. Define a cylindrical end of $M \times N$ by the following parametrization:
\begin{equation}
F: \Sigma \times [1, +\infty) \to (M \times N) \setminus{int(\Sigma)}
\end{equation}
\begin{equation}
F(z, r) = (\phi^{r}_{M}(\pi_{M}(z)), \phi^{r}_{N}(\pi_{N}(z)).
\end{equation}

\subsection{Lagrangian submanifolds}
	The Lagrangian submanifolds we are going to consider are either exact closed Lagrangian submanifolds in the interior of $M$ or non-compact exact Lagrangian submanifolds $L$ that intersect $\partial M$ transversely with boundary $l = \partial L = L \cap \partial M$ being Legendrian submanifolds in $\partial M$, such that over the cylindrical end, $L \cap \partial M \times [1, +\infty)$ is of the form $l \times [1, +\infty)$. We call such a non-compact Lagrangian submanifold a conical Lagrangian submanifold. These two kinds of Lagrangian submanifolds are said to be admissible. We will mostly focus on conical Lagrangian submanifolds since only they involve non-trivial wrapping. \par

\begin{remark}
More generally, we will allow some additional Lagrangian submanifolds in the wrapped Fukaya category, and also call them admissible, in the sense that wrapped Floer cohomology for those Lagrangian submanifolds are well-defined. See section \ref{section: well-definedness of wrapped Floer cohomology of the geometric composition}.
\end{remark}

	For a conical Lagrangian submanifold $L$, we choose a primitive $f_{L}$ for the restriction of $\lambda$ to $L$, i.e. $df_{L} = \lambda|_{L}$. Since $\lambda$ vanishes on $\partial L$, we can choose $f_{L}$ such that on the cylindrical end $l \times [1, +\infty)$ of $L$ the function $f_L$ is locally constant, in particular independent of the radial coordinate $r$. \par

\subsection{Spin structures and gradings}
	The coefficients for the wrapped Floer cohomology groups of a pair of Lagrangian submanifolds will only be $\mathbb{Z}/2$ if we do not impose any conditions on the Lagrangian submanifolds. In order to have coefficients being $\mathbb{Z}$, we need to study orientations on various moduli spaces of pseudoholomorphic disks used in the definition of Floer cochain complexes. It is by now standard that a choice of spin structures on relevant Lagrangian submanifolds determines coherent orientations on all moduli spaces of (inhomogeneous) pseudoholomorphic disks. For a proof, see Chapter 8 of \cite{FOOO2} for the compact case, and section 11 of \cite{Seidel} for the exact case. \par
	The relevant Floer cohomology groups a priori carry only a $\mathbb{Z}/2$-grading. If we desire a $\mathbb{Z}$-grading, we need a trivialization of the square of the anti-canonical bundle of $M$, as well as gradings on Lagrangian submanifolds. A choice of gradings on relevant Lagrangian submanifolds determines $\mathbb{Z}$-valued gradings on generators of various Floer cochain complexes, i.e. integral lift of Maslov indices of Hamiltonians chords between a pair of Lagrangian submanifolds. For this matter, also see section 11 of \cite{Seidel}. \par
	Let us make the following assumption on the Lagrangian submanifolds in consideration:

\begin{assumption} \label{relative first Chern class vanishes}
$2c_1(M, L) \in H^{2}(M, L; \mathbb{Z})$ vanishes. 
\end{assumption}

	Under Assumption \ref{relative first Chern class vanishes}, $L$ admits both spin structures and gradings. We will fix a choice of spin structure and grading for every Lagrangian submanifold that we are going to look at, and will not repeat this later. The reader is referred to \cite{Seidel} or \cite{Abouzaid1} for a discussion on the orientations on the moduli spaces of inhomogeneous pseudoholomorphic disks and the signs they determine. \par

\subsection{Hamiltonian functions}
	We will work with a restricted class of Hamiltonian functions which have rigid behaviour near infinity. \par
	
\begin{definition}
	 A (time-independent) Hamiltonian $H: M \to \mathbb{R}$ is called admissible, if over the cylindrical end $\partial M \times [1, +\infty)$, $H$ depends only on the radial coordinate, and is quadratic in the radial coordinate outside a compact set containing the interior $M_{0}$, i.e. $H$ takes the form $H(y, r) = r^{2}$ for $r \ge r_{0}$ for some number $r_{0} > 1$. Denote by $\mathcal{H}(M)$ the space of admissible Hamiltonians.
\end{definition}

	The reason that we use time-independent Hamiltonians to setup wrapped Floer theory is that it simplifies many estimates required for proving compactness results for the relevant moduli spaces, in particular when we define the quilted version of wrapped Floer cohomology (sections \ref{section: wrapped Floer theory in the product}, \ref{section: definition of quilted wrapped Floer cohomology}), and introduce geometric compositions into wrapped Floer theory (section \ref{section: well-definedness of wrapped Floer cohomology of the geometric composition}). \par
	In practice, we may use a Hamiltonian $H$ that is quadratic for $r \ge 1+\epsilon$, and is $C^2$-small in the interior of $M$, takes values in $[0, \epsilon]$ there with the $C^1$-norm of $H$ in the interior of $M$ also being in $[0, \epsilon]$. Additionally, on a collar neighborhood of $\partial M$ in $M$, in particular at the level $r = 1$, we require the Hamiltonian takes values between $[-\epsilon, \epsilon]$. Moreover, we require the derivative of $H$ with respect to $r$ be less than $\frac{2}{\epsilon}$, namely $H$ does not grow too fast in $[1, 1+\epsilon]$. The additional requirements on admissible Hamiltonians will not affect the resulting Floer theory, up to quasi-isomorphism, by a standard continuation argument. This works because it suffices to interpolate the two almost complex structures on a compact set, so the usual $C^{0}$-estimates hold. \par
	These additional conditions imply the following lemma, the proof of which is a straightforward calculation. \par
	
\begin{lemma} \label{area estimate}
Let $H$ be an admissible Hamiltonian satisfying the additional requirements described above, and $X_{H}$ its Hamiltonian vector field. The restriction of $X_H$ on each level hypersurface $\partial M \times \{r\}$ is $2r$ times the Reeb vector field. And for each time-$T$ $X_{H}$-chord $\gamma$, we have 

\begin{enumerate}
\item If $\gamma$ lies in the interior of $M$, then
\begin{equation}
\int_{0}^{T} \gamma^{*}\lambda_{M} \le T\epsilon;
\end{equation}

\item If $\gamma$ is a Reeb chord on the level hypersurface $\partial M \times \{r\}$ for some $r \in [1, 1+\epsilon]$, then
\begin{equation}
\int_{0}^{T} \gamma^{*}\lambda_{M} \le \frac{4T}{\epsilon};
\end{equation}

\item If $\gamma$ is a Reeb chord on the level hypersurface $\partial M \times \{r\}$ for some $r \ge 1+\epsilon$, then
\begin{equation}
\int_{0}^{T} \gamma^{*}\lambda_{M} = 2Tr^{2}.
\end{equation}

\end{enumerate}

\end{lemma}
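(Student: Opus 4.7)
The plan is to verify each assertion by a direct calculation on the cylindrical end, exploiting that on $\partial M \times [1,+\infty)$ the Liouville form is $\lambda_M = r\lambda|_{\partial M}$ and $H$ depends only on $r$. First, from $\omega_M = dr\wedge\lambda|_{\partial M} + r\, d(\lambda|_{\partial M})$, I would decompose a candidate Hamiltonian vector field as $X = a\partial_r + Y$ with $Y$ tangent to the level set and solve $\iota_X\omega_M = dH = H'(r)\,dr$. Using the defining properties of the Reeb vector field $R$ of $\lambda|_{\partial M}$, namely $\lambda|_{\partial M}(R)=1$ and $\iota_R\, d\lambda|_{\partial M}=0$, one checks that $a=0$ and $Y = H'(r)R$, so $X_H = H'(r) R$ pointwise on each level hypersurface. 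Specializing to the region $r \ge 1+\epsilon$ where $H(r)=r^2$ yields $X_H = 2rR$, which gives the first assertion of the lemma.

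Next I would use this identification to compute the contraction with $\lambda_M$. On the cylindrical end,
\begin{equation*}
\lambda_M(X_H) \;=\; r\,\lambda|_{\partial M}\!\bigl(H'(r) R\bigr) \;=\; r H'(r).
\end{equation*}
For a time-$T$ chord $\gamma$ sitting on a fixed level $\{r\}$ this gives $\int_0^T \gamma^*\lambda_M = T r H'(r)$. In case (3), with $H(r) = r^2$ and $r \ge 1+\epsilon$, the integrand is exactly $2r^2$, producing the equality $2Tr^2$. In case (2), with $r \in [1,1+\epsilon]$ and $H'(r) \le 2/\epsilon$, we have $rH'(r) \le (1+\epsilon)(2/\epsilon) \le 4/\epsilon$, after absorbing the factor $1+\epsilon$ into the bound (assuming $\epsilon \le 1$, which is harmless); integration then yields the desired $4T/\epsilon$.

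For the interior case (1), the chord $\gamma$ is confined to the precompact region $M_0$, on which $H$ has $C^1$-norm at most $\epsilon$; combined with the fact that $\lambda_M$ is uniformly bounded on the compact $M_0$, this forces a pointwise estimate $|\lambda_M(X_H)| \le \epsilon$ (with a geometric constant absorbed into the choice of $\epsilon$), and integration over $[0,T]$ produces the bound $T\epsilon$.

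No step here is substantively difficult — the calculation is essentially forced once one knows $X_H = H'(r) R$ on the cylindrical end and invokes the $C^1$-smallness of $H$ in the interior. The only point requiring any real care is the bookkeeping of constants in case (1), where the precise bound depends on the chosen normalizations that are implicit in the admissibility conditions on $H$ stated just before the lemma; this is why the result is presented there as an inequality rather than an equality.
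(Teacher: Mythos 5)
The paper does not actually carry out a proof of this lemma---it simply asserts that the proof is ``a straightforward calculation''---and your calculation is precisely the one the author has in mind: solve for $X_H$ on the cylindrical end in terms of the Reeb vector field, contract with $\lambda_M$, and integrate. The central identity $\lambda_M(X_H)=rH'(r)$ on $\partial M\times\{r\}$, together with the three regimes of $H$ imposed just before the lemma, produces exactly cases (1)--(3). Two small points are worth noting. First, as written you set up the equation $\iota_{X_H}\omega_M=dH$, but that convention would yield $X_H=-H'(r)R$; the conclusion $X_H=H'(r)R$ that you (correctly) reach and that matches the lemma requires the convention $\iota_{X_H}\omega_M=-dH$, so the sentence stating the Hamiltonian equation has a stray sign. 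Second, in case (1) your bound picks up a geometric constant coming from $\Vert\lambda_M\Vert$ and $\Vert\omega_M^{-1}\Vert$ on $M_0$; you flag this and absorb it into $\epsilon$, which is the right thing to do---the lemma's statement is written with the same implicit normalization and is only meaningful up to that constant. Neither issue affects the substance.
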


	Now let $L_{0}, L_{1}$ be two conical Lagrangian submanifolds of $M$. We may also make the following assumption on the admissible Hamiltonian $H$ that we are going to work with, which can be achieved in a generic situation. \par
	
\begin{assumption}
	All time-one chords of $H$ from $L_{0}$ to $L_{1}$ are non-degenerate.
\end{assumption}

\subsection{Almost complex structures}
	One of the technical advantages of Liouville manifolds in studying pseudoholomorphic curves and studying Floer theory is that they are convex for a natural class of almost complex structures. These almost complex structures are compatible with the symplectic structures, and over the cylindrical end transform the Liouville vector field to Reeb vector fields on level hypersurfaces $\partial M \times \{R\}$, making $\partial M$ as a convex boundary of $M$. Furthermore, the restriction of such an almost complex structure to the hyperplane distribution of the contact structure on each level hypersurface $\partial M \times \{r\}$ induces an almost complex structure compatible with the symplectic structure on that hyperplane distribution induced by the differential of the contact form. We call these almost complex structures of contact type. This can be made concise by saying that $J$ is of contact type if it is compatible with the symplectic structure and over the cylindrical end it satisfies 
\begin{equation}
\lambda_{M} \circ J = dr.
\end{equation}

	In fact, this condition can be loosen to the one which only requires the almost complex structure to be of contact type away from a compact set. The Floer theory defined using almost complex structures of the latter type will be quasi-isomorphic to the one defined using the previous ones, by a standard continuation argument, for the reason similar to that with Hamiltonians. We call this larger class of almost complex structures admissible, and denote the space of all admissible almost complex structures on $M$ by $\mathcal{J}(M)$. \par

\subsection{Floer's equation over the strip}
	Suppose we are given an admissible Hamiltonian $H$ as well as a one-parameter family of almost complex structures $J_{t}$ of contact type parametrized by $t \in [0,1]$. For conical Lagrangian submanifolds $L_{0}, L_{1}$ of $M$, and two time-one chords $\gamma_0, \gamma_1$ of $X_{H}$ from $L_{0}$ to $L_{1}$, we can consider Floer's equation:
\begin{equation}
\begin{cases}
      u: Z = (-\infty, +\infty) \times [0,1] \to M\\
      \partial_{s}u + J_{t}(\partial_{t}u - X_{H}) = 0\\
      \lim\limits_{s \to -\infty}u(s, \cdot) = \gamma_0(\cdot), \lim\limits_{s \to +\infty}u(s, \cdot) = \gamma_1(\cdot)\\
      u(s, 0) \in L_{0}, u(s, 1) \in L_{1}\\
\end{cases} 
\end{equation}

	In the third equation above, we require the convergence is exponentially fast in $|s|$. This is equivalent to the condition that the solutions have finite energy. We denote by $\tilde{\mathcal{M}}(\gamma_{0}, \gamma_{1})$ the set of solutions to Floer's equation as above, and call it the (parametrized) moduli space of inhomogeneous pseudoholomorphic strips (which we also call Floer trajectories) between $\gamma_0$ and $\gamma_1$. Since the equation is invariant under translation in the $s$-variable, there is an $\mathbb{R}$-action on $\tilde{\mathcal{M}}(\gamma_0, \gamma_1)$, which is free whenever the solutions are not constant maps. We denote by $\mathcal{M}(\gamma_0, \gamma_1)$ the quotient $\tilde{\mathcal{M}}(\gamma_0, \gamma_1)/\mathbb{R}$, and call it the (unparametrized) moduli space. The following lemma regarding transversality is standard. However, since we are using a time-independent Hamiltonian $H$, there does not seem to be a clear proof in the literature, so we include one here. \par

%transversality for strps: we might need the assumption that the dimension of M is greater than or equal to four%
\begin{lemma} \label{transversality for strips}
Suppose that $\dim M \ge 4$. For a generic one-parameter family of admissible almost complex structures $J_{t}$, the moduli space $\tilde{\mathcal{M}}(\gamma_0, \gamma_1)$ is a smooth manifold whose dimension is equal to $\deg(\gamma_0) - \deg(\gamma_1)$.
If $\dim M = 2$, for a generic one-parameter family of admissible almost complex structures and a generic one-parameter family of Hamiltonians, the moduli space is a smooth manifold also.
\end{lemma}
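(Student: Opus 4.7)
The plan is to carry out the universal-moduli-space / Sard-Smale scheme, with essentially all of the work going into a somewhere-injectivity statement that survives the time-independence of $H$. I would introduce a Banach space $\mathcal{J}^{\ell}$ of $C^{\ell}$ one-parameter families $\{J_t\}_{t\in[0,1]}$ of admissible almost complex structures, together with a $W^{1,p}$ completion $\mathcal{B}(\gamma_0,\gamma_1)$ of the space of maps $u:Z\to M$ with Lagrangian boundary conditions on $L_0,L_1$ and exponential convergence to $\gamma_0,\gamma_1$ at the two ends (which is automatic from non-degeneracy of the chords). The section $\bar{\partial}(u,J)=\partial_s u+J_t(\partial_t u-X_H)$ of the standard Banach bundle over $\mathcal{B}(\gamma_0,\gamma_1)\times\mathcal{J}^{\ell}$ has $u$-linearization $D_u$ Fredholm of index $\deg(\gamma_0)-\deg(\gamma_1)$. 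Once the total (universal) linearization is shown to be surjective at every zero, the universal moduli space is a Banach manifold, Sard-Smale applied to the projection to $\mathcal{J}^{\ell}$ yields a residual set of regular values, and the usual Taubes intersection argument passes from $C^{\ell}$ to smooth $J$.

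The surjectivity reduces by duality to the following claim: if $\eta\in L^q$ satisfies $D_u^{*}\eta=0$ together with $\int_Z\langle\eta,Y(t,u)(\partial_t u-X_H)\rangle\,ds\,dt=0$ for every admissible variation $Y$ of $J_t$, then $\eta\equiv 0$. The mechanism is to locate a point $(s_0,t_0)\in\mathbb{R}\times(0,1)$ that is somewhere injective for $u$ in the sense that $\partial_s u(s_0,t_0)\neq 0$ and $u(s,t_0)\neq u(s_0,t_0)$ for every $s\neq s_0$; then choosing $Y$ supported in a narrow slab $t\in(t_0-\delta,t_0+\delta)$ and in a small ball around $u(s_0,t_0)$ forces $\eta(s_0,t_0)=0$, after which Aronszajn unique continuation for the Cauchy-Riemann-type operator $D_u^{*}$ propagates this to $\eta\equiv 0$ on all of $Z$.

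The hard part, and the reason a proof specific to time-independent $H$ is needed, is producing the somewhere-injective point. Unique continuation applied in the $s$-direction first rules out $\partial_s u\equiv 0$ on any non-empty open subset of $Z$, so the set $\Omega=\{(s,t)\in\mathbb{R}\times(0,1):\partial_s u(s,t)\neq 0\}$ is open and dense. On $\Omega$ one examines the multiple-point locus $\{(s_1,s_2,t):u(s_1,t)=u(s_2,t),\ s_1\neq s_2\}$, a real-analytic subvariety of $\mathbb{R}^2\times(0,1)$. When $\dim M\geq 4$ a dimension count shows that its image in $\mathbb{R}\times(0,1)$ cannot cover $\Omega$ unless $u$ factors through a branched cover of a simpler Floer strip, which together with the Lagrangian asymptotics forces $u$ to be $s$-invariant and contradicts its non-triviality. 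When $\dim M=2$ this count degenerates because a single $t$-slice $u(\cdot,t)$ can already sweep out an open set of $M$, so I would instead enlarge the perturbation space to include a generic $t$-dependent Hamiltonian agreeing with the original admissible $H$ outside a large compact set, and rerun the same Sard-Smale argument with this extra parameter direction, which supplies enough variation to make the universal linearization surjective.
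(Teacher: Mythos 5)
Your plan follows the Floer--Hofer--Salamon somewhere-injectivity scheme: locate a point with $\partial_s u\neq 0$ whose $t$-slice is injective, localize a variation $Y$ of $J$ there, and propagate by Aronszajn unique continuation. The paper's proof goes a quite different way, via a three-case decomposition in which (i) trivial solutions ($du\equiv dt\otimes X_H$) are handled by Gromov's graph trick, reducing to a constant holomorphic section of a Hamiltonian fibration over a genus-zero domain; (ii) when one chord lies in the interior of $M$, convexity confines $u$ to the interior, where $J$ can be perturbed without restriction; and (iii) when both chords lie in the cylindrical end, an SFT-type splitting separates the linearization into a contact-hyperplane component, governed by a freely perturbable $J_\xi$, and a Reeb--radial $\mathbb{R}^2$-component whose surjectivity is established by an explicit PDE argument. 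Your proposal, as written, has two genuine gaps relative to the lemma.

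First, the localized perturbation $Y$ you invoke must lie in the tangent space to the space of \emph{admissible} almost complex structures. If both $\gamma_0,\gamma_1$ are Reeb chords in the cylindrical end, convexity forces $u$ to remain there, where admissibility constrains $J$ to be of contact type; the only free direction of variation is the restriction $J_\xi$ to the contact hyperplane distribution, while the Reeb--radial $\mathbb{C}$-summand is rigid. Choosing $Y$ supported in a small ball around $u(s_0,t_0)$ therefore does not, by itself, yield a nonvanishing pairing with $\eta$: you would need either a somewhere-injectivity argument confined to the $\xi$-projection, or a separate argument that the $\mathbb{R}^2$-component of the linearized operator is surjective with no perturbation at all. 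The paper does exactly the latter, writing down the linearized SFT system and showing that any nonzero solution of the $\mathbb{R}^2$-part has the same exponential growth rate at $+\infty$ and $-\infty$ and so cannot lie in $L^{k,p}$. Second, your appeal to unique continuation to make $\{\partial_s u\neq 0\}$ open and dense tacitly discards the trivial solutions $u(s,t)=\gamma_0(t)$ with $\gamma_0=\gamma_1$, for which $\partial_s u\equiv 0$ on all of $Z$. These are genuine points of $\tilde{\mathcal{M}}(\gamma_0,\gamma_0)$, and the lemma asserts smoothness there too; they require their own regularity argument, which the paper supplies in its case (i).
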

\begin{proof}
There are three cases to consider. 
\begin{enumerate}[label=(\roman*)]

\item The moduli space consists of trivial solutions, i.e. those $u$ satisfying $du = X_{H}(u)$ identically. By a traditional trick of Gromov, this corresponds to a constant pseudoholomorphic section of some locally trivial Hamiltonian fibration with respect to a distinguished almost complex structure determined by $J$ and $H$. For these solutions, the linearized Cauchy-Riemann operator is the standard Cauchy-Riemann operator (perturbed by a constant vector) with linear Lagrangian boundary conditions, which is then surjective because the domain has genus zero.

\item One of $\gamma_{0}, \gamma_{1}$ is a Hamiltonian chord in the interior of $M$, and $u$ is non-trivial. In this case, due to the convexity, $u \in \tilde{\mathcal{M}}(\gamma_{0}, \gamma_{1})$ has to have image entirely contained in the interior of $M$, in particular the other chord also has to be inside the interior. Over there we are freely allowed to perturb $J_{t}$, so is not difficult to show that the linearization of the universal Cauchy-Riemann operator (including the almost complex structure as a variable) is surjective. To set it up, let $p>2$ and consider the universal Cauchy-Riemann operator as a section of the Banach bundle $\mathcal{E} \to \mathcal{B}$, where
\begin{equation}
\mathcal{B} = \Set{(u, J) |
\begin{cases}
u: Z \to M \text{ is of class } L^{1, p}, \text{ for some } p>2\\
u(s, 0) \in L_{0}, u(s, 1) \in L_{1},\\
\lim\limits_{s \to -\infty}u(s, \cdot) = \gamma_{0}(\cdot), \lim\limits_{s \to +\infty}u(s, \cdot) = \gamma_{1}(\cdot),\\
J \text{ is an almost complex structure compatible with } \omega,
\end{cases}
}
\end{equation}

and the fiber of $\mathcal{E}$ over $(u, J) \in \mathcal{B}$ is
\begin{equation}
\mathcal{E}_{(u, J)} = L^{p}(u^{*}TM \otimes_{J} \Lambda^{0,1}_{Z}).
\end{equation}
The inhomogeneous Cauchy-Riemann operator is then regarded as a section, denoted by $\bar{\partial}_{H}$, which sends $(u, J)$ to $\frac{1}{2}(du - dt \otimes X_{H}) + \frac{1}{2} J \circ (du - dt \otimes X_{H}) \circ j$, where $j$ is the fixed complex structure on the domain $Z$. The linearization of this is then the following Fredholm operator:
\begin{equation}
D_{(u, J)}\bar{\partial}_{H}: \mathcal{Y} \oplus L^{1, p}(u^{*}TM; u^{*}TL_{0}, u^{*}TL_{1}; ) \to L^{p}(u^{*}TM \otimes_{J} \Lambda^{0,1}_{Z})
\end{equation}
\begin{equation}
D_{(u, J)}\bar{\partial}_{H}(Y, \xi) = \frac{1}{2}Y \circ (du - dt \otimes X_{H}) \circ j + D_{u}\bar{\partial}_{J, H}(\xi),
\end{equation}
where $\mathcal{Y}$ is the tangent space of the space of compatible almost complex structures, and $\bar{\partial}_{J, H}$ is the inhomogeneous Cauchy-Riemann operator for the single $J$ and $H$. We want to prove that the universal linearized operator $D_{(u, J)}\bar{\partial}_{H}$ is surjective. Suppose the contrary. Then there exists a nonzero $\eta \in \ker D_{u}^{*}\bar{\partial}_{J, H}$, which is automatically smooth by elliptic regularity, and vanishes at most at a discrete set of points, such that
$$\langle Y \circ (du - dt \otimes X_{H}) \circ j, \eta \rangle = 0.$$
But since $u$ is not a trivial solution,  $du - dt \otimes X_{H}$ vanishes at most at a discrete set of points on the domain. In particular, we can choose $Y$ such that at some point $z \in Z$ which is not in the union of these two discrete sets,
$$\langle Y_{z} \circ (du_{z} - dt \otimes X_{H}(u(z))) \circ j_{z}, \eta_{z} \rangle \ne 0.$$
This is a contradiction, which implies that $D_{(u, J)}\bar{\partial}_{H}$ is surjective. Considering the projection $\ker D_{(u, J)}\bar{\partial}_{H} \to \mathcal{J}(M)$, we get the conclusion by applying Sard-Smale theorem.

%SFT transversality%
\item Both $\gamma_{0}, \gamma_{1}$ are Reeb chords contained in level hypersurfaces of the cylindrical end $\partial M \times [1, +\infty)$, and they are contained in different levels, say $\partial M \times \{a_0\}$ and $\partial M \times \{a_1\}$. In this case, we need a SFT-type transversality argument. Over the cylindrical end the almost complex structure can be assumed to be of contact type, which means that it is the direct sum of the almost complex structure $J_{\xi}$ on the contact hyperplane distribution $\xi$ of the contact manifold $\partial M$ with the standard conjugate complex structure on $\mathbb{C} = \mathbb{R}^{2}$, where the first $\mathbb{R}$ factor is the Reeb direction. Therefore, writing $u = (v, l): Z \to \partial M \times [1, +\infty)$ we can split this equation to a SFT-type system of equations:
\begin{equation}
\begin{cases}
\pi_{\xi}(\frac{\partial v}{\partial s}) + J_{\xi} \circ \pi_{\xi}(\frac{\partial v}{\partial t}) = 0\\
\frac{\partial l}{\partial s} - \pi_{Reeb}(\frac{\partial v}{\partial t}) + H_{S}'(l) = 0\\
\frac{\partial l}{\partial t} + \pi_{Reeb}(\frac{\partial v}{\partial s}) = 0.
\end{cases}
\end{equation}
And there are boundary conditions for $\pi_{\xi}(dv)$: the two boundary components lie in the Lagrangian subbundles of $\xi$ which are the tangent bundles of the Legendrian submanifolds $l_{0}$ and $l_{1}$ respectively. \par
%here we use the assumption that the dimension of M is greater than or equal to four%
	Suppose now $\dim M \ge 4$. Then the first equation is non-trivial, and is a Cauchy-Riemann equation on the contact hyperplane distribution with Lagrangian boundary conditions, and $J_{\xi}$ can be any almost complex structure compatible with the symplectic form on $\xi$, hence the linearized operator can be made surjective by a generic choice of $J_{\xi}$. Note this is true even if the projection $\pi_{\xi}(dv)$ is constant (namely when $dv$ is contained in the Reeb direction), because that reduces to the usual linear Cauchy-Riemann equation with linear Lagrangian boundary conditions. \par
	In order to prove that the linearization of the full equation can be made surjective by generic choice of $J_{\xi}$, we need to show that the component of the linearization in the $\mathbb{R}^2$-direction is indeed regular, because writing down the linearization explicitly shows that there is nothing to perturb. Let us try to solve the equation
\begin{equation*}
D_{u}\bar{\partial}_{J, H}(\eta) = 0.
\end{equation*}
Let us write $\eta = (\eta_{\xi}, \eta_{Reeb}, \eta_{r})$ as the decompositon according to the decomposition of the tangent bundle of $\partial M \times [1, +\infty)$. As indicated above, the component in $\xi$ can be made surjective, so we obtain some solution $\eta_{\xi}$ of Sobolev class $L^{1, p}$, which is in fact in $L^{k, p}$ for any $k$ by elliptic bootstrapping. The remaining component of the equation has the form:
\begin{equation}
\begin{cases}
\frac{\partial \eta_{r}}{\partial s} - \frac{\partial \eta_{Reeb}}{\partial t} + 2 \eta_{r} + f(\eta_{\xi}) = 0\\
\frac{\partial \eta_{r}}{\partial t} + \frac{\partial \eta_{Reeb}}{\partial s} + g(\eta_{\xi}) = 0\\
\eta_{Reeb}(s, 0) = 0\\
\eta_{Reeb}(s, 1) = 0
\end{cases}
\end{equation}
where $f(\eta_{\xi}), g(\eta_{\xi})$ are functions of $\eta_{\xi}$ depending only on the contact structure on $\partial M$, as well as on the point $u$ where we linearize the equation, and are in fact polynomial in $\eta_{\xi}$, but do not depend on the derivatives of it. For $s \gg 0$, $\eta_{\xi}$ decays exponentially in $s$, because of the Sobolev condition. By elementary method in PDE (for example using Fourier series), the only solution has exponential growth with the same rate at both $+\infty$ and $-\infty$, and hence is not of class $L^{k, p}$ for any $k, p$.
\end{enumerate}
\end{proof}

	As a consequence, the unparametrized moduli space $\mathcal{M}(\gamma_0, \gamma_1)$ is a smooth manifold of dimension $\deg(\gamma_0) - \deg(\gamma_1) - 1$ whenever the $\mathbb{R}$-action is free, otherwise the parametrized moduli space consists of constant maps and $\mathcal{M}(\gamma_0, \gamma_1)$ is regarded as empty. \par
	We introduce a (partial) compactification of $\mathcal{M}(\gamma_0, \gamma_1)$ by adding strata consisting of broken strips:
\begin{equation} \label{compactification of moduli space of strips}
\bar{\mathcal{M}}(\gamma_0, \gamma_1) = \coprod \mathcal{M}(\gamma_0, \theta_1) \times \mathcal{M}(\theta_1, \theta_2) \times \cdots \times \mathcal{M}(\theta_{k-1}, \theta_k) \times \mathcal{M}(\theta_k, \gamma_1).
\end{equation}

	Because the symplectic manifold $M$ is exact, and the Lagrangian submanifolds $L_0, L_1$ are exact, there cannot be any sphere bubbling or disk bubbling. The maximum principle implies that this is indeed a compactification. However, the method of proof we are going to use is slightly different, though essentially equivalent. \par
	
\begin{lemma} \label{regularity + compactness}
For a generic one-parameter family of admissible almost complex structures $J_{t}$, $\bar{\mathcal{M}}(\gamma_0, \gamma_1)$ is a compact manifold with boundary and corners of dimension $\deg(\gamma_0) - \deg(\gamma_1)$. The codimension one boundary stratum is covered by images of the natural inclusions
$$\mathcal{M}(\gamma_0, \theta_1) \times \mathcal{M}(\theta_1, \gamma_1) \to \bar{\mathcal{M}}(\gamma_0, \gamma_1)$$
\end{lemma}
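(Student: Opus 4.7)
The plan is to decompose the statement into three standard pieces: regularity of each stratum, compactness of the total space, and gluing to identify the boundary-with-corners structure. Regularity on each open stratum of (\ref{compactification of moduli space of strips}) follows by applying Lemma \ref{transversality for strips} to each of the factors $\mathcal{M}(\theta_i, \theta_{i+1})$ simultaneously; picking a common generic $J_t$ makes all of them smooth of the expected dimension, so a $k$-fold broken configuration lives in a stratum of dimension $\deg(\gamma_0) - \deg(\gamma_1) - k - 1$.

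The core work is compactness. Given a sequence $u_n \in \mathcal{M}(\gamma_0, \gamma_1)$, the usual computation with the action functional bounds the energy by $\mathcal{A}_H(\gamma_0) - \mathcal{A}_H(\gamma_1)$ uniformly in $n$. Sphere bubbling is excluded by exactness of $(M, \lambda)$, and disk bubbling is excluded by exactness of $L_0, L_1$ together with their primitives $f_{L_i}$. The one genuine worry is escape to infinity along the cylindrical end, and here I would use the Abouzaid--Seidel integrated maximum principle rather than a pointwise one, which is the alternative approach the author alludes to just before the lemma. Concretely, pick $R$ larger than the radial levels supporting $\gamma_0, \gamma_1$; the subset $u_n^{-1}(\partial M \times [R, \infty))$ is a compact surface with boundary, on which admissibility of $J_t$ (contact type at infinity) combined with $H = r^2$ yields a subharmonic differential inequality for $r \circ u_n$. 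The conical boundary condition $L_i \cap \partial M \times [1, \infty) = l_i \times [1, \infty)$ makes $dr$ vanish tangent to $L_0, L_1$, so the Neumann-type boundary term has the correct sign, and the maximum of $r \circ u_n$ must be attained on the asymptotic boundary, a contradiction. This yields a uniform $C^0$-bound, after which standard Gromov--Floer compactness produces a subsequence converging, modulo reparametrization in $s$, to a broken Floer trajectory whose breaks occur at Hamiltonian chords.

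For the codimension-one boundary and the manifold-with-corners structure, I would invoke the standard pre-gluing plus Newton iteration argument for broken Floer strips. Near a two-level configuration $(u_0, u_1) \in \mathcal{M}(\gamma_0, \theta_1) \times \mathcal{M}(\theta_1, \gamma_1)$, exponential decay of finite-energy solutions to the nondegenerate asymptote $\theta_1$ together with regularity of each factor from Lemma \ref{transversality for strips} give a local chart of the form $[R_0, \infty) \times \mathcal{M}(\gamma_0, \theta_1) \times \mathcal{M}(\theta_1, \gamma_1)$ on $\bar{\mathcal{M}}(\gamma_0, \gamma_1)$, with the gluing parameter $\rho \to \infty$ recovering the broken configuration; dimensions add as $(\deg(\gamma_0) - \deg(\theta_1) - 1) + (\deg(\theta_1) - \deg(\gamma_1) - 1) + 1 = \deg(\gamma_0) - \deg(\gamma_1) - 1$. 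Iterating produces the corner strata. The main obstacle I anticipate is verifying the integrated maximum principle for the quadratic Hamiltonian in the transitional region $r \in [1, 1 + \epsilon]$, where $H$ is not exactly $r^2$; this is precisely where the tuning conditions recorded in Lemma \ref{area estimate} must be used to ensure the boundary inequality has the correct sign, and where one should push $R$ above $1 + \epsilon$ so that the subharmonic inequality for $r \circ u_n$ holds strictly on the region where the argument is applied.
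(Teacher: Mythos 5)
Your proof is correct and takes essentially the same route as the paper: the action--energy equality \eqref{action-energy equality for strips} is the crux, giving the uniform energy bound, while sphere and disk bubbling are excluded by exactness. Where you go further is in spelling out precisely how the energy bound propagates to a $C^{0}$-bound, via the Abouzaid--Seidel integrated maximum principle; the paper's proof simply asserts that strips cannot escape to infinity because their energy is fixed, and your argument is really the same mechanism stated explicitly rather than a different one. One inaccuracy worth fixing in your write-up: you claim the conical boundary condition makes ``$dr$ vanish tangent to $L_{0},L_{1}$.'' This is not so --- over the cylindrical end $T L_{i} = T l_{i} \oplus \mathbb{R}\,\partial_{r}$, and $dr(\partial_{r})=1$, so $dr$ does \emph{not} restrict to zero on $TL_{i}$. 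What actually makes the Stokes/no-escape computation close is that $\lambda_{M}|_{L_{i}} = 0$ over the end (equivalently, the primitive $f_{L_{i}}$ is locally constant there), so the Lagrangian-boundary contribution to the truncated energy integral vanishes, while the contact-type condition $\lambda_{M}\circ J = dr$ controls the sign of the contribution over the free boundary $u^{-1}(\partial M\times\{R\})$. With that substitution your argument is complete and matches the paper's intent, including the transversality of strata, the gluing chart near the two-level configurations, and the dimension count.
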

\begin{proof}
%provide details if necessary%
	This is a direct consequence of the action-energy equality, which expresses the energy of a pseudoholomorphic strip between two chords in terms of the action of these two chords:
\begin{equation}\label{action-energy equality for strips}
\int_{Z} \frac{1}{2}|du - X_{H}(u)|^2 = \mathcal{A}_{H, L_0, L_1}(\gamma_0) - \mathcal{A}_{H, L_0, L_1}(\gamma_1)
\end{equation}
It follows that there cannot be pseudoholomorphic strips escaping to infinity, because the energy of the pseudoholomorphic strips between given two chords is fixed by the action of them. On the other hand, once $\gamma_0$ and $\gamma_1$ are fixed, there can only be finitely many non-trivial broken strips, because each non-trivial pseudoholomorphic strip picks up some energy which is positive and uniformly bounded from below by a constant which depends only on the background geometry - the symplectic manifold, the Lagrangian submanifolds and the Hamiltonian function. Moreoever, there cannot be sphere bubbles or disk bubbles as mentioned before.
\end{proof}

	However, the set of $X_{H}$-chords between $L_0$ and $L_1$ is in general infinite. But still we have the following finiteness result, which also follows from the action-energy equality. \par
	
\begin{lemma} \label{compactness 2}
For each time-one $X_{H}$-chord $\gamma_1$ from $L_0$ to $L_1$, the compactified moduli space $\bar{\mathcal{M}}(\gamma_0, \gamma_1)$ is empty for all but finitely many chords $\gamma_0$.
\end{lemma}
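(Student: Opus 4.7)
The plan is to apply the action-energy equality (\ref{action-energy equality for strips}) once more, this time with $\gamma_1$ held fixed and $\gamma_0$ varying. Nonnegativity of energy forces every $\gamma_0$ for which $\bar{\mathcal{M}}(\gamma_0, \gamma_1)$ is nonempty to satisfy $\mathcal{A}_{H, L_0, L_1}(\gamma_0) \geq \mathcal{A}_{H, L_0, L_1}(\gamma_1)$, so it suffices to show that for every constant $C$, only finitely many time-one $X_H$-chords from $L_0$ to $L_1$ have action at least $C$.

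The first step is to confine such chords to a compact region of $M$. Because $L_0, L_1$ are conical and their primitives $f_{L_0}, f_{L_1}$ are chosen to be locally constant on the cylindrical ends, the boundary contribution $f_{L_0}(\gamma(0)) - f_{L_1}(\gamma(1))$ to the action takes only finitely many values as $\gamma$ ranges over chords reaching infinity, hence is uniformly bounded. For a chord $\gamma$ on a level $r \geq 1 + \epsilon$, Lemma \ref{area estimate}(3) with $T = 1$ gives $\int_0^1 \gamma^*\lambda = 2r^2$ while $H \equiv r^2$, so the action has asymptotic form
\[
\mathcal{A}_{H, L_0, L_1}(\gamma) = -r^2 + O(1)
\]
as $r \to \infty$, the sign of the leading term being the one consistent with (\ref{action-energy equality for strips}). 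The lower bound $\mathcal{A}(\gamma) \geq C$ then yields a uniform upper bound $r \leq R(C)$, while chords lying entirely in $M_0 \cup (\partial M \times [1, 1+\epsilon])$ are already confined to a compact set by construction.

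For the second step I would appeal to discreteness. On each level $r \geq 1 + \epsilon$ the identification $X_H = 2r R$ from Lemma \ref{area estimate} puts time-one $X_H$-chords from $L_0$ to $L_1$ into bijection with Reeb chords of length $2r$ between the Legendrian boundaries $\partial L_0$ and $\partial L_1$, and the finiteness of Reeb chords of bounded length noted just after the non-degeneracy assumption on Reeb orbits implies that only finitely many such chords have length at most $2R(C)$. The remaining chords lie in the compact region $M_0 \cup (\partial M \times [1, 1+\epsilon])$ and are isolated by the non-degeneracy hypothesis on the time-one chords of $H$, hence also finite in number. The one genuinely delicate point is fixing the sign of the leading $-r^2$ term, which is what converts the lower bound from the action-energy equality into an actual radial bound; the sign is forced by whichever action convention is consistent with (\ref{action-energy equality for strips}), but it is worth verifying explicitly to close the argument.
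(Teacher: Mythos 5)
Your proposal is correct and fills in exactly the argument the paper leaves implicit: the paper states Lemma~\ref{compactness 2} with no proof, asserting only that it ``also follows from the action-energy equality,'' and your version (bound $\mathcal{A}(\gamma_0)$ from below via nonnegativity of energy, observe $\mathcal{A} = -r^2 + O(1)$ on level $r$ to get a radial bound, then invoke discreteness of Reeb chords of bounded length plus non-degeneracy of the interior chords) is the intended chain of reasoning. The one point you flag as delicate is in fact immediate from the action convention the paper writes out in sections 3.5 and 7.1, namely $\mathcal{A}_{H,L_0,L_1}(\gamma) = -\int_0^1 \gamma^*\lambda + \int_0^1 H(\gamma(t))\,dt + f_{L_1}(\gamma(1)) - f_{L_0}(\gamma(0))$; combined with Lemma~\ref{area estimate}(3) at $T=1$ (giving $\int_0^1 \gamma^*\lambda_M = 2r^2$ while $H \equiv r^2$) this yields $\mathcal{A}(\gamma) = -2r^2 + r^2 + O(1) = -r^2 + O(1)$ directly, so you need not appeal back to \eqref{action-energy equality for strips} to pin the sign.
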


	The above two lemmata are the key geometric ingredients that ensure finiteness in the definition of Floer's differential, the statements and most of the ideas of which, except the SFT-type argument, are learned from \cite{Abouzaid1}, \cite{Abouzaid-Seidel}. \par
	
\section{Wrapped Floer cohomology}

\subsection{Basic definition}
	Let us briefly recall the definition of wrapped Floer cohomology, basically taken from \cite{Abouzaid1}. We include it here for the convenience of the reader, and use it to fix notations. Given a pair of admissible Lagrangian submanifolds $L_{0}, L_{1}$, we define a graded $\mathbb{Z}$-module:
\begin{equation}
CW^{*}(L_0, L_1; H, J_{t}) = \bigoplus_{\gamma \in \mathcal{X}(L_{0}, L_{1}; H)} \mathbb{Z} |o_{\gamma}|
\end{equation}
where $\mathcal{X}(L_0, L_1)$ is the set of all time-one $H$-chords from $L_0$ to $L_1$, and $|o_{\gamma}|$ is the canonical orientation line associated to the chord $\gamma$, (see [Seidel]). \par
	This as a graded module is in fact independent of $J_{t}$. We then define a differential $m^{1}=m^{1}_{H, J_{t}}$ on $CW^{*}(L_0, L_1; H, J_{t})$ by counting isolated elements in the moduli space of inhomogeneous pseudoholomorphic strips. More precisely, we consider the case where $\deg(\gamma_0) = \deg(\gamma_1) + 1$. Using the orientation on the moduli space $\mathcal{M}(\gamma_0, \gamma_1)$ determined by the chosen spin structures on the Lagrangian submanifolds, we obtain an isomorphism $o_{\gamma_1} \to o_{\gamma_0}$. We denote by $m^{1}_{u}: |o_{\gamma_1}| \to |o_{\gamma_0}|$ the induced map on orientation lines, and define
\begin{equation}
m^{1}: CW^{i}(L_0, L_1; H, J_{t}) \to CW^{i+1}(L_0, L_1; H, J_{t})
\end{equation}
\begin{equation}
m^{1}([\gamma_1]) = (-1)^{i} \sum_{\substack{u \in \bar{\mathcal{M}}(\gamma_0, \gamma_1)\\ \deg(\gamma_0) = \deg(\gamma_1) + 1}} m^{1}_{u}([\gamma_1]).
\end{equation}

	By Lemma \ref{compactness 2}, the sum is finite. The fact that $m^{1}$ squares to zero comes from the structure of the boundary of one-dimensional moduli space, described in Lemma \ref{regularity + compactness}, plus a standard gluing argument. We call the resulting cohomology group the wrapped Floer cohomology of $L_0, L_1$ with respect to $(H, J_{t})$ and denote it by $HW^{*}(L_0, L_1; H, J_{t})$. The wrapped Floer cochain complex does not depend on admissible $(H, J_{t})$ up to canonical chain homotopy up to higher chain homotopies, the proof of which uses continuation maps. Thus the wrapped Floer cohomology is independent of admissible pairs $(H, J_{t})$. \par

\begin{remark}
	From now on, when we define various kinds of moduli spaces, we always start with one-parameter families of almost complex structures that are of contact type. Instead of emphasizing this point every time, we will only denote them as $J, J_{M}, J_{N}$, etc. But these symbols should be understood as one-parameter families of almost complex structures parametrized by $t \in [0, 1]$, unless otherwise specified.
\end{remark}

%multiplicative structure%
\subsection{Multiplicative structure}
	We review the multiplicative structure on wrapped Floer cohomology. The chain level operation (which is in general not necessarily associative)
\begin{equation}
m^{2}: CW^{*}(L_1, L_2; H, J_{t}) \otimes CW^{*}(L_0, L_1; H, J_{t}) \to CW^{*}(L_0, L_2; H, J_{t})
\end{equation}
is defined to be the composition of
\begin{equation} \label{pre-multiplication}
CW^{*}(L_1, L_2; H, J_{t}) \otimes CW^{*}(L_0, L_1; H, J_{t}) \to CW^{*}(\phi^{2}L_0, \phi^{2}L_1; \frac{H}{2} \circ \phi^{2}, (\phi^{2})^{*}J_{t})
\end{equation}
with
\begin{equation}
CW^{*}(\phi^{2}L_0, \phi^{2}L_1; \frac{H}{2} \circ \phi^{2}, (\phi^{2})^{*}J_{t}) \to CW^{*}(L_0, L_2; H, J_{t})
\end{equation}
where the first map is defined by counting inhomogeneous pseudoholomorphic triangles, and the second map is induced by conjugating by the Liouville flow, which is an isomorphism. Both maps are cochain maps. See \cite{Abouzaid1} for the original construction of all $A_{\infty}$-structure maps, but let us review some of the definitions to fix terminology and notations we shall use. \par
	Let $\phi^{s}$ be the time-$(\ln{s})$ Liouville flow. We first consider the map \eqref{pre-multiplication}, whose definition uses pseudoholomorphic triangles. Let $S$ be a disk with three boundary punctures $\xi_0, \xi_1, \xi_2$ where $\xi_0$ is a negative puncture while the other two are positive punctures. We then need to choose a smooth function $\rho_{S}: \partial D^2 \to [1, 2]$ which is $1$ near $\xi_1, \xi_2$ and is $2$ near $\xi_0$. We call such a function a time-shifting function. Let $\mathcal{H}(M)$ denote the space of admissible Hamiltonians on $M$, and $\mathcal{J}(M)$ the space of admissible almost complex structures on $M$. The definition of the inhomogeneous pseudoholomorphic triangles also involves additional data as described below. \par

\begin{definition}\label{Floer datum}
A Floer datum for $S$ consists of
\begin{enumerate}[label = (\alph*)]

\item A basic one-form $\alpha_{S}$ on $S$, which is closed ($d \alpha_{S} = 0$) and vanishes on the boundary of $S$, and whose pullback by $\epsilon^{k}$ is $2dt$ if $k=0$, and $dt$ if $k=1, 2$;

\item A $S$-dependent family of admissible Hamiltonians on $M$, i.e. a smooth map $H_{S}: S \to \mathcal{H}(M)$, whose pullback under $\epsilon^{k}$ agrees with $\frac{H}{4} \circ \phi^{2}$ if $k=0$, and with $H$ if $k=1, 2$;

\item A $S$-dependent family of admissible almost complex structures on $M$, i.e. a smooth map $J_{S}: S \to \mathcal{J}(M)$, whose pullback under $\epsilon^{k}$ agrees with $(\phi^{2})^{*}J_{t}$ if $k=0$, and with $J_{t}$ if $k=1, 2$.

\end{enumerate}
\end{definition}

%important point: require the basic one-form to be closed.%
	Let $\mathcal{M}_{2}(\gamma_{0}; \gamma_{1}, \gamma_{2})$ be the moduli space of inhomogeneous pseudoholomorphic maps $u: S \to M$ which satisfies the following conditions:
\begin{equation}
\begin{cases}
(du - \alpha_{S} \otimes X_{H_{S}})^{0,1} = 0\\
u(z) \in \phi^{\rho_{S}(z)}L_0 &\text{if $z \in \partial S$ lies between $\xi_0$ and $\xi_1$}\\
u(z) \in \phi^{\rho_{S}(z)}L_1 &\text{if $z \in \partial S$ lies between $\xi_1$ and $\xi_2$}\\
u(z) \in \phi^{\rho_{S}(z)}L_2 &\text{if $z \in \partial S$ lies between $\xi_2$ and $\xi_0$}\\
\lim\limits_{s \to -\infty} u \circ \epsilon^{0}(s, \cdot) = \phi^{2}\gamma_{0}(\cdot)\\
\lim\limits_{s \to +\infty} u \circ \epsilon^{k}(s, \cdot) = \gamma_{k}(\cdot) &\text{if $k=1, 2$}.
\end{cases}
\end{equation}
In the above equation, the $(0,1)$-part of the differential is defined with respect to the complex structure on $S$ and the $S$-parametrized family of almost complex structures $J_{S}$ on $M$. We have the following transversality result :

\begin{lemma} \label{transversality for 3-pointed disks}
	For a generic family of admissible almost complex structures $J_{S}$ as part of the Floer datum defined in Definition \ref{Floer datum}, the moduli space $\mathcal{M}_{2}(\gamma_{0}; \gamma_{1}, \gamma_{2})$ is a smooth manifold of dimension $\deg(\gamma_0) - \deg(\gamma_1) - \deg(\gamma_2)$.
\end{lemma}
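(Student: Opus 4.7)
The plan is to adapt the argument of Lemma \ref{transversality for strips} to the present setting, with the simplification that the Floer datum already allows $S$-dependent perturbations of the almost complex structure, giving more freedom than in the strip case.

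First I would set up the universal moduli space as a Banach bundle problem. Fix $p > 2$ and consider the Banach manifold
\begin{equation*}
\mathcal{B} = \Set{(u, J_{S}) |
\begin{cases}
u: S \to M \text{ of class } L^{1, p},\\
u \text{ satisfies the Lagrangian boundary conditions and the asymptotic}\\
\text{conditions at the three punctures in Definition \ref{Floer datum},}\\
J_{S}: S \to \mathcal{J}(M) \text{ an admissible Floer datum extending the}\\
\text{fixed choices along the strip-like ends}
\end{cases}}
\end{equation*}
and the Banach bundle $\mathcal{E} \to \mathcal{B}$ with fiber $L^{p}(u^{*}TM \otimes_{J_{S}} \Lambda^{0,1}_{S})$. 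Viewing the inhomogeneous Cauchy--Riemann operator $\bar{\partial}_{H_{S}}$ as a section, the linearization at a zero $(u, J_{S})$ takes the form
\begin{equation*}
D_{(u, J_{S})}\bar{\partial}_{H_{S}}(Y, \xi) = \tfrac{1}{2} Y \circ (du - \alpha_{S} \otimes X_{H_{S}}) \circ j + D_{u}\bar{\partial}_{J_{S}, H_{S}}(\xi),
\end{equation*}
where $Y$ ranges over $S$-dependent variations of the almost complex structure supported away from the strip-like ends and the cylindrical end. The goal is to show this universal operator is surjective at every zero, and then conclude with Sard--Smale.

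Next I would split the analysis into cases, paralleling the proof of Lemma \ref{transversality for strips}. The first case is where the image of $u$ is entirely contained in the interior part $M_{0}$. Here $du - \alpha_{S} \otimes X_{H_{S}}$ vanishes only at a discrete set by unique continuation (the only exception being maps that factor through a Hamiltonian orbit, which for a genuine three-punctured disk with the chosen $\rho_{S}$ forces a contradiction with the distinct rescalings at the punctures). We can freely perturb $J_{S}$ on any open subset of $S$, so the standard duality argument applies: a nonzero element $\eta \in \ker D_{u}^{*}\bar{\partial}_{J_{S}, H_{S}}$ would vanish at most on a discrete set by elliptic regularity and unique continuation, and we can choose $Y$ supported near a generic interior point to pair nontrivially with it. The second case, where $u$ escapes into the cylindrical end, requires the SFT-type splitting used in case (iii) of Lemma \ref{transversality for strips}: over $\partial M \times [1, +\infty)$ write $u = (v, l)$, decompose the equation into a component in the contact distribution $\xi$ and components in the Reeb and radial directions, and exploit the freedom to perturb $J_{\xi}$ to achieve surjectivity of the $\xi$-component, after which the $\mathbb{R}^{2}$-component is shown to be automatically regular by the asymptotic growth analysis of solutions of the linearized $(\eta_{r}, \eta_{Reeb})$ system.

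The main obstacle, as in the strip case, is the second case: the almost complex structure must remain of contact type on the cylindrical end, so we cannot perturb freely there, and we must rely on the purely elliptic argument for the $(\eta_{r}, \eta_{Reeb})$ system together with the fact that the boundary values $\pi_{Reeb}(\eta)$ vanish on $\partial S$. A technical nuisance new to the triangle setting is that the domain $S$ is not translation-invariant and the time-shifting function $\rho_{S}$ interpolates between three Lagrangian asymptotes with different Liouville rescalings; one must check that this does not introduce exponential modes that a solution of the linearized equation could exploit, which is handled by the asymptotic convergence rates at each puncture combined with the fact that the Reeb boundary component decays at every puncture. Granted surjectivity in all cases, the projection $\ker D_{(u, J_{S})}\bar{\partial}_{H_{S}} \to \mathcal{J}(M)^{S}$ is Fredholm of index $\deg(\gamma_{0}) - \deg(\gamma_{1}) - \deg(\gamma_{2})$, and Sard--Smale produces the generic set of Floer data for which $\mathcal{M}_{2}(\gamma_{0}; \gamma_{1}, \gamma_{2})$ is a smooth manifold of the claimed dimension.
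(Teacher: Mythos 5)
Your high-level strategy matches the paper's: set up the universal moduli space, separate into a case where $J_S$ can be perturbed freely and a case where the SFT-type decomposition is needed, and conclude with Sard--Smale. However, your case split has a genuine gap.

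You divide the argument according to whether the image of $u$ lies entirely in $M_0$ or ``escapes into the cylindrical end,'' and you assign the SFT-type decomposition to the second case. That decomposition is only available when the map is \emph{entirely} contained in $\partial M \times [1, +\infty)$, because the splitting of the linearized equation into a $\xi$-component and an $(\eta_{Reeb}, \eta_r)$-component relies on the almost complex structure being of contact type and the symplectization product structure holding over the whole domain. For triangles, unlike for strips, a map can genuinely cross $\partial M$: if, say, $\gamma_0$ is an interior chord while $\gamma_1, \gamma_2$ are Reeb chords on level hypersurfaces, the triangle must pass through both regions. (For strips the paper's convexity argument rules out such crossings, which is why the analogous division works there; that argument does not carry over to three or more punctures.) As written, your case $2$ would attempt to apply a global SFT-type splitting to such a map, and this is not valid.

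The correct division, which the paper makes, is along the positions of the \emph{chords} rather than the image of $u$: if at least one $\gamma_k$ is an interior chord, then $u$ is near that chord --- hence in $M_0$ --- on a neighborhood of the corresponding puncture, and since $du - \alpha_S \otimes X_{H_S}$ vanishes only on a discrete set, one can choose a variation $Y$ supported in the interior to force surjectivity. The SFT-type argument is reserved for the case where \emph{all three} chords lie in the cylindrical end, in which case the concavity of the collar neighborhood of $\partial M$ confines the entire triangle to $\partial M \times [1, +\infty)$, and the global splitting becomes legitimate. You should replace your case $2$ with this, and observe that the intermediate case (image meeting both $M_0$ and the cylindrical end) falls under the easy free-perturbation argument, not the hard SFT one. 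Incidentally, the paper also notes that for disks with three or more boundary punctures the argument already gives transversality in dimension two, since the domain-dependent family of almost complex structures precludes multiple covers; this remark is worth including.
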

\begin{proof}
	If one of the chords lie in the interior of $M$, then the proof argues with the linearization of the universal Cauchy-Riemann operator, in which $J_{S}$ is regarded as a variable. It is not difficult to show that this universal Fredholm operator is surjective, essentially because we are allowed to perturb $J$ in the interior of $M$ in an arbitrary way (unlike the case that $J$ has to be of contact-type over the cylindrical end), then a Sard-Smale argument finishes the proof. We omit the detail here, which is in fact the same as the proof given for Lemma \ref{transversality for strips}. \par
%technical point: to achieve transversality by perturbing the almost complex structure only%
	If all the chords $\gamma_{0}, \gamma_{1}, \gamma_{2}$ lie in the cylindrical end where the almost complex structures are only allowed to be perturbed in the space of admissible (or more restrictively contact-type) almost complex structures, it is not clear a priori the universal Fredholm operator is still surjective. For this we need a SFT-type as in the proof of Lemma \ref{transversality for strips}. Since all the three chords lie in the cylindrical end, the inhomogeneous pseudoholomorphic triangle $u$ must be also contained in the cylindrical end, because the collar neighborhood of $\partial M$ in the interior of $M$ is concave. So in the inhomogeneous Cauchy-Riemann equation that $u$ satisfies, the almost complex structure is of contact type, which means that it is the direct sum of the almost complex structure $J_{\xi}$ on the contact hyperplane distribution $\xi$ of the contact manifold $\partial M$ with the standard conjugate complex structure on $\mathbb{C} = \mathbb{R}^{2}$, where the first $\mathbb{R}$ factor is the Reeb direction. Therefore, writing $u = (v, l): S \to \partial M \times [1, +\infty)$ we can split this equation to a SFT-type system of equations:
\begin{equation}
\begin{cases}
\pi_{\xi}(dv) + J_{\xi} \circ \pi_{\xi}(dv) \circ j_{S} = 0\\
dl - \pi_{Reeb}(dv) \circ j + \alpha_{S} \circ j_{S} \otimes H_{S}'(l) = 0.
\end{cases}
\end{equation}
Here $\pi_{\xi}: T(\partial M) \to \xi$ is the projection to the contact hyperplane distribution, and $\pi_{Reeb}: T(\partial M) \to \partial M \times \mathbb{R}$ is the projection to the Reeb direction. Following the same kind of SFT-type argument as in the proof of Lemma \ref{transversality for strips} by going to strip-like ends, we obtain the desired conclusion. In this case, that is the domain is a disk with three or more boundary punctures, the proof even also works directly for dimension two. This is because we have used a generic domain-dependent family of almost complex structures that can vary freely away from the strip-like ends, and consequently multiply-covered solutions will never appear.
\end{proof}

	Since there cannot be sphere bubbles or disk bubbles, the moduli space $\mathcal{M}_{2}(\gamma_{0}; \gamma_{1}, \gamma_{2})$ has a natural compactification $\bar{\mathcal{M}}_{2}(\gamma_{0}; \gamma_{1}, \gamma_{2})$ whose codimension one boundary stratum is covered by 
\begin{equation}
\begin{split}
\coprod \mathcal{M}(\theta_{0}, \gamma_{0}) \times \mathcal{M}_{2}(\theta_{0}; \gamma_{1}, \gamma_{2}) &\cup \coprod \mathcal{M}_{2}(\gamma_{0}; \theta_{1}, \gamma_{2}) \times \mathcal{M}(\gamma_{1}, \theta_{1})\\
&\cup \coprod \mathcal{M}_{2}(\gamma_{0}; \gamma_{1}, \theta_{2}) \times \mathcal{M}(\gamma_{2}, \theta_{2}).
\end{split}
\end{equation}
$\bar{\mathcal{M}}_{2}(\gamma_{0}; \gamma_{1}, \gamma_{2})$ is a compact manifold, because we have a similar result to Lemma \ref{compactness 2}, which in this case says that for a fixed pair $(\gamma_{1}, \gamma_{2})$, the moduli space $\mathcal{M}_{2}(\gamma_{0}; \gamma_{1}, \gamma_{2})$ is empty for all but finitely many $\gamma_{0}$. \par
	The compactness result uses the following action-energy equality 
\begin{equation}\label{action-energy equality for multiplication structure}
\int_{S}\frac{1}{2}|du - \alpha_{S} \otimes X_{S}|^2 = \mathcal{A}_{\frac{H}{2} \circ \phi^{2}}(\phi^{2}\gamma_{0}) - \mathcal{A}_{H}(\gamma_{1}) - \mathcal{A}_{H}(\gamma_{2}) + (\text{curvature term})
\end{equation}
where the curvature term is $\int_{S} u^{*}H_{S} d\alpha_{S}$ in general, which is in fact zero here by our choice of $\alpha_{S}$ being closed. Using this, and by an argument that is completely similar to the ones for Lemma \ref{regularity + compactness} and Lemma \ref{compactness 2}. \par

%cohomological unit%
\subsection{The cohomological unit} \label{cohomological unit}
	It is well-known that cohomology-level multiplication on wrapped Floer cohomology has a unit. For the convenience of the reader, and also for the purpose of using the representative explicitly later in the proof of Theorem \ref{geometric composition isomorphism}, we include a description here. To save notations we consider the case of a single Lagrangian submanifold $L$, and the multiplication:
\begin{equation*}
m^{2}: HW^{*}(L, L) \otimes HW^{*}(L, L) \to HW^{*}(L, L).
\end{equation*}
The unit $e_{L} \in HW^{0}(L, L)$ for the multiplication is the image of $1 \in H^{0}(L)$ under the canonical map
\begin{equation} \label{PSS homomorphism}
PSS: H^{*}(L) \to HW^{*}(L, L),
\end{equation}
called the Piunikhin-Salamon-Schwarz (PSS) homomorphism, which is defined as follows. Consider the disk with one negative boundary puncture $D^{2} \setminus \{-1\}$. Choose a strip-like end $\epsilon_{-}: (-\infty, 0] \times [0, 1] \to D^{2} \setminus \{-1\}$ near the puncture. Choose a family of Hamiltonians $H_{z}$ depending on $z \in D^{2} \setminus \{-1\}$, which agrees with $H$ over the strip-like end, and vanishes in a neighborhood of $1 \in \partial D^{2}$, and moreover away from this neighborhood is admissible for all $z$. Also, choose a family of admissible almost complex structures $J_{z}$ depending on $z \in D^{2} \setminus \{-1\}$ which agrees with $J_{t}$ over the strip-like end. We will see that admissibility condition is in fact irrelevant. Given an $H$-chord $\gamma$, and a locally finite chain $P \subset L$ (we fix a smooth triangulation of $L$ at the beginning) which can be taken to be represented by an oriented piecewise linear submanifold, we consider the parametrized inhomogeneous Cauchy-Riemann equation:
\begin{equation} \label{PSS moduli space}
\begin{cases}
(du - X_{H_{z}} \otimes dt)^{0,1} = 0\\
u(\partial (D^{2} \setminus \{-1\})) \subset L\\
\lim\limits_{s \to -\infty} u \circ \epsilon_{-}(s, \cdot) = \gamma(\cdot)\\
u(1) \in P
\end{cases}
\end{equation}
for smooth maps $u: D^{2} \setminus \{-1\} \to M$. Note that although $dt$ does not extend across the point $1 \in \partial D^{2}$, the Hamiltonian $H_{z}$ vanishes in a neighborhood of $1$ in $D^{2} \setminus \{-1\}$, so the above equation still makes sense. Alternatively, one may use a different one-form on $D^{2}$, which is only sub-closed, and vanishes on the boundary. This is a minor issue which will not affect the definition of the cochain map, up to homotopy. Let $\mathcal{M}_{PSS}(\gamma; P)$ be the moduli space of solutions to the equation \eqref{PSS moduli space}. The following is a transversality result for this moduli space, whose proof is similar to that of Lemma \ref{transversality for strips}. \par

\begin{lemma}
For a generic family of almost complex structures $J_{z}$, the moduli space $\mathcal{M}_{PSS}(\gamma; P)$ is a piecewise linear manifold of dimension $\deg(\gamma) - n + \dim{P}$.
\end{lemma}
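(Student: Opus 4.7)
The plan is to follow the universal moduli space strategy used for Lemma \ref{transversality for strips}, with one additional step to incorporate the marked-point constraint $u(1) \in P$. I would first set up the universal Cauchy--Riemann operator as a section of a Banach bundle over $\mathcal{B} \times \mathcal{J}_{D^{2} \setminus \{-1\}}$, where $\mathcal{B}$ parametrizes maps $u \colon D^{2} \setminus \{-1\} \to M$ of class $L^{1,p}$ with boundary on $L$ and asymptote $\gamma$ at the negative puncture, and $\mathcal{J}_{D^{2} \setminus \{-1\}}$ is the space of smooth families of admissible almost complex structures parametrized by the domain. Before imposing the condition $u(1) \in P$, the immediate goal is to show that the unconstrained moduli space $\mathcal{M}_{PSS}(\gamma)$ is a smooth manifold whose dimension equals the Fredholm index of the linearized operator; by Riemann--Roch for a disk with one negative boundary puncture and Lagrangian boundary condition $L$, this index equals $\deg(\gamma)$.

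To establish surjectivity of the universal linearization at a solution $(u, J_{z})$, I would split into exactly the two cases treated in Lemma \ref{transversality for strips}. If the image of $u$ meets the interior part $M_{0}$, then any cokernel element of the unperturbed linearization is smooth by elliptic regularity and vanishes at most on a discrete set by unique continuation; combined with the fact that $du - X_{H_{z}} \otimes dt$ vanishes only on a discrete set (as $u$ cannot be a trivial solution of the full equation), a perturbation of $J_{z}$ supported near an interior domain point makes the pairing non-zero, contradicting non-surjectivity. If instead $u$ is entirely contained in the cylindrical end $\partial M \times [1, +\infty)$, I invoke the same SFT-type splitting as in case (iii) of Lemma \ref{transversality for strips}: the equation decomposes into a Cauchy--Riemann equation on the contact distribution $\xi$, which can be made regular by generic perturbation of $J_{\xi}$, plus a linear system in the Reeb and radial directions whose only $L^{k,p}$ solution is trivial. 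A Sard--Smale argument then yields a generic subset of $\mathcal{J}_{D^{2} \setminus \{-1\}}$ over which $\mathcal{M}_{PSS}(\gamma)$ is smooth of dimension $\deg(\gamma)$.

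The final step is to impose $u(1) \in P$. Since $H_{z}$ vanishes identically in a neighbourhood of $1 \in \partial D^{2}$, the equation reduces to the plain $\bar\partial$-equation there, and a routine evaluation-map transversality argument (achieved by a further generic perturbation of $J_{z}$ supported near $1$ in the domain) shows that the evaluation $\mathrm{ev}_{1} \colon \mathcal{M}_{PSS}(\gamma) \to L$, $u \mapsto u(1)$, is transverse to each stratum of the smooth triangulation carrying $P$. Consequently $\mathcal{M}_{PSS}(\gamma; P) = \mathrm{ev}_{1}^{-1}(P)$ inherits the structure of a piecewise linear manifold of codimension $n - \dim P$ in $\mathcal{M}_{PSS}(\gamma)$, giving the asserted dimension $\deg(\gamma) - n + \dim P$. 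The only genuinely non-routine ingredient is the SFT-type argument in the cylindrical case, which is the main obstacle but is handled by the same reasoning as case (iii) of Lemma \ref{transversality for strips}; the marked-point constraint itself presents no essential additional difficulty since it is merely a finite-codimension condition on an already smooth moduli space.
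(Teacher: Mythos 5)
Your proposal follows essentially the same approach as the paper's proof: establish regularity of the unconstrained moduli space $\mathcal{M}_{PSS}(\gamma)$ by the same argument as in Lemma \ref{transversality for strips}, then further perturb $J_{z}$ so that the evaluation map at the boundary marked point $1$ is transverse to the piecewise linear chain $P$, making $\mathcal{M}_{PSS}(\gamma;P) = \mathrm{ev}_{1}^{-1}(P)$ a PL manifold of the stated dimension. The paper's proof is terser, compressing the first step into a reference to the standard transversality argument, but the underlying plan and the reason the result is only a PL (rather than smooth) manifold---that $P$ is a simplicial, not smooth, representative---are exactly what you wrote.
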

\begin{proof}
	The only difference from the conclusion by standard transversality argument is that our moduli space is only a piecewise linear manifold. This is because without the condition $u(1) \in P$, the resulting moduli space is a smooth manifold, and we perturb the almost complex structures further such that the evaluation map from that moduli space at the boundary marked point $1$ is transverse to the piecewise linear submanifold $P$. Of course, if $P$ were represented by a smooth submanifold, the moduli space $\mathcal{M}_{PSS}(\gamma; P)$ would be a smooth manifold. But there are obstructions to representing integral homology classes by smooth submanifolds.
\end{proof}

\begin{remark}
	If we are only concerned with the cohomological unit, we only have to consider the case where $P$ is the (locally finite) fundamental chain $[L]$ of $L$, which is represented by the smooth manifold $L$ itself. In that case, the resulting moduli space is a smooth manifold, after generic perturbation. \par
	Using a Morse complex for certain Morse function on $L$, or pseudocycles to compute the cohomology $H^{*}(L)$ would resolve the problem of the moduli space not a priori being a smooth manifold. But piecewise linear manifolds are fine for our purpose, because we will only use zero-dimensional and one-dimensional moduli spaces.
\end{remark}

	There is a natural compactification $\bar{\mathcal{M}}_{PSS}(\gamma; P)$ of the moduli space $\mathcal{M}_{PSS}(\gamma; P)$ whose codimension one stratum is covered by:
\begin{equation}
\mathcal{M}(\gamma, \gamma') \times \mathcal{M}_{PSS}(\gamma'; P) \coprod \mathcal{M}_{PSS}(\gamma; \partial P)
\end{equation}
It can be proved that $\bar{\mathcal{M}}_{PSS}(\gamma; P)$ is a compact piecewise linear manifold with boundary and corners, but the proof is somewhat cumbersome and we do not quite need it. We only need the result for zero-dimensional and one-dimensional moduli spaces, which is straightforward. In particular, the zero-dimensional moduli space $\mathcal{M}_{PSS}(\gamma; P)$ is compact, where $\deg(\gamma) = n - \dim{P}$. \par
	We also have the following finiteness result. \par

\begin{lemma} \label{compactness for PSS moduli space}
For any locally finite chain $P$ of $L$, the compactified moduli space $\bar{\mathcal{M}}_{PSS}(\gamma; P)$ is empty for all but finitely many $H$-chords $\gamma$.
\end{lemma}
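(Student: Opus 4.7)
The plan is to establish an action-energy inequality for the PSS moduli space that bounds the action $\mathcal{A}_H(\gamma)$ of the asymptotic chord in terms of data depending only on $P$ and the chosen Floer data, then to combine this with the quadratic growth of action at infinity from Lemma \ref{area estimate} and the standing non-degeneracy assumption on $\lambda|_{\partial M}$ to conclude that only finitely many $\gamma$ can occur. The result for the compactification $\bar{\mathcal{M}}_{PSS}(\gamma; P)$ then follows using Lemma \ref{compactness 2}.

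First I would rewrite the PSS equation using the 1-form $\beta$ on $D^2\setminus\{-1\}$ indicated in the remark after \eqref{PSS moduli space}, chosen to equal $dt$ on the strip-like end near $-1$, to vanish in a neighborhood of the marked point $1$, to vanish on all of $\partial D^2$, and to satisfy $d\beta \leq 0$. Using the exactness of $L$ with primitive $f_L$ (locally constant on the cylindrical end) together with Stokes' theorem, one derives an action-energy identity of the schematic form
\begin{equation*}
0 \leq E(u) = -\mathcal{A}_H(\gamma) + f_L(u(1)) - \int_{D^2\setminus\{-1\}} H_z\, d\beta,
\end{equation*}
for every $u \in \mathcal{M}_{PSS}(\gamma; P)$. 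The standard maximum principle for admissible $(H_z, J_z)$ (quadratic Hamiltonian at infinity, contact-type almost complex structure) shows that $r\circ u$ attains its maximum on the strip-like end, so $u(1) \in P \cap L \cap \{r \leq r(\gamma)\}$, on which $f_L$ takes only finitely many values. Together with the uniform bound on the curvature term $-\int H_z\, d\beta$ (controlled by $\sup H_z$ on $\mathrm{supp}(d\beta)$ and the fixed $\beta$), this yields a uniform upper bound $\mathcal{A}_H(\gamma) \leq C$ with $C = C(P, \text{Floer data})$ independent of $\gamma$.

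Lemma \ref{area estimate} then pins down the chord: for a Reeb chord on the level hypersurface $\partial M \times \{r\}$ with $r \geq 1+\epsilon$ we have $\int_0^1 \gamma^*\lambda = 2r^2$ and $\int_0^1 H(\gamma)\, dt = r^2$, so $|\mathcal{A}_H(\gamma)|$ grows like $r^2$ as $r \to \infty$. The bound above therefore forces $r(\gamma) \leq R_0$ for some $R_0$ depending only on $P$ and the Floer data. By the non-degeneracy assumption on $\lambda|_{\partial M}$ there are only finitely many Reeb chords on $\partial M$ with both endpoints on the Legendrian $\partial L$ below any fixed length, and hence only finitely many $H$-chords $\gamma$ of $L$ with $r(\gamma) \leq R_0$. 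This gives the desired finiteness for the open moduli space $\mathcal{M}_{PSS}(\gamma; P)$, and also for $\mathcal{M}_{PSS}(\gamma; \partial P)$ by the same argument applied to $\partial P$.

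Finally, the codimension-one strata added in the compactification are of the form $\mathcal{M}(\gamma, \gamma') \times \mathcal{M}_{PSS}(\gamma'; P)$ and $\mathcal{M}_{PSS}(\gamma; \partial P)$. The argument above produces only finitely many intermediate asymptotes $\gamma'$, and by Lemma \ref{compactness 2} each such $\gamma'$ is connected by a Floer strip to only finitely many $\gamma$; hence only finitely many $\gamma$ contribute to $\bar{\mathcal{M}}_{PSS}(\gamma; P)$. I expect the main technical point to be the careful derivation of the action-energy identity in the presence of the pointwise constraint $u(1) \in P$ and with a merely sub-closed $\beta$ — the signed curvature term $-\int H_z\, d\beta$ must be nonnegative, which is precisely why we impose $d\beta \leq 0$. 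This is a routine adaptation of the standard treatment of PSS maps in \cite{Abouzaid1}.
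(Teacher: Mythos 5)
Your approach is genuinely different from the one the paper actually uses. The paper's proof is topological and much shorter: the compactified PSS disk exhibits a free homotopy rel endpoints from $\gamma$ to a constant path in $L$, while Reeb chords on level hypersurfaces of the symplectization never have trivial relative homotopy class in $\mathcal{P}(M,L,L)$; hence only the finitely many $H$-chords in the interior can be asymptotics of a PSS disk. Your action-energy strategy is in principle a valid and more robust alternative (it does not depend on the topological claim about Reeb chords), and it parallels the a priori estimates the paper uses elsewhere.

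However, the inequality you wrote down has the wrong sign on the action term, and this breaks the argument. In the paper's conventions (which you must use here) a Reeb chord on $\partial M\times\{r\}$ has $\mathcal{A}_H(\gamma)\approx -r^2 + O(1)$; in particular the action goes to $-\infty$, not $+\infty$, as $r\to\infty$. To exclude chords with large $r$ you therefore need a \emph{lower} bound $\mathcal{A}_H(\gamma)\ge -C$, not the upper bound $\mathcal{A}_H(\gamma)\le C$ you deduce from your displayed identity. And indeed the correct Stokes computation gives the opposite sign from what you wrote: since $\gamma$ sits at the \emph{negative} strip-like end, it plays the role of the "$\gamma_0$ at $s=-\infty$" in the paper's equation \eqref{action-energy equality for strips}, so
\begin{equation*}
0 \;\le\; E(u) \;=\; \mathcal{A}_H(\gamma) \;+\; \int_{D^2\setminus\{-1\}} (H_z\circ u)\,d\beta,
\end{equation*}
with no $f_L(u(1))$ term: the two boundary arcs of $\partial D^2$ contribute $f_L(\gamma(1))-f_L(u(1))$ and $f_L(u(1))-f_L(\gamma(0))$, whose sum is absorbed into $\mathcal{A}_H(\gamma)$. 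Once the sign is corrected the argument closes cleanly and is actually simpler than what you wrote: one only needs an \emph{upper} bound on the curvature integral $\int (H_z\circ u)\,d\beta$, which is immediate from $d\beta\le 0$ together with $H_z\ge -\epsilon$ everywhere (compactness of $\mathrm{supp}(d\beta)$ is not even needed in the region where $H_z\ge 0$, since there $H_z\,d\beta\le 0$). This gives $\mathcal{A}_H(\gamma)\ge -\epsilon\,\|d\beta\|_{L^1}$, which by the quadratic growth of the action excludes all but finitely many Reeb chords. As written, though, the stated inequality is in the wrong direction and the conclusion "$\mathcal{A}_H(\gamma)\le C$ forces $r(\gamma)\le R_0$" does not follow.
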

\begin{proof}
	Note that for any $P$, $\mathcal{M}_{PSS}(\gamma; P)$ is empty unless the relative homotopy class of the map $\gamma: ([0,1], \{0\}, \{1\}) \to (M, L, L)$ is trivial, i.e. the same as a constant map $[0, 1] \to L$, in the space $\mathcal{P}(M, L, L)$ of paths in $M$ with the two ends in $L$. Reeb chords on the level hypersurfaces do not have trivial relative homotopy class. Therefore the possible outputs $\gamma$ can only be Hamiltonians chords contained in the interior of $M$, and there are finitely many of them.
\end{proof}

	We consider the zero-dimensional moduli space $\mathcal{M}_{PSS}(\gamma; P)$, namely the case $\deg(\gamma) = n - \dim{P}$. Each $u \in \mathcal{M}_{PSS}(\gamma; P)$ induces a map of orientation lines:
\begin{equation}
PSS_{u}: |o_{P}| \to |o_{\gamma}|,
\end{equation}
Then we define:
\begin{equation} \label{PSS cochain map}
PSS: C_{n-*}^{lf}(L) \to CW^{*}(L, L)
\end{equation}
\begin{equation}
PSS(P) = \sum_{\substack{u \in \mathcal{M}_{PSS}(\gamma; P)\\ \deg(\gamma) = n - \dim{P}}} PSS_{u}(P).
\end{equation}
By Lemma \ref{compactness for PSS moduli space}, the sum is finite, and therefore $PSS$ is well-defined. Studying one-dimensional moduli space, plus a simple case of the much more involved gluing argument dealing with pseudoholomorphic curves and ambient chains together presented in \cite{FOOO3}, shows that $PSS$ is a cochain map, and thus induces the desired map on cohomology \eqref{PSS homomorphism}, via the non-compact Poincar\'{e} duality
\begin{equation*}
H_{n-*}^{lf}(L) \cong H^{*}(L).
\end{equation*}
The construction of PSS homomorphism is quite classical, and there are other versions using different chain models on $L$ in which this kind of mixed-type gluing argument as in \cite{FOOO3} is not required. But we find it pictorially clear to use simplicial chains on $L$ to describe the behavior of this map. Of course, all versions are equivalent, on the level of cohomology. \par
	We then define $e_{L} = PSS([L]) \in HW^{0}(L, L)$. By modifying $H$ slightly if necessary, we may assume that $e_{L}$ is represented by a unique $H$-chord $\gamma_{min}$ from $L$ to itself that has index zero. Another interpretation is that this chord corresponds to the unique minimum of $H|_{L}$, which on the other hand corresponds to the unique degree zero intersection point of $L \cap \phi_{H}L$ in the interior of $M$. Moreover, we can arrange that its action is the biggest among all chords (recall all Reeb chords have sufficiently negative action). On the other hand, for any $\epsilon > 0$, we may choose an admissible $H$ that is $C^{2}$-small in the interior such that the absolute value of the action of $\gamma_{min}$ is less than $\epsilon$. Note that we cannot ensure that this chord has positive action, because of the presence of the terms involving the primitive $f_{L}$, unlike the case of symplectic cohomology whose underlying cochain complex is generated by closed periodic orbits. But most of the estimates still go through. In particular, it is easy to see that $e_{L}$ is a unit for the multiplication $m^{2}$ on the level of cohomology. \par

\begin{lemma}
For any $[\gamma] \in HW^{*}(L, L)$, we have that
\begin{equation}
m^{2}(e_{L}, [\gamma]) = [\gamma] \in HW^{*}(L, L).
\end{equation}
\end{lemma}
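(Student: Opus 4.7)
The plan is to produce a chain-level homotopy between $m^{2}(e_{L}, -)$ and the identity on $CW^{*}(L, L)$. By definition, $e_{L} = PSS([L])$, so $m^{2}(e_{L}, [\gamma])$ is computed by first counting PSS disks with input $[L]$ and output some chord $\gamma_{0}$, then counting triangles with inputs $(\gamma_{0}, \gamma)$ and output $\gamma'$. The first step is to package this two-step count into a single glued moduli space using the standard gluing theorem: a disk with one negative puncture at $\gamma'$, one positive puncture at $\gamma$, a boundary marked point $z_{0}$ required to lie on $L$, and a domain-dependent Floer datum that realizes both the PSS cap (with Hamiltonian decaying to zero near $z_{0}$) and the triangle perturbation. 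Crucially, because $P = [L]$ and the entire boundary of the disk is constrained to lie on $L$, the marked-point constraint $u(z_{0}) \in L$ is automatically satisfied, so it can simply be dropped.

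Having removed the constraint, the moduli space becomes one of pseudoholomorphic maps from a disk with two boundary punctures carrying a Floer datum that is standard $(H, J_{t})$ near the punctures but degenerates in a middle region. The next step is to construct a one-parameter family $(\alpha_{S}^{r}, H_{S}^{r}, J_{S}^{r})$ for $r \in [0, 1]$ of such Floer data that interpolates between this configuration at $r = 0$ and the standard strip datum $(dt, H, J_{t})$ at $r = 1$. The parametrized moduli space is one-dimensional, and its codimension-one boundary decomposes into strip-breaking contributions at the two punctures, yielding an operator $K: CW^{*}(L, L) \to CW^{*-1}(L, L)$ satisfying
\begin{equation*}
m^{2}(e_{L}, \gamma) - \mathrm{id}(\gamma) = m^{1} K(\gamma) + K m^{1}(\gamma).
\end{equation*}
At the endpoint $r = 1$, the action-energy equality \eqref{action-energy equality for strips} forces any strip from $\gamma$ to $\gamma'$ with the standard datum to have energy $\mathcal{A}_{H}(\gamma) - \mathcal{A}_{H}(\gamma')$; in the zero-dimensional stratum this can only be the constant strip at $\gamma = \gamma'$, giving the identity map (after a generic perturbation ensuring transversality for the constant solution, which is automatic in our convex setup).

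The main obstacle is compactness of the parametrized moduli space, and specifically ruling out disk bubbling in the region where $H_{S}^{r}$ degenerates to zero. This is where the exactness of $L$ is essential: any non-constant $J$-holomorphic disk with boundary on the exact Lagrangian $L$ would have positive symplectic area, contradicting $\int_{D} u^{*}\omega = \int_{\partial D} u^{*}\lambda = 0$ obtained from the primitive $f_{L}$. Escape to infinity is ruled out by the usual maximum principle argument, since each $H_{S}^{r}$ in the family can be chosen admissible (quadratic in $r$) over the cylindrical end, and the Lagrangians are conical. Finally, a uniform curvature bound on $d\alpha_{S}^{r}$ along the family yields uniform action-energy control analogous to \eqref{action-energy equality for multiplication structure}, guaranteeing that only finitely many pairs $(\gamma, \gamma')$ can contribute in each action window. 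Combining these, the parametrized moduli space compactifies with the expected boundary, and the chain-homotopy identity above gives $m^{2}(e_{L}, [\gamma]) = [\gamma]$ on cohomology.
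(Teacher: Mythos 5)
Your argument is essentially the paper's: both glue the PSS cap for $[L]$ into the first input of the triangle, observe that the $u(z_0)\in P$ constraint is void when $P=[L]$, recognize the glued moduli problem as a continuation strip with moving boundary and domain-dependent data, and then show that continuation strip is chain-homotopic to the identity via the standard interpolation of Floer data (and action-energy control to force the constant solutions at the trivial endpoint). Your version simply makes the homotopy-of-Floer-data step and the bubbling/$C^0$ remarks explicit where the paper invokes them as ``standard.''
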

\begin{proof}
	Consider the three punctured disk used to define multiplication structure on wrapped Floer cohomology. At the first positive puncture we put the asymptotic condition $\gamma_{min}$ which represents $e_{L}$, and at the second positive puncture we put the asymptotic condition $\gamma$. Recall that near the negative puncture, we have applied the time-$(\ln{2})$ Liouville flow to the Lagrangian submanifolds and the asymptotic Hamiltonian chord. Requiring that the inhomogeneous pseudoholomorphic triangle converges to $\phi^{2}\gamma'$ as a chord from $\phi^{2}L$ to itself, we obtain the moduli space $\mathcal{M}(\gamma'; \gamma_{min}, \gamma)$ of inhomogeneous pseudoholomorphic triangles. In order for this moduli space to be of dimension zero, we must require that $\deg(\gamma) = \deg(\gamma')$. \par
	Then the key point is that in our construnction of $e_{L}$, we have an inhomogeneous pseudoholomorphic map $D^{2} \setminus \{-1\} \to M$ which converges to $\gamma_{min}$ over the negative strip-like end near the puncture $-1$. We may glue this to the triangle at the first positive puncture, and obtain a disk with one positive puncture and one negative puncture. The fact that the marked point $1$ on $D^{2} \setminus \{-1\}$ is mapped to the fundamental chain of $L$ shows that there is no constraint on that marked point, which means after gluing we obtain an inhomogeneous pseudoholomorphic strip with moving boundary and domain-dependent Floer datum, connecting $\gamma$ and $\phi^{2}\gamma'$. Therefore, we see that the moduli space $\mathcal{M}(\gamma'; \gamma_{min}, \gamma) \times \mathcal{M}_{PSS}(\gamma_{0}, [L])$ is cobordant to the moduli space defining the continuation map with a perturbed family of Hamiltonians converging to the same $H$ over both strip-like ends. \par
	If $\gamma, \gamma'$ correspond Reeb chords on level hypersurfaces, then they must be on the same level because they have the same Conley-Zehnder index while being connected by a continuation strip, as all such chords are isolated. This forces them to have the same action, because the Hamiltonian $H$ depends only on the radial coordinate $r$ and the primitive $f_{L}$ for $L$ is locally constant over the cylindrical end. By the action-energy equality \eqref{action-energy equality for multiplication structure} applied to the inhomogeneous pseudoholomorphic triangle, we see that there is not enough energy for $\gamma$ to be different from $\gamma'$ for there to exist a non-constant continuation strip. If $\gamma, \gamma'$ are Hamiltonian chords in the interior, then the standard homotopy method for continuation map also shows that $[\gamma] = [\gamma'] \in HW^{*}(L, L)$.
\end{proof}

	A completely parallel argument shows that $e_{L}$ is also a right cohomological unit. In fact, the same proof implies that this element $e_{L}$ is a cohomological unit for the triangle products
\begin{equation}
m^{2}: HW^{*}(L, L) \otimes HW^{*}(L, L') \to HW^{*}(L, L')
\end{equation}
and
\begin{equation}
m^{2}: HW^{*}(L', L) \otimes HW^{*}(L, L) \to HW^{*}(L', L)
\end{equation}
for any admissible $L'$. \par

\begin{remark}
	The description of the cohomological unit $e_{L}$ does not say that it is nonzero, and the PSS homomorphism does not have to be injective or surjective. There are examples where $HW^{*}(L, L)$ vanishes, so in particular $e_{L} = 0$. There could be differentials coming from degree $-1$ generators in $CW^{*}(L, L)$ killing the representative of $e_{L}$, while in simplicial homology there are no simplicial chains of negative dimension.
\end{remark}

\subsection{Admissible Lagrangian submanifolds in the product}\label{section: wrapped Floer theory in the product}
	In some sense, the main concern of this paper is with the wrapped Floer cohomology in the product Liouville manifold $M \times N$. The fact that the Lagrangian submanifolds in consideration are in general non-compact brings up a lot of complication and difficulty. For this reason we restrict to specific classes of Lagrangian submanifolds in the product $M \times N$. \par
	
\begin{definition}\label{def: admissible Lagrangian submanifolds in the product manifold}
	A Lagrangian submanifold $\mathcal{L} \subset M \times N$ is called admissible for wrapped Floer thoery, if it is one of the following kind:
\begin{enumerate}[label=(\roman*)]

\item either a conical Lagrangian submanifold $\mathcal{L}$ with respect to the natural cylindrical end $\Sigma \times [1, +\infty)$;

\item or a product $L \times L'$ of conical Lagrangian submanifolds $L \subset M, L' \subset N$.

\end{enumerate}
\end{definition}

	We will later allow wider classes of Lagrangian submanifolds when studying geometric compositions of Lagrangian correspondences, but focus on the above classes throughout this section. \par
	In this subsection, we discuss the first class of Lagrangian submanifolds. Obviously, the wrapped Floer cohomology $HW^{*}(\mathcal{L}_{0}, \mathcal{L}_{1}; K, J)$ for a pair of conical Lagrangian submanifolds with respect to the quadratic Hamiltonian $K$ and the contact-type almost complex structure is well-defined. Thus the only thing to be discussed is the wrapped Floer cohomology of $(\mathcal{L}_{0}, \mathcal{L}_{1})$ with respect to the split Hamiltonian $H_{M, N}$ and the product almost complex structure $J_{M, N}$. This would require the study of the structure of the moduli space of inhomogeneous pseudoholomorphic strips as usual. While transversality is not a problem for us because of the introduction of time-dependent perturbations (in other words one-parameter families) of the almost complex structures, compactness is not a priori guaranteed. We need to prove:
\begin{enumerate}[label=(\roman*)]

\item For fixed $H_{M, N}$-chords $\gamma_{0}, \gamma_{1}$ from $\mathcal{L}_{0}$ to $\mathcal{L}_{1}$, the bordified moduli space $\bar{\mathcal{M}}(\gamma_{0}, \gamma_{1})$ of inhomogeneous pseudoholomorphic strips connecting $\gamma_{0}$ to $\gamma_{1}$ is compact;

\item For fixed $\gamma_{1}$, $\bar{\mathcal{M}}(\gamma_{0}, \gamma_{1})$ is non-empty for all but finitely many $\gamma_{0}$'s.

\end{enumerate} \par

	The first follows from the standard argument based upon the action-energy equality \eqref{action-energy equality for strips} since the Lagrangians $\mathcal{L}_{0}, \mathcal{L}_{1}$ are exact. \par
	The second requires further detailed estimate on the energy of inhomogeneous pseudoholomorphic strips. Consider any $H_{M, N}$-chord $\gamma$ from $\mathcal{L}_{0}$ to $\mathcal{L}_{1}$. Writing $\gamma = (x, y)$ its components in $M$ and $N$, we deduce that $x$ satisfies the Hamilton's equation for the Hamiltonian vector field $X_{H_{M}}$, while $y$ satisfies the Hamilton's equation for the Hamiltonian vector field $X_{H_{N}}$. We may then compute its action to be
\begin{equation}
\mathcal{A}_{H_{M, N}, \mathcal{L}_{0}, \mathcal{L}_{1}}(\gamma) = \int_{0}^{1} - x^{*}\lambda_{M} - y^{*}\lambda_{N} + H_{M}(x(t))dt + H_{N}(y(t))dt + f_{\mathcal{L}_{1}}(\gamma(1)) - f_{\mathcal{L}_{0}}(\gamma(0)).
\end{equation}
Let us estimate the action in the following four cases:
\begin{enumerate}[label=(\roman*)]

\item $x$ lies in the interior part (the compact domain) of $M$, and $y$ lies in the interior part of $N$. By choosing the Hamiltonian $H_{M}$ (and $H_{N}$ respectively) to be $C^{2}$-small in the interior part of $M$ (and respectively $N$), we can ensure that
\begin{equation}
|\mathcal{A}_{H_{M, N}, \mathcal{L}_{0}, \mathcal{L}_{1}}(\gamma)| \le C,
\end{equation}
for some small constant $C$ depending only on the geometry of the Liouville manifolds $M, N$, the Lagrangian submanifolds, and the chosen Hamiltonians $H_{M}, H_{N}$.

\item $x$ lies in the cylindrical end $\partial M \times [1, +\infty)$ of $M$, while $y$ lies in the interior part of $N$. Since $x$ satisfies the Hamilton's equation for $X_{H_{M}}$, it must lie on some hypersurface $\partial M \times \{R_{1}\}$ for some $R_{1}$. The two integral terms involving $x$ contribute $-R_{1}^{2}$, while the rest two terms satisfy
\begin{equation}
|\int_{0}^{1}-y^{*}\lambda_{N} + H_{N}(y(t))dt| \le C',
\end{equation}
for some universal constant $C'$. Suppose $R_{1}$ is large enough (slightly bigger than $1$) so that $\partial M \times \{R_{1}\} \times N$ is contained in $\Sigma \times [1, +\infty)$. Since the Lagrangian submanifolds $\mathcal{L}_{0}, \mathcal{L}_{1}$ are conical with respect to $\Sigma \times [1, +\infty)$, the primitives $f_{\mathcal{L}_{0}}, f_{\mathcal{L}_{1}}$ are locally constant there. Thus
\begin{equation}
f_{\mathcal{L}_{1}}(\gamma(1)) - f_{\mathcal{L}_{0}}(\gamma(0)) = D
\end{equation}
for some constant $D$ independent of $\gamma$ and depending only on the chosen primitives.

\item $x$ lies in the interior part of $M$, while $y$ lies in the cylindrical end $\partial N \times [1, +\infty)$ of $N$. For the same reason, $y$ lies on some hypersurface $\partial N \times \{R_{2}\}$. This time, the two integral terms involving $y$ contribute $-R_{2}^{2}$, while the rest two terms satisfy
\begin{equation}
|\int_{0}^{1}-x^{*}\lambda_{M} + H_{M}(x(t))dt| \le C'.
\end{equation}
Again, the primitives contribute a universal constant $D$.

\item $x$ lies in $\partial M \times [1, +\infty)$, and $y$ lies in $\partial N \times [1, +\infty)$. Thus $x$ is located on $\partial M \times \{R_{1}\}$ for some $R_{1}$, and $y$ is located on $\partial N \times \{R_{2}\}$ for some $R_{2}$. The four integral terms contribute $-R_{1}^{2} - R_{2}^{2}$, while the primitives contribute the same constant $D$.

\end{enumerate}

	The crucial observation is that, as either $R_{1}$ or $R_{2}$ (or both) increases, the action eventually decreases, and goes to $-\infty$ as either one of $R_{1}, R_{2} \to +\infty$. \par
	Now we are ready to prove the second compactness result. Suppose on the contrary for fixed $\gamma_{1}$ there are infinitely many $\gamma_{0}$'s for which $\bar{\mathcal{M}}(\gamma_{0}, \gamma_{1})$ is non-empty. Since the Hamiltonians $H_{M}, H_{N}$ are generic and non-degenerate, and the Reeb dynamics are assumed to be non-degenerate on both contact manifolds $\partial M$ and $\partial N$, there must be one (and in fact infinitely many) such $\gamma_{0} = (x_{0}, y_{0})$ so that either $x_{0}$ or $y_{0}$ lies in the corresponding cylindrical end. Let us assume that is the case with $x_{0}$, which lies on $\partial M \times \{R_{1}\}$ for a large $R_{1}$. Then the action $\mathcal{A}_{H_{M, N}, \mathcal{L}_{0}, \mathcal{L}_{1}}(\gamma_{0})$ satisfies:
\begin{equation}
-R_{1}^{2} - C' + D \le \mathcal{A}_{H_{M, N}, \mathcal{L}_{0}, \mathcal{L}_{1}}(\gamma_{0}) \le -R_{1}^{2} + C' + D,
\end{equation}
which is sufficiently negative as long as $R_{1}$ is sufficiently large. Then the action-energy equality \eqref{action-energy equality for strips} applied to this case implies any $u \in \bar{\mathcal{M}}(\gamma_{0}, \gamma_{1})$ has negative energy, which is of course impossible. This contradiction implies there can be only finitely many such $\gamma_{0}$'s. \par

\subsection{Products of Lagrangian submanifolds} \label{Wrapped Floer cohomology for product Lagrangian submanifolds}
	We now consider the wrapped Floer cohomology of product Lagrangian submanifolds $L \times L'$ in $M \times N$. Wrapped Floer cohomology for product Lagrangian submanifolds $L \times L'$ in $M \times N$ with respect to the split Hamiltonian $H_{M, N}$ and the product almost complex structure $J_{M, N}$ is easily defined as it amounts to wrapped Floer cohomology of the factors. \par
	Thus the major task is to define the wrapped Floer cohomology of product Lagrangian submanifolds with respect to the quadratic Hamiltonian $K$ and a contact-type almost complex structure $J$. Given conical Lagrangian submanifolds $L \subset M$ and $L' \subset N$, their product $L \times L'$ is still an exact Lagrangian submanifold of $M \times N$. However, regarding the more refined structure, it is not conical with respect to the contact hypersurface $\Sigma$ on the nose. \par
	In order to prove that the wrapped Floer cohomology is well-defined in this setting, we dig back into the usual setup, and provide more refined argument. While transversality is not a problem, the compactness results for the relevant moduli spaces do not follow from the previous argument, as the product Lagrangian submanifold $L \times L'$ is not conical. This is the main difficulty we have to overcome. \par
	There are two possible approaches to resolve this issue. The first one is to observe the existence of a canonical Hamiltonian deformation making the product Lagrangian $L \times L'$ conical. However the behavior at infinity would require further analysis which we will not attempt. The second one is to directly prove compactness results for the moduli space of inhomogeneous pseudoholomorphic strips/disks as before, without perturbing the Lagrangian boundary conditions in a Hamiltonian way. This is the approach we shall take in this paper. \par
	For simplicity, consider the moduli space $\mathcal{M}(\gamma_{0}, \gamma_{1})$ of inhomogeneous pseudoholomorphic strips connecting time-one $K$-chords $\gamma_{0}$ and $\gamma_{1}$ from $L \times L'$ to itself, and $\bar{\mathcal{M}}(\gamma_{0}, \gamma_{1})$ its bordification. Assume the Hamiltonian $K$ on the product $M \times N$ is generic. The desired compactness results necessary for the wrapped Floer cohomology to be defined are the following. \par

\begin{proposition}
\begin{enumerate}[label=(\roman*)]

\item For fixed $\gamma_{0}$ and $\gamma_{1}$, the moduli space $\bar{\mathcal{M}}(\gamma_{0}, \gamma_{1})$ is compact;

\item For fixed $\gamma_{1}$, then $\bar{\mathcal{M}}(\gamma_{0}, \gamma_{1})$ is empty for all but finitely many $\gamma_{0}$'s.

\end{enumerate}
\end{proposition}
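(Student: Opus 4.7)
The plan is to carry out the action-energy argument of section \ref{section: wrapped Floer theory in the product}, adapted to the product Lagrangian $L \times L'$ and the quadratic Hamiltonian $K$ on the natural cylindrical end $\Sigma \times [1,+\infty)$. Two ingredients are needed: an action estimate for $K$-chords showing $\mathcal{A}(\gamma) \to -\infty$ with height, and an a priori $C^{0}$-bound on Floer strips via an integrated maximum principle.

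First I would establish the action estimate. On the level hypersurface $\{r=R\}$ of $\Sigma \times [1,+\infty)$ we have $K = R^{2}$ and $X_{K} = 2R \cdot R_{\Sigma}$, so a time-one $K$-chord $\gamma$ at height $R$ gives $-\int \gamma^{*}\lambda + \int K\,dt = -R^{2}$. The primitive term $f_{L\times L'}(\gamma(1)) - f_{L\times L'}(\gamma(0))$ is uniformly bounded in $R$: since $\partial L$ and $\partial L'$ are Legendrian, one can choose $f_{L}, f_{L'}$ locally constant on the respective conical ends, so $f_{L \times L'} = f_{L} + f_{L'}$ is a bounded function on all of $L \times L'$ with $|f_{L\times L'}| \le D := \sup|f_{L}| + \sup|f_{L'}|$. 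Thus $\mathcal{A}_{K}(\gamma) \le -R^{2} + 2D$. Part (ii) then follows in the same manner as in section \ref{section: wrapped Floer theory in the product}: for fixed $\gamma_{1}$ the action-energy equality \eqref{action-energy equality for strips} forces $\mathcal{A}(\gamma_{0}) \ge \mathcal{A}(\gamma_{1})$ whenever $\bar{\mathcal{M}}(\gamma_{0},\gamma_{1}) \ne \emptyset$, bounding the height of $\gamma_{0}$; the nondegeneracy assumption on $K$-chords then leaves only finitely many candidates.

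For part (i) the same equality supplies a uniform energy bound, so it suffices to rule out escape of Floer strips to infinity. I would apply an integrated maximum principle to $\rho = r\circ u$. On $S_{R} := u^{-1}(\{\rho \ge R\})$, the contact-type $J$ and quadratic form of $K$ give $\Delta \rho \ge 0$ in the interior; Stokes then splits the boundary integral into a nonnegative contribution from the level curves $u^{-1}(\{\rho=R\})$ and a Lagrangian contribution along $u^{-1}(L\times L')\cap S_{R}$. The latter contribution is a signed sum of values of the primitive $f_{L\times L'}$ at the endpoints of boundary arcs, hence is bounded in absolute value by $2D$ independently of $R$. Combining these yields an a priori bound $\sup\rho \le R_{\max}$ depending only on $\mathcal{A}(\gamma_{0}) - \mathcal{A}(\gamma_{1})$ and $D$; standard Gromov compactness in the compact region $\{\rho \le R_{\max}\}$ then completes part (i).

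The main obstacle is handling the Lagrangian boundary condition in the maximum principle: the pointwise Neumann-type condition used in the conical case fails here because $L \times L'$ is not invariant under the joint Liouville flow on $\Sigma \times [1,+\infty)$ outside the piece $U_{1}$ where both factors sit in their cylindrical ends simultaneously. The rescue is the uniform boundedness of $f_{L\times L'}$: choosing the individual primitives locally constant on the conical ends of $L$ and $L'$ promotes the exactness of $\lambda|_{L \times L'}$ to a globally bounded primitive, so the pointwise boundary defect becomes a uniformly bounded error term that the integrated form of the maximum principle can absorb.
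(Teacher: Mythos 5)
Your part~(ii) is essentially the paper's argument: a $K$-chord at level $R$ of the cylindrical end $\Sigma\times[1,+\infty)$ has $-\int\gamma^{*}\lambda + \int K\,dt = -R^{2}$, and because the extended primitives $f_{L}, f_{L'}$ are globally bounded the primitive contribution to the action is uniformly bounded, so the action tends to $-\infty$ with $R$; the action-energy equality then excludes all but finitely many $\gamma_{0}$'s. This is exactly what the paper does.

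For part~(i) you correctly flag a subtlety the paper glosses over. The paper derives a uniform energy bound for broken strips from the action-energy equality and then asserts Gromov compactness without addressing the $C^{0}$ estimate, and you are right that this is the nontrivial step, because $L\times L'$ is not conical for $\Sigma\times[1,+\infty)$: the joint Liouville flow does not preserve $L\times L'$ in the regions $U_{2}$ and $U_{3}$ where one factor lies in the compact interior, so the pointwise Neumann-type boundary condition behind the usual maximum principle fails there. Controlling the defect by the globally bounded primitive $f_{L\times L'}$ is the right mechanism, but the formulation as written has problems. First, with $S_{R} = u^{-1}(\{\rho\geq R\})$, the outward normal on the level curve $u^{-1}(\{\rho=R\})$ points into $\{\rho<R\}$, so $\partial_{\nu}\rho\leq 0$ there; the level-curve contribution to $\int_{\partial S_{R}}\partial_{\nu}\rho$ is nonpositive, not nonnegative as you state, and the direction of the inequality matters. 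Second, and more importantly, the primitive values do not appear in $\int_{\partial S_{R}}\partial_{\nu}\rho$; they appear only when one integrates $u^{*}\lambda$ along the Lagrangian boundary, so the correct bookkeeping is Stokes applied to $\int_{S_{R}}u^{*}\omega = \int_{\partial S_{R}} u^{*}\lambda$, together with $u^{*}\lambda = d(f_{L\times L'}\circ u)$ on the Lagrangian arcs and the contact-type sign on the level curve. That gives $\int_{S_{R}}u^{*}\omega \leq 2D$ uniformly for every $R$ above the asymptotic levels. But converting a uniform bound on the energy above level $R$ into an actual bound on $\sup\rho$ still requires a further quantitative step---an inverse isoperimetric or monotonicity estimate along the symplectization---which your outline omits. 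The paper omits it as well, so this is not a defect particular to your write-up; your approach is sound and fills in more than the paper does, but it stops one step short of fully closing the gap.
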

\begin{proof}
	To prove these, we can in fact use the same argument as before. That is, we appeal to the action-energy equality \ref{action-energy equality for strips} that expresses the energy $E(u)$ of a strip in terms of the action of the asymptotic Hamiltonian chords. The proof of (i) can be immediately finished: any inhomogeneous pseudoholomorphic strip connecting fixed $x_{0}, x_{1}$ has fixed amount of energy $E(u)$, and that continues to hold for broken strips. Thus Gromov compactness theorem applies. \par
	 The proof of (ii) requires more detailed a priori estimate on the energy. First recall the formula for computing the action of a Hamiltonian chord. Writing $\gamma = (x, y)$ in components of the map $[0, 1] \to M \times N$, that formula can be rewritten as
\begin{equation}
\mathcal{A}_{K, L \times L', L \times L'}(\gamma) = \int_{0}^{1}-\gamma^{*}(\lambda_{M} \times \lambda_{N}) + K(\gamma(t))dt + f_{L}(x(1)) - f_{L}(x(0)) + f_{L'}(y(1)) - f_{L'}(y(0)).
\end{equation}
Suppose $\gamma$ is an $K$-chord that lies in the cylindrical end $\Sigma \times [1, +\infty)$. Then the Hamilton's equation implies that it must lie on some hypersurface $\Sigma \times \{R\}$ for some $R \ge 1$. Thus the first two terms contribute $-R^{2}$. To estimate the last four terms, we recall that the choices of primitives $f_{L}, f_{L'}$ are made so that they are locally constant on the conical ends $\partial L \times [1, +\infty)$ and $\partial L' \times [1, +\infty)$ respectively. In addition, we have chosen extensions of these functions to $M$ and respectively $N$ so that they are uniformly bounded, say by a constant $C$ that depends only on the Liouville structures on $M, N$ and the Lagrangian submanifolds $L, L'$. Thus, we have that
\begin{equation}
|f_{L}(x(1)) - f_{L}(x(0)) + f_{L'}(y(1)) - f_{L'}(y(0))| \le 4C.
\end{equation}
Now if the $K$-chord $\gamma$ lies far away from the compact domain, i.e. if $R$ is sufficiently large, the action $\mathcal{A}_{K, L \times L'}(\gamma)$ is sufficiently negative. \par
	Now we can use this estimate to prove the second statement. Because of the genericity assumption on $K$, if there were infinitely many $\gamma_{0}$'s for which $\bar{\mathcal{M}}(\gamma_{0}, \gamma_{1})$ is non-empty, then there must be such a $\gamma_{0}$ which lie on some hypersurface $\Sigma \times \{R\}$ for a sufficiently large $R$. Since we have fixed $\gamma_{1}$, its action is fixed, say $D$. Thus for any broken inhomogeneous pseudoholomorphic strip $u$ connecting $\gamma_{0}$ to $\gamma_{1}$, its energy satisfies
\begin{equation}
E(u) \le -R^{2} + 4C + D,
\end{equation}
which is negative if $R \gg 0$. This is a contradiction and thus the proof is complete. \par
\end{proof}

	With the desired compactness results proved, we may now conclude that the wrapped Floer differential of $L \times L'$ with respect to $K$ (and an admissible almost complex structure) is well-defined in the usual sense. The same argument can be applied to show that the wrapped Floer cohomology of a pair $(L_{1} \times L'_{1}, L_{2} \times L'_{2})$ is well-defined, with respect to the data $(K, J_{t})$. \par
	Finally, the remaining part of the story in the product manifold is to define wrapped Floer cohomology for a pair $(L \times L', \mathcal{L})$ where $\mathcal{L}$ is conical with respect to the cylindrical end $\Sigma \times [1, +\infty)$. As in the previous cases, we shall prove this by providing a priori estimate on the energy of an inhomogeneous pseudoholomorphic strip $u$ connecting a pair of $K$-chords $\gamma_{0}, \gamma_{1}$ from $L \times L'$ to $\mathcal{L}$. Note the action of any such a $K$-chord $\gamma = (x, y)$ is
\begin{equation}
\mathcal{A}_{K, L \times L', \mathcal{L}}(\gamma) = \int_{0}^{1}-\gamma^{*}(\lambda_{M} \times \lambda_{N}) + K(\gamma(t))dt + f_{\mathcal{L}}(\gamma(1)) - f_{L}(x(0)) - f_{L'}(y(0)).
\end{equation}
Again, the last three terms are uniformly bounded. For a $K$-chord lying on some hypersurface $\Sigma \times [1, +\infty)$, the first two terms contribute $-R^{2}$. This is good enough for the energy estimate in order to prove the desired compactness statements. This proves $HW^{*}(L \times L', \mathcal{L}; K, J_{t})$ is well-defined. The same argument also works for $HW^{*}(\mathcal{L}, L \times L'; K, J_{t})$. \par

\subsection{Invariance of wrapped Floer cohomology in the product}
	Having said a lot about the definitions of various Lagrangian submanifolds in the product Liouville manifold $M \times N$, in particular defined two versions of wrapped Floer cohomology, one with respect to $(H_{M, N}, J_{M, N})$ and the other with respect to $(K, J)$,  we question are isomorphic. The main result of this section is to remove this anbiguity and give affirmative answer on the level of cohomology: the split Hamiltonian defines isomorphic wrapped Floer cohomology to the one defined by admissible Hamiltonian. \par

	The proof of this theorem is rather technical and is deferred to section \ref{section: technical proofs} We call the above cochain map the action-restriction map. We defer the construction and the proof to the last section of the paper in which we provide necessary estimates involved. Benefiting from this theorem, we can freely talk about wrapped Floer cohomology of Lagrangian submanifolds in the product Liouville manifolds, thinking of the Hamiltonian function being either a split one or an admissible one. In other words, it makes the definition of wrapped Floer cohomology unambiguous when considering products of Liouville manifolds. This point is extremely helpful in the study of the quilted version of wrapped Floer cohomology, the construction of functors associated to Lagrangian correspondences, and the isomorphism of wrapped Floer cohomology under geometric compositions of Lagrangian correspondences, as we will discuss in upcoming sections. \par
	According to the discussion, as an immediate corollary of Theorem \ref{invariance for wrapped Floer cohomology in the product} we have the following K\"{u}nneth-type formula for product Lagrangian submanifolds. \par

\begin{corollary}
Let $L_{0}, L_{1} \subset M$ and $L'_{0}, L'_{1} \subset N$ be admissible Lagrangian submanifolds. Then there is a canonical isomorphism
\begin{equation}
HW^{*}(L_{0} \times L_{1}, L'_{0} \times L'_{1}; K, J) \cong HW^{*}(L_{0}, L_{1}; H_{M}, J_{M}) \otimes HW^{*}(L'_{0}, L'_{1}; H_{N}, J_{N})
\end{equation}
\end{corollary}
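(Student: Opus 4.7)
The plan is to combine the invariance theorem (Theorem \ref{invariance for wrapped Floer cohomology in the product}) with an explicit product decomposition of the Floer complex for split data, followed by algebraic K\"unneth. Concretely, I would first invoke Theorem \ref{invariance for wrapped Floer cohomology in the product} to replace the data $(K, J)$ by the split data $(H_{M,N}, J_{M,N})$ up to canonical isomorphism on cohomology, thereby reducing the problem to comparing the split-data complex on $M \times N$ with the tensor product of the factor complexes.

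Next, with the split data in hand, I would establish a tautological splitting at the chord and trajectory level. A time-one chord of $X_{H_{M,N}} = X_{H_M} \oplus X_{H_N}$ from $L_0 \times L'_0$ to $L_1 \times L'_1$ is precisely an ordered pair $(\gamma, \gamma')$ with $\gamma \in \mathcal{X}(L_0, L_1; H_M)$ and $\gamma' \in \mathcal{X}(L'_0, L'_1; H_N)$, and since the almost complex structure $J_{M,N} = J_M \times J_N$ preserves both distributions $TM$ and $TN$, any solution $u : Z \to M \times N$ of Floer's equation for $(H_{M,N}, J_{M,N})$ splits as $u = (u_M, u_N)$ with each component solving the corresponding Floer equation in its factor. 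The linearized operator is block-diagonal, so the index additivity $\deg(\gamma, \gamma') - \deg(\gamma_0, \gamma'_0) = (\deg\gamma - \deg\gamma_0) + (\deg\gamma' - \deg\gamma'_0)$ is automatic, and the canonical product orientation on Lagrangians, together with the standard sign conventions for product complexes, upgrades this to a sign-coherent identification
\begin{equation*}
CW^{*}(L_0 \times L'_0, L_1 \times L'_1; H_{M,N}, J_{M,N}) \cong CW^{*}(L_0, L_1; H_M, J_M) \otimes CW^{*}(L'_0, L'_1; H_N, J_N),
\end{equation*}
with the differential on the right being the tensor product differential. Taking cohomology and applying the algebraic K\"unneth theorem (using that each factor complex is free over $\mathbb{Z}$, with torsion terms vanishing over a field of coefficients) then yields the claimed canonical isomorphism.

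The main obstacle is transversality, since a strict product almost complex structure is rarely regular for the product Floer equation. To bypass this, I would use that the splitting $u = (u_M, u_N)$ reduces regularity of the linearization $D_u$ to the separate regularity of the two factor linearizations $D_{u_M}$ and $D_{u_N}$: choosing the one-parameter families $J_M, J_N$ generically as permitted in Lemma \ref{transversality for strips} makes each factor operator surjective, whence $D_u = D_{u_M} \oplus D_{u_N}$ is surjective as well. Compactness of the product moduli spaces, together with the finiteness result (there are only finitely many pairs $(\gamma_0, \gamma'_0)$ of action above a fixed threshold), follows directly from the factor-wise statements of Lemma \ref{regularity + compactness} and Lemma \ref{compactness 2}. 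Thus the split data, although not admissible in the sense of the cylindrical end $\Sigma \times [1, +\infty)$, nonetheless defines a genuine Floer complex whose chain-level isomorphism with the tensor product is manifest, and the invariance theorem takes care of comparing it to $CW^{*}(\ldots; K, J)$.
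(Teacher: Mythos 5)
Your proposal is correct and follows essentially the same route as the paper: invoke Theorem \ref{invariance for wrapped Floer cohomology in the product} to pass between admissible data $(K,J)$ and split data $(H_{M,N}, J_{M,N})$, identify the split-data complex with the tensor product of the factor complexes via the factorization of chords, strip solutions, and block-diagonal linearized operators, and conclude with algebraic K\"unneth. The paper makes one point more explicit than you do---namely that the zero-dimensional components of the parametrized strip moduli $\tilde{\mathcal{M}}$ consist of constant (translation-invariant) solutions, which is exactly what forces the split-data differential to coincide with the tensor-product differential---but this is the standard dimension-counting step implicit in your sign-coherent identification, so your argument is essentially complete.
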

\begin{proof}
	Note that a time-one chord from $L_{0} \times L_{1}$ to $L'_{0} \times L'_{1}$ corresponds to a pair of chords $(\gamma, \gamma')$, where $\gamma$ is an $H_{M}$ chord from $L_{0}$ to $L_{1}$, and $\gamma'$ is an $H_{N}$-chord from $L'_{0}$ to $L'_{1}$, we get an obvious identification of graded modules
\begin{equation}
CW^{*}(L_{0} \times L_{1}, L'_{0} \times L'_{1}; H_{M, N}, J_{M, N}) = CW^{*}(L_{0}, L_{1}; H_{M}, J_{M}) \otimes CW^{*}(L'_{0}, L'_{1}; H_{N}, J_{N}).
\end{equation} \par
	We claim this is in fact an isomorphism of cochain complexes, taking the differential on $CW^{*}(L_{0} \times L_{1}, L'_{0} \times L'_{1}; H_{M, N}, J_{M, N})$ to the tensor product differential $m^{1}_{L_{0}, L_{1}} \otimes 1 + 1 \otimes m^{1}_{L'_{0}, L'_{1}}$ on the right hand side. To see this, we examine the Floer's equation on $M \times N$, and find it is equivalent to a pair of Floer's equations on $M$ and $N$ respectively, precisely because the Hamiltonian and the almost complex structure are of split type. Therefore we get a canonical one-to-one correspondence of solutions to Floer's equations, thus a canonical identification of parametrized moduli spaces of inhomogeneous pseudoholomorphic strips:
\begin{equation}
\tilde{\mathcal{M}}((\gamma_{0}, \gamma'_{0}), (\gamma_{1}, \gamma'_{1}) = \tilde{\mathcal{M}}(\gamma_{0}, \gamma_{1}) \times \tilde{\mathcal{M}}(\gamma'_{0}, \gamma'_{1}).
\end{equation}
A degree computation shows that one-dimensional component of $\tilde{\mathcal{M}}((\gamma_{0}, \gamma'_{0}), (\gamma_{1}, \gamma'_{1})$ corresponds to the union of the following two kinds of product moduli spaces:
\begin{enumerate}[label = (\roman*)]
\item the one-dimensional component of $\tilde{\mathcal{M}}(\gamma_{0}, \gamma_{1})$ times the zero-dimensional component of $\tilde{\mathcal{M}}(\gamma'_{0}, \gamma'_{1})$;

\item the zero-dimensional component of $\tilde{\mathcal{M}}(\gamma_{0}, \gamma_{1})$ times the one-dimensional component of $\tilde{\mathcal{M}}(\gamma'_{0}, \gamma'_{1})$.
\end{enumerate}
But zero-dimensional moduli spaces of solutions to Floer's equation consist of constant maps, thereby giving rise to identity maps. This proves the claim, and therefore proves the isomorphism
\begin{equation*}
HW^{*}(L_{0} \times L_{1}, L'_{0} \times L'_{1}; H_{M, N}, J_{M, N}) \cong HW^{*}(L_{0}, L_{1}; H_{M}, J_{M}) \otimes HW^{*}(L'_{0}, L'_{1}; H_{N}, J_{N}).
\end{equation*}
The rest follows from Theorem \ref{invariance for wrapped Floer cohomology in the product}.
\end{proof}

	In addition to the cohomology groups, we also the multiplicative structure on wrapped Floer cohomology. The main result is Theorem \ref{preserving multiplicative structure}, which in words states that the multiplicative structures defined using the split Hamiltonian and the admissible Hamiltonian coincide on the level of cohomology, after we identify the two wrapped Floer cohomology groups as in Theorem \ref{invariance for wrapped Floer cohomology in the product}. \par
	The proof involves more careful arrangement of the argument for Theorem \ref{invariance for wrapped Floer cohomology in the product}, which is also given in the last section. See particularly section \ref{section: proof of preserving multiplicative structure}. \par

%wrapped Floer cohomology for Lagrangian correspondences%
\section{Wrapped Floer cohomology for Lagrangian correspondences}\label{wrapped Floer theory for Lagrangian correspondences}

\subsection{Quilted surfaces}
	Quilted surfaces naturally appear as basic geometric objects for generalizing Floer cohomology groups to the case of Lagrangian correspondences, as studied extensively in \cite{Wehrheim-Woodward1}, \cite{Wehrheim-Woodward2}, \cite{Wehrheim-Woodward3}, \cite{Wehrheim-Woodward4} in the case of compact monotone Lagrangian correspondences in compact monotone symplectic manifolds, or compact exact Lagrangian correspondences in the interior of compact exact symplectic manifolds with boundary and corners. Roughly speaking, a quilted surface is a collection of surfaces (called patches) seamed together at certain pairs of boundary components of the patches. We review the formal definition here. \par

\begin{definition}
	A quilted surface $\underline{S}$ of genus zero is a collection of bordered Riemann surfaces with genus zero $\{S_i\}_{i=1, \cdots, m}$ with boundary punctures, satisfying the following conditions:

\begin{enumerate}[label = (\alph*)]

\item Every surface $S_{i}$ in the collection, which we call a patch of $\underline{S}$, comes with chosen strip-like ends $\epsilon_{i, j}$ near the boundary punctures $z_{i, j}$, where $j \in P(S_i)$ labels the boundary punctures of $S_i$ in cyclic order. Let $I_{i, j}$ be the non-compact boundary component of $S_i$ between $z_{i, j-1}$ and $z_{i, j}$.

\item There is a collection $\{((i_{\sigma}, I_{\sigma}), ((i'_{\sigma}, I'_{\sigma}))\}_{\sigma \in \mathcal{S}}$ of seams (which we will also denote by $\mathcal{S}$ by abuse of notation), whose elements are pairs of elements in $\cup_{i=1}^{m} \{i\} \times \pi_{0}(\partial S_{i})$ such that for each $\sigma \in \mathcal{S}$, there is a diffeomorphism of boundary components:
$$\phi_{\sigma}: \partial S_{i_{\sigma}} \supset I_{\sigma} \to I'_{\sigma} \subset \partial S_{i'_{\sigma}},$$
which is real analytic and compatible with the chosen strip-like ends. Here real analyticity means that at each point $z \in I_{\sigma}$, there exists a neighborhood $U$ of $z$ in $S_{\sigma}$ such that $\phi_{\sigma}|_{U \cap I_{\sigma}}$ extends to an embedding of $U$ into $S_{i'\sigma}$ which reverses the complex structures. Compatibility with the strip-like ends means that if $I_{\sigma}$ lies in between two punctures (hence does $I'_{\sigma}$), namely $I_{\sigma} = I_{i_{\sigma}, j_{\sigma}}, I'_{\sigma} = I_{i'_{\sigma}, j'_{\sigma}}$, then $\phi_{\sigma}$ has to match up the end $\epsilon_{i_{\sigma}, j_{\sigma}}$ with the end $\epsilon_{i'_{\sigma}, j'_{\sigma}-1}$ and the end $\epsilon_{i_{\sigma}, j_{\sigma}-1}$ with the end $\epsilon_{i'_{\sigma}, j'_{\sigma}}$.

\end{enumerate}

\end{definition}

	We can label by $\mathcal{B}$ the remaining boundary components, which are not identified in the above process and are called true boundary components, as below:
$$\{(i_{b}, I_{b})\}_{b \in \mathcal{B}} = \cup_{i=1}^{m} \{i\} \times \pi_{0}(\partial S_{i}) \setminus \mathcal{S}.$$
We will again denote this set by $\mathcal{B}$ by abuse of notation. The quilted surface $\underline{S}$ has two kinds of ends: strip-like ends near punctures between true boundary components, and quilted ends. By definition, a quilted end is a maximal sequence $\underline{\epsilon} = (\epsilon_{i_{k}, j_{k}})_{k=1, \cdots, n_{\underline{\epsilon}}}$ of ends of patches with boundaries identified $\epsilon_{i_{k}, j_{k}}(\cdot, \delta_{i_{k}, j_{k}}) \cong \epsilon_{i_{k+1}, j_{k+1}}(\cdot, 0)$ via some seam. We call this quilted end a positive quilted end if all strip-like ends in this sequence are positive ends, otherwise a negative one (all strip-like ends in a quilted end have to be either all postive or all negative, by definition of seaming). \par
	There are also two different kinds of quilted ends. One is cyclic, meaning that the upper boundary $I_{i_{1}, j_{1}-1}$ of the first strip-like end $\epsilon_{i_{1}, j_{1}-1}$ is seamed together with the lower boundary $I_{i_{m}, j_{m}}$ of the last strip-like end $\epsilon_{i_{m}, j_{m}}$. The other is non-cyclic, meaning that these two boundaries are not seamed. \par
	Wehrheim and Woodward proved in \cite{Wehrheim-Woodward3} that any two quilted surfaces of the same combinatorial type are smoothly homotopic. Namely,

\begin{lemma}
Let $\underline{S}_{0}$ and $\underline{S}_{1}$ be two quilted surfaces of the same combinatorial type. Then there exists a smooth family $(\underline{S}_{t})_{t \in [0, 1]}$ of quilted surfaces connecting the two. Here smoothness means that the complex structures on patches, the strip-like ends, and the seaming maps depend smoothly on $t$.
\end{lemma}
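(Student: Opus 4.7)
The plan is to show that the space $\mathcal{Q}(\tau)$ of quilted surfaces of a fixed combinatorial type $\tau$ is path-connected, by decomposing the data defining a quilted surface into three layers -- complex structures on patches, strip-like ends, and seaming maps -- and interpolating each layer in turn, propagating compatibility as we go.

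First, for each patch $S_i$, the underlying topological type and cyclic ordering of punctures are fixed by $\tau$. Thus the complex structures on $S_i$ in $\underline{S}_0$ and $\underline{S}_1$ correspond to two points in the moduli space of bordered Riemann surfaces of genus zero with the prescribed number of boundary punctures. Since this moduli space is path-connected (in fact contractible for the cases we consider, e.g.\ disks with $\ge 3$ punctures, by the standard $PSL(2,\mathbb{R})$ uniformization), I would choose a smooth family of complex structures $j_{i,t}$ on $S_i$ interpolating between the two. Doing this for each patch gives the first layer $(\{S_i, j_{i,t}\})_{t \in [0,1]}$.

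Next, once the complex structures are fixed for each $t$, I would interpolate the strip-like ends. For a fixed Riemann surface with a boundary puncture, the space of strip-like ends (germs of holomorphic embeddings of the half-strip converging to the puncture with the standard asymptotic) is contractible -- two such ends differ by a real parameter determined by the rate of approach, and any two are connected by a canonical path. By choosing such paths simultaneously and smoothly in $t$ (possible since the space of complex structures already depends smoothly on $t$), I obtain smooth families $\epsilon_{i,j,t}$. This yields the second layer.

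Finally, I would interpolate the seaming maps. For each seam $\sigma$, the maps $\phi_{\sigma,0}$ and $\phi_{\sigma,1}$ are real analytic diffeomorphisms between boundary components, now required to be compatible with the already interpolated strip-like ends on both sides. The key observation is that, near the strip-like ends, compatibility forces the seaming map to agree with a standard model on a neighborhood of each puncture, so the only genuine freedom lives in a compact piece of boundary. The space of real analytic diffeomorphisms between two compact boundary arcs that agree with prescribed data on neighborhoods of the endpoints is contractible, so I can smoothly interpolate $\phi_{\sigma,t}$. This gives the third layer, and altogether a smooth family $\underline{S}_t$ of quilted surfaces.

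The main obstacle is the last step: while smooth interpolations of diffeomorphisms are standard, here I must insist on real analyticity of each $\phi_{\sigma,t}$, and at the same time maintain smooth dependence of the family on $t$. I would address this by working with a collar neighborhood of each seam and choosing the interpolation via real analytic flows, for example by pulling back a smooth family of vector fields on the ambient surface through the holomorphic doubling; this preserves real analyticity of each time slice while yielding smooth $t$-dependence. Once all three layers are assembled, the compatibility of seaming maps with strip-like ends is automatic by construction, and the resulting family $(\underline{S}_t)_{t \in [0,1]}$ satisfies the claimed smoothness.
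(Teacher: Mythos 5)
This lemma is not proved in the paper at all: the text explicitly attributes it to Wehrheim and Woodward and cites their work, so there is no in-paper argument to compare against. Your three-layer decomposition (complex structures on patches, strip-like ends, seaming maps) and the underlying strategy of showing each layer of data lives in a contractible space and interpolating sequentially is the natural approach and is in the spirit of the Wehrheim--Woodward argument.

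That said, two steps need more care than you give them. First, the claim that two strip-like ends at a puncture ``differ by a real parameter'' is not quite right; the space of proper holomorphic embeddings of the half-strip with the prescribed asymptotics is infinite-dimensional. What matters, and what you correctly invoke, is that this space is contractible (it is affine once one subtracts a reference end). Second, and more substantively, the interpolation of real analytic seaming maps is the genuine subtlety, and your sketch via ``real analytic flows through the holomorphic double'' is too vague to carry the argument: flowing along a smoothly $t$-dependent family of vector fields does not obviously keep each time slice real analytic in the spatial variable, and neither existence nor the required boundary compatibility of such a family is automatic. A cleaner fix is to note that an orientation-preserving real analytic diffeomorphism $\phi$ between boundary arcs with prescribed real analytic germs at the endpoints (those germs forced by compatibility with the already-interpolated strip-like ends) can be written as $\phi' = e^{u}$ with $u$ real analytic, and the straight-line interpolation of the $u$'s, renormalized to match the prescribed boundary germs, gives a smooth path through real analytic diffeomorphisms with the same germ data, hence contractibility. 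You should also confirm nonemptiness of each layer (for instance, that a compatible real analytic seaming map exists once the complex structures and ends at time $t$ are fixed) rather than assuming it; this is where working with the Schwarz double of each patch genuinely helps, and where some authors instead treat the complex structure on the whole quilt as a single object rather than decomposing it as you do.
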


	This lemma is then used to prove that Floer-theoretic invariants defined using quilted surfaces are independent of these auxiliary data involved in the definition of quilted surfaces, up to homotopy. \par

\subsection{Inhomogeneous pseudoholomorphic quilts}
	Let us begin by briefly recalling the local theory of moduli spaces of inhomogeneous pseudoholomorphic quilts, basically taken from \cite{Wehrheim-Woodward3}. Suppose that we have a collection of Liouville manifolds $\underline{M} = (M_{0}, M_{1}, \cdots, M_{m})$. Let $\underline{S}$ be a quilted surface. A Lagrangian label $\underline{\mathcal{L}}$ for $\underline{S}$ in $\underline{M}$ consists of the following:

\begin{enumerate}[label = (\alph*)]

\item a collection of Lagrangian correspondences $\mathcal{L}_{\sigma} \subset M_{i_{\sigma}}^{-} \times M_{i'_{\sigma}}$ for $\sigma \in \mathcal{S}$ associated to the seams;

\item a collection of Lagrangian submanifolds $L_{b} \subset M_{i_{b}}$ for $b \in \mathcal{B}$ associated to true boundary components. 

\end{enumerate}

	We will assume that all Lagrangian submanifolds/correspondences are closed exact submanifolds in the interior of the relevant Liouville manifolds, or conical over the natural cylindrical ends. For each quilted end $\underline{\epsilon} = (\epsilon_{i_{k}, j_{k}})_{k=1, \cdots, n_{\underline{\epsilon}}}$, we can associate to it the sequence of Lagrangian correspondences that label the seams and true boundary components that run into the strip-like ends $(\epsilon_{i_{k}, j_{k}})_{k=1, \cdots, n_{\underline{\epsilon}}}$. There are two slightly different cases:

\begin{enumerate}[label = (\roman*)]

\item If $\underline{\epsilon}$ is cyclic, then associated to it is a sequence of Lagrangian correspondences $\mathcal{L}_{\sigma_{1}} \subset M_{i_{1}}^{-} \times M_{i_{2}}, \cdots, \mathcal{L}_{\sigma_{n_{\underline{\epsilon}}-1}} \subset M_{i_{n_{\underline{\epsilon}}-1}}^{-} \times M_{i_{n_{\underline{\epsilon}}}}, \mathcal{L}_{\sigma_{n_{\underline{\epsilon}}}} \subset M_{i_{n_{\underline{\epsilon}}}}^{-} \times M_{i_{1}}^{-}$;

\item If $\underline{\epsilon}$ is non-cyclic, then associated to it is two Lagrangian submanifolds $L_{b_{0}} \subset M_{i_{1}}, L_{b_{n_{\underline{\epsilon}}}} \subset M_{i_{n_{\underline{\epsilon}}}}$, as well as a sequence of Lagrangian correspondences $\mathcal{L}_{\sigma_{1}} \subset M_{i_{1}}^{-} \times M_{i_{2}}, \cdots, \mathcal{L}_{\sigma_{n_{\underline{\epsilon}}-1}} \subset M_{i_{n_{\underline{\epsilon}}-1}}^{-} \times M_{i_{n_{\underline{\epsilon}}}}$. If is helpful to think of $L_{b_{n_{\underline{\epsilon}}}} \subset M_{i_{n_{\underline{\epsilon}}}}^{-}$ so that $L_{b_{n_{\underline{\epsilon}}}} \times L_{b_{0}}$ is also regarded as a Lagrangian correspondence.

\end{enumerate}
We will call a sequence of Lagrangian correspondences a generalized Lagrangian correspondence, and one of the type in $(a)$ a cyclic generalized Lagrangian correspondence, and one of the type in $(b)$ a non-cyclic generalized Lagrangian correspondence. \par
	Now suppose admissible Hamiltonians $H_{M_{i}}$ and one-parameter families of contact-type almost complex structures $J_{M_{i}}$ on $M_{i}$ have been chosen for all Liouville manifolds in the collection. Given a quilted surface $\underline{S}$ with Lagrangian labels, choose a quilted Floer datum $(\underline{\alpha}_{\underline{S}}, \underline{H}_{\underline{S}}, \underline{J}_{\underline{S}})$ for $\underline{S}$, which by definition consists of Floer data on all patches. \par
	
\begin{definition}
	An inhomogeneous pseudoholomorphic quilt with Lagrangian seam and boundary conditions is a tuple of maps $\underline{u}: \underline{S} \to \underline{M}$ such that
\begin{enumerate}[label = (\alph*)]

\item each patch $u_{i}: S_{i} \to M_{i}$ satisfies the inhomogeneous Cauchy-Riemann equation:
\begin{equation}
(du_{i} - \alpha_{S_{i}} \otimes X_{H_{S_{i}}})^{0, 1} = 0;
\end{equation}

\item $\underline{S}$ maps seams to the given Lagrangian correspondences, and true boundary components to the given Lagrangian submanifolds.
\end{enumerate}
\end{definition}

	The energy of an inhomogeneous pseudoholomorphic quilt is defined to be the sum of the energy of every patch. The asymptotic condition for an inhomogeneous pseudoholomorphic quilt at a strip-like end is just a chord. The asymptotic condition for a pseudoholomorphic quilt at a quilted end is a generalized chord, which we define below. \par

\begin{definition}
Let $\underline{S}$ be a quilted surface and let $\underline{\epsilon} = (\epsilon_{i_{k}, j_{k}})_{k=1, \cdots, n_{\underline{\epsilon}}}$ be a quilted end. There are two cases:
\begin{enumerate}[label = (\roman*)]

\item If $\underline{\epsilon}$ is cyclic, then we have a cyclic generalized Lagrangian correspondence as before. A generalized chord for this generalized Lagrangian correspondence is a tuple $\underline{\gamma} = (\gamma_{1}, \cdots, \gamma_{n_{\underline{\epsilon}}})$ where $\gamma_{k}$ is a $H_{M_{i_{k}}}$-chord in $M_{i_{k}}$ such that
\begin{equation}
(\gamma_{k}(1), \gamma_{k+1}(0)) \in \mathcal{L}_{\sigma_{k}}.
\end{equation}

\item If $\underline{\epsilon}$ is non-cyclic, then we have a non-cyclic generalized Lagrangian correspondence as before. A generalized chord for this generalized Lagrangian correspondence is a tuple $\underline{\gamma} = (\gamma_{0}, \gamma_{1}, \cdots, \gamma_{n_{\underline{\epsilon}}})$ where $\gamma_{k}$ is a $H_{M_{i_{k}}}$-chord in $M_{i_{k}}$ such that
\begin{equation}
\gamma_{0}(0) \in L_{b_{0}}, \gamma_{n_{\underline{\epsilon}}}(1) \in L_{b_{n_{\underline{\epsilon}}}},
\end{equation}
\begin{equation}
(\gamma_{k}(1), \gamma_{k+1}(0)) \in \mathcal{L}_{\sigma_{k}}, k=1, \cdots, n_{\underline{\epsilon}} - 1.
\end{equation}

\end{enumerate}

\end{definition}

	Suppose that Lagrangian labels for $\underline{S}$ have been chosen, and that associated to each positive (resp. negative) end $\underline{\epsilon}^{\pm}$ of $\underline{S}$ the asymptotic condition (a generalized chord $\underline{\gamma}_{\underline{\epsilon}}^{\pm}$) has been fixed. We consider an inhomogeneous pseudoholomorphic quilt $\underline{u}: \underline{S} \to \underline{M}$ with Lagrangian seam and boundary conditions, satisfying the prescribed asymptotic conditions over the quilted ends. Moreover, we assume that the energy of $\underline{u}$ is finite. The linearized Cauchy-Riemann operator at $\underline{u}$, which we denote by $D_{\underline{u}}\partial_{\underline{H}_{\underline{S}}, \underline{J}_{\underline{S}}}$, is defined to be the direct sum of the usual linearized Cauchy-Riemann operators on patches. $D_{\underline{u}}\partial_{\underline{H}_{\underline{S}}, \underline{J}_{\underline{S}}}$ is a Fredholm operator, basically because the linearized Cauchy-Riemann operator on each patch is Fredholm. For more details, see Remark 3.8 of \cite{Wehrheim-Woodward3}. \par
	Let $\underline{\Gamma}^{-}, \underline{\Gamma}^{+}$ be the collection of all generalized chords at negative ends and positive ends respectively. Let $\mathcal{M}_{\underline{S}}(\underline{\Gamma}^{-}; \underline{\Gamma}^{+})$ be the moduli space of inhomogeneous pseudoholomorphic quilts with Lagrangian seam and boundary conditions. Transversality can be proved in the same way as before. It is also not difficult to find a compactification of this moduli space which is a smooth manifold with boundary and corners, since the symplectic manifolds are Liouville manifolds and the Lagrangian submanifolds are all exact. We choose not to enter the full details here for general quilted surface $\underline{S}$ because we do not quite need the general result, and the notations could become very complicated. Also, in fact the above construction has a flaw for the purpose of defining invariants or operations of wrapped Floer cohomology - we have not taken time-shifting functions into account, which, unlike the case of closed manifolds, is absolutely necessary because Hamiltonians add up for example when we try to multiply inputs, but continuation maps do not generally induce cochain homotopy equivalence in the case of non-compact manifolds. Doing so for all kinds of quilted surfaces would require significantly amount of additional notations which we do not bother writing down. Instead, we will deal with only the quilted surfaces that are relevant to our constructions later, about which we will be more precise. \par
%action-energy equality for inhomogeneous pseudoholomorphic quilts%
	Finally, we present a general action-energy equality for inhomogeneous pseudoholomorphic quilts, which will be extremely useful for proving compactness results. The action of a generalized chord $\underline{\gamma}$ is defined to the the sum of the action of the Hamiltonian chords as its components. The action-energy equality for inhomogeneous pseudoholomorphic quilts has the following form
\begin{equation} \label{action-energy equality for quilts}
E(\underline{u}) = \sum_{\underline{\gamma}^{-} \in \underline{\epsilon}^{-}}\mathcal{A}(\underline{\gamma}^{-}) - \sum_{\underline{\gamma}^{+} \in \underline{\epsilon}^{+}}\mathcal{A}(\underline{\gamma}^{+}),
\end{equation}
where the first sum is over all generalized chords as asymptotic values of $\underline{u}$ at negative quilted ends, and the second sum is over all generalized chords as asymptotic values of $\underline{u}$ at positive quilted ends. The proof is simply an application of Stokes' theorem, applied to each patch of the quilted surface. \par

%quilted wrapped Floer cohomology%
\subsection{Quilted wrapped Floer cohomology} \label{section: definition of quilted wrapped Floer cohomology}
	Using quilted surfaces, we may extend the definition of the wrapped Floer cohomology to a quilted version. The basic case is that we have conical Lagrangian submanifolds $L \subset M, L' \subset N$, as well as an admissible Lagrangian correspondence $\mathcal{L} \subset M^{-} \times N$. Although having introduced general quilts in the previous subsection, proving well-definedness of wrapped Floer cohomology in general encounters a lot of complication and difficulty, and therefore we will essentially only focus on this basic case. The generators for the quilted wrapped Floer cochain complex $CW^{*}(L, \mathcal{L}, L')$ are time-one generalized chords, defined as follows. \par

\begin{definition}
	A time-one generalized $(H_{M}, H_{N})$-chord for the triple $(L, \mathcal{L}, L')$ is a pair $(\gamma^{0}, \gamma^{1})$ where $\gamma^{0}$ is a $H_{M}$-chord and $\gamma^{1}$ is a $H_{N}$-chord such that $\gamma^{0}(0) \in L, (\gamma^{0}(1), \gamma^{1}(0)) \in \mathcal{L}, \gamma^{1}(1) \in L'$. We denote by $\mathcal{X}_{(H_{M}, H_{N})}(L, \mathcal{L}, L')$ the set of all generalized $(H_{M}, H_{N})$-chords for the triple $(L, \mathcal{L}, L')$.
\end{definition}

	The notion of absolute Maslov index can be easily extended to generalized chords, which will provide gradings on the quilted version of  wrapped Floer cochain complexes. The way of defining this index is to regard a generalized chord as a $H_{M, N}$-chord in the product manifold $M^{-} \times N$ going from $L \times L'$ to $\mathcal{L}$. For each generalized chord $(\gamma^{0}, \gamma^{1})$, there is also a canonical orientation line associated to it, denoted by $|o_{(\gamma^{0}, \gamma^{1})}|$, defined in a similar way by regarding this generalized chord as a chord in the product manifold. We will not repeat these kinds of definitions later. \par
	Now consider the (parametrized) moduli space of quilted pseudoholomorphic strips $\tilde{\mathcal{M}}((\gamma^{0}_{0}, \gamma^{1}_{0}), (\gamma^{0}_{1}, \gamma^{1}_{1}))$ which converge exponentially to $(\gamma^{0}_{0}, \gamma^{1}_{0})$ at the negative quilted end, and to $(\gamma^{0}_{1}, \gamma^{1}_{1})$ at the positive quilted end, with Lagrangian boundary conditions $(L, \mathcal{L}, L')$. The standard transversality argument shows that for generic one-parameter families of almost complex structures $J_{M}$ and $J_{N}$ of contact-type, $\tilde{\mathcal{M}}((\gamma^{0}_{0}, \gamma^{1}_{0}), (\gamma^{0}_{1}, \gamma^{1}_{1}))$ is a smooth manifold of dimension $\deg((\gamma^{0}_{0}, \gamma^{1}_{0})) - \deg((\gamma^{0}_{1}, \gamma^{1}_{1}))$. There is a natural $\mathbb{R}$-action on $\tilde{\mathcal{M}}((\gamma^{0}_{0}, \gamma^{1}_{0}), (\gamma^{0}_{1}, \gamma^{1}_{1}))$, by simultaneous translation on the two patches (which are strips) of the quilted strip (due to the seam condition, only simultaneous translations are allowed). This action is free whenever $\deg((\gamma^{0}_{0}, \gamma^{1}_{0})) \ne \deg((\gamma^{0}_{1}, \gamma^{1}_{1}))$. Call the quotient $\mathcal{M}((\gamma^{0}_{0}, \gamma^{1}_{0}), (\gamma^{0}_{1}, \gamma^{1}_{1}))$ the moduli space of (unparametrized) quilted pseudoholomorphic strips, which is a smooth manifold of dimension $\deg((\gamma^{0}_{0}, \gamma^{1}_{0})) - \deg((\gamma^{0}_{1}, \gamma^{1}_{1})) - 1$. \par
	There is a natural compactification $\bar{\mathcal{M}}((\gamma^{0}_{0}, \gamma^{1}_{0}), (\gamma^{0}_{1}, \gamma^{1}_{1}))$, similar to the one in \eqref{compactification of moduli space of strips}:
\begin{equation}
\begin{split}
\bar{\mathcal{M}}((\gamma^{0}_{0}, \gamma^{1}_{0}), (\gamma^{0}_{1}, \gamma^{1}_{1})) &= \coprod \mathcal{M}((\gamma^{0}_{0}, \gamma^{1}_{0}), (\theta^{0}_{1}, \theta^{1}_{1})) \times \mathcal{M}((\theta^{0}_{1}, \theta^{1}_{1}), (\theta^{0}_{2}, \theta^{1}_{2}))\\
& \times \cdots \times \mathcal{M}((\theta^{0}_{k-1}, \theta^{1}_{k-1}), (\theta^{0}_{k}, \theta^{1}_{k})) \times \mathcal{M}((\theta^{0}_{k}, \theta^{1}_{k}), (\gamma^{0}_{1}, \gamma^{1}_{1})).
\end{split}
\end{equation}

	Similar to Lemma \ref{regularity + compactness} and Lemma \ref{compactness 2}, we have the following two lemmata whose proofs utilize the action-energy equality on $M^{-} \times N$, and are essentially the same as the previous ones. \par

\begin{lemma}
	For generic one-parameter families of almost complex structures $J_{M}$ and $J_{N}$ of contact-type, the moduli space $\bar{\mathcal{M}}((\gamma^{0}_{0}, \gamma^{1}_{0}), (\gamma^{0}_{1}, \gamma^{1}_{1}))$ is a compact smooth manifold with boundary and corners of dimension $\deg((\gamma^{0}_{0}, \gamma^{1}_{0})) - \deg((\gamma^{0}_{1}, \gamma^{1}_{1})) - 1$.
\end{lemma}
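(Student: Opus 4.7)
The plan is to reduce to the compactness and regularity results already established for wrapped Floer cohomology in the product Liouville manifold $M^{-} \times N$. By the standard folding trick, a quilted pseudoholomorphic strip with patches $(u^{0}, u^{1})$, seam along $\mathcal{L}$, and true boundary components on $L$ and $L'$ is equivalent to a single inhomogeneous pseudoholomorphic strip $\underline{u} = u^{0} \times u^{1}: Z \to M^{-} \times N$ whose two boundary components map respectively to the product Lagrangian $L \times L'$ and to the conical Lagrangian $\mathcal{L}$. Under this identification a generalized chord $(\gamma^{0}, \gamma^{1})$ corresponds to a single $H_{M, N}$-chord from $L \times L'$ to $\mathcal{L}$, the pair of contact-type families $(J_{M}, J_{N})$ yields the split almost complex structure $J_{M, N}$, and the free $\mathbb{R}$-action of simultaneous translation on the patches becomes the usual $\mathbb{R}$-action on strips.

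For transversality and the dimension count, the folded boundary conditions are precisely the two admissible classes of Definition \ref{def: admissible Lagrangian submanifolds in the product manifold}, so the argument of Lemma \ref{transversality for strips}, including the SFT-type splitting in the cylindrical end, applies verbatim. A generic choice of $(J_{M}, J_{N})$ of contact type therefore makes $\tilde{\mathcal{M}}((\gamma^{0}_{0}, \gamma^{1}_{0}), (\gamma^{0}_{1}, \gamma^{1}_{1}))$ a smooth manifold of the expected dimension $\deg((\gamma^{0}_{0}, \gamma^{1}_{0})) - \deg((\gamma^{0}_{1}, \gamma^{1}_{1}))$, and quotienting by the free $\mathbb{R}$-action cuts the dimension by one.

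For compactness, the action-energy equality \eqref{action-energy equality for quilts} specialized to a quilted strip reads
\begin{equation*}
E(\underline{u}) = \mathcal{A}_{H_{M, N}}(\gamma^{0}_{0}, \gamma^{1}_{0}) - \mathcal{A}_{H_{M, N}}(\gamma^{0}_{1}, \gamma^{1}_{1}),
\end{equation*}
so the total energy of every broken configuration in $\bar{\mathcal{M}}((\gamma^{0}_{0}, \gamma^{1}_{0}), (\gamma^{0}_{1}, \gamma^{1}_{1}))$ is fixed. Exactness of $M$, $N$, $L$, $L'$ and $\mathcal{L}$ rules out sphere and disk bubbling. The only genuinely new issue is escape to infinity along the cylindrical end of $M^{-} \times N$ under the split Hamiltonian $H_{M, N}$, which is a priori not admissible with respect to $\Sigma \times [1, +\infty)$. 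However, this is exactly what the analysis of section \ref{section: wrapped Floer theory in the product} handles: the case-by-case action estimate carried out there shows that any $H_{M, N}$-chord lying at radius $R \gg 0$ in either factor of the cylindrical end has action tending to $-\infty$, so bounded energy forces the image of every quilted strip to remain in a fixed compact set. Gromov compactness then produces the desired compactification by broken quilted strips.

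The codimension-one boundary stratum is covered by the images of the natural maps $\mathcal{M}((\gamma^{0}_{0}, \gamma^{1}_{0}), (\theta^{0}, \theta^{1})) \times \mathcal{M}((\theta^{0}, \theta^{1}), (\gamma^{0}_{1}, \gamma^{1}_{1})) \to \bar{\mathcal{M}}((\gamma^{0}_{0}, \gamma^{1}_{0}), (\gamma^{0}_{1}, \gamma^{1}_{1}))$ via the standard gluing construction, and higher-codimension corners correspond to configurations with more breaking. The main obstacle in the entire argument is the non-admissibility of the split Hamiltonian for the natural cylindrical end of the product, and the point is that once the action estimates of section \ref{section: wrapped Floer theory in the product} are in place, the proof is entirely parallel to those of Lemma \ref{regularity + compactness} and Lemma \ref{compactness 2}.
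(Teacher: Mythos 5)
Your proof is correct and takes essentially the same approach the paper indicates: fold the quilt into a single strip in $M^{-} \times N$ with boundary on $\mathcal{L}$ and $L \times L'$, apply the transversality argument of Lemma \ref{transversality for strips}, and derive compactness from the action-energy equality together with the action estimates of section \ref{section: wrapped Floer theory in the product}. One small correction: folding produces the almost complex structure $-J_{M} \times J_{N}$ on $M^{-} \times N$ (compatible with $(-\omega_M)\oplus\omega_N$), not $J_{M,N}=J_M\times J_N$, as recorded in Lemma \ref{folding quilt lemma}; this does not affect your argument.
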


\begin{lemma}\label{compactness for quilted strips}
	For each $(\gamma^{0}_{1}, \gamma^{1}_{1})$, $\mathcal{M}((\gamma^{0}_{0}, \gamma^{1}_{0}), (\gamma^{0}_{1}, \gamma^{1}_{1}))$ is empty for all but finitely many $(\gamma^{0}_{0}, \gamma^{1}_{0})$.
\end{lemma}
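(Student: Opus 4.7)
The plan is to mimic the action-based compactness argument already deployed for the ordinary wrapped Floer cohomology in the product manifold (Section~3.5) and the product-Lagrangian case (Section~3.6), but now applied to the quilted action-energy equality \eqref{action-energy equality for quilts}. The key observation is that a generalized chord $(\gamma^{0},\gamma^{1})$ for $(L,\mathcal{L},L')$ can be ``unfolded'' into an honest $H_{M,N}$-chord on $M^{-}\times N$ by setting $\tilde\gamma(t)=(\gamma^{0}(1-t),\gamma^{1}(t))$, which runs from $\mathcal{L}$ to $L\times L'$. Under this identification all action estimates established in Section~3.5 for chords in the product directly bound the action of generalized chords.

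First I would write down the action explicitly:
\begin{equation*}
\mathcal{A}(\gamma^{0},\gamma^{1})
=\int_{0}^{1}\!-(\gamma^{0})^{*}\lambda_{M}+H_{M}(\gamma^{0})\,dt
+\int_{0}^{1}\!-(\gamma^{1})^{*}\lambda_{N}+H_{N}(\gamma^{1})\,dt
+f_{\mathcal{L}}(\gamma^{0}(1),\gamma^{1}(0))-f_{L}(\gamma^{0}(0))-f_{L'}(\gamma^{1}(1)).
\end{equation*}
Next I would split into the same four cases as in Section~3.5 depending on whether each of $\gamma^{0}$ and $\gamma^{1}$ lies in the interior of its factor or on a level hypersurface $\partial M\times\{R_{1}\}$, $\partial N\times\{R_{2}\}$ of the cylindrical end. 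By Lemma~\ref{area estimate}, each Reeb-chord factor contributes a term $-2R_{i}^{2}\cdot 1=-R_{i}^{2}$ to the first two integrals (where I have used that $H$ is quadratic and the chord time is $1$), while each interior factor contributes a bounded quantity. The primitive terms are uniformly bounded: $f_{L},f_{L'}$ are locally constant on the cylindrical ends of $L,L'$, and $f_{\mathcal{L}}$ is locally constant on the cylindrical end of $\mathcal{L}$ with respect to $\Sigma\times[1,+\infty)$, so once $R_{1}+R_{2}$ is large enough that $(\gamma^{0}(1),\gamma^{1}(0))$ lies in the conical region of $\mathcal{L}$, the quantity $f_{\mathcal{L}}(\gamma^{0}(1),\gamma^{1}(0))-f_{L}(\gamma^{0}(0))-f_{L'}(\gamma^{1}(1))$ is bounded by a constant depending only on the Lagrangian data. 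Consequently $\mathcal{A}(\gamma^{0},\gamma^{1})\le -R_{1}^{2}-R_{2}^{2}+C$ for some $C$ independent of $(\gamma^{0},\gamma^{1})$, hence tends to $-\infty$ as either $R_{1}$ or $R_{2}$ grows.

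With this estimate in hand I would finish by contradiction. Suppose there were infinitely many $(\gamma^{0}_{0},\gamma^{1}_{0})$ with $\mathcal{M}((\gamma^{0}_{0},\gamma^{1}_{0}),(\gamma^{0}_{1},\gamma^{1}_{1}))$ non-empty. By non-degeneracy of $H_{M},H_{N}$ and of the Reeb flows on $\partial M,\partial N$, only finitely many generalized chords have both components in the fixed compact parts, so at least one component of infinitely many of the $(\gamma^{0}_{0},\gamma^{1}_{0})$ must lie on an arbitrarily high level hypersurface. By the above estimate, $\mathcal{A}(\gamma^{0}_{0},\gamma^{1}_{0})$ can be made as negative as we wish, while $\mathcal{A}(\gamma^{0}_{1},\gamma^{1}_{1})$ is fixed. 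The action-energy equality \eqref{action-energy equality for quilts} applied to any quilted strip $\underline{u}$ connecting them would then force $E(\underline{u})=\mathcal{A}(\gamma^{0}_{0},\gamma^{1}_{0})-\mathcal{A}(\gamma^{0}_{1},\gamma^{1}_{1})<0$, contradicting non-negativity of energy.

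The only genuinely new point, compared with the product-Lagrangian argument already in the paper, is controlling the primitive term $f_{\mathcal{L}}(\gamma^{0}(1),\gamma^{1}(0))$; this is the main obstacle, but it is handled by the conical assumption on $\mathcal{L}$ together with a uniform choice of bounded extension of the primitives, exactly as in the analogous step for $\mathcal{L}$ replaced by $L\times L'$ in Section~3.6. Everything else is a transcription of the previous energy argument, so no separate new compactness input is required.
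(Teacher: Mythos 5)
Your proof follows exactly the route the paper indicates: fold the generalized chord to the $H_{M,N}$-chord $\Gamma(t)=(\gamma^{0}(1-t),\gamma^{1}(t))$ in $M^{-}\times N$ running from $\mathcal{L}$ to $L\times L'$, invoke the action estimates already established for the pair $(L\times L',\mathcal{L})$ in the product manifold, observe that the conical choices of primitive make the boundary terms uniformly bounded, and conclude from the action-energy equality that the action of the negative asymptote goes to $-\infty$ as its cylindrical level increases. The only slip is a sign on the primitives --- with $\Gamma(0)\in\mathcal{L}$ and $\Gamma(1)\in L\times L'$ the contribution is $f_{L}(\gamma^{0}(0))+f_{L'}(\gamma^{1}(1))-f_{\mathcal{L}}(\gamma^{0}(1),\gamma^{1}(0))$ rather than its negative --- but since you use only uniform boundedness of this term, the argument is unaffected.
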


%definition of quilted wrapped Floer cohomology%
	We then define a graded $\mathbb{Z}$-module
\begin{equation}	
CW^{*}(L, \mathcal{L}, L'; H_{M}, H_{N}, J_{M}, J_{N}) = \bigoplus_{(\gamma^{0}, \gamma^{1}) \in \mathcal{X}_{(H_{M}, H_{N})}(L, \mathcal{L}, L')} |o_{(\gamma^{0}, \gamma^{1})}|,
\end{equation}
which we call quilted wrapped Floer cochain complex. The differential $m^1 = m^1_{H_{M}, H_{N}, J_{M}, J_{N}}$ is defined to be
\begin{equation}
m^{1}: CW^{i}(L, \mathcal{L}, L'; H_{M}, H_{N}, J_{M}, J_{N}) \to CW^{i+1}(L, \mathcal{L}, L'; H_{M}, H_{N}, J_{M}, J_{N})
\end{equation}
\begin{equation}
m^{1}([(\gamma^{0}_{1}, \gamma^{1}_{1})]) = (-1)^{i} \sum_{\substack{\deg((\gamma^{0}_{0}, \gamma^{1}_{0})) = \deg((\gamma^{0}_{1}, \gamma^{1}_{1})) + 1\\ \underline{u} \in \mathcal{M}((\gamma^{0}_{0}, \gamma^{1}_{0}), (\gamma^{0}_{1}, \gamma^{1}_{1}))}} m^{1}_{\underline{u}}([(\gamma^{0}_{1}, \gamma^{1}_{1})]).
\end{equation}
By Lemma \ref{compactness for quilted strips}, the sum is indeed finite. For the matter of orientation, we define the orientation line of a generalized chord $(\gamma^{0}, \gamma^{1})$ to be the orientation line of the $H_{M, N}$-chord in $M^{-} \times N$ by setting $\Gamma(t) = (\gamma^{0}(1-t), \gamma^{1}(t))$, and define the orientations on the moduli spaces of quilted strips to be those on the moduli space of inhomogeneous pseudoholomorphic strips in the product $M^{-} \times N$. This point of view shall be made more explicit in Lemma \ref{folding quilt lemma}. For more general quilted surfaces, see for example \cite{Wehrheim-Woodward5}, but we shall not try to make use of the results in this paper. \par
	We call the resulting cohomology the quilted wrapped Floer cohomology of $(L, \mathcal{L}, L')$ with respect to $H_{M}, H_{N}, J_{M}, J_{N})$, and denote it by $HW^{*}(L, \mathcal{L}, L'; H_{M}, H_{N}, J_{M}, J_{N})$. As before, standard continuation argument shows that this is independent of choices of generic one-parameter families of admissible almost complex structures, and independent of admissible Hamiltonians $H_{M}$ on $M$ and $H_{N}$ on $N$. \par
	The advantage of using split Hamiltonians on the product Liouville manifold $M^{-} \times N$ is that we can fold the quilted inhomogeneous pseudoholomorphic curve to obtain an inhomogeneous pseudoholomorphic curve with Lagrangian boundary conditions in the product Liouville manifold $M^{-} \times N$, with respect to the data $(H_{M, N}, -J_{M} \times J_{N})$. Thus quilted wrapped Floer cohomology is usually the wrapped Floer cohomology in the product manifold: \par

\begin{lemma} \label{folding quilt lemma}
\begin{equation}
\begin{split}
HW^{*}(L, \mathcal{L}, L'; H_{M}, H_{N}, J_{M}, J_{N}) &\cong HW^{*}(\mathcal{L}, L \times L'; H_{M, N}, -J_{M} \times J_{N})\\
& \cong HW^{*}(L \times L', \mathcal{L}; K, J),
\end{split}
\end{equation}
where $K$ is some admissible Hamiltonian on $M^{-} \times N$, and $J$ is an almost complex structure of contact type with respect to the contact hypersurface $\Sigma$.
\end{lemma}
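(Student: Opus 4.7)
The plan is to prove the first isomorphism by the Wehrheim--Woodward folding trick adapted to our non-compact setting, and to deduce the second isomorphism directly from Theorem \ref{invariance for wrapped Floer cohomology in the product}.

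For the first isomorphism I would set up a chain-level correspondence as follows. A quilted strip for the triple $(L, \mathcal{L}, L')$ consists of a pair of maps $u_0: \mathbb{R}\times[0,1]\to M$ and $u_1: \mathbb{R}\times[0,1]\to N$ satisfying the inhomogeneous Cauchy--Riemann equations with data $(H_M, J_M)$ and $(H_N, J_N)$, subject to $u_0(s, 0) \in L$, $u_1(s, 1) \in L'$, and the seam condition $(u_0(s, 1), u_1(s, 0)) \in \mathcal{L}$. Reflecting in the first factor by $\bar{u}_0(s, t) := u_0(s, 1-t)$ and setting $\tilde{u}(s, t) := (\bar{u}_0(s, t), u_1(s, t))$, one obtains a map $\tilde{u}: \mathbb{R}\times[0,1] \to M^- \times N$ with $\tilde{u}(\cdot, 0) \in \mathcal{L}$ and $\tilde{u}(\cdot, 1) \in L \times L'$. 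On the level of generators this implements the bijection $(\gamma^0, \gamma^1) \leftrightarrow \tilde{\gamma}(t) = (\gamma^0(1-t), \gamma^1(t))$ between time-one generalized chords for $(L, \mathcal{L}, L')$ and time-one $H_{M,N}$-chords from $\mathcal{L}$ to $L \times L'$ in $M^-\times N$, which is precisely the identification already used in defining the grading and orientation line of a generalized chord.

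I would then verify three points: (i) the $t$-reflection together with the symplectic sign change from $M$ to $M^-$ transforms the $M$-factor of Floer's equation into one on $M^-$ with almost complex structure $-J_M$, so that $\tilde{u}$ satisfies Floer's equation on the product with data $(H_{M,N}, -J_M \times J_N)$; (ii) the assignment $(u_0, u_1) \mapsto \tilde{u}$ is $\mathbb{R}$-equivariant and, by construction of the grading and orientation conventions via the product manifold, degree- and sign-preserving; (iii) transversality and compactness on one side transfer to the other. For (iii), the compactness in $M^-\times N$ for the possibly non-conical pair $(\mathcal{L}, L\times L')$ with split data is exactly what was established in Section \ref{section: wrapped Floer theory in the product} and subsection \ref{Wrapped Floer cohomology for product Lagrangian submanifolds}. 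The bijection of parametrized moduli spaces resulting from (i)--(iii) then yields a chain-level isomorphism inducing the first isomorphism on cohomology.

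The second isomorphism follows from Theorem \ref{invariance for wrapped Floer cohomology in the product}: both sides are wrapped Floer cohomologies of the same admissible pair in $M^- \times N$, and the theorem provides a chain map $R$ inducing an isomorphism between the split-data version and the admissible-data version. The swap in the order of the two Lagrangians on the right-hand side is implemented by the standard involution $(s, t) \mapsto (-s, 1-t)$ on Floer strips, which under our time-independent Hamiltonian convention tautologically identifies the two orderings. The step I expect to be the most delicate is the sign bookkeeping in the folding --- in particular, confirming that the reflection-induced complex structure on the $M$-factor is precisely $-J_M$, compatibly with $\omega_{M^-} = -\omega_M$ and with the convention under which generalized chords were graded and oriented via the product manifold. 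Once this is pinned down, the remaining verifications are formal consequences of regularity and compactness results established earlier in the paper.
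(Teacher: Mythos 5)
Your approach matches the paper's: the first isomorphism comes from the folding trick, with exactly the explicit fold $w(s,t) = (u(s,1-t), v(s,t))$ matching Hamiltonian chords, moduli spaces, gradings and orientations (the latter were already defined through the folded picture), and the second isomorphism is obtained by invoking Theorem~\ref{invariance for wrapped Floer cohomology in the product}.

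The one step in your write-up that does not hold up is the claim that the Lagrangian swap $HW^*(\mathcal{L}, L\times L') \leftrightarrow HW^*(L\times L', \mathcal{L})$ is ``tautologically'' implemented by the involution $(s,t)\mapsto(-s,1-t)$ on Floer strips. That involution sends a solution of Floer's equation for $(H, J_t)$ with boundary on $(L_0, L_1)$ to a solution for $(-H, J_{1-t})$ with boundary on $(L_1, L_0)$; on chords it sends $\gamma$ to $\bar\gamma(t)=\gamma(1-t)$, which is a chord of $-H$. So it identifies $CW^*(L_0, L_1; H)$ with $CW^*(L_1, L_0; -H)$, not with $CW^*(L_1, L_0; H)$, and for wrapped theory (where $-H$ is not admissible) there is no such tautological order-swap. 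The discrepancy is not a real obstruction to the lemma, though: the text immediately following it in the paper says that the wrapped Floer cohomology of the \emph{ordered} pair $(\mathcal{L}, L\times L')$ with split data is taken as the definition of the quilted group. So the second isomorphism is meant to be the pure change-of-Floer-data isomorphism of Theorem~\ref{invariance for wrapped Floer cohomology in the product} at fixed Lagrangian order $(\mathcal{L}, L\times L')$; the displayed order $(L\times L', \mathcal{L})$ in the lemma is evidently a typographical slip, and you should drop the involution step rather than attempt to justify it.
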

\begin{proof}
For the first isomorphism, the corresponding Floer cochain complexes are obviously isomorphic as graded modules. Folding a quilted pseudoholomorphic strip defining the differential on $CW^{*}(L, \mathcal{L}, L'; H_{M}, H_{N}, J_{M}, J_{N})$, we get an inhomogeneous pseudoholomorphic strip defining the differential on $CW^{*}(L \times L', \mathcal{L}; H_{M, N}, -J_{M} \times J_{N})$. Putting this "folding quilt" in a more explicit form, we define an inhomogeneous pseudoholomorphic map $w: \mathbb{R} \times [0, 1] \to M^{-} \times N$ by
\begin{equation}
w(s, t) = (u(s, 1-t), v(s, t)),
\end{equation}
from a quilted inhomogeneous pseudoholomorphic map $(u, v)$
This proves that the isomorphism intertwines the differentials. The second isomorphism is established in Theorem \ref{invariance for wrapped Floer cohomology in the product}.
\end{proof}

\begin{remark}
We should have written $L^{t}$ instead of $L$, where $L^{t}$ is the transpose of $L$ (indeed identical to $L$ as a smooth submanifold) and is regarded as a Lagrangian submanifold of $M^{-}$. But we will not bother doing this unless there might be confusion.
\end{remark}

	The above proof provides canonical identification between the moduli spaces, and hence we may take the wrapped Floer cohomology of $(\mathcal{L}, L \times L')$ in the product Liouville manifold $M^{-} \times N$ with respect to the split Hamiltonian $H_{M, N}$ and product almost complex structure $-J_{M} \times J_{N}$, as a definition for the quilted wrapped Floer cohomology $HW^{*}(L, \mathcal{L}, L')$. Also, we can use Theorem \ref{invariance for wrapped Floer cohomology in the product} to give an alternative definition of the quilted wrapped Floer cohomology by wrapped Floer cohomology on product Liouville manifolds with appropriate Lagrangian boundary conditions, using admissible Hamiltonians on the product manifolds. Since using either split Hamiltonians or admissible ones does not matter at least on the level of cohomology, we will drop the symbols for Hamiltonians and almost complex structures in the notation of wrapped Floer cohomology, until section \ref{section: technical proofs}, where we prove Theorem \ref{invariance for wrapped Floer cohomology in the product}. \par

\section{Functoriality in Lagrangian correspondences} \label{section: module-valued functors associated to Lagrangian correspondences}

\subsection{Wrapped Fukaya category on the cohomology level}
	The purpose of this section is to prove Theorem \ref{functors associated to Lagrangian correspondences}, which constructs functors between wrapped Fukaya categories associated to admissible Lagrangian correspondences $\mathcal{L} \subset M^{-} \times N$. As mentioned in the introduction, we work mostly on cohomology level. The main reason is that it is not proved in this paper that the version of the wrapped Fukaya category of the product manifold defined using split Hamiltonians is quasi-isomorphic to the standard one, although it seems quite straightforward to use split Hamiltonians at every place to perform the constructions on the chain level. \par
	Let us denote by $H(\mathcal{W}(M))$ the cohomology category of the wrapped Fukaya category of a Liouville manifold $M$. Its objects are closed exact Lagrangian submanifolds in the interior of $M$, as well as exact Lagrangian submanifolds $L$ that intersection $\partial M$ transversely with boundary being Legendrian submanifolds $l$ of $\partial M$, and over the cylindrical ends are of the form $l \times [1, +\infty)$. All Lagrangian submanifolds are assumed to be equipped with gradings and spin structures. The morphism space between two such Lagrangian submanifolds is the wrapped Floer cohomology group $HW(L_{0}, L_{1})$, defined using an admissible Hamiltonian and an admissible almost complex structure. The composition of morphisms is defined by cohomology level multiplication on wrapped Floer cohomology induced from the cochain level multiplication $m^{2}$. \par
	As for the product Liouville manifold $M^{-} \times N$, the Lagrangian submanifolds to be included in the wrapped Fukaya category are those considered in section \ref{section: wrapped Floer theory in the product}. We have shown that the wrapped Floer cohomology $HW^{*}(\mathcal{L}_{0}, \mathcal{L}_{1})$ of a pair of such Lagrangian submanifolds is well-defined, with respect to either the split Hamiltonian or the quadratic Hamiltonian. These will be morphism spaces in the cohomology category $H(\mathcal{W}(M^{-} \times N))$, and the composition is again the triangle product, which is well-defined by a similar argument. \par

\subsection{Bimodule-valued functor}
	The way we are going to prove Theorem \ref{functors associated to Lagrangian correspondences} is to first construct a bimodule-valued functor:
\begin{equation}\label{bimodule-valued functor}
\Phi: H(\mathcal{W}(M^{-} \times N)) \to (H(\mathcal{W}(M)), H(\mathcal{W}(N)))^{bimod}
\end{equation}
which sends each admissible Lagrangian correspondence $\mathcal{L} \subset M^{-} \times N$ to the bimodule $P_{\mathcal{L}}$ over $(H(\mathcal{W}(M)), H(\mathcal{W}(N)))$ which, to inputs $L \subset M, L' \subset N$, gives output:
\begin{equation}
P_{\mathcal{L}}(L, L') = HW^{*}(L, \mathcal{L}, L').
\end{equation}
Here by a bimodule over $(H(\mathcal{W}(M)), H(\mathcal{W}(N)))$, we mean a right-$H(\mathcal{W}(M))$ and left-$H(\mathcal{W}(N)))$ bimodule, despite the fact that we write $H(\mathcal{W}(M))$ on the left and $H(\mathcal{W}(N))$ on the right. This distinction is important in wrapped Floer theory, as in general there is no natural duality identifying left modules with right modules. \par
	One of the bimodule structure maps is
\begin{equation} \label{bimodule structure}
m^{1|0|1}: HW^{*}(L_{0}, L_{1}) \otimes HW^{*}(L_{1}, \mathcal{L}, L'_{0}) \otimes HW^{*}(L'_{0}, L'_{1}) \to HW^{*}(L_{0}, \mathcal{L}, L'_{1}),
\end{equation}
defined as follows. Let $\underline{S}^{bm}$ be the quilted surface which consists of two patches $S^{bm}_{0}, S^{bm}_{1}$ both of which are disks with one negative boundary puncture $z_{i, 0}$ and two positive boundary punctures $z_{i, 1}, z_{i, 2}$, near which the strip-like ends $\epsilon_{i, j}$ are chosen. The two patches are seamed along $I_{0, 1}$ and $I_{1, 1}$. The quilted surface $\underline{S}^{bm}$ has four ends - one negative quilted end, one positive quilted end and two positive strip-like ends. \par	
%important technical point: need time-shifting function for this quilted map%
	To define inhomogeneous pseudoholomorphic quilts that are suitable for the purpose of defining $m^{1|0|1}$, we will have to use a time-shifting function that controls the moving boundary conditions to take care of the issue that Hamiltonians add up over the ends of the quilted surfaces, similar to the situation involved in defining the multiplication structure. So we choose a time-shifting function $\rho_{\underline{S}^{bm}}$ for the quilted surface, which by definition consists of two time-shifting functions $\rho_{S^{bm}_{i}}, i = 1, 2$, defined on boundary components of $S^{bm}_{i}$, such that they agree over the seam. We require that $\rho_{S^{bm}_{i}}$ takes the value $1$ near the two postive boundary punctures of $S^{bm}_{i}$ and the value $2$ at the negative boundary puncture of $S^{bm}_{i}$.  \par
	Also, we need to choose a Floer datum on the quilted surface $\underline{S}^{bm}$, which consists of Floer data $(\alpha_{S^{bm}_{i}}, H_{S^{bm}_{i}}, J_{S^{bm}_{i}})$ on patches $S^{bm}_{i}$ in the usual sense. Note that $S^{bm}_{0}$ is mapped to $M$ and $S^{bm}_{1}$ to $N$. So these Floer data for the two patches should be understood as families of Hamiltonians and almost complex structures on $M$ and $N$ respectively. \par
	Given $H_{M}$-chord $\gamma$ from $L_{0}$ to $L_{1}$, $H_{N}$-chord $\theta$ from $L'_{0}$ to $L'_{1}$, as well as generalized chords $(\gamma^{0}, \theta^{0})$ for the triple $(L_{0}, \mathcal{L}, L'_{1})$ and $(\gamma^{1}, \theta^{1})$ for $(L_{1}, \mathcal{L}, L'_{0})$, the inhomogeneous Cauchy-Riemann equation for quilted maps $\underline{u}: \underline{S}^{bm} \to (M, N)$ with the prescribed asymptotic behavior has the following form:
\begin{equation}
\begin{cases}
(du_{i} - \alpha_{S^{bm}_{i}} \otimes X_{H_{S^{bm}_{0}}}(u_{0}))^{0, 1} = 0, & i = 1, 2\\
u_{0}(z) \in \phi_{M}^{\rho_{S^{bm}_{0}}(z)}L_{0}, & z \in I_{0, 2}\\
u_{0}(z) \in \phi_{M}^{\rho_{S^{bm}_{0}}(z)}L_{1}, & z \in I_{0, 0}\\
(u_{0}(z), u_{1}(z)) \in (\phi_{M}^{\rho_{S^{bm}_{0}}(z)} \times \phi_{N}^{\rho_{S^{bm}_{1}}(z)})\mathcal{L}, & z \text{ lies on the seam } (I_{0, 1}, I_{1, 1})\\
u_{1}(z) \in \phi_{N}^{\rho_{S^{bm}_{1}}(z)}L'_{1}, & z \in I_{1, 2}\\
u_{1}(z) \in \phi_{N}^{\rho_{S^{bm}_{1}}(z)}L'_{0}, & z \in I_{1, 0}\\
\lim\limits_{s \to -\infty} u_{0} \circ \epsilon_{0, 0}(s, \cdot) = \phi_{M}^{2}\gamma^{0}(\cdot), & \lim\limits_{s \to -\infty} u_{1} \circ \epsilon_{1, 0}(s, \cdot) = \phi_{N}^{2}\theta^{0}(\cdot)\\
\lim\limits_{s \to +\infty} u_{0} \circ \epsilon_{0, 1}(s, \cdot) = \gamma^{1}(\cdot), & \lim\limits_{s \to +\infty} u_{1} \circ \epsilon_{1, 1}(s, \cdot) = \theta^{1}(\cdot)\\
\lim\limits_{s \to +\infty} u_{0} \circ \epsilon_{0, 2}(s, \cdot) = \gamma(\cdot), & \lim\limits_{s \to +\infty} u_{1} \circ \epsilon_{1, 2}(s, \cdot) = \theta(\cdot)\\
\end{cases}
\end{equation}
The picture for such a quilted map is shown in Figure 1. \par

%the picture for the quilted surface used to define bimodule-valued functor%
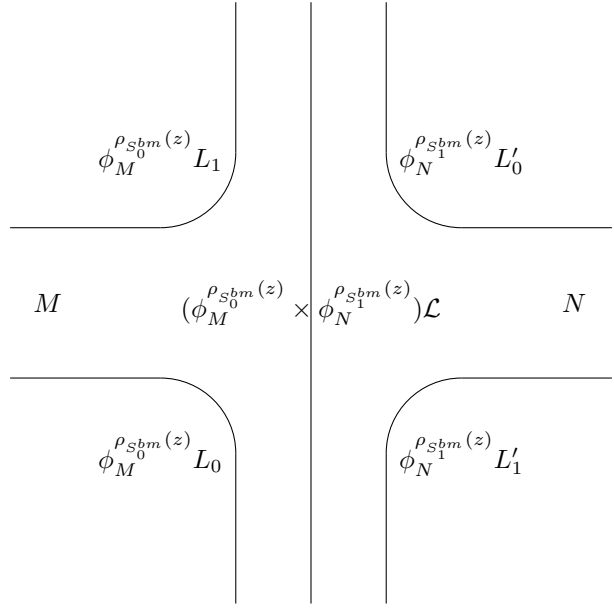
\begin{figure}\label{fig: the quilted map defining bimodule structure}
\centering
\begin{tikzpicture}
%surface%
	\draw (-4, 1) -- (-2, 1);
	\draw (-2, 1) arc (270:360:1cm);
	\draw (-1, 2) -- (-1, 4);
	\draw (-4, -1) -- (-2, -1);
	\draw (-2, -1) arc (90:0:1cm);
	\draw (-1, -2) -- (-1, -4);
	\draw (0, 4) -- (0, -4);
	\draw (1, 4) -- (1, 2);
	\draw (1, 2) arc (180: 270:1cm);
	\draw (2, 1) -- (4, 1);
	\draw (1, -4) -- (1, -2);
	\draw (1, -2) arc (180:90:1cm);
	\draw (2, -1) -- (4, -1);
%Lagrangian labels%
	\draw (-2, 2) node {$\phi_{M}^{\rho_{S^{bm}_{0}}(z)}L_{1}$};
	\draw (-2, -2) node {$\phi_{M}^{\rho_{S^{bm}_{0}}(z)}L_{0}$};
	\draw (2, 2) node {$\phi_{N}^{\rho_{S^{bm}_{1}}(z)}L'_{0}$};
	\draw (2, -2) node {$\phi_{N}^{\rho_{S^{bm}_{1}}(z)}L'_{1}$};
	\draw (0, 0) node {$(\phi_{M}^{\rho_{S^{bm}_{0}}(z)} \times \phi_{N}^{\rho_{S^{bm}_{1}}(z)})\mathcal{L}$};
%target manifold labels%
	\draw (-3.5, 0) node {$M$};
	\draw (3.5, 0) node {$N$};
\end{tikzpicture}
\caption{the quilted map defining the bimodule structure map}
\end{figure}

	Let $\mathcal{M}^{bm}((\gamma^{0}, \theta^{0}); \gamma, (\gamma^{1}, \theta^{1}), \theta)$ be the moduli space of inhomogeneous pseudoholomorphic quilts $\underline{u}: \underline{S}^{bm} \to (M, N)$ satisfying the above conditions. As before, we have the following standard transversality result: \par

\begin{lemma}
	For a generic choice of Floer datum for the quilted surface $\underline{S}^{bm}$, the moduli space $\mathcal{M}^{bm}((\gamma^{0}, \theta^{0}); \gamma, (\gamma^{1}, \theta^{1}), \theta)$ is a smooth manifold of dimension
\begin{equation*}
\deg((\gamma^{1}, \theta^{1})) - \deg(\gamma) - \deg((\gamma^{0}, \theta^{0})) - \deg(\theta).
\end{equation*}
\end{lemma}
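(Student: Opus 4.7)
My plan is to adapt the universal moduli space argument from the proofs of Lemma~\ref{transversality for strips} and Lemma~\ref{transversality for 3-pointed disks} to the quilted setting. Setting up the Banach space of quilted maps $\underline{u} = (u_0, u_1)$ of class $W^{1,p}$ for some $p>2$ satisfying the Lagrangian seam and boundary conditions and the prescribed asymptotic conditions, I take the families of almost complex structures $(J_{S^{bm}_0}, J_{S^{bm}_1})$ as the generic parameters in the Floer datum, with the Hamiltonians $(H_{S^{bm}_0}, H_{S^{bm}_1})$, the closed one-forms $(\alpha_{S^{bm}_0}, \alpha_{S^{bm}_1})$, and the time-shifting function $\rho_{\underline{S}^{bm}}$ held fixed. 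The linearization of the universal Cauchy-Riemann operator splits as a direct sum $(D u_0, D u_1)$ of patchwise Fredholm operators coupled only through the linearized seam condition along $\phi_\sigma$; because a variation $Y_0$ of $J_{S^{bm}_0}$ enters only the first summand and $Y_1$ of $J_{S^{bm}_1}$ only the second, the surjectivity problem decouples into two subproblems on the respective patches.

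To show the universal linearized operator is surjective, I suppose $(\eta_0, \eta_1)$ lies in the cokernel and pair it separately with perturbations of the form $(Y_0, 0)$ and $(0, Y_1)$. The case analysis then mirrors that for Lemma~\ref{transversality for 3-pointed disks}. If any of the chords on patch $i$ lies in the interior of the respective Liouville manifold, $u_i$ must enter that interior, where $J_{S^{bm}_i}$ may be perturbed freely, and the pointwise argument from Lemma~\ref{transversality for strips} case (ii) forces $\eta_i \equiv 0$. In the remaining case where all chords on both patches lie on cylindrical level hypersurfaces, each $u_i$ is confined to the cylindrical end and $J_{S^{bm}_i}$ must be of contact type; I then apply the SFT-type splitting from Lemma~\ref{transversality for strips} case (iii) patchwise, so that generic perturbation of $J_\xi$ yields surjectivity on the contact-distribution component, while the remaining Reeb/radial subsystem is a linear equation whose finite-energy solutions grow exponentially at the strip-like ends and hence escape $L^p$. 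A standard Sard--Smale argument then delivers regularity for a generic choice of $(J_{S^{bm}_0}, J_{S^{bm}_1})$.

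For the dimension, I compute the Fredholm index of the coupled operator using the folding construction of Lemma~\ref{folding quilt lemma}: reinterpreting $\underline{u}$ as an inhomogeneous pseudoholomorphic map into $M^{-} \times N$ with Lagrangian boundary conditions coming from $L_0 \times L'_1$, $L_1 \times L'_0$, and $\mathcal{L}$ (with the appropriate time-shifts), I read off the dimension from the standard index formula for Cauchy-Riemann operators on a genus-zero surface with boundary punctures, which matches the count stated in the lemma. The main obstacle I anticipate is the SFT step in the all-infinity case, where the seam condition couples the Reeb/radial components on the two patches through $\mathcal{L}$ along the seam. The resolution is that $\mathcal{L}$ is conical over the contact hypersurface $\Sigma \subset M^{-} \times N$, so the decomposition of the product cylindrical-end tangent bundle into the product contact distribution and a Reeb-plus-radial plane is preserved by $T\mathcal{L}$; this allows the patchwise exponential-growth argument to proceed without the seam coupling obstructing the contradiction, and closes the proof.
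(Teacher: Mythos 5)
The paper does not actually supply a proof of this lemma: it simply calls it ``standard'' and points back to the earlier transversality proofs for strips and three-pointed disks, and to the general statement that transversality for quilts ``can be proved in the same way as before.'' Your setup (universal moduli space with the domain-dependent $J_{S^{bm}_i}$ as parameters, patchwise Fredholm operator coupled through the linearized seam condition, case analysis by interior versus cylindrical end, index via folding) is exactly the approach the paper appeals to.

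However, your resolution of the flagged difficulty in the all-at-infinity case is not correct. You assert that because $\mathcal{L}$ is conical over the contact hypersurface $\Sigma \subset M^{-}\times N$, the tangent spaces $T\mathcal{L}$ along the seam preserve the decomposition of the product cylindrical-end tangent bundle into the per-factor contact distributions $\xi_M \oplus \xi_N$ and the per-factor Reeb/radial planes. This fails in general. Conicality over $\Sigma$ gives $T\mathcal{L} = \mathbb{R}\,\nu \oplus T(\partial\mathcal{L})$ with $\nu = \nu_M \oplus \nu_N$ and $T(\partial\mathcal{L}) \subset \xi_\Sigma$, where $\xi_\Sigma = \ker\bigl((\lambda_M+\lambda_N)|_{T\Sigma}\bigr)$. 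But $\xi_\Sigma$ strictly contains $\xi_M \oplus \xi_N$: its dimension exceeds $\dim(\xi_M\oplus\xi_N)$ by two, and the extra directions are linear combinations of the factorwise Reeb and radial vectors. So there is no reason for $T(\partial\mathcal{L})$ to split along $\xi_M \oplus \xi_N$ versus the Reeb/radial planes of the factors. The linearized seam condition therefore does mix the contact-distribution components of the two patches with their Reeb/radial components, and the ``patchwise exponential-growth'' argument for the residual system does not simply carry over. To close this case one would have to either analyze the coupled Reeb/radial subsystem with the genuinely mixed seam boundary condition directly, or restructure the argument (e.g.\ fold and run the SFT-type splitting relative to $\Sigma$ rather than per factor; but then the non-seam boundary conditions $L_i \times L'_j$, being conical per factor rather than over $\Sigma$, fail to respect \emph{that} splitting), or give some separate reason that no coherent inhomogeneous pseudoholomorphic quilt can remain entirely in the cylindrical ends. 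As written, the proposal identifies the right obstacle but does not actually overcome it.
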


	There is a natural compactification $\bar{\mathcal{M}}^{bm}((\gamma^{0}, \theta^{0}); \gamma, (\gamma^{1}, \theta^{1}), \theta)$, whose codimension one stratum consists of the following:
\begin{equation}\label{boundary of moduli space of bimodule maps}
\begin{split}
&\coprod \mathcal{M}((\gamma^{0}, \theta^{0}); (\gamma^{0}_{1}, \theta^{0}_{1})) \times \mathcal{M}^{bm}((\gamma^{0}_{1}, \theta^{0}_{1}); \gamma, (\gamma^{1}, \theta^{1}), \theta)\\
&\cup \coprod \mathcal{M}^{bm}((\gamma^{0}, \theta^{0}); \gamma_{1}, (\gamma^{1}, \theta^{1}), \theta) \times \mathcal{M}(\gamma_{1}; \gamma)\\
&\cup \coprod \mathcal{M}^{bm}((\gamma^{0}, \theta^{0}); \gamma, (\gamma^{1}_{1}, \theta^{1}_{1}), \theta) \times \mathcal{M}((\gamma^{1}_{1}, \theta^{1}_{1}); (\gamma^{1}, \theta^{1}))\\
&\cup \coprod \mathcal{M}^{bm}((\gamma^{0}, \theta^{0}); \gamma, (\gamma^{1}, \theta^{1}), \theta_{1}) \times \mathcal{M}(\theta_{1}; \theta).
\end{split}
\end{equation} \par

%compactification of moduli space of bimodule strings%
	Similar to Lemma \ref{regularity + compactness}, we can prove the following:
\begin{lemma}
	$\bar{\mathcal{M}}^{bm}((\gamma^{0}, \theta^{0}); \gamma, (\gamma^{1}, \theta^{1}), \theta)$ is a compact smooth manifold with boundary and corners of dimension $\deg((\gamma^{0}, \theta^{0})) - \deg(\gamma) - \deg((\gamma^{1}, \theta^{1})) - \deg(\theta)$.
\end{lemma}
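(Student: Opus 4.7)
The plan is to follow the pattern established in Lemma~\ref{regularity + compactness} and its quilted analogue (Lemma~\ref{compactness for quilted strips}), decomposing the argument into four steps: (i) smoothness and dimension via Fredholm theory, (ii) absence of bubbling, (iii) a uniform a priori energy bound that rules out escape to the cylindrical ends of $M$ and $N$, and (iv) identification of the codimension-one boundary listed in \eqref{boundary of moduli space of bimodule maps} via standard gluing.

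For (i), the linearized inhomogeneous Cauchy-Riemann operator at any $\underline{u} \in \mathcal{M}^{bm}$ is the direct sum of the linearized operators on each patch, coupled through the seam by the tangent space of $\mathcal{L}$. This is Fredholm with index equal to $\deg((\gamma^{0},\theta^{0})) - \deg(\gamma) - \deg((\gamma^{1},\theta^{1})) - \deg(\theta)$; transversality was obtained in the preceding lemma by a generic choice of Floer datum on $\underline{S}^{bm}$. For (ii), exactness of $\omega_{M}, \omega_{N}$ and of all Lagrangian labels (note that $\phi^{\rho}L$ is still exact because the Liouville flow preserves exactness, and its primitive differs from that of $L$ in a controlled way) precludes sphere and disk bubbles in the usual way.

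Step (iii) is the main technical point. I apply the action-energy equality \eqref{action-energy equality for quilts} separately to each patch: since the forms $\alpha_{S^{bm}_{i}}$ are basic and closed, the would-be curvature terms $\int_{S^{bm}_{i}} u_{i}^{*}H_{S^{bm}_{i}}\, d\alpha_{S^{bm}_{i}}$ vanish, leaving only asymptotic action terms together with boundary contributions coming from the time-shifting functions $\rho_{S^{bm}_{i}}$ and the primitives $f_{L_{j}}, f_{L'_{j}}, f_{\mathcal{L}}$. These primitives are locally constant on the conical ends and $\rho_{S^{bm}_{i}}$ is uniformly bounded in $[1,2]$, so the boundary contributions are bounded by a constant depending only on the background geometry. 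With the four asymptotic chords fixed, this yields a uniform bound on the energy $E(\underline{u})$, and the quadratic growth of $H_{M}, H_{N}$ prevents any sequence from sliding into the cylindrical end of either factor, by exactly the type of action estimate used in Section~\ref{section: wrapped Floer theory in the product}. Combined with the interior/boundary regularity already in the literature, this proves compactness of $\bar{\mathcal{M}}^{bm}$.

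For (iv), the uniform energy bound together with Gromov compactness allows only strip breaking at one of the four ends, giving precisely the four product strata in \eqref{boundary of moduli space of bimodule maps}; standard gluing shows that each product stratum is indeed the image of a codimension-one boundary of $\bar{\mathcal{M}}^{bm}$. The hard part is really step (iii): getting the bookkeeping of primitives and time-shifted Lagrangian boundary conditions correct across two different Liouville factors linked by the seam on $\mathcal{L}$. However, since each patch lives in a single Liouville manifold and the seam contributes a bounded term via $f_{\mathcal{L}}$, no genuinely new phenomenon appears beyond what was already handled in the discussion of wrapped Floer theory on $M^{-} \times N$.
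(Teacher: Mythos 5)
The paper does not actually prove this lemma: immediately after stating it the text reads, ``However, we will not give a complete proof which is standard but somewhat cumbersome, since we will only need the $0$-dimensional and $1$-dimensional compactified moduli spaces \dots which are then easily seen to be compact smooth manifolds and compact smooth manifolds with boundary respectively.'' Your four-step sketch is a reasonable unpacking of that deferred ``standard'' argument; it mirrors the logic of Lemma~\ref{regularity + compactness} and of the product-manifold estimates in Section~\ref{section: wrapped Floer theory in the product}, and I do not see a substantive gap. Two small remarks on phrasing: in step (iii) you speak of ``boundary contributions coming from the time-shifting functions'' as if they were extra terms in the action-energy identity, but in the setup of \eqref{action-energy equality for quilts} these are already absorbed into the action of the rescaled output chord (the identity has no extra boundary term once $\alpha_S$ is closed and vanishes on $\partial S$); and the statement that a uniform energy bound ``prevents sliding into the cylindrical end'' should also invoke the maximum principle (or the essentially equivalent area argument the paper alludes to after \eqref{compactification of moduli space of strips}) rather than the fixed energy alone, since a bounded-energy sequence can in principle still translate outward without the convexity of the contact-type $J$ ruling it out. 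Neither point changes the conclusion.
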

	However, we will not give a complete proof which is standard but somewhat cumbersome, since we will only need the $0$-dimensional and $1$-dimensional compactified moduli spaces (i.e. when $\deg((\gamma^{0}, \theta^{0})) - \deg(\gamma) - \deg((\gamma^{1}, \theta^{1})) - \deg(\theta)$ is either zero or one), which are then easily seen to be compact smooth manifolds and compact smooth manifolds with boundary respectively. Also, we have the following analogue of Lemma \ref{compactness 2}: \par

\begin{lemma}
	For a fixed triple $(\gamma, (\gamma^{1}, \theta^{1}), \theta)$, the moduli space
\begin{equation*}
\mathcal{M}^{bm}((\gamma^{0}, \theta^{0}); \gamma, (\gamma^{1}, \theta^{1}), \theta)
\end{equation*}
is empty for all but finitely many $(\gamma^{0}, \theta^{0})$'s.
\end{lemma}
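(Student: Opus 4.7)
The plan is to combine the action--energy equality for inhomogeneous pseudoholomorphic quilts (equation \eqref{action-energy equality for quilts}) with the action estimates for generalized chords carried out in section \ref{section: wrapped Floer theory in the product}. The underlying philosophy is the same as for Lemma \ref{compactness 2}: an output generalized chord with components deep in the cylindrical ends of $M$ or $N$ has very negative action, which is ultimately incompatible with the fixed positive-end contributions in the action--energy identity.

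First, for any $\underline{u} \in \mathcal{M}^{bm}((\gamma^{0}, \theta^{0}); \gamma, (\gamma^{1}, \theta^{1}), \theta)$, I would apply the quilted action--energy identity, which in this case reads
\[
E(\underline{u}) = \mathcal{A}((\gamma^{0}, \theta^{0})) - \mathcal{A}(\gamma) - \mathcal{A}((\gamma^{1}, \theta^{1})) - \mathcal{A}(\theta) + \mathcal{C}(\underline{u}),
\]
where $\mathcal{C}(\underline{u}) = \sum_i \int_{S^{bm}_i} u_i^{*} H_{S^{bm}_i}\, d\alpha_{S^{bm}_i}$ is the curvature term created by the domain dependence of the Hamiltonians, which in turn is forced upon us by the time-shifting functions $\rho_{S^{bm}_i}$ with values $1$ at the positive ends and $2$ at the negative quilted end. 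Second, I would arrange the Floer datum, following the sub-closed recipe of Abouzaid--Seidel adapted to quilted surfaces, so that each $\alpha_{S^{bm}_i}$ is sub-closed (i.e.\ $d\alpha_{S^{bm}_i} \leq 0$) and so that $H_{S^{bm}_i}$ is quadratic and non-negative at infinity. This makes the integrand of $\mathcal{C}(\underline{u})$ non-positive over the cylindrical ends, and over the compact part of each patch the integrand is bounded by a constant depending only on the Floer datum; in particular $\mathcal{C}(\underline{u}) \leq C$ for some $C$ independent of the output chord.

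Third, since $E(\underline{u}) \geq 0$ and the inputs are fixed, I would obtain a uniform lower bound $\mathcal{A}((\gamma^{0}, \theta^{0})) \geq A_{0}$, where $A_{0}$ depends only on the fixed inputs and on the Floer datum. The generalized chord $(\gamma^{0}, \theta^{0})$ regarded as a single $H_{M, N}$-chord from $L_{0} \times L'_{1}$ to $\mathcal{L}$ in $M^{-} \times N$ then satisfies, by the case analysis of section \ref{section: wrapped Floer theory in the product}, an upper bound of the form $-R_{1}^{2} - R_{2}^{2} + C'$ (with possibly only one of $R_{1}, R_{2}$ appearing if only one component lies in a cylindrical end). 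Comparing the two bounds constrains $R_{1}, R_{2}$ to a compact interval, and non-degeneracy of $H_{M}, H_{N}$ and of the Reeb dynamics on $\partial M, \partial N$ then leaves only finitely many admissible $(\gamma^{0}, \theta^{0})$.

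The step I expect to be the main obstacle is controlling the curvature term $\mathcal{C}(\underline{u})$. Unlike in the triangle-product setup where $\alpha_{S}$ could be chosen closed and the curvature vanished identically, here the combination of the seam compatibility and the prescribed time-shifting values $\rho = 1$ at positive ends versus $\rho = 2$ at the negative quilted end generically prevents $d\alpha_{S^{bm}_i}$ from vanishing. One must verify that $\alpha_{S^{bm}_i}$ can be chosen simultaneously sub-closed, compatible with the seams, and asymptotic to $dt$ with the correct time-shifts at the strip-like ends, so that the resulting upper bound on $\mathcal{C}(\underline{u})$ is genuinely independent of $(\gamma^{0}, \theta^{0})$. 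Once this is arranged, the finiteness conclusion follows along the lines of Lemma \ref{compactness 2}.
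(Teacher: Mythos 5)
Your overall plan—action--energy identity combined with the a priori action estimates for generalized chords in the product—is exactly what the paper intends; it gives no detailed proof, saying only "analogue of Lemma \ref{compactness 2}," and your reconstruction is the right one. The step you flag as the main obstacle is not actually an obstacle, however. Each patch $S^{bm}_{i}$ of the quilted surface $\underline{S}^{bm}$ is itself a three-punctured disk with one negative puncture of weight $2$ and two positive punctures of weight $1$, which is \emph{exactly} the combinatorics of the disk $S$ used for the triangle product. As in Definition \ref{Floer datum}, this weight balance ($2 = 1+1$) allows one to choose on each patch a closed basic one-form $\alpha_{S^{bm}_{i}}$ with $d\alpha_{S^{bm}_{i}} = 0$ and with the prescribed pullbacks $2\,dt$ and $dt$ over the respective strip-like ends. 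The seaming condition constrains only the time-shifting functions $\rho_{S^{bm}_{0}}, \rho_{S^{bm}_{1}}$ to agree along the seam, which is no obstruction since both interpolate between $1$ at $z_{i,1}$ and $2$ at $z_{i,0}$ there; the one-forms $\alpha_{S^{bm}_{i}}$ vanish on all of $\partial S^{bm}_{i}$ by definition of being basic and therefore impose no matching condition across the seam. With these choices the curvature contribution $\mathcal{C}(\underline{u}) = \sum_i \int_{S^{bm}_i} u_i^{*} H_{S^{bm}_{i}}\, d\alpha_{S^{bm}_{i}}$ is identically zero, which is consistent with the form of the quilted action--energy equality \eqref{action-energy equality for quilts}. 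No sub-closedness device is needed. Your remaining steps—passing from $E(\underline{u}) \ge 0$ to a lower bound on $\mathcal{A}((\gamma^{0}, \theta^{0}))$ (accounting for the fixed Liouville rescaling at the negative quilted end), invoking the estimates of section \ref{section: wrapped Floer theory in the product} to see that a generalized chord with a component at level $R_1$ or $R_2$ has action bounded above by $-R_1^2 - R_2^2 + C'$, and then using non-degeneracy to conclude finiteness—are correct and complete the argument.
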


	In particular, $0$-dimensional moduli spaces are compact manifolds, hence consist of finitely many elements. "Counting" these elements then defines a homomorphism of graded $\mathbb{Z}$-modules
\begin{equation}
\begin{split}
m^{1|0|1}&: CW^{*}(L_{0}, L_{1}; H_{M}, J_{M}) \otimes CW^{*}(L_{1}, \mathcal{L}, L'_{0}; H_{M}, H_{N}, J_{M}, J_{N}) \otimes CW^{*}(L'_{0}, L'_{1}; H_{N}, J_{N})\\
&\to CW^{*}(\phi_{M}^{2}L_{0}, (\phi_{M}^{2} \times \phi_{N}^{2})\mathcal{L}, \phi_{N}^{2}L'_{1}; \frac{H_{M}}{2} \circ \phi_{M}^{2}, \frac{H_{N}}{2} \circ \phi_{N}^{2}, (\phi_{M}^{2})^{*}J_{M}, (\phi_{N}^{2})^{*}J_{N})
\end{split}
\end{equation}
\begin{equation}
\begin{split}
&m^{1|0|1}([\gamma], [(\gamma^{1}, \theta^{1})], [\theta])\\
&= \sum_{\substack{\deg((\gamma^{0}, \theta^{0})) = \deg(\gamma) + \deg((\gamma^{1}, \theta^{1})) + \deg(\theta)\\ \underline{u} \in \mathcal{M}^{bm}((\gamma^{0}, \theta^{0}); \gamma, (\gamma^{1}, \theta^{1}), \theta)}} m^{1|0|1}_{\underline{u}}([\gamma], [(\gamma^{1}, \theta^{1})], [\theta]).
\end{split}
\end{equation}
Here $m^{1|0|1}_{\underline{u}}$ is the induced map on orientation lines. By analyzing the boundary of one-dimensional moduli spaces, we can prove that $m^{1|0|1}$ is a cochain map. Also, we identify the output
\begin{equation*}
CW^{*}(\phi_{M}^{2}L_{0}, (\phi_{M}^{2} \times \phi_{N}^{2})\mathcal{L}, \phi_{N}^{2}L'_{1}; \frac{H_{M}}{2} \circ \phi_{M}^{2}, \frac{H_{N}}{2} \circ \phi_{N}^{2}, (\phi_{M}^{2})^{*}J_{M}, (\phi_{N}^{2})^{*}J_{N})
\end{equation*}
with $CW^{*}(L_{0}, \mathcal{L}, L'_{1}; H_{M}, H_{N}, J_{M}, J_{N})$ canonically by conjugating by the Liouville flow. Thus we get a cochain map
\begin{equation}
\begin{split}
&m^{1|0|1}: CW^{*}(L_{0}, L_{1}; H_{M}, J_{M}) \otimes CW^{*}(L_{1}, \mathcal{L}, L'_{0}; H_{M}, H_{N}, J_{M}, J_{N}) \otimes CW^{*}(L'_{0}, L'_{1}; H_{N}, J_{N})\\
&\to CW^{*}(L_{0}, \mathcal{L}, L'_{1}; H_{M}, H_{N}, J_{M}, J_{N}),
\end{split}
\end{equation}
which induces the desired homomorphism \eqref{bimodule structure} on cohomology. \par
	There are other two maps as part of the bimodule structure maps, which on the chain level take the following forms
\begin{equation}
m^{1|0|0}: CW^{*}(L_{0}, L_{1}; H_{M}, J_{M}) \otimes CW^{*}(L_{1}, \mathcal{L}, L'_{0}) \to CW^{*}(L_{0}, \mathcal{L}, L'_{0}),
\end{equation}
\begin{equation}
m^{0|0|1}: CW^{*}(L_{0}, \mathcal{L}, L'_{0}) \otimes CW^{*}(L'_{0}, L'_{1}; H_{N}, J_{N}) \to CW^{*}(L_{0}, \mathcal{L}, L'_{1}).
\end{equation}
These are defined using moduli spaces of inhomogeneous pseudoholomorphic quilted maps from similar quilted surfaces. For example, $m^{1|0|0}$ is defined using quilted surfaces without the puncture $z_{1, 2}$ on the patch $S^{bm}_{1}$, while $m^{0|0|1}$ is defined using quilted surfaces without the puncture $z_{0, 2}$ on the patch $S^{bm}_{0}$. Because the Lagrangian submanifolds of $M$ and of $N$ we consider are conical, and the Lagrangian correspondence $\mathcal{L}$ is admissible, by the same argument we can show that the moduli spaces of inhomogeneous pseudoholomorphic quilted maps from these quilted surfaces have good structures, i.e. they have natural compactifications which are compact smooth manifolds with boundary and corners. Moreover, both $m^{1|0|0}$ and $m^{0|0|1}$ are cochain maps, inducing the desired cohomological level maps. This completes the construction of the first functor on the level of its effect on objects. \par
	Let us proceed to investigate the effect of the functor $\Phi$ on morphisms. Suppose we are given admissible Lagrangian correspondences $\mathcal{L}_{0}, \mathcal{L}_{1} \subset M^{-} \times N$, then the wrapped Floer cohomology $HW^{*}(\mathcal{L}_{0}, \mathcal{L}_{1})$ (defined using an admissible Hamiltonian on $M^{-} \times N$) is the morphism space from $\mathcal{L}_{0}$ to $\mathcal{L}_{1}$ in the cohomology category $H(\mathcal{W}(M^{-} \times N))$. The goal is therefore to construct a bimodule homomorphism
\begin{equation}\label{bimodule hom}
\Phi_{\Gamma}: P_{\mathcal{L}_{0}} \to P_{\mathcal{L}_{1}}
\end{equation}
for each element $[\Gamma] \in HW^{*}(\mathcal{L}_{0}, \mathcal{L}_{1})$. This is in fact straightforward, using the multiplicative structure on wrapped Floer cohomology of Lagrangians in the product manifold $M^{-} \times N$. Namely, for each $L \subset M, L' \subset N$, we have the following isomorphisms
\begin{equation}
HW^{*}(L, \mathcal{L}_{i}, L') \cong HW^{*}(\mathcal{L}_{i}, L \times L'), i = 0, 1.
\end{equation}
Then multiplication with $[\Gamma] \in HW^{*}(\mathcal{L}_{0}, \mathcal{L}_{1})$ under the triangle product
\begin{equation}
m^{2}: HW^{*}(\mathcal{L}_{1}, L \times L') \otimes HW^{*}(\mathcal{L}_{0}, \mathcal{L}_{1}) \to HW^{*}(\mathcal{L}_{0}, L \times L')
\end{equation}
induces a map
\begin{equation}
\Phi_{\Gamma}: HW^{*}(\mathcal{L}_{1}, L \times L') \to HW^{*}(\mathcal{L}_{0}, L \times L'),
\end{equation}
of degree $\deg(\Gamma)$. We have to prove it is indeed a bimodule homomorphism. Intuitively this is quite clear; however to rigorously implement such an idea, we work with the following alternative construction of the bimodule homomorphism. \par
	The alternative construction involves a kind of quilted surface $\underline{S}^{bh}$ consisting of two patches, $S^{bh}_{0}, S^{bh}_{1}$. $S^{bh}_{i}$ is a disk with one negative boundary puncture $z_{i, 0}$ and two positive boundary punctures $z_{i, 1}, z_{i, 2}$, cyclically ordered on the boundary of $D^{2}$. The boundary component of $S^{bh}_{i}$ going from $z_{i, j-1}$ to $z_{i, j}$ is denoted by $I_{i, j}$. We suppose that strip-like ends $\epsilon_{i, j}$ near all punctures have been chosen, in each individual patch. The seaming condition is the following: $I_{0, 2}$ with $I_{1, 2}$, and $I_{0, 0}$ with $I_{1, 0}$. After we seam the two patches together, the resulting quilted surface has three quilted ends: one negative quilted strip-like end $(\epsilon_{0, 0}, \epsilon_{1, 0})$, and one positive quilted strip-like end $(\epsilon_{0, 1}, \epsilon_{1, 1})$, and another positive end $(\epsilon_{0, 2}, \epsilon_{1, 2})$. The third one, $(\epsilon_{0, 2}, \epsilon_{1, 2})$, is regarded as a quilted cylindrical end, as the two pairs of near by boundary components are both seamed together. \par
%important technical point: need time-shifting function for this quilted map%
	To write down the inhomogeneous Cauchy-Riemann equation for quilted maps $\underline{u}: \underline{S}^{bh} \to (M, N)$, we also need a time-shifting function $\rho_{\underline{S}^{bh}}$, which by definition consists of two time-shifting functions $\rho_{S^{bh}_{0}}$ and $\rho_{S^{bh}_{1}}$ defined on the boundary components of $S^{bh}_{0}$ and $S^{bh}_{1}$ respectively, such that they agree over the seams. We require that $\rho_{S^{bh}_{i}} = 1$ near $z_{i, 1}, z_{i, 2}$, and that $\rho_{S^{bh}_{i}} = 2$ near $z_{i, 0}$, for $i = 1, 2$. \par
	Then we choose Floer datum for the quilted surface $\underline{S}^{bh}$, which consists of Floer data $(\alpha_{S^{bh}_{0}}, H_{S^{bh}_{0}}, J_{S^{bh}_{0}})$ and $(\alpha_{S^{bh}_{1}}, H_{S^{bh}_{1}}, J_{S^{bh}_{1}})$ on the two patches, satisfying the following conditions. $(\alpha_{S^{bh}_{0}}, H_{S^{bh}_{1}}, J_{S^{bh}_{1}})$ agrees with $(dt, H_{M}, J_{M})$ over the two positive strip-like ends $\epsilon_{0, 1}, \epsilon_{0, 2}$ of $S^{bh}_{0}$, and with $(2dt, \frac{H_{M}}{4} \circ \phi_{M}^{2}, (\phi_{M}^{2})^{*}J_{M})$ over the negative strip-like end $\epsilon_{0, 0}$ of $S^{bh}_{0}$. And there are similar conditions for $(\alpha_{S^{bh}_{1}}, H_{S^{bh}_{1}}, J_{S^{bh}_{1}})$. \par
	Given generalized Hamiltonian chords $(\gamma^{0}, \theta^{0})$ for $(L, \mathcal{L}_{0}, L')$, $(\gamma^{1}, \theta^{1})$ for $(L, \mathcal{L}_{1}, L')$, as well as an $H_{M, N}$-chord $\Gamma$ in $M^{-} \times N$ from $\mathcal{L}_{0}$ to $\mathcal{L}_{1}$, the inhomogeneous Cauchy-Riemann equation for quilted maps $\underline{u}: \underline{S}^{bh} \to (M, N)$ with these asymptotic conditions takes the following form:
\begin{equation}
\begin{cases}
(du_{i} - \alpha_{S^{bh}_{i}} \otimes X_{H_{S^{bh}_{i}}}(u_{i}))^{0, 1} = 0, & i = 1, 2\\
u_{0}(z) \in \phi_{M}^{\rho_{S^{bh}_{0}}(z)}L, & z \in I_{0, 1}\\
(u_{0}(z), u_{1}(z)) \in (\phi_{M}^{\rho_{S^{bh}_{0}}(z)} \times \phi_{N}^{\rho_{S^{bh}_{1}}(z)})\mathcal{L}_{1}, & z \text{ lies on the seam } (I_{0,2}, I_{1, 2})\\
(u_{0}(z), u_{1}(z)) \in (\phi_{M}^{\rho_{S^{bh}_{0}}(z)} \times \phi_{N}^{\rho_{S^{bh}_{1}}(z)})\mathcal{L}_{0}, & z \text{ lies on the seam } (I_{0,0}, I_{1, 0})\\
u_{1}(z) \in \phi_{N}^{\rho_{S^{bh}_{1}}}(z)L', & z \in I_{1, 1}\\
\lim\limits_{s \to -\infty} u_{0} \circ \epsilon_{0, 0}(s, \cdot) = \phi_{M}^{2}\gamma^{0}(\cdot)\\
\lim\limits_{s \to +\infty} u_{0} \circ \epsilon_{0, 1}(s, \cdot) = \gamma^{i}(\cdot)\\
\lim\limits_{s \to -\infty} u_{1} \circ \epsilon_{1, 0}(s, \cdot) = \phi_{N}^{2}\theta^{0}(\cdot)\\
\lim\limits_{s \to +\infty} u_{1} \circ \epsilon_{1, 1}(s, \cdot) = \theta^{i}(\cdot)\\
\lim\limits_{s \to +\infty} (u_{0}, u_{1}) \circ (\epsilon_{0, 2}(s, 1-t), \epsilon_{1, 2}(s, t)) = \Gamma(t)
\end{cases}
\end{equation}
The picture of such a quilted map is shown in Figure 2.
\par

%the picture of the quilted surface defining the bimodule homomorphism%
\begin{figure}\label{fig: the quilted map defining bimodule homomorphism}
\centering
\begin{tikzpicture}
%the surface%
	\draw (-6, 1.5) -- (6, 1.5);
	\draw (-6, 0) -- (-0.3, 0);
	\draw (0, 0) circle (0.3cm);
	\draw (0.3, 0) -- (6, 0);
	\draw (-6, -1.5) -- (6, -1.5);
%Lagrangian labels%
	\draw (0, 2) node {$\phi_{M}^{\rho_{S^{bh}_{0}}(z)}L$};
	\draw (0, -2) node {$\phi_{N}^{\rho_{S^{bh}_{1}}}(z)L'$};
	\draw (-3, 0) node {$(\phi_{M}^{\rho_{S^{bh}_{0}}(z)} \times \phi_{N}^{\rho_{S^{bh}_{1}}(z)})\mathcal{L}_{0}$};
	\draw (3, 0) node {$(\phi_{M}^{\rho_{S^{bh}_{0}}(z)} \times \phi_{N}^{\rho_{S^{bh}_{1}}(z)})\mathcal{L}_{1}$};
%target manifold labels%
	\draw (0, 1) node {$M$};
	\draw (0, -1) node {$N$};
\end{tikzpicture}
\caption{the quilted map defining bimodule homomorphisms}
\end{figure}
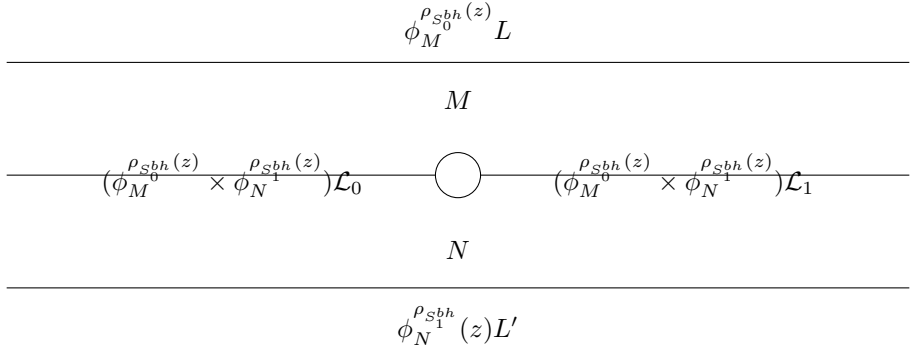

	Let $\mathcal{M}^{bh}((\gamma^{0}, \theta^{0}); (\gamma^{1}, \theta^{1}), \Gamma)$ be the (uncompactified) moduli space of inhomogeneous pseudoholomorphic quilts of the above kind. A standard transversality argument implies that for a generic choice of Floer datum for the quilted surface, the moduli space $\mathcal{M}^{bh}((\gamma^{0}, \theta^{0}); (\gamma^{1}, \theta^{1}), \Gamma)$ is a smooth manifold of dimension
\begin{equation*}
\deg((\gamma^{0}, \theta^{0})) - \deg((\gamma^{1}, \theta^{1})) - \deg(\Gamma).
\end{equation*} \par
	There is a natural compactification $\bar{\mathcal{M}}^{bh}((\gamma^{0}, \theta^{0}); (\gamma^{1}, \theta^{1}), \Gamma)$ whose codimension one boundary is covered by
\begin{equation}
\begin{split}
&\coprod \mathcal{M}((\gamma^{0}, \theta^{0}), (\gamma^{0}_{1}, \theta^{0}_{1})) \times \mathcal{M}^{bh}((\gamma^{0}_{1}, \theta^{0}_{1}); (\gamma_{1}, \theta_{1}), \Gamma)\\
&\cup \coprod \mathcal{M}^{bh}((\gamma^{0}, \theta^{0}); (\gamma^{1}_{1}, \theta^{1}_{1}), \Gamma) \times \mathcal{M}((\gamma^{1}_{1}, \theta^{1}_{1}), (\gamma_{1}, \theta_{1}))\\
&\cup \coprod \mathcal{M}^{bh}((\gamma^{0}, \theta^{0}); (\gamma_{1}, \theta_{1}), \Gamma_{1}) \times \mathcal{M}(\Gamma_{1}, \Gamma),
\end{split}
\end{equation}
where the last term $\mathcal{M}(\Gamma_{1}, \Gamma)$ stands for the moduli space of inhomogeneous pseudoholomorphic strips in $M^{-} \times N$ with boundary on the pair $(\mathcal{L}_{0}, \mathcal{L}_{1})$, with respect to the split Hamiltonian $H_{M, N}$ and product almost complex structure $-J_{M} \times J_{N}$, with asymptotic convergence conditions being $\Gamma_{1}$ at $-\infty$ and $\Gamma$ at $+\infty$. \par
	There are regularity and compactness results for this moduli space similar to Lemma \ref{regularity + compactness} and Lemma \ref{compactness 2}. They allow us to count rigid elements in zero-dimensional moduli spaces to obtain a map:
\begin{equation} \label{bimodule homomorphism from quilted surface}
\begin{split}
&\Phi: CW^{*}(L, \mathcal{L}_{1}, L'; H_{M}, H_{N}, J_{M}, J_{N}) \otimes CW^{*}(\mathcal{L}_{0}, \mathcal{L}_{1}; H_{M, N}, -J_{M} \times J_{N})\\
&\to CW^{*}(\phi_{M}^{2}L, (\phi_{M}^{2} \times \phi_{N}^{2})\mathcal{L}_{0}, \phi_{N}^{2} L'; \frac{H_{M}}{2} \circ \phi_{M}^{2}, \frac{H_{N}}{2} \circ \phi_{N}^{2}, (\phi_{M}^{2})^{*}J_{M}, (\phi_{N}^{2})^{*}J_{N})
\end{split}
\end{equation}
\begin{equation}
\Phi([\Gamma], [(\gamma^{1}, \theta^{1})]) = \sum_{\substack{\underline{u} \in \mathcal{M}^{bh}((\gamma^{0}, \theta^{0}); (\gamma^{1}, \theta^{1}), \Gamma)\\ \deg((\gamma^{0}, \theta^{0})) = \deg((\gamma^{1}, \theta^{1})) + \deg(\Gamma)}} \Phi_{\underline{u}}([\Gamma], [(\gamma^{1}, \theta^{1})]),
\end{equation}
where $\Phi_{\underline{u}}$ is the induced map on orientation lines. A standard gluing argument implies that this is a cochain map. Also, we identify the output
\begin{equation*}
CW^{*}(\phi_{M}^{2}L, (\phi_{M}^{2} \times \phi_{N}^{2})\mathcal{L}_{0}, \phi_{N}^{2} L'; \frac{H_{M}}{2} \circ \phi_{M}^{2}, \frac{H_{N}}{2} \circ \phi_{N}^{2}, (\phi_{M}^{2})^{*}J_{M}, (\phi_{N}^{2})^{*}J_{N})
\end{equation*}
with $CW^{*}(L, \mathcal{L}_{0}, L'; H_{M}, H_{N}, J_{M}, J_{N})$ canonically by conjugating by the Liouville flow, and thus regard the map \eqref{bimodule homomorphism from quilted surface} as
\begin{equation}
\begin{split}
\Phi: &CW^{*}(L, \mathcal{L}_{1}, L'; H_{M}, H_{N}, J_{M}, J_{N}) \otimes CW^{*}(\mathcal{L}_{0}, \mathcal{L}_{1}; H_{M, N}, -J_{M} \times J_{N})\\
&\to CW^{*}(L, \mathcal{L}_{0}, L'; H_{M}, H_{N}, J_{M}, J_{N}).
\end{split}
\end{equation}
Hence we get an induced map on cohomology:
\begin{equation*}
HW^{*}(L, \mathcal{L}_{1}, L') \otimes HW^{*}(\mathcal{L}_{0}, \mathcal{L}_{1}) \to HW^{*}(L, \mathcal{L}_{0}, L').
\end{equation*}
Fixing the input $[\Gamma] \in HW^{*}(\mathcal{L}_{0}, \mathcal{L}_{1})$, we get the desired map $\Phi_{\Gamma}$. Although its form of presence is not quite transparent, this will be the $(0, 0)$-th order map of the bimodule homomorphism \eqref{bimodule hom}. \par
	To extend this map to higher orders, at least to orders $(1, 0), (0, 1)$ and $(1, 1)$ so that we can verify that it defines a bimodule homomorphism on the cohomological category, we shall introduce a few more quilted surfaces and study inhomogeneous pseudoholomorphic maps from them. The relevant quilted surface $\underline{S}^{bh, k, l}$ is obtained from $\underline{S}^{bh}$ by adding $k$ positive punctures on the $I_{0, 1}$ boundary component of $S^{bh}_{0}$ and $l$ positive punctures on the $I_{1, 1}$ boundary component of $S^{bh}_{1}$, where $k = 0, 1$ and $l = 0, 1$ for our interest in this paper. Convergence conditions for the inhomogeneous pseudoholomorphic quilted maps from $\underline{S}^{bh, k, l}$ at these additional punctures are Floer cochains between Lagrangians in $M$ and respectively $N$. Since we only consider conical Lagrangian submanifolds of $M$ and of $N$, and admissible Lagrangian correspondences from $M$ to $N$, the moduli spaces of such quilted maps all have good structures, i.e. they have natural compactifications which are compact smooth manifolds with boundary and corners. The description of the compactification is similar, and counting rigid elements therein yields the following maps
\begin{equation}
\begin{split}
\Phi^{1, 0}: &CW^{*}(\mathcal{L}_{0}, \mathcal{L}_{1}; H_{M, N}, -J_{M} \times J_{N})\\
&\to \hom(CW^{*}(L_{0}, L_{1}; H_{M}, J_{M}) \otimes CW^{*}(L_{1}, \mathcal{L}_{0}, L'_{0}), CW^{*}(L_{0}, \mathcal{L}_{1}, L'_{0})),
\end{split}
\end{equation}

\begin{equation}
\begin{split}
\Phi^{0, 1}: &CW^{*}(\mathcal{L}_{0}, \mathcal{L}_{1}; H_{M, N}, -J_{M} \times J_{N})\\
&\to \hom(CW^{*}(L_{0}, \mathcal{L}_{0}, L'_{0}) \otimes CW^{*}(L'_{0}, L'_{1}; H_{N}, J_{N}), CW^{*}(L_{0}, \mathcal{L}_{1}, L'_{1})),
\end{split}
\end{equation}

\begin{equation}
\begin{split}
\Phi^{1, 1}: &CW^{*}(\mathcal{L}_{0}, \mathcal{L}_{1}; H_{M, N}, -J_{M} \times J_{N}) \to \hom(CW^{*}(L_{0}, L_{1}; H_{M}, J_{M})\\
&\otimes CW^{*}(L_{1}, \mathcal{L}_{0}, L'_{0}) \otimes CW^{*}(L'_{0}, L'_{1}; H_{N}, J_{N}), CW^{*}(L_{0}, \mathcal{L}_{1}, L'_{1})),
\end{split}
\end{equation}
which are all cochain maps with respect to the corresponding differentials, where the differential on the $\hom$-complex is induced by (quilted) Floer differentials on various (quilted) wrapped Floer cochain complexes. In particular, these include the case $k = l = 0$. \par

\begin{remark}
	Allowing general numbers $k, l$, we can define the $A_{\infty}$-bimodule structures and homomorphisms in this way, which will appear in \cite{Gao}. As already mentioned, there is still a technical issue regarding full chain-level $A_{\infty}$-structures for wrapped Floer theory for the product manifold, which is the reason why we stay in the case $k, l \le 1$.
\end{remark}

\subsection{Module-valued functor}
	To convert the above construction into the form as stated in Theorem \ref{functors associated to Lagrangian correspondences}, it suffices to play with some simple homological algebra, which mostly has to do with a Yoneda-type functor. \par
	There is a general way of turning bimodules to functors to the category of modules. To be more precise, let $A, B$ be (locally small) linear categories, and $P$ be a bimodule over $(A, B)$, i.e. a right-$A$ and left-$B$ bimodule. Then we can construct in a canonical way a functor from $A$ to the category of left $B$-modules.
\begin{equation}
F_{P}: A \to l-Mod(B),
\end{equation}
which is defined as follows. For any object $X \in Ob(A)$, $F_{P}(X)$ is the left $B$-module:
\begin{equation}
Y \mapsto P(X, Y).
\end{equation}
For any morphism $a \in \hom_{A}(X_{0}, X_{1})$, $F_{P}(a)$ is the $B$-module homomorphism:
\begin{equation}
F_{P}(X_{1}) \to F_{P}(X_{0})
\end{equation}
which is induced by multiplication by $a$ with respect to the right-$A$ structure:
\begin{equation}
\hom_{A}(X_{0}, X_{1}) \otimes P(X_{1}, Y) \to P(X_{0}, Y).
\end{equation}
With additional little amount of argument, one can show: \par

\begin{lemma}
	Suppose both $A$ and $B$ are unital. Then the above construction yields a fully faithful functor
\begin{equation}
F: (A, B)^{bimod} \to Func(A, l-Mod(B)).
\end{equation}
\end{lemma}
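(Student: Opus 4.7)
The plan is to unpack the statement by first defining $F$ on morphisms of bimodules, then checking faithfulness (which is essentially formal) and finally proving fullness, which is the actual content. Given a bimodule homomorphism $\phi: P \to Q$, consisting of component maps $\phi_{X,Y}: P(X,Y) \to Q(X,Y)$ that intertwine both the right $A$- and left $B$-actions, I define $F(\phi): F_P \to F_Q$ to be the natural transformation whose value at an object $X \in \mathrm{Ob}(A)$ is the left $B$-module homomorphism $F_P(X) \to F_Q(X)$ given, for each $Y \in \mathrm{Ob}(B)$, by $\phi_{X,Y}: P(X,Y) \to Q(X,Y)$. That each $F(\phi)_X$ is indeed a $B$-module homomorphism is precisely left $B$-linearity of $\phi$, and naturality in $X$ for $a \in \hom_A(X_0, X_1)$, i.e. commutativity of $F(\phi)_{X_0} \circ F_P(a) = F_Q(a) \circ F(\phi)_{X_1}$, follows from right $A$-linearity of $\phi$. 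Functoriality of $F$ (compatibility with composition and identities of bimodule maps) is then immediate from the componentwise definition.

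Faithfulness is transparent: if $F(\phi) = F(\psi)$, then for all $X$ the $B$-module homomorphisms $F(\phi)_X, F(\psi)_X: F_P(X) \to F_Q(X)$ agree, so evaluating at every $Y$ gives $\phi_{X,Y} = \psi_{X,Y}$ for all pairs $(X,Y)$, hence $\phi = \psi$ as bimodule maps.

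The main work is fullness. Given a natural transformation $\eta: F_P \to F_Q$ in $\mathrm{Func}(A, l\text{-}\mathrm{Mod}(B))$, each $\eta_X: F_P(X) \to F_Q(X)$ is a left $B$-module homomorphism, which by definition consists of component maps $\eta_{X,Y}: P(X,Y) \to Q(X,Y)$ commuting with the left $B$-action (here the unitality of $B$ enters, as $l\text{-}\mathrm{Mod}(B)$ consists of unital modules and morphisms thereof, so the $B$-action determines the morphism structure). Set $\phi_{X,Y} := \eta_{X,Y}$; I then check that $\phi$ is a bimodule homomorphism. Left $B$-linearity holds by construction. Right $A$-linearity is precisely the naturality of $\eta$ with respect to $a \in \hom_A(X_0, X_1)$: evaluating the naturality square at $Y$ gives exactly the compatibility of $\phi$ with the right $A$-action on $P$ and $Q$. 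Finally, unitality of $A$ is used to ensure that this assignment is inverse to $F$ on morphisms: the value $\eta_{X, Y}$ is fully determined by the bimodule structure together with $\eta$'s naturality, and the unit $e_X \in \hom_A(X,X)$ guarantees we recover $\eta = F(\phi)$ rather than only a weaker version that agrees after multiplying by units.

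The step I expect to require the most care is the bookkeeping in fullness — checking that unpacking a natural transformation's components indeed produces a map that respects \emph{both} the left $B$- and the right $A$-structure, and that the resulting $\phi \mapsto F(\phi)$ is a literal two-sided inverse on hom-sets. No deep input is needed; the entire argument is a Yoneda-style verification, and unitality is invoked only to make sure the categories of modules and bimodules are defined with their standard unital hom-spaces so that the bijection on morphisms is genuine.
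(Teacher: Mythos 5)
The paper does not actually give a proof of this lemma; it simply asserts that it follows ``with a little amount of argument,'' so there is no argument in the text to compare yours against. Your proof is the standard Yoneda-style verification and is essentially correct: defining $F$ on a bimodule homomorphism $\phi$ componentwise, observing that faithfulness is immediate from the componentwise description, and unpacking a natural transformation $\eta: F_P \to F_Q$ into a bimodule homomorphism with left $B$-linearity furnished by each $\eta_X$ being a $B$-module map and right $A$-linearity furnished by naturality in $X$. This gives a literal bijection on hom-sets, which is exactly full faithfulness.

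One remark on precision. You say that unitality of $A$ ``guarantees we recover $\eta = F(\phi)$ rather than only a weaker version that agrees after multiplying by units.'' This is not really where unitality enters: once you set $\phi_{X,Y} := \eta_{X,Y}$, the identity $F(\phi) = \eta$ holds by definition and requires nothing further. The place unitality of $A$ is genuinely used is upstream, in making the construction $P \mapsto F_P$ well-defined into $\mathrm{Func}(A, l\text{-}\mathrm{Mod}(B))$: for $F_P$ to be a functor one needs $F_P(e_X) = \mathrm{id}_{F_P(X)}$, i.e.\ that the right $A$-action on $P$ is unital, and likewise unitality of $B$ ensures each $F_P(X)$ is a unital left $B$-module. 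With that adjustment your argument is airtight.
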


	We apply this lemma to the bimodule-valued functor \eqref{bimodule-valued functor} to obtain the desired functor \eqref{functor in a categorical level}, and evaluate at any object $\mathcal{L}$ to obtain the functor \eqref{functor associated to a single Lagrangian correspondence}. \par
	Finally, the statement that both functors are cohomologically unital follows from the fact that the Yoneda functor is cohomologically fully faithful and therefore cohomologically unital. While the cohomological fullness and failthfulness follow from the fact that the wrapped Fukaya category is cohomologically unital. \par
	As a concluding remark, we mention that the above process of turning bimodules to module-valued functors can be extended to $A_{\infty}$-categories, which will explained in \cite{Gao} to carry out chain-level constructions of $A_{\infty}$-functors from Lagrangian correspondences. \par

\section{Geometric compositions and representability}

\subsection{Geometric composition of Lagrangian correspondences}
%In the wrapped setting, geometric composition should be defined by fiber product over the graph of the wrapping%
	The main reason for us to study geometric compositions of Lagrangian correspondences is that we attempt to prove representability of the above-constructed module-valued functors $\Phi_{\mathcal{L}}$ (and $A_{\infty}$-version to appear in \cite{Gao}). The result necessary for establishing representability is Theorem \ref{geometric composition isomorphism}, which we shall prove in this section. \par
	In the case of wrapped Floer cohomology, the definition of the geometric composition of two Lagrangian correspondences is slightly different from that of compact Lagrangian correspondences between closed/tame symplectic manifolds when the usual compact Fukaya categories are concerned, in the sense that the Lagrangian correspondences should be composed in the way governed by the flow of the Hamiltonian vector field, which encodes information about wrapping. We formalize the description as follows. \par
	
\begin{definition}\label{geometric composition}
	Given Lagrangian correspondences $\mathcal{L}_{01} \subset M_{0}^{-} \times M_{1}, \mathcal{L}_{12} \subset M_{1}^{-} \times M_{2}$, the geometric composition of them is defined to be the map from the fiber product of $\mathcal{L}_{01}$ and $\mathcal{L}_{12}$ over the graph of $\psi_{H_{M_{1}}} \subset M_{1} \times M_{1}^{-}$ to $M_{0}^{-} \times M_{2}$
\begin{equation}
\iota_{02}: \mathcal{L}_{02} = \mathcal{L}_{01} \circ_{H_{M_{1}}} \mathcal{L}_{12} = \mathcal{L}_{01} \times_{\Gamma(\psi_{H_{M_{1}}})} \mathcal{L}_{12} \to M_{0}^{-} \times M_{2},
\end{equation}
where the map $\iota_{02}$ is induced by the projection $M_{0} \times M_{1} \times M_{1} \times M_{2} \to M_{0} \times M_{2}$.
\end{definition}

	In a generic situation, the geometric composition is a Lagrangian immersion, which might not even be proper. In this paper, we will restrict to the case where the geometric composition is a proper embedding. This assumption of course impose very strong restriction on the geometry of these Lagrangian correspondences. \par
	This definition of geometric composition is well suited to the wrapped Lagrangian Floer theory. However, it causes one potential trouble: in general the geometric composition of arbitrary two conical Lagrangian submanifolds might no longer be conical, because of the Hamiltonian perturbation. In the next subsection, we will impose a further condition on such geometric compositions, and explain how the wrapped Floer cohomology can still be defined for these particular kinds of Lagrangian submanifolds. \par
	In this section, we shall focus on the case of the geometric composition of a conical Lagrangian submanifold $L \subset M$ with an admissible Lagrangian correspondence $\mathcal{L} \subset M^{-} \times N$. We also call $\iota: L \circ_{H_{M}} \mathcal{L} \to N$ the geometric transform of $L$ by $\mathcal{L}$. Studying geometric compositions of general Lagrangian correspondences as in Definition \ref{geometric composition} is of importance to further purposes, for example, understanding composition of functors. However, currently there are still unsolved technical issues concerning wrapped Floer theory in multiple products of Liouville manifolds, so we will not discuss that point. \par

%defining wrapped Floer cohomology for the geometric composition%
\subsection{Wrapped Floer cohomology of the geometric composition} \label{section: well-definedness of wrapped Floer cohomology of the geometric composition}
	Assume from now on that the geometric composition $\iota: L \circ_{H_{M}} \mathcal{L} \to N$ is a proper embedding. However, it may no longer be conical. Thus proving well-definedness of its wrapped Floer cohomology is a task before trying to prove the isomorphism of wrapped Floer cohomology under geometric composition \eqref{isomorphism under geometric composition}. This subsection is devoted to proving the first part of Theorem \ref{geometric composition isomorphism}. \par
	Let $f$ be the primitive for $L$ and $F$ be the primitive for $\mathcal{L}$. That is, $df = \lambda_{M}|_{L}$ and $dF = (-\lambda_{M} \times \lambda_{N})|_{\mathcal{L}}$. The following lemma provides a canonical choice of a primtive for the geometric composition:

\begin{lemma}
If $\iota: L \circ_{H_{M}} \mathcal{L} \to N$ is a proper embedding, then
\begin{equation} \label{primitive for the geometric composition}
g = (f + F \circ (\psi_{H_{M}} \times id_{N}) + i_{X_{H_{M}}}\lambda_{M}) \circ \iota^{-1}
\end{equation}
is a primitive for the Lagrangian submanifold $\iota(L \circ_{H_{M}} \mathcal{L})$ of $N$. Here $i_{X_{H_{M}}}\lambda_{M}$ means contracting the one-form $\lambda_{M}$ by the Hamiltonian vector field $X_{H_{M}}$. In particular, $L \circ_{H_{M}} \mathcal{L}$ is exact.
\end{lemma}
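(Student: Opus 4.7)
The plan is to verify the pointwise identity $d(g \circ \iota) = \iota^*\lambda_N$ on the fiber product $\tilde{\mathcal{L}}_{02} := L \times_{\Gamma(\psi_{H_M})} \mathcal{L}$, which, since $\iota$ is a proper embedding, is canonically identified with $\iota(L \circ_{H_M}\mathcal{L}) \subset N$. The exactness of the geometric composition and the displayed form of its primitive are immediate consequences of this identity.

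First, I would give an explicit local description of tangent vectors: a point of $\tilde{\mathcal{L}}_{02}$ is a pair $(l, n)$ with $(\psi_{H_M}(l), n) \in \mathcal{L}$, and a tangent vector is a pair $(v, w) \in T_lL \oplus T_nN$ satisfying the linearized seam condition $((\psi_{H_M})_*v, w) \in T_{(\psi_{H_M}(l), n)}\mathcal{L}$. Under this description $\iota$ sends $(l, n) \mapsto n$, so the target one-form evaluates as $(\iota^*\lambda_N)(v, w) = \lambda_N(w)$.

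Next I would compute $d(g \circ \iota)(v, w)$ termwise using the defining identities $df = \lambda_M|_L$ and $dF = (-\lambda_M \oplus \lambda_N)|_\mathcal{L}$. The first summand contributes $\lambda_M(v)$, and the second contributes $-\lambda_M((\psi_{H_M})_*v) + \lambda_N(w)$ via the seam condition. Adding these produces the desired $\lambda_N(w)$ plus a residual one-form on $L$ equal to $-(\psi_{H_M}^*\lambda_M - \lambda_M)(v)$, which must be absorbed by the differential of the third summand.

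The main obstacle is thus to show that the contraction term provides a primitive for $\psi_{H_M}^*\lambda_M - \lambda_M$ along the relevant locus. For time-independent Hamiltonians, Cartan's formula yields
\begin{equation*}
\mathcal{L}_{X_{H_M}}\lambda_M \;=\; d(i_{X_{H_M}}\lambda_M) + i_{X_{H_M}}\omega_M \;=\; d\bigl(i_{X_{H_M}}\lambda_M + H_M\bigr),
\end{equation*}
and integrating along the Hamiltonian flow expresses $\psi_{H_M}^*\lambda_M - \lambda_M$ as the differential of a natural action integral. In the cylindrical region where $H_M$ is quadratic in $r$, both $H_M$ and $i_{X_{H_M}}\lambda_M$ are invariant along Hamiltonian orbits, so the action integral reduces (up to additive constants on connected components, which do not affect exactness) to the contraction $i_{X_{H_M}}\lambda_M$ as written, while in the interior part of $M$ the relevant potential is absorbed into $f$ and $F$ up to constants. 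With this primitive identification in place, the $\lambda_M$-contributions cancel, giving $d(g \circ \iota)(v, w) = \lambda_N(w) = (\iota^*\lambda_N)(v, w)$, which identifies $g$ as a primitive for the exact Lagrangian submanifold $\iota(L \circ_{H_M}\mathcal{L}) \subset N$.
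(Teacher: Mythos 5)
Your proposal follows the same logical scaffold as the paper's proof: describe $L\circ_{H_M}\mathcal{L}$ as a fiber product, compute $d(g\circ\iota)$ term by term on tangent vectors, and reduce everything to showing that the contraction term accounts for the residual $\psi_{H_M}^*\lambda_M - \lambda_M$ left over from the first two summands. The paper bypasses this last verification by passing to the fiber product over the diagonal $\Delta_M$ and quoting ``the well--known fact that a Hamiltonian symplectomorphism is an exact symplectomorphism,'' whereas you try to derive the needed identity from Cartan's formula. The explicitness is welcome, but it is exactly where the gap appears.

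With the paper's convention $i_{X_{H_M}}\omega_M = -dH_M$ (so that $X_{H_M} = 2rY$ on the cylindrical end, as asserted after Lemma~\ref{area estimate}), Cartan's formula reads $\mathcal{L}_{X_{H_M}}\lambda_M = d\bigl(i_{X_{H_M}}\lambda_M - H_M\bigr)$ --- your sign on $H_M$ is flipped --- and integrating the flow gives
\begin{equation*}
\psi_{H_M}^*\lambda_M - \lambda_M = d\left(\int_0^1 (\psi_{H_M}^t)^*\bigl(i_{X_{H_M}}\lambda_M - H_M\bigr)\,dt\right).
\end{equation*}
Over the cylindrical end, where $H_M = r^2$ and $i_{X_{H_M}}\lambda_M = 2r^2$ are both $\psi^t_{H_M}$--invariant, the action integral is $i_{X_{H_M}}\lambda_M - H_M = r^2$, \emph{not} $i_{X_{H_M}}\lambda_M = 2r^2$. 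The discrepancy is $H_M = r^2$, which is quadratic along the radial direction of $L$; since $dH_M\bigl(\tfrac{\partial}{\partial r}\bigr)=2r\ne 0$ and the conical part of $L$ contains the radial direction, this is not an additive constant on connected components. So the step asserting that the action integral reduces to the bare contraction up to locally constant functions fails, and the proposal as written does not establish $d(g\circ\iota)=\iota^*\lambda_N$. (Your own computation in fact indicates that the formula should carry the extra $-H_M$ correction, i.e.\ $g = \bigl(f + F\circ(\psi_{H_M}\times \mathrm{id}_N) + i_{X_{H_M}}\lambda_M - H_M\bigr)\circ\iota^{-1}$, or more precisely the full action integral off the cylindrical end; that correction has the same quadratic growth and would not disturb the subsequent action estimates, but without it the cancellation you need does not take place.)
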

\begin{proof}
	Recall that (the preimage of) the geometric composition is the fiber product over the graph of $\psi_{H_{M}}$:
\begin{equation}
L \times_{\Gamma(\psi_{H_{M}})} \mathcal{L} = \{(p, \psi_{H_{M}}(p), q) \in M \times M \times N| p \in L, (\psi_{H_{M}}(p), q) \in \mathcal{L}\}
\end{equation}
which is canonically isomorphic to the fiber product over the diagonal of $L$ and the perturbed $\mathcal{L}$:
\begin{equation} \label{fiber product with the perturbed Lagrangian correspondence}
L \times_{\Delta_{M}} (\psi_{H_{M}}^{-1} \times id_{N})\mathcal{L} = \{(p, p, q) \in M \times M \times N| p \in L, (p, q) \in (\psi_{H_{M}}^{-1} \times id_{N})\mathcal{L}\}
\end{equation}
By tranversality assumption, $L \circ_{H_{M}} \mathcal{L}$ is a smooth submanifold of $M \times M \times N$ and comes with a natural smooth map $\iota$ to $N$ induced by the projection $M \times M \times N \to N$ sending $(p, \psi_{H_{M}}(p), q)$ to $q$. Moreover, the above isomorphism is canonical in the sense that the second fiber product \eqref{fiber product with the perturbed Lagrangian correspondence} also has a natural map to $N$ which agrees with $\iota$, sending $(p, p, q)$ to $q$. We will for convenience seek a function on \eqref{fiber product with the perturbed Lagrangian correspondence} whose differential is equal to the pullback of $\lambda_{N}$. \par
	Since $F$ is a primitive for $\mathcal{L}$ with respect to the one form $(-\lambda_{M} \times \lambda_{N})|_{\mathcal{L}}$, a canonical choice of a primitive for the perturbed Lagrangian correspondence $(\psi_{H_{M}}^{-1} \times id_{N})\mathcal{L}$ is given by:
\begin{equation}
\tilde{F} = F \circ (\psi_{H_{M}} \times id_{N}) + i_{X_{H_{M}}}\lambda_{M}.
\end{equation}
The above formula is obtained from the proof of the well-known fact that a Hamiltonian symplectomorphism is an exact symplectomorphism in an exact symplectic manifold, which writes down an explicit formula for the change of the primitive under a Hamiltonian isotopy. To clarify the above formula, the value of $\tilde{F}$ at a point $(p, q) \in (\psi_{H_{M}}^{-1} \times id_{N})\mathcal{L}$ is
\begin{equation}
F(\psi_{H_{M}}(p), q) + (i_{X_{H_{M}}}\lambda_{M})(p).
\end{equation}
By noting that on the diagonal $\Delta_{M}$, the restriction of $\lambda_{M} \times (-\lambda_{M})$ equals zero, we may add $f$ and $\tilde{F}$ together and restrict that to the fiber product $L \times_{\Delta_{M}} (\psi_{H_{M}}^{-1} \times id_{N})\mathcal{L}$ to obtain a fuction $h$ ($h \circ \iota^{-1} = g$) satisfying
\begin{equation}
dh = \lambda_{N}|_{L \times_{\Delta_{M}} (\psi_{H_{M}}^{-1} \times id_{N})\mathcal{L}}.
\end{equation}
Finally, recall that $\iota: L \times_{\Delta_{M}} (\psi_{H_{M}}^{-1} \times id_{N})\mathcal{L} \to N$ is induced by the projection $M \times M \times N \to N$, and therefore we may conclude that
\begin{equation}
dg = \lambda_{N}|_{\iota(L \times_{\Delta_{M}} (\psi_{H_{M}}^{-1} \times id_{N})\mathcal{L})},
\end{equation}
as desired.
\end{proof}

	Having extended $f$ to the whole $M$ and $F$ to the whole $M^{-} \times N$ which are locally constant in the corresponding cylindrical ends, we get a natural extension of $g$ to the whole $N$. Let us try to analyze the behavior of $g$ in the cylindrical end $\partial N \times [1, +\infty)$. Since we assume the geometric composition to be properly embedded in $N$, the Lagrangian correspondence $\mathcal{L}$ in consideration should be conical with respect to the natural choice of cylindrical end $\Sigma \times [1, +\infty)$ of $M^{-} \times N$, and the projection $\mathcal{L} \to N$ must be proper. As a consequence, the primitive $F$ is locally constant in $\Sigma \times [1, +\infty)$. Note that the projection of $\Sigma \times [1, +\infty)$ to the $N$-factor contains a cylindrical part $\partial N \times [A, +\infty)$ of $N$, for some constant $A$ possibly slightly bigger than $1$. Thus for a point $q \in \iota(L \times_{\Delta_{M}} (\psi_{H_{M}}^{-1} \times id_{N})\mathcal{L}) \cap \partial N \times [1, +\infty)$ which lies on a hypersurface $\partial N \times \{R\}$ for large $R$, $F(\psi_{H_{M}}(p), q)$ is uniformly bounded independent of $R$, for the unique point $(p, p, q) = \iota^{-1}(q) \in L \times_{\Delta_{M}} (\psi_{H_{M}}^{-1} \times id_{N})\mathcal{L}$. Also, for the same reason, $f(p)$ is uniformly bounded independent of the position of $q$. It remains to estimate the term $(i_{X_{H_{M}}}\lambda_{M})(p)$. Since $\mathcal{L} \to N$ is proper, for $R$ sufficiently large, the point $\psi_{H_{M}}(p)$ must lie in the cylindrical end $\partial M \times [1, +\infty)$, thus on some hypersurface $\partial M \times \{R_{1}\}$ for some $R_{1} = R_{1}(R)$ depending on $R$. Moreover, as $R$ increases, $R_{1}$ also increases. Since $H_{M}$ is quadratic in the radial coordinates on the cylindrical end $\partial M \times [1, +\infty)$, the Hamiltonian symplectomorphism $\psi_{H_{M}}$ preserves the cylindrical end $\partial M \times [1, +\infty)$ and in fact preserves every level hypersurface. Thus $p \in \partial M \times \{R_{1}\}$ also. As a consequence, we may calculate that
\begin{equation} \label{the extra term contributing to the primitive}
(i_{X_{H_{M}}}\lambda_{M})(p) = 2R_{1}^{2}.
\end{equation}
The crucial fact is that it is constant along the hypersurface $\partial M \times \{R_{1}\}$, and therefore independent of the position of the point $q \in \partial N \times \{R\}$ as long as $q$ various on the $R$-level hypersurface. \par
	It is with this choice of primitive $g$ for the geometric composition $L \circ_{H_{M}} \mathcal{L}$ that the wrapped Floer cohomology is to be defined. In a more categorical language, the geometric composition $L \circ_{H_{M}} \mathcal{L}$ together with its canonical grading and relative spin structure as well as this particular choice of primitive becomes an object of the wrapped Fukaya category. We will not investigate the $A_{\infty}$-structures in this paper, but rather focus on the wrapped Floer cohomology. Since we always assume that $\iota$ is a proper embedding, we will quite ofen write $L \circ_{H_{M}} \mathcal{L}$ instead of $\iota(L \circ_{H_{M}} \mathcal{L})$ for the Lagrangian submanifold of $N$ by abuse of notation. \par
	As before, we understand that the main difficulty in proving well-definedness of wrapped Floer cohomology for the geometric composition is with the compactness of the relevant moduli spaces. The general strategy is to make use of the action-energy equality \eqref{action-energy equality for strips} to provide a priori energy bound for the inhomogeneous pseudoholomorphic strips in terms of the action of the asymptotic Hamiltonian chords. Thus we shall always begin by estimating the action of Hamiltonian chords in consideration. \par
	First, we explain why the self wrapped Floer cohomology $HW^{*}(L \circ_{H_{M}} \mathcal{L}, L \circ_{H_{M}} \mathcal{L})$ is well-defined. Let $y$ be an $H_{N}$-chord from $L \circ_{H_{M}} \mathcal{L}$ to itself. Then its action is calculated by the following formula:
\begin{equation}
\mathcal{A}_{H_{N}, L \circ_{H_{M}} \mathcal{L}}(y) = -\int_{0}^{1}y^{*}\lambda_{N} + H_{N}(y(t))dt + g(y(1)) - g(y(0)).
\end{equation}
If $y$ lies in the interior part of $N$, we may ensure that $|\mathcal{A}_{H_{N}, L \circ_{H_{M}} \mathcal{L}}(y)|$ is small, say bounded by a small constant $c$, by choosing $H_{N}$ to be small in the interior part. If $y$ lies in the cylindrical end $\partial N \times [1, +\infty)$, it must lie on some hypersurface $\partial N \times \{R\}$ for some $R$, by the Hamilton's equation. Thus the integral term contributes $-R^{2}$ to the aciton. It remains to estimate $g(y(1)) - g(y(0))$. Since $y(1)$ and $y(0)$ lies on the same hypersurface $\partial N \times \{R\}$, we may deduce that $|g(y(1)) - g(y(0))|$ is uniformly bounded independent of $R$, because of the above analysis on the behavior of the primitive $g$, in particular the calculation \eqref{the extra term contributing to the primitive} showing that the values of $i_{X_{H_{M}}}\lambda_{M}$ at $p_{1}$ and $p_{0}$ agree, where the point $p_{i}$ is defined by $(p_{i}, p_{i}, y(i)) = \iota^{-1}(y(i))$, for $i=1, 2$. Therefore, we obtain the following estimate for the action of such a Hamiltonian chord:
\begin{equation}
-R^{2} - C \le \mathcal{A}_{H_{N}, L \circ_{H_{M}} \mathcal{L}}(y) \le -R^{2} + C,
\end{equation}
where $C$ is a universal constant independent of $R$, which depends only on $L, \mathcal{L}$ and the Hamiltonians $H_{M}, H_{N}$, all of which are given data. \par
	Now we are ready to prove the desired compactness results. First, for fixed Hamiltonian chords $y_{0}, y_{1}$ from $\iota(L \circ_{H_{M}} \mathcal{L})$ to itself, the action-energy equality \eqref{action-energy equality for strips} implies that for any $u \in \bar{\mathcal{M}}(y_{0}, y_{1})$, its energy $E(u) = \mathcal{A}_{H_{N}, L \circ_{H_{M}} \mathcal{L}}(y_{0}) - \mathcal{A}_{H_{N}, L \circ_{H_{M}} \mathcal{L}}(y_{1})$ is fixed. Also, the maximum principle prevents any component of the broken inhomogeneous pseudoholomorphic strip from escaping to infinity, and therefore Gromov compactness theorem can be applied and implies that $\bar{\mathcal{M}}(y_{0}, y_{1})$ is compact. This argument only uses the exactness condition and does not rely on the estimate of the action of Hamiltonian chords as above. Second, for fixed $y_{1}$, if there were infinitely many $y_{0}$'s for which $\bar{\mathcal{M}}(y_{0}, y_{1})$ is non-empty, there must be one which lies on $\partial N \times \{R\}$ for some large $R$. For $u \in \bar{\mathcal{M}}(y_{0}, y_{1})$, its energy then satisfies
\begin{equation}
-R^{2} - C - \mathcal{A}_{H_{N}, L \circ_{H_{M}} \mathcal{L}}(y_{1}) \le E(u) \le -R^{2} + C - \mathcal{A}_{H_{N}, L \circ_{H_{M}} \mathcal{L}}(y_{1}),
\end{equation}
which is negative for $R$ sufficiently large, as $\mathcal{A}_{H_{N}, L \circ_{H_{M}} \mathcal{L}}(y_{1})$ is fixed. This is a contradiction and thus proves that for fixed $y_{1}$, $\bar{\mathcal{M}}(y_{0}, y_{1})$ is empty for all but finitely many $y_{0}$'s. Therefore, the differential on $CW^{*}(L \circ_{H_{M}} \mathcal{L}, L \circ_{H_{M}} \mathcal{L})$ has a finite output, and the self wrapped Floer cohomology is well-defined in the usual sense. \par
	Second, we would like to show that the wrapped Floer cohomology $HW^{*}(L \circ_{H_{M}} \mathcal{L}, L')$ is well-defined, for some conical Lagrangian submanifold $L'$ of $N$, with a locally constant primitive $f'$. For an $H_{N}$-chord $y$ from $L \circ_{H_{M}} \mathcal{L}$ to $L'$, its action is
\begin{equation}
\mathcal{A}_{H_{N}, L \circ_{H_{M}} \mathcal{L}, L'}(y) = -\int_{0}^{1}y^{*}\lambda_{N} + H_{N}(y(t))dt + f'(y(1)) - g(y(0)).
\end{equation}
The first two terms are the same as those in the previous case, so it suffices to estimate $f'(y(1)) - g(y(0))$. If $y$ lies on $\partial N \times \{R\}$, then $f'(y(1))$ is some constant $C'$ because $f'$ is locally constant in the cylindrical end, while $2R_{1}^{2} - D \le g(y(0)) \le 2R_{1}^{2} + D$, where $D$ is a universal constant independent of $R$ and depends only on the bounds of $f, F$, both of which are uniformly bounded. Therefore,
\begin{equation}
f'(y(1)) - g(y(0)) \le C' - 2R_{1}^{2} + D.
\end{equation}
Recall $R_{1} = R_{1}(R)$ increases as $R$ increases, so $R_{1} \to +\infty$ as $R \to +\infty$, and correspondingly $f'(y(1)) - g(y(0)) \to -\infty$. Based on the previous argument using action-energy equality, this is enough to establish compactness results for the relevant moduli space $\bar{\mathcal{M}}(y_{0}, y_{1})$, hence confirming well-definedness of wrapped Floer cohomology in this case. \par
	Third, we attempt to prove that the wrapped Floer cohomology $HW^{*}(L', L \circ_{H_{M}} \mathcal{L})$ is well-defined. Note this is not symmetric to the second case in the presence of wrapping, so an independent proof is necessary. A Hamiltonian chord $y$ from $L'$ to $L \circ_{H_{M}} \mathcal{L}$ has action
\begin{equation}
\mathcal{A}_{H_{N}, L', L \circ_{H_{M}} \mathcal{L}}(y) = -\int_{0}^{1}y^{*}\lambda_{N} + H_{N}(y(t))dt + g(y(1)) - f'(y(0)).
\end{equation}
This situation is somewhat opposite to the previous one, as $g(y(1)) - f'(y(0))$ is approximately $2R_{1}^{2}$ up to some bounded constant, which does not go to $-\infty$ as $R \to +\infty$, but instead goes to $+\infty$. This appears to be an essential obstruction to making wrapped Floer cohomology well-defined, which we cannot resolve temporarily. In this regard, we shall restrict ourselves to some special classes of Lagrangian correspondences $\mathcal{L}$. Either of the following condition is sufficient for the wrapped Floer cohomology $HW^{*}(L', L \circ_{H_{M}} \mathcal{L})$ to be well-defined:
\begin{enumerate}[label=(\roman*)] \label{condition for the wrapped Floer cohomology for the geometric composition as a left-module to be well-defined}

\item There are only finitely many time-one $H_{N}$-chords from $L'$ to $L \circ_{H_{M}} \mathcal{L}$. The typical example is $\mathcal{L} = \Delta_{M} \subset M^{-} \times M$ the diagonal, and $L, L'$ are conical Lagrangian submanifolds of $M$ whose Legendrian boundary $\partial L, \partial L'$ do not intersect (this is a generic condition).

\item For geometric reason from the geometry of $\mathcal{L}$, we may deduce that $R_{1} = R_{1}(R) \le cR$ for some constant $0 < c < \frac{\sqrt{2}}{2}$.

\end{enumerate}
The above conditions are temporary, as currently there are no better or more natural conditions to be formulated. Fortunately, we will not frequently consider $HW^{*}(L' L \circ_{H_{M}} \mathcal{L})$ as it does not appear in the isomorphism of wrapped Floer cohomology under geometric composition. \par
%there could possibly be better conditions, but we do not try to seek them here%
	Finally, it remains to understand the well-definedness of the wrapped Floer cohomology $HW^{*}(L_{1} \circ_{H} \mathcal{L}_{1}, L_{2} \circ_{H} \mathcal{L}_{2})$, for a pair of geometric compositions. This time, the action of a Hamiltonian chord $y$ from $L_{1} \circ_{H} \mathcal{L}_{1}$ to $L_{2} \circ_{H} \mathcal{L}_{2}$ is:
\begin{equation}
\mathcal{A}_{H_{N}, L_{1} \circ_{H} \mathcal{L}_{1}, L_{2} \circ_{H} \mathcal{L}_{2}}(y) = -\int_{0}^{1}y^{*}\lambda_{N} + H_{N}(y(t))dt + g_{2}(y(1)) - g_{1}(y(0)),
\end{equation}
where $g_{i}$ is the canonical choice of the primitive for $L_{i} \circ_{H} \mathcal{L}_{i}$, as in \eqref{primitive for the geometric composition}. As before, in order to prove the desired compactness results, it suffices to prove that if $y$ lies on $\partial N \times \{R\}$, its action is sufficiently negative as long as $R$ is sufficiently large, and decreases as $R$ increases, at least for large values of $R$. Again, the integral terms contribute $-R^{2}$. To estimate $g_{2}(y(1)) - g_{1}(y(0))$, we recall from the definion of these primitives $g_{i}$ in \eqref{primitive for the geometric composition} that the term $i_{X_{H_{M}}}\lambda_{M}$ appears in both $g_{2}$ and $g_{1}$, and takes the same at $p_{1}$ and $p_{0}$ since $y(1)$ and $y(0)$ lies on the same hypersurface $\partial N \times \{R\}$. Thus when substracting $g_{1}(y(0))$ from $g_{2}(y(1))$, the contribution from this term cancel. The remaining terms are
\begin{equation}
f_{2}(p_{1}) + F_{2}(\psi_{H_{M}}(p_{1}), y(1)) - f_{1}(p_{0}) - F_{1}(\psi_{H_{M}}(p_{0}), y(0)),
\end{equation}
which is uniformly bounded independent of $R$ since all these functions are locally constant in the corresponding cylindrical ends. Thus, the action $\mathcal{A}_{H_{N}, L_{1} \circ_{H} \mathcal{L}_{1}, L_{2} \circ_{H} \mathcal{L}_{2}}(y)$ satisfies
\begin{equation}
-R^{2} - C \le \mathcal{A}_{H_{N}, L_{1} \circ_{H} \mathcal{L}_{1}, L_{2} \circ_{H} \mathcal{L}_{2}}(y) \le -R^{2} + C,
\end{equation}
for some universal constant $C$ depending only on $L_{1}, L_{2}, \mathcal{L}_{1}, \mathcal{L}_{2}$ (and their primitives), as well as the Hamiltonians $H_{M}, H_{N}$, but not on $R$. For sufficiently large $R$, this estimate shows the desired result. \par

\subsection{Representability by the geometric composition}\label{section:representability}
	The geometric composition $L \circ_{H_{M}} \mathcal{L}$ provides a natural candidate to represent the module-valued functor constructed in the previous section. The first and essential step toward proving this fact is the isomorphism on wrapped Floer cohomologies under this geometric composition. The full $A_{\infty}$-structures will be discussed in \cite{Gao}. \par
	Recall that in the context of wrapped Fukaya categories, the geometric composition  of $L$ and $\mathcal{L}$ is defined to be the Lagrangian immersion:
\begin{equation}
\iota: L \circ_{H_{M}} \mathcal{L} = L \times_{\Gamma(\psi_{H_{M}})} \mathcal{L} \to N,
\end{equation}
where $L \times_{\Gamma(\psi_{H_{M}})} \mathcal{L}$ is the fiber product of $L$ and $\mathcal{L}$ over the graph of $\psi_{H_{M}} \subset M^{-} \times M$. Here the map $\iota$ is induced by the projection $M \times M \times N \to N$. As discussed in the preceding subsection, there is a well-defined wrapped Floer cohomology for the geometric composition. In this subsection, we prove the second part of Theorem \ref{geometric composition isomorphism}. \par
	Before we discuss the proof, let us first observe that there is a natural identification between generalized chords for $(L, \mathcal{L}, L')$ and time-one $H_{N}$-chords from $L'$ to itself, which yields a map of abelian groups
\begin{equation}\label{identification of generalized chords by energy-zero solutions}
CW^{*}(L, \mathcal{L}, L') \cong CW^{*}(L \circ_{H_{M}} \mathcal{L}, L').
\end{equation}
However, this naive identification is not a cochain map in general, so we want to "correct" this to make it a cochain map. \par
	The proof again uses a quilted Floer-theoretic construction. We consider a quilted surface $\underline{S}^{gc}$ consisting of two patches $S^{gc}_{0}, S^{gc}_{1}$, where $S^{gc}_{0}$ is a disk with two positve boundary punctures near which strip-like ends $\epsilon_{0, j}, j = 0, 1$ are chosen, and $S^{gc}_{1}$ is a disk with one negative boundary puncture and two positive boundary punctures, near which strip-like ends $\epsilon_{1, j}, j = 0, 1, 2$ are chosen. We seam together the boundary component $I_{0, 1}$ with $I_{1, 2}$. The quilted surface $\underline{S}^{gc}$ then has one negative strip-like end and two positive quilted ends, each one consisting of two strip-like end components. \par

%the picture for the geometric isomorphism%
\begin{figure}
\centering
\begin{tikzpicture}
%the surface%
	\draw (-4, -1) -- (4, -1);
	\draw (-4, 1) -- (-2, 1);
	\draw (-2, 1) arc (270:360:1cm);
	\draw (-1, 2) -- (-1, 3);
	\draw (0, 1) -- (0, 3);
	\draw (1, 0) arc (270:180:1cm);
	\draw (1, 0) -- (4, 0);
	\draw (1, 3) -- (1, 2);
	\draw (1, 2) arc (180:270:1cm);
	\draw (2, 1) -- (4, 1);
%Lagrangian labels%
	\draw (0, -1.5) node {$\phi_{N}^{\rho_{S^{gc}_{1}}(z)}L'$};
	\draw (-3, 1.5) node {$\phi_{N}^{\rho_{S^{gc}_{1}}(z)}(L \circ_{H_{M}} \mathcal{L})$};
	\draw (2, 0) node {$(\phi_{M}^{\rho_{S^{gc}_{0}}(z)} \times \phi_{N}^{\rho_{S^{gc}_{1}}(z)}) \mathcal{L}$};
	\draw (3, 1.5) node {$\phi_{M}^{\rho_{S^{gc}_{0}}(z)}L$};
%target manifold labels%
	\draw (3, 0.75) node {$M$};
	\draw (-3, 0) node {$N$};
\end{tikzpicture}
\caption{the quilted map defining the isomorphism under geometric composition}
\end{figure}
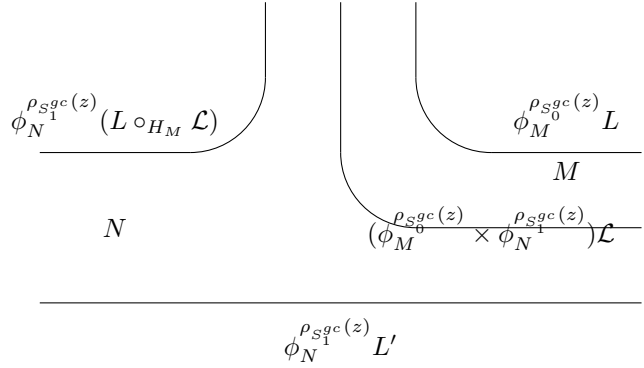

	We consider the moduli space $\mathcal{N}((\gamma^{0}, \gamma^{1}); e, \theta)$ of inhomogeneous pseudoholomorphic quilts $\underline{u}: \underline{S}^{gc'} \to (M, N)$, with Lagrangian seam and boundary conditions: $(u_{0}(x), u_{1}(x) \in \mathcal{L}$ for $x$ along the seam, and $u_{0}(I_{0, 0}) \subset L, u_{1}(I_{1, 0}) \subset L \circ_{H_{M}} \mathcal{L}$. The asymptotic conditions we put are the following: over the negative quilted end, $\underline{u}$ should converge to a $H_{N}$-chord $\theta$ in $N$ from $L \circ_{H_{M}} \mathcal{L}$ to $L'$; over the positive quilted end consisting of $(\epsilon_{0, 0}, \epsilon_{1, 1})$, $\underline{u}$ should converge to a generalized chord $(\gamma^{0}, \gamma^{1})$ for $(L, \mathcal{L}, L')$; over the other positive quilted end, $\underline{u}$ should converge to the generalized chord $e$ for $(L, \mathcal{L}, L \circ_{H_{M}} \mathcal{L})$ which corresponds to the unique Hamiltonian chord in $M_{2}$ from $L \circ_{H_{M}} \mathcal{L}$ to itself, representing the cohomological unit in $HW^{*}(L \circ_{H_{M}} \mathcal{L}, L \circ_{H_{M}} \mathcal{L})$. More precisely, we should also include a time-shifting function and use moving boundary conditions and Floer data that are compatible with the time-shifting function. Since these kinds of discussions have been carried out a lot of times, we will omit them here. \par
	The outcome is a cochain map
\begin{equation}
gc: CW^{*}(L, \mathcal{L}, L') \to CW^{*}(L \circ_{H_{M}} \mathcal{L}, L'),
\end{equation}
which induces the desired map on cohomology. \par
	To prove this cochain map induces an isomorphism on cohomology, we proceed in the following three steps. First recall from section \ref{cohomological unit} that the cohomological unit in $HW^{*}(L \circ_{H_{M}} \mathcal{L}, L \circ_{H_{M}} \mathcal{L})$ is represented by a unique Hamiltonian chord $\gamma_{min}$ from $L \circ_{H_{M}} \mathcal{L}$ to itself that is contained in the the interior of $N$, which has the biggest action compared to all other chords. Moreover, by the argument in section \ref{cohomological unit}, we have the quantitative estimate: for any $\epsilon > 0$, we may choose the admissible Hamiltonian $H_{N}$ on $N$ with its $C^{2}$-norm small enough in the interior, such that the action of $\gamma_{min}$ satisfies
\begin{equation*}
|\mathcal{A}_{H_{N}, L \circ_{H_{M}} \mathcal{L}, L \circ_{H_{M}} \mathcal{L}}(\gamma_{min})| \le \epsilon.
\end{equation*}
Moreover, we may arrange that the action of $\gamma_{min}$ is the biggest among all the $H_{N}$-chords from $L \circ_{H_{M}} \mathcal{L}$ to itself. \par
	Second, we need to provide a lower bound for the energy of the inhomogeneous pseudoholomorphic quilt $\underline{u}: \underline{S}^{gc} \to \underline{M}$. This is a general property of non-trivial pseudoholomorphic curves. We first fold the quilt to get an inhomogeneous pseudoholomorphic curve in the product manifold $M^{-} \times N$ with Lagrangian boundary conditions. Then we use a traditional trick due to Gromov to convert this inhomogeneous pseudoholomorphic curve to a pseudoholomorphic section of some locally trivial Hamiltonian fibration whose projection is holomorphic with respect to a unique almost complex structure on the total space determined by the almost complex structures and Hamiltonian functions chosen on the Liouville manifolds $M^{-}$ and $N$. If the original quilted map is not a trivial solution, then this pseudoholomorphic section is not contained in the horizontal distribution, and therefore attains some energy in the vertical direction. Gromov's monotonicity lemma then ensures minimal amount of energy, say $\delta$, that every non-trivial pseudoholomorphic curve has to attain. We may assume $\delta > \epsilon$, by choosing $\epsilon$ smaller if necessary. \par
	Finally, we consider the action filtrations (from below) on these two wrapped Floer cochain complexes. By a general action-energy equality \ref{action-energy equality for quilts}, combined with the previous two steps, it follows that the map $gc$ increases the action and therefore preserves the filtrations on these filtered cochain complexes. Inhomogeneous pseudoholomorphic quilts with minimal energy among all those with the same boundary conditions and asymptotic conditions at the quilted cylindrical end only increase the action by the smallest possible amount, and they must necessarily be trivial solutions, mapping the entire quilted surface into generalized chords, which identify the input and output generalized chords, and pass through the constraint given by the fundamental class of $L \circ_{H_{M}} \mathcal{L}$. This defines a map $gc_{0}$, which is precisely the natural identification \eqref{identification of generalized chords by energy-zero solutions}. Other inhomogeneous pseudoholomorphic maps of this kind have positive energy, and define another map $gc_{+}$ which strictly increase the action. Now $gc$ can be decomposed as $gc = gc_{0} + gc_{+}$, where $gc_{0}$ preserves the action and is an isomorphism of modules, and $gc_{+}$ strictly increases the action. From this viewpoint, $gc$ can be regarded as an upper triangular matrix whose diagonal entries are invertible, so by elementary algebra it is an isomorphism of modules. Therefore $gc$ is an isomorphism of cochain complexes, and in particular induces an isomorphism on cohomology. \par

\begin{remark}
	More generally, one would consider the geometric composition of two Lagrangian correspondences $\mathcal{L}_{01} \subset M_{0}^{-} \times M_{1}$ and $\mathcal{L}_{12} \subset M_{1}^{-} \times M_{2}$, and hope to prove a similar isomorphism:
\begin{equation*}
HW^{*}(L, \mathcal{L}_{01}, \mathcal{L}_{12}, L') \cong HW^{*}(L, \mathcal{L}_{01} \circ_{H_{M_{1}}} \mathcal{L}_{12}, L').
\end{equation*}
However, as mentioned in the section \ref{section: definition of quilted wrapped Floer cohomology} defining quilted wrapped Floer cohomology, there is certain essential difficulty in proving well-definedness of wrapped Floer cohomology for the pair $(\mathcal{L}_{01} \times L', L \times \mathcal{L}_{12})$ in the triple product $M_{0}^{-} \times M_{1} \times M_{2}^{-}$. This is why we did not discuss such an isomorphism in this paper. But since it is important for the purpose of understanding compositions of these functors, we plan to discuss this point in future research.
\end{remark}

%split Hamiltonian = admissible Hamiltonian, up to quasi-isomorphism}
\section{From split Hamiltonians to admissible Hamiltonians} \label{section: technical proofs}

\subsection{Outline of strategy}
	The entire section is devoted to the proof of Theorem \ref{invariance for wrapped Floer cohomology in the product} and Theorem \ref{preserving multiplicative structure}. We first give an outline of the strategy taken here. Note that the symplectic action functional on the space of paths from one Lagrangian submanifold to the other, associated to each Hamiltonian as well as the Lagrangian submanifolds together with their primitives, defines a filtration on the Floer cochain complexes. We can arrange that the truncated Floer cochain complexes $CW^{*}_{(-\infty, a]}(L_{0}, L_{1}; H, J)$ for a positive number $a$ captures all essential information, because we can set up wrapped Floer theory using Hamiltonians that are $C^2$-small in the interior part of the Liouville manifold. Such Hamiltonians are practically convenient for setting up wrapped Floer theory of a Liouville manifold, which turns out to be independent of the behavior of Hamiltonians in the interior part, as long as they are generic. In particular, the action of any time-one chord for such a Hamiltonian either negative or bounded above from a given positive number, which depends only on the Lagrangian submanifolds which the chord starts from and lands on. The main strategy is then to change the split Hamiltonian to an admissible one in several steps, while keeping track of the action of the new chords that possibly appear in each step. We will make sure that the potentially newly arising chords all have sufficiently positive action, so we can eventually rule them out in the truncated Floer cochain complexes as they do not contribute to Floer cohomology. In this section, by chords we always mean time-one Hamiltonian chords, unless we specify the time. \par
	Now consider the product Liouville manifold $M \times N$ with the product Liouville form $\lambda = \lambda_{M} \bigoplus \lambda_{N}$. Let $\mathcal{L}_{0}, \mathcal{L}_{1} \subset M \times N$ be Lagrangian submanifolds, which are
\begin{enumerate}[label=(\roman*)]

\item either conical with respect to the cylindrical end $\Sigma \times [1, +\infty)$ of $M \times N$ described in section \ref{product manifold}. We choose primitives $f_{\mathcal{L}_{0}}, f_{\mathcal{L}_{1}}$ such that they are locally constant over the cylindrical end $\Sigma \times [1, +\infty)$.

\item or product Lagrangian submanifolds $\mathcal{L}_{0} = L_{0} \times L'_{0}, \mathcal{L}_{1} = L_{1} \times L'_{1}$ of conical Lagrangian submanifolds $L_{0}, L_{1} \subset M$ and $L'_{0}, L'_{1} \subset N$. In this case the primitive $f_{\mathcal{L}_{i}}$ is the sum $\pi_{M}^{*}f_{L_{i}} + \pi_{N}^{*}f_{L'_{i}}$ of primitives for $L_{i}$ and $L'_{i}$ respectively, which are locally constant over the cylindrical ends of $M$ and $N$ respectively.

\item or one is conical with respect to $\Sigma \times [1, +\infty)$ and the other is a product.

\end{enumerate}
While those cases seem to be very different, it turns out that our analysis does not differ much in all cases. This is mainly due to the following (obvious) observation. \par

\begin{lemma}
	Let $\mathcal{L} = L \times L'$ be a product of conical Lagrangian submanifolds $L \subset M, L' \subset N$. Let $f_{L}$ be a primitive for $L$ which is locally constant over $\partial M \times [1, +\infty)$, and similarly for $f_{L'}$. Then $f_{\mathcal{L}} = \pi_{M}^{*}f_{L_{i}} + \pi_{N}^{*}f_{L'_{i}}$ is a primitive for $\mathcal{L}$ which is uniformly bounded in the cylindrical end $\Sigma \times [1, +\infty)$, with $\max|f_{\mathcal{L}}| \le \max(|f_{L}| + |f_{L'}|)$. In particular, it is locally constant in the region $(\partial M \times [1, +\infty)) \cup (\partial N \times [1, +\infty))$.
\end{lemma}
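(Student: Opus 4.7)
The plan is to verify the three asserted properties (primitivity, the explicit bound, and local constancy on the stated region) in order; each reduces to a direct computation, which is why the author labels the lemma an obvious observation. I expect no serious obstacle, only the usual need to keep track of what the factor-wise hypotheses on $f_L$ and $f_{L'}$ actually give us.

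First, for primitivity I would just differentiate. On the ambient product we have
\begin{equation*}
d f_{\mathcal{L}} \;=\; \pi_{M}^{*} d f_{L} \;+\; \pi_{N}^{*} d f_{L'} \;=\; \pi_{M}^{*}(\lambda_{M}|_{L}) \;+\; \pi_{N}^{*}(\lambda_{N}|_{L'}),
\end{equation*}
and restricting to $\mathcal{L} = L \times L'$ gives $(\lambda_{M} \oplus \lambda_{N})|_{\mathcal{L}} = \lambda|_{\mathcal{L}}$, as required.

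Second, for the bound I would argue that $f_{L}$ is globally bounded on $L$: on the compact interior part of $L$ it is bounded by continuity, and on the cylindrical end $\partial L \times [1,+\infty)$ it is locally constant by hypothesis, so (since $\partial L$ has only finitely many components by the standing assumption of non-degenerate Reeb chords on $\partial M$) it takes only finitely many values there. Hence $\sup_{L} |f_{L}| < \infty$, and likewise for $f_{L'}$. The triangle inequality then gives
\begin{equation*}
|f_{\mathcal{L}}(x,y)| \;\le\; |f_{L}(x)| + |f_{L'}(y)| \;\le\; \sup_{L}|f_{L}| + \sup_{L'}|f_{L'}|,
\end{equation*}
which is the stated uniform bound on $M \times N$; in particular this holds on the cylindrical end $\Sigma \times [1,+\infty)$.

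Finally, for local constancy I would note that the pullback of a locally constant function under a smooth map is locally constant. By hypothesis $f_{L}$ is locally constant on $\partial M \times [1,+\infty) \subset M$, hence $\pi_{M}^{*} f_{L}$ is locally constant on $(\partial M \times [1,+\infty)) \times N$; symmetrically $\pi_{N}^{*} f_{L'}$ is locally constant on $M \times (\partial N \times [1,+\infty))$. On the intersection of these two regions, namely $(\partial M \times [1,+\infty)) \times (\partial N \times [1,+\infty))$, both summands are locally constant, so $f_{\mathcal{L}}$ itself is locally constant there, which is the content of the final sentence. This is the only place where care is needed to match the wording of the lemma to what the two factor-wise hypotheses directly provide; beyond that there is nothing further to prove.
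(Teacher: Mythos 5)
The paper offers no proof of this lemma (it is explicitly called an "obvious observation"), so there is no argument to compare against; your direct verification is the natural and essentially unique route, and it is correct. One small misattribution: the finiteness of the set of components of the conical end $\partial L \times [1,+\infty)$ of $L$ is not a consequence of the non-degeneracy of Reeb orbits on $\partial M$. It follows instead from the compactness of the Legendrian boundary $\partial L \subset \partial M$ (equivalently, from compactness of $L \cap M_0$), which gives finitely many connected components directly; the Reeb non-degeneracy assumption plays no role here. The rest is fine, and your reading of the paper's final sentence as asserting local constancy on the product region $(\partial M \times [1,+\infty)) \times (\partial N \times [1,+\infty))$ is the only interpretation under which the claim is actually true, so it is the right one despite the paper's slightly loose "$\cup$" notation.
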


	For each time-dependent Hamiltonian function $H_{t}$, we can define the action functional:
\begin{equation}
\mathcal{A}_{H, \mathcal{L}_{0}, \mathcal{L}_{1}}(\Gamma) = - \int_{0}^{1} \Gamma^{*} \lambda + \int_{0}^{1} H(t, \Gamma(t))dt + f_{\mathcal{L}_{1}}(\Gamma(1)) - f_{\mathcal{L}_{0}}(\Gamma(0)),
\end{equation}
where $\Gamma$ is a time-one $H_{t}$-chord from $\mathcal{L}_{0}$ to $\mathcal{L}_{1}$. The action functional defines a filtration on the free graded module over the set of $X_{H_{t}}$-chords. And if $H_{t}$ has certain growth condition (not necessarily quadratic) such that the Floer complex $CW^{*}(\mathcal{L}_{0}, \mathcal{L}_{1}; H, J)$ is defined, it also defines a filtration on this complex. The same remark applies to the case where $H$ is time-independent which will be our primary concern in this section. \par
	We start from the split Hamiltonian $H_{M, N} = \pi_{M}^{*}H_{M} + \pi_{N}^{*}H_{N}$. Since $H_{M}$ and $H_{N}$ are chosen to be $C^2$-small in the interior, we may assume that there exists a $\epsilon > 0$ such that
$$| - \int_{0}^{1} \Gamma^{*} \lambda + \int_{0}^{1} H(\Gamma(t))dt | \le \epsilon$$
for every $(H_{M, N})$-chord $\Gamma$ that lies in the interior of $M \times N$ which can be identified with the product of the interior of $M$ with the interior of $N$. Fix a positive number $a>0$ which is bigger than $\max{(|f_{\mathcal{L}_{0}}| + |f_{\mathcal{L}_{1}}|)} + \epsilon$. Thus the truncated Floer cochain complex $CW^{*}_{(-b, a]}(\mathcal{L}_{0}, \mathcal{L}_{1}; H_{M, N}, J_{M, N})$ includes all Hamiltonian chords in the interior of $M \times N$. The goal is to find an admissible Hamiltonian $K$ as well as an almost complex structure $J$ of contact type such that for each $b$ we can construct the following sequence of cochain maps:
\begin{equation}\label{act-res}
\begin{split}
CW^{*}_{(-b, a]}(\mathcal{L}_{0}, \mathcal{L}_{1}; H_{M, N}, J_{M, N}) \to CW^{*}_{(-b, a]}(\mathcal{L}_{0}, \mathcal{L}_{1}; K, J) \\
\to CW^{*}_{(-5b, a]}(\mathcal{L}_{0}, \mathcal{L}_{1}; H_{M, N}, J_{M, N}) \to  CW^{*}_{(-5b, a]}(\mathcal{L}_{0}, \mathcal{L}_{1}; K, J)
\end{split}
\end{equation}
with the property that the following two compositions
\begin{align*}
CW^{*}_{(-b, a]}(\mathcal{L}_{0}, \mathcal{L}_{1}; H_{M, N}, J_{M, N}) &\to CW^{*}_{(-5b, a]}(\mathcal{L}_{0}, \mathcal{L}_{1}; H_{M, N}, J_{M, N}), \\
CW^{*}_{(-b, a]}(\mathcal{L}_{0}, \mathcal{L}_{1}; K, J) &\to CW^{*}_{(-5b, a]}(\mathcal{L}_{0}, \mathcal{L}_{1}; K, J)
\end{align*}
are inclusions with respect to the corresponding action filtrations. This is enough to establish the desired isomorphism between the two Floer cohomology groups. \par

\subsection{The first step}
	We first deformation the Hamiltonian $H_{M, N}$ so that it becomes constant outside a compact set. The key is to choose the constant and the compact set carefully so that the additional chords that could possibly appear after change of Hamiltonians have sufficiently postive action, which can then be excluded from the truncated Floer complexes and do not contribute to Floer cohomology. \par
	We review conditions on the two Hamiltonians $H_{M}$ and $H_{N}$. Recall that we have chosen $H_{M}$ such that $H_{M}(r_{1}) = r_{1}^2$ for $r_{1} \ge 1 + \epsilon$, and in the region $\{r_{1} \le {1}\}$, $H_{M}$ is $C^2$-small taking values in $[-\epsilon, 0]$, and has $C^1$-norm less than $\epsilon$. Moreover, we can assume the derivative $\frac{\partial H_{M}}{\partial r_{1}} \le \frac{4}{\epsilon}$, since the increasing amount of the function $H_{M}$ from $r_{1} = 1$ to $r_{1} = 1 + \epsilon$ is not bigger than $(1 + \epsilon)^{2} - \epsilon$. We impose the same conditions on $H_{N}$. \par
	We know that for Hamiltonians that increase over the cylindrical end having growth rate bigger than linear functions, the chords will have more negative action as the level they lie in increases. So given $b>0$, we can choose $A$ sufficiently large such that all chords (in the product, of the original split Hamiltonian $H_{M, N}$) initially having action greater than $-b$ are contained in the "square" region $\{r_{1} \le A, r_{2} \le A\}$. Then we deform it to a new Hamiltonian $H_1 = H_{M,1} + H_{N,1}$ which is still of split type (namely $H_{M,1}$ depends only on $M$-factor and $H_{N,1}$ depends only on $N$-factor), such that
%the first step deformation%
\begin{equation}
\begin{split}
H_{M,1} =
\begin{cases}
H_{M}(r_1) & \text{if $r_1 \le A-\epsilon$}, \\
C & \text{if $r_1 \ge A$}.
\end{cases}
H_{N,1} = 
\begin{cases}
H_{N}(r_1) & \text{if $r_2 \le A-\epsilon$}, \\
C & \text{if $r_2 \ge A$}.
\end{cases}
\end{split}
\end{equation}
Here we are going to take 
\begin{equation}
C = (A-\frac{3\epsilon}{4})^2.
\end{equation}
Moreover, we require that
%derivative bound on the first step deformation%
\begin{equation}
\begin{cases}
\frac{\partial H_{M,1}}{\partial r_1} \le \frac{2A}{3}, & r_1 \in [A-\epsilon, A];\\
\frac{\partial H_{N,1}}{\partial r_2} \le \frac{2A}{3}, & r_2 \in [A-\epsilon, A].
\end{cases}
\end{equation}
These requirements are possible since total variation of $H_{M,1}$ or $H_{N,1}$ is $\frac{\epsilon A}{2} - \frac{7\epsilon^2}{16}$ in the region $r_{1} \in [A - \epsilon, A]$ or $r_{2} \in [A - \epsilon, A]$. \par

	To illustrate what indeed happens, we should think of both the Hamiltonians $H_{M}$ and $H_{N}$ are being deformed to the constant function $C$ outside a compact set bounded by $A$ in exactly the same way (as functions on the half ray $[1, +\infty)$. We arrange such deformation carefully to make sure the resulting Hamiltonians are still increasing, but grow in a tempered way in the region $r_{1}, r_{2} \in [A - \epsilon, A]$. Concretely, we impose certain bounds on derivatives of them with respect to the redial coordinates $r_{1}$ and respectively $r_{2}$ within that range. \par
	Now the new Hamiltonian $H_{M, 1} + H_{N, 1}$ might have addtional time-one chords from $\mathcal{L}_{0}$ to $\mathcal{L}_{1}$ compared to $H_{M, N}$. There are four types of them:
\begin{enumerate}[label=(\roman*)]

\item Constant chords on levels $H_{1} = 2C$ (if they exist, they correspond to intersection points of $\mathcal{L}_{0}$ with $\mathcal{L}_{1}$). These have action
\begin{equation}
\mathcal{A}_{H_{1}, \mathcal{L}_{0}, \mathcal{L}_{1}}(\Gamma) = 2C + f_{\mathcal{L}_{1}}(\Gamma(1)) - f_{\mathcal{L}_{0}}(\Gamma(0)),
\end{equation}
which is sufficiently positive if we choose $A$ sufficiently large, as $C = (A-\frac{3\epsilon}{4})^2
$. In fact, these constant chords cannot exist generically, because the Lagrangians are disjoint over the cylindrical ends, except the case $\mathcal{L}_{0} = \mathcal{L}_{1}$. Even $\mathcal{L}_{0} = \mathcal{L}_{1}$ does not matter, because we may slightly perturb one to geometrically separate them at infinity, without affecting wrapped Floer cohomology.

\item Chords in $M \times N$ that project to Reeb chords in $M$ on levels $\partial M \times \{r_{1}\}$ for $r_1$ close to $A$ ($r_{1} \in [A-\epsilon, A]$), and project to constant chords in $N$ on levels $H_{N,1} = C$. These chords have action
\begin{equation}
\begin{split}
\mathcal{A}_{H_{1}, \mathcal{L}_{0}, \mathcal{L}_{1}}(\Gamma) &= -\frac{\partial H_{M,1}}{\partial r_1}r_1 + H_{M,1}(r_1) + C\\
&+ f_{\mathcal{L}_{1}}(\Gamma(1)) - f_{\mathcal{L}_{0}}(\Gamma(0))\\
&\ge -\frac{2A^2}{3} + (A - \epsilon)^2 + C\\
&+ f_{\mathcal{L}_{1}}(\Gamma(1)) - f_{\mathcal{L}_{0}}(\Gamma(0))\\
&\gg 0,
\end{split}
\end{equation}
if $A$ is chosen sufficiently large (recall the primitives $f_{\mathcal{L}_{0}}, f_{\mathcal{L}_{1}}$ are locally constant over the cylindrical end). We will not repeat this kind of sentence, unless special care is needed.

\item Chords in $M \times N$ that project to constant chords in $M$ on levels $H_{M,1}=C$, and Reeb chords in $N$ on levels $\partial N \times \{r_2\}$ for $r_2$ close to $A$ ($r_2 \in [A-\epsilon, A]$). This case is symmetric to the above one.

\item Chords in $M \times N$ that project to Reeb chords in $M$ on levels $\partial M \times \{r_1\}$ for $r_1$ close to $A$ ($r_1 \in [A-\epsilon, A]$), and project to Reeb chords in $N$ on levels $\partial N \times \{r_2\}$ for $r_2$ close to $A$ ($r_2 \in [A-\epsilon, A]$). These chords have action
\begin{equation}
\begin{split}
\mathcal{A}_{H_{1}, \mathcal{L}_{0}, \mathcal{L}_{1}}(\Gamma) &= -\frac{\partial H_{M,1}}{\partial r_1}r_1 + H_{M,1}(r_1) + -\frac{\partial H_{M,2}}{\partial r_2}r_2 + H_{M,2}(r_2)\\
&+ f_{\mathcal{L}_{1}}(\Gamma(1)) - f_{\mathcal{L}_{0}}(\Gamma(0))\\
&\ge -\frac{2A^2}{3} + (A - \epsilon)^2 -\frac{2A^2}{3} + (A - \epsilon)^2\\
&+ f_{\mathcal{L}_{1}}(\Gamma(1)) - f_{\mathcal{L}_{0}}(\Gamma(0))\\
&\gg 0
\end{split}
\end{equation}

\end{enumerate}\par

	To summarize, we have deformed the split Hamiltonian $H_{M, N}$ to a new one so that the undesired Hamiltonian chords all have sufficiently positive action, which therefore are excluded from the action filtration window $(-b, a]$. \par 

%the second step%
\subsection{The second step}
	Now the Hamiltonian $H_1 = H_{M,1} + H_{N,1}$ (split-type) on $M \times N$ is constant equal to $2C$ in the region $\{r_1 \ge A, r_2 \ge A\}$. We want to deform it to a Hamiltonian that is constant outside the compact set $\{r_1 \le B, r_2 \le B\}$ for appropriate choice of $B > A$ to be determined. \par
	By the first step, this is already the case in the region $\{r_1 \ge A, r_2 \ge A\}$. Consider the following two regions:

\begin{gather}
I = M \times (\partial N \times [A, +\infty)), \\
II = (\partial M \times [A, +\infty)) \times N.
\end{gather}
We first deal with the region $I$. The other case is symmetric. \par

	The idea is to find an appropriate way to deform $\pi_{M}^{*}H_{M, 1}$ to a new function $H_{I, 2}$ on the region $I$ by interpolating it with a suitable constant.

\begin{definition}
Define a smooth cut-off function $\rho: [A, +\infty) \to [0,1]$ as follows.

\begin{equation}
\rho = 
\begin{cases}
0 & \text{on $[A, A_{1}]$}, \\
1 & \text{on $[B - \epsilon, +\infty)$},
\end{cases}
\end{equation}
and is strictly increasing on $[A_1, B-\epsilon]$, where $A_1 > A$ is a big postive number to be chosen later. Moreover, we require that for $r_2 \in [A_{1} + \epsilon, B-2\epsilon]$, the derivative of $\rho$ is constant (namely $\rho$ is linear), and this constant satisfies

\begin{equation}
\rho'(r_2) \equiv \text{constant} \in [\frac{1}{B-A_{1}-\epsilon}, \frac{1}{B-A_{1}-3\epsilon}].
\end{equation}
\end{definition}

	These assumptions on the cut-off function $\rho$ will imply the following consequences:

\begin{lemma}
\begin{enumerate}[label=(\roman*)]
\item The amount that $\rho$ increases on the interval $[A_{1} + \epsilon, B - 2\epsilon]$ is between $1-\frac{2\epsilon}{B - A_{1}-\epsilon}$ and $1$;

\item The sum of the total variation of $\rho$ on $[A_{1}, A_{1} + \epsilon]$ and on $[B - 2\epsilon, B - \epsilon]$ is between $0$ and $\frac{2\epsilon}{B - A_{1} - \epsilon}$;

\item For $r_2 \in [A_1, A_{1} + \epsilon]$, we have that $0 \le \rho(r_2) \le \frac{2\epsilon}{B-A_{1}-\epsilon}$. And we can choose $\rho$ growing in a tempered way such that the derivative $\rho'(r_2) \le \frac{3}{B-A_{1}-\epsilon}$ for all $r_2 \in [A_1, A_{1} + \epsilon]$;

\item For $r_2 \in [B-2\epsilon, B-\epsilon]$, we have that $1-\frac{2\epsilon}{B-A_{1}-\epsilon} \le \rho(r_2) \le 1$. And we can choose $\rho$ such that the derivative $\rho'(r_2) \le \frac{3}{B-A_{1}-\epsilon}$ for all $r_2 \in [B-2\epsilon, B-\epsilon]$.

\end{enumerate}
\end{lemma}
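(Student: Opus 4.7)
The plan is to exploit two facts: the monotonicity of $\rho$, which pins its total variation on $[A_{1}, B-\epsilon]$ to exactly $1$, and the explicit constant slope on the middle interval. All four assertions will then follow by distributing this total-variation ``budget'' of $1$ across the three subintervals $[A_{1}, A_{1}+\epsilon]$, $[A_{1}+\epsilon, B-2\epsilon]$, $[B-2\epsilon, B-\epsilon]$.

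First I would establish (i) by direct multiplication. The middle interval has length $B - A_{1} - 3\epsilon$, and the constant value $c$ of $\rho'$ there lies in $[\tfrac{1}{B-A_{1}-\epsilon}, \tfrac{1}{B-A_{1}-3\epsilon}]$, so the increase $c(B-A_{1}-3\epsilon)$ sits in $[1 - \tfrac{2\epsilon}{B-A_{1}-\epsilon},\, 1]$. Assertion (ii) then drops out by subtraction: the total variation on $[A_{1}, B-\epsilon]$ is $1$ by monotonicity, so the variation on the two end-intervals combined equals $1$ minus the middle increase, hence lies in $[0, \tfrac{2\epsilon}{B-A_{1}-\epsilon}]$. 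From here the pointwise $\rho$-bounds in (iii) and (iv) are immediate from monotonicity: for $r_{2} \in [A_{1}, A_{1}+\epsilon]$ we have $0 \le \rho(r_{2}) \le \rho(A_{1}+\epsilon)$ and $\rho(A_{1}+\epsilon)$ is controlled by (ii); the case of (iv) is symmetric, using $\rho(B-\epsilon) = 1$.

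The step that requires genuine care, and the only real obstacle, is producing a $\rho$ whose derivative satisfies the pointwise bound $\rho'(r_{2}) \le \tfrac{3}{B-A_{1}-\epsilon}$ on both short intervals, rather than merely in an averaged sense. The mean value of $\rho'$ on each short interval is at most $\tfrac{2}{B-A_{1}-\epsilon}$ by (ii), so there is slack against the claimed $\tfrac{3}{B-A_{1}-\epsilon}$ bound to accommodate $C^{1}$ matching conditions: $\rho'$ must vanish at $A_{1}$ and at $B-\epsilon$ where $\rho$ is constant, and equal the middle slope $c \le \tfrac{1}{B-A_{1}-3\epsilon}$ at the junctions $A_{1}+\epsilon$ and $B-2\epsilon$. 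I would close this step by an explicit construction, e.g.\ by choosing $\rho$ on $[A_{1}, A_{1}+\epsilon]$ to be a smoothly tapered ramp (a mollified piecewise-linear function, or an explicit cubic) whose slope never exceeds $\tfrac{3}{B-A_{1}-\epsilon}$ and matches $c$ smoothly at $A_{1}+\epsilon$; the construction is feasible provided $B - A_{1}$ is large compared with $\epsilon$, a hypothesis we are free to impose when choosing $B$ in the subsequent step.
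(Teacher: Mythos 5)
Your proposal is correct and follows the same elementary route the paper gestures at (the paper's proof is just ``The proofs of all the four statements are elementary calculations''). You have also correctly isolated the one point that genuinely needs care — producing $\rho$ with the pointwise derivative bound $\rho'(r_2)\le\frac{3}{B-A_1-\epsilon}$ on the two short ramps, which is feasible because the mean slope there is at most $\frac{2}{B-A_1-\epsilon}$ and the matching value $c\le\frac{1}{B-A_1-3\epsilon}$ stays below $\frac{3}{B-A_1-\epsilon}$ once $B-A_1\ge 4\epsilon$, a condition guaranteed by the later choice of constants.
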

\begin{proof}
	The proofs of all the four statements are elementary calculations.
\end{proof}

	Now we define the function $H_{I, 2}$ as follows:
\begin{gather}
H_{I, 2}: M \times \partial N \times [A, +\infty) \to \mathbb{R},\\
H_{I, 2}(x, y, r_2) = (1-\rho(r_2))H_{M,1}(x) + \rho(r_2)C.
\end{gather} \par

	To see what Hamiltonian chords can possibly arise and to estimate their action, first of all we need to find the Hamiltonian vector field for $H_{I, 2}$. The symplectic form on $M \times \partial N \times [A, +\infty)$ is of the form $\omega_{M} \bigoplus d(r_2 \lambda_{N}|_{\partial N})$ where $\lambda_{N}|_{\partial N}$ is the contact form on $\partial N$. Thus we find that the Hamiltonian vector field of $H_{I, 2}$ has the form:

\begin{equation}
X_{H_{I, 2}}(x, y, r_2) = (1-\rho(r_2)) X_{H_{M,1}}(x) - (C-H_{M,1}(x)) \rho'(r_2)Y_{\partial N}(y),
\end{equation}
where $Y_{\partial N}(y)$ is the Reeb vector field for $\lambda_{N}|_{\partial N}$ on $\partial N \times \{1\}$. \par
	Again, the new Hamiltonian $H_{I,2} + H_{N,1}$ might have additional time-one chords. Consider such a time-one chord $\Gamma$ of $X_{H_{I,2}}$ from $\mathcal{L}_{0}$ to $\mathcal{L}_{1}$. Its projection to $M$ is a $X_{H_{M,1}}$-chord $\gamma$ of time-$(1-\rho(r_2))$ (which does not necessarily starts from and ends in Lagrangian submanifolds), along which $H_{M,1}$ is constant equal to $H_{M,1}(\gamma(0))$, by Hamilton's equation (since $H_{M,1}$ is time-independent). Since $X_{H_{I,2}}$ does not have $\frac{\partial}{\partial r_{2}}$-component, the chord $\Gamma$ lies on some level $\{r_2 = \text{constant}\}$. Hence its projection to $\partial N \times [A, +\infty) \subset N$ is a Reeb chord of $Y_{\partial N}$ of time $(C-H_{M,1}(\gamma(0)))\rho'(r_2)$ on level $\partial N \times \{r_2\}$ for some $r_2 \ge A$, but with opposite direction to the one determined by $Y_{\partial N}$. To summarize, a time-one $X_{H_{I,2}}$-chord $\Gamma$ corresponds to a pair $(\gamma, \theta)$ where $\gamma$ is a time-$(1-\rho(r_{2}))$ $X_{H_{M,1}}$-chord in $M$ and $\theta$ is a time-$(C-H_{M,1}(\gamma(0)))\rho'(r_{2})$ Reeb chord for $-Y_{\partial N}$, located on the level $\partial N \times \{r_{2}\}$. \par
	Now we compute the action of $\Gamma$. Note that we have only modified $H_{M,1} + H_{N,1}$ on the region $I=M \times \partial N \times [A, +\infty)$ to the new one $H_{I,2}+H_{N,1}$, so we should compute the action with respect to $H_{I,2}+H_{N,1}$. Furthermore, in the region $I$ the function $H_{N,1}$ is constant, so we can replace it by its value $C$ (the same $C$ as in the first step). A straightforward calculation by the definition of the action gives:

\begin{equation}
\begin{split}
\mathcal{A}_{H_{I,2}+H_{N,1}, \mathcal{L}_{0}, \mathcal{L}_{1}}(\Gamma) &= -\int_{0}^{1-\rho(r_2)} \gamma^{*}\lambda_{M} - \int_{0}^{(C-H_{M,1}(\gamma(0)))\rho'(r_2)} - r_{2} \theta^{*}\lambda_{N}|_{\partial N}\\
&+ \int_{0}^{1} H_{I,2}(\Gamma(t))dt + C + f_{\mathcal{L}_{1}}(\Gamma(1)) - f_{\mathcal{L}_{0}}(\Gamma(0))\\
& = - \int_{0}^{1-\rho(r_2)} \gamma^{*}\lambda_{M} + r_{2}\rho'(r_2)(C-H_{M,1}(\gamma(0))) + \rho(r_2)(C-H_{M,1}(\gamma(0)))\\
&+ H_{M,1}(\gamma(0)) + C + f_{\mathcal{L}_{1}}(\Gamma(1)) - f_{\mathcal{L}_{0}}(\Gamma(0))\\
& = - \int_{0}^{1-\rho(r_2)} \gamma^{*}\lambda_{M} + (1- \rho(r_2) -r_{2}\rho'(r_2))H_{M,1}(\gamma(0))\\
&+ (1+ \rho(r_2) + r_{2}\rho'(r_2))C + f_{\mathcal{L}_{1}}(\Gamma(1)) - f_{\mathcal{L}_{0}}(\Gamma(0)).
\end{split}
\end{equation}

	There are five possible classes of $X_{H_{M,1}}$-chords, which are listed below. And according to the class of the projection of $\Gamma$ to the $X_{H_{M,1}}$-chord $\gamma$, we estimate the action of $\Gamma$. The goal is to show that the action is sufficiently positive.

\begin{enumerate}[label=(\roman*)]

%case (a)%
\item Short chords in the interior of $M$. Here we say these chord are short because the Hamiltonian $H_{M,1}$ is $C^2$-small there. For these chords, we have
\begin{equation}
\int_{0}^{1-\rho(r_2)} \gamma^{*}\lambda_{M} \le \epsilon.
\end{equation}

	Now there are three sub-cases to consider, depending on the value of $r_2$.

\begin{enumerate}[label=\bfseries (i \alph*):]

%subcase (a1)%
\item $r_2 \in [A, A_1]$, where $\rho(r_2) \equiv 0, \rho'(r_2) \equiv 0$. In this case, $X_{H_{I,2}}$ does not have $Y_{\partial N}$-component. This implies that
\begin{equation}
\begin{split}
\mathcal{A}_{H_{I,2}+H_{N,1}, \mathcal{L}_{0}, \mathcal{L}_{1}}(\Gamma) &\ge -\epsilon + H_{M,1}(\gamma(0)) + C + f_{\mathcal{L}_{1}}(\Gamma(1)) - f_{\mathcal{L}_{0}}(\Gamma(0))\\
&\ge C - 2\epsilon + f_{\mathcal{L}_{1}}(\Gamma(1)) - f_{\mathcal{L}_{0}}(\Gamma(0))\\
& \gg 0.
\end{split}
\end{equation}

%subcase (a2)%
\item $r_{2} \in [B-\epsilon, +\infty)$, where $\rho(r_2) \equiv 1, \rho'(r_2) \equiv 0$. In this case, $X_{H_{I,2}}$ is zero so $\Gamma$ is constant. Thus we have
\begin{equation}
\begin{split}
\mathcal{A}_{H_{I,2}+H_{N,1}, \mathcal{L}_{0}, \mathcal{L}_{1}}(\Gamma) &= H_{M,1}(\gamma(0)) + C + f_{\mathcal{L}_{1}}(\Gamma(1)) - f_{\mathcal{L}_{0}}(\Gamma(0))\\
&\ge C - \epsilon + f_{\mathcal{L}_{1}}(\Gamma(1)) - f_{\mathcal{L}_{0}}(\Gamma(0))\\
&\gg 0.
\end{split}
\end{equation}

%subcase (a3)%
\item $r_2 \in [A_1, B-\epsilon]$. Recall that $H_{M,1}$ is $C^2$-small, taking values in $[-\epsilon, 0]$. Therefore we have
\begin{align*}
r_{2}(C-H_{M,1}(\gamma(0)))\rho'(r_2) &\ge 0, \\
\rho(r_2)(C-H_{M,1}(\gamma(0))) &\ge 0
\end{align*}
Thus we obtain the estimate
\begin{equation}
\begin{split}
\mathcal{A}_{H_{I,2}+H_{N,1}, \mathcal{L}_{0}, \mathcal{L}_{1}}(\Gamma) &\ge - 2\epsilon + C + f_{\mathcal{L}_{1}}(\Gamma(1)) - f_{\mathcal{L}_{0}}(\Gamma(0))\\
&\gg 0.
\end{split}
\end{equation}

\end{enumerate}

%case (b)%
\item Reeb chords on level $\partial M \times \{r_1\}$ for some $r_1$ close to $1$ ($r_1 \in [1, 1+\epsilon]$) where $H_{M,1}(r_1) \in [-\epsilon, (1+\epsilon)^2]$. Recall we have assumed that $\frac{\partial H_{M,1}}{\partial r_1} \le \frac{4}{\epsilon}$. For these chords, we have
\begin{equation}
\int_{0}^{1-\rho(r_2)} \gamma^{*}\lambda_{M} \le \frac{4}{\epsilon} (1-\rho(r_2)).
\end{equation}

	Now there are three sub-cases to consider, depending on the value of $r_2$.

\begin{enumerate}[label=\bfseries (ii \alph*):]

%subcase (b1)%
\item $r_2 \in [A, A_1]$, where $\rho(r_2) \equiv 0, \rho'(r_2) \equiv 0$. We have
\begin{equation}
\begin{split}
\mathcal{A}_{H_{I,2}+H_{N,1}, \mathcal{L}_{0}, \mathcal{L}_{1}}(\Gamma) &\ge -\frac{4}{\epsilon} + H_{M,1}(\gamma(0)) + C + f_{\mathcal{L}_{1}}(\Gamma(1)) - f_{\mathcal{L}_{0}}(\Gamma(0))\\
&\ge -\frac{4}{\epsilon} - \epsilon + C + f_{\mathcal{L}_{1}}(\Gamma(1)) - f_{\mathcal{L}_{0}}(\Gamma(0))\\
&\gg 0.
\end{split}
\end{equation}
Reminder: here $\epsilon > 0$ is small but fixed. So we can choose $A$ large ($C=(A-\frac{3\epsilon}{4})^2$) such that the above estimate holds.

%subcase (b2)%
\item $r_2 \in [B-\epsilon, +\infty)$, where $\rho(r_2) \equiv 1, \rho'(r_2) \equiv 0$. We have
\begin{equation}
\begin{split}
\mathcal{A}_{H_{I,2}+H_{N,1}, \mathcal{L}_{0}, \mathcal{L}_{1}}(\Gamma) &\ge -\frac{4}{\epsilon} + 2C + f_{\mathcal{L}_{1}}(\Gamma(1)) - f_{\mathcal{L}_{0}}(\Gamma(0))\\
&\gg 0.
\end{split}
\end{equation}

%subcase (b3)%
\item $r_2 \in [A_1, B-\epsilon]$. Since $H_{M,1}$ takes values between $-\epsilon$ and $(1+\epsilon)^2$, we still have 
\begin{align*}
r_{2}(C-H_{M,1}(\gamma(0)))\rho'(r_2) &\ge 0, \\
\rho(r_2)(C-H_{M,1}(\gamma(0))) &\ge 0
\end{align*}
Thus we obtain the estimate
\begin{equation}
\begin{split}
\mathcal{A}_{H_{I,2}+H_{N,1}, \mathcal{L}_{0}, \mathcal{L}_{1}}(\Gamma) &\ge -\frac{4}{\epsilon} - \epsilon + C + f_{\mathcal{L}_{1}}(\Gamma(1)) - f_{\mathcal{L}_{0}}(\Gamma(0))\\
&\gg 0.
\end{split}
\end{equation}

\end{enumerate}

%case (c)%
\item Reeb chords on level $\partial M \times \{r_1\}$ for some $r_1 \in [1+\epsilon, A - \epsilon]$. For these chords, we have
\begin{equation}
\int_{0}^{1-\rho(r_2)} \gamma^{*}\lambda_{M} = 2r_{1}^{2}(1-\rho(r_2)).
\end{equation}

	Now there are three sub-cases to consider, depending on the value of $r_2$.

\begin{enumerate}[label=\bfseries (iii \alph*):]

%subcase (c1)%
\item $r_2 \in [A, A_1]$, where $\rho(r_2) \equiv 0, \rho'(r_2) \equiv 0$. We have
\begin{equation}
\begin{split}
\mathcal{A}_{H_{I,2}+H_{N,1}, \mathcal{L}_{0}, \mathcal{L}_{1}}(\Gamma) &= -2r_{1}^2 + H_{M,1}(r_1) + C + f_{\mathcal{L}_{1}}(\Gamma(1)) - f_{\mathcal{L}_{0}}(\Gamma(0))\\
&= C - r_{1}^2 + f_{\mathcal{L}_{1}}(\Gamma(1)) - f_{\mathcal{L}_{0}}(\Gamma(0))\\
&\ge C - (A - \epsilon)^2 + f_{\mathcal{L}_{1}}(\Gamma(1)) - f_{\mathcal{L}_{0}}(\Gamma(0))\\
&= \frac{\epsilon A}{2} - \frac{7\epsilon^2}{16} + f_{\mathcal{L}_{1}}(\Gamma(1)) - f_{\mathcal{L}_{0}}(\Gamma(0))\\
&\gg 0.
\end{split}
\end{equation}

%subcase (c2)%
\item $r_2 \in [B-\epsilon, +\infty)$, where $\rho(r_2) \equiv 1, \rho'(r_2) \equiv 0$. We have
\begin{equation}
\begin{split}
\mathcal{A}_{H_{I,2}+H_{N,1}, \mathcal{L}_{0}, \mathcal{L}_{1}}(\Gamma) &= C - H_{M,1}(\gamma(0)) + H_{M,1}(\gamma(0)) + C + f_{\mathcal{L}_{1}}(\Gamma(1)) - f_{\mathcal{L}_{0}}(\Gamma(0))\\
&= 2C + f_{\mathcal{L}_{1}}(\Gamma(1)) - f_{\mathcal{L}_{0}}(\Gamma(0))\\
&\gg 0.
\end{split}
\end{equation}

%subcase (c3)%
\item $r_2 \in [A_1, B-\epsilon]$. Since $C=(A-\frac{3\epsilon}{4})^2 > (A-\epsilon)^2$ and $H_{M,1}(\gamma(0)) \le (A-\epsilon)^2$, we still have
\begin{align*}
r_{2}(C-H_{M,1}(\gamma(0)))\rho'(r_2) &\ge 0, \\
\rho(r_2)(C-H_{M,1}(\gamma(0))) &\ge 0
\end{align*}
Hence,
\begin{equation}
\begin{split}
\mathcal{A}_{H_{I,2}+H_{N,1}, \mathcal{L}_{0}, \mathcal{L}_{1}}(\Gamma) &\ge -2r_{1}^{2}(1-\rho(r_2)) + r_{1}^{2} + C + f_{\mathcal{L}_{1}}(\Gamma(1)) - f_{\mathcal{L}_{0}}(\Gamma(0))\\
&\ge C - (A-\epsilon)^2 + f_{\mathcal{L}_{1}}(\Gamma(1)) - f_{\mathcal{L}_{0}}(\Gamma(0))\\
&= \frac{\epsilon A}{2} - \frac{7\epsilon^2}{16} + f_{\mathcal{L}_{1}}(\Gamma(1)) - f_{\mathcal{L}_{0}}(\Gamma(0))\\
&\gg 0.
\end{split}
\end{equation}

\end{enumerate}

%case (d)%
\item Reeb chords on level $\partial M \times \{r_1\}$ for some $r_1$ close to $A$ ($r_1 \in [A-\epsilon, A]$, where $\frac{\partial H_{M,1}}{\partial r_1} \le \frac{2A}{3}$. So for these chords, we have
\begin{equation}
\int_{0}^{1-\rho(r_2)} \gamma^{*}\lambda_{M} \le \frac{2A}{3} r_{1}(1-\rho(r_2)) \le \frac{2A^2}{3}(1-\rho(r_2)).
\end{equation}

	Now there are five sub-cases to consider, depending on the value of $r_2$.

\begin{enumerate}[label=\bfseries (iv \alph*):]

%subcase (d1)%
\item $r_2 \in [A, A_1]$, where $\rho(r_2) \equiv 0, \rho'(r_2) \equiv 0$. We have
\begin{equation}
\begin{split}
\mathcal{A}_{H_{I,2}+H_{N,1}, \mathcal{L}_{0}, \mathcal{L}_{1}}(\Gamma) &\ge -\frac{2A^2}{3} + r_{1}^2 + C + f_{\mathcal{L}_{1}}(\Gamma(1)) - f_{\mathcal{L}_{0}}(\Gamma(0))\\
&\ge -\frac{2A^2}{3} + (A - \epsilon)^2 + C + f_{\mathcal{L}_{1}}(\Gamma(1)) - f_{\mathcal{L}_{0}}(\Gamma(0))\\
&\gg 0.
\end{split}
\end{equation}

%subcase (d2)%
\item $r_2 \in [B-\epsilon, +\infty)$, where $\rho(r_2) \equiv 1, \rho'(r_2) \equiv 0$. We have
\begin{equation}
\begin{split}
\mathcal{A}_{H_{I,2}+H_{N,1}, \mathcal{L}_{0}, \mathcal{L}_{1}}(\Gamma) &= H_{M,1}(\gamma(0)) + C + f_{\mathcal{L}_{1}}(\Gamma(1)) - f_{\mathcal{L}_{0}}(\Gamma(0))\\
&\ge (A-\epsilon)^2 + C + f_{\mathcal{L}_{1}}(\Gamma(1)) - f_{\mathcal{L}_{0}}(\Gamma(0))\\
&\gg 0.
\end{split}
\end{equation}

%subcase (d3)%
\item $r_2 \in [A_1, \frac{A_1+B}{2}]$. In this case, we can choose appropriate $\rho$ such that
\begin{equation}
1 - r_{2}\rho'(r_2) - \rho(r_2) \ge 0.
\end{equation}
So the action satisfies
\begin{equation}
\begin{split}
\mathcal{A}_{H_{I,2}+H_{N,1}, \mathcal{L}_{0}, \mathcal{L}_{1}}(\Gamma) &= (1 - r_{2}\rho'(r_2) - \rho(r_2))r_{1}^2 + \frac{2A}{3}(1-\rho(r_2))r_{1}\\
&+ (1 + r_{2}\rho'(r_2) + \rho(r_2))C + f_{\mathcal{L}_{1}}(\Gamma(1)) - f_{\mathcal{L}_{0}}(\Gamma(0))\\
&\ge C + f_{\mathcal{L}_{1}}(\Gamma(1)) - f_{\mathcal{L}_{0}}(\Gamma(0))\\
&\gg 0.
\end{split}
\end{equation}

%subcase (d4)%
\item $r_2 \in [\frac{A_1+B}{2}, B-2\epsilon]$. Recall that in this region, the derivative $\rho'(r_2) \equiv constant \in [\frac{1}{B-A_{1}-\epsilon}, \frac{1}{B-A_{1}-3\epsilon}]$. So we have
\begin{equation}
\frac{r_{2} - A_{1} - \epsilon}{B - A_{1} - \epsilon} \le \rho(r_2) \le \frac{2\epsilon}{B - A_{1} - \epsilon} + \frac{r_{2} - A_{1} - \epsilon}{B - A_{1} - 3\epsilon} \le \frac{r_{2} - A_{1} + \epsilon}{B - A_{1} - 3\epsilon},
\end{equation}
\begin{equation}
\frac{r_2}{B - A_{1} - \epsilon} \le r_{2}\rho(r_2) \le \frac{r_2}{B - A_{1} - 3\epsilon}.
\end{equation}
Thus we obtain
\begin{equation}
\begin{split}
1 - r_{2}\rho'(r_2) - \rho(r_2) &\ge 1 - \frac{2r_{2} - A_{1} + \epsilon}{B - A_{1} - 3\epsilon}\\
&\ge 1 - \frac{2B - A_{1} - 3\epsilon}{B - A_{1} - 3\epsilon}\\
&= -1 - \frac{A_{1} + 3\epsilon}{B - A_{1} - 3\epsilon},
\end{split}
\end{equation}
and also
\begin{equation}
\begin{split}
1 + r_{2}\rho'(r_2) + \rho(r_2) &\ge 1 + \frac{2r_{2} - A_{1} - \epsilon}{B - A_{1} -\epsilon}\\
&\ge 1 + \frac{2\frac{A_{1} + B}{2} - A_{1} - \epsilon}{B - A_{1} - \epsilon}\\
&= 1 + \frac{B - \epsilon}{B - A_{1} - \epsilon}\\
&= 2 + \frac{A_1}{B - A_{1} - \epsilon}.
\end{split}
\end{equation}
Thus the action satisfies
\begin{equation}
\begin{split}
\mathcal{A}_{H_{I,2}+H_{N,1}, \mathcal{L}_{0}, \mathcal{L}_{1}}(\Gamma) &\ge (-1 - \frac{A_{1} + 3\epsilon}{B - A_{1} - 3\epsilon})r_{1}^2 + (2 + \frac{A_1}{B - A_{1} - \epsilon})C\\
&+ f_{\mathcal{L}_{1}}(\Gamma(1)) - f_{\mathcal{L}_{0}}(\Gamma(0))\\
&\ge (-1 - \frac{A_{1} + 3\epsilon}{B - A_{1} - 3\epsilon})A^2 + (2 + \frac{A_1}{B - A_{1} - \epsilon})(A-\frac{3\epsilon}{4})^2\\
&+ f_{\mathcal{L}_{1}}(\Gamma(1)) - f_{\mathcal{L}_{0}}(\Gamma(0))
\end{split}
\end{equation}
We may choose $A_{1}, B$ sufficiently large such that $2 + \frac{A_1}{B - A_{1} - \epsilon}$ is much bigger than $1 + \frac{A_{1} + 3\epsilon}{B - A_{1} - 3\epsilon}$, which will then ensure that $\mathcal{A}_{H_{I,2}+H_{N,1}, \mathcal{L}_{0}, \mathcal{L}_{1}}(\Gamma) \gg 0$.
%subcase (d5)%
\item $r_2 \in [B-2\epsilon, B-\epsilon]$. Here we have
\begin{equation}
1 - \frac{2\epsilon}{B - A_{1} - \epsilon} \le \rho(r_2) \le 1.
\end{equation}

	This case is special - we realize that we need the following additional assumption on the choice of the function $\rho$:
%additional assumption on the function $\rho$%
\begin{assumption}
$0 \le \rho'(r_2) \le \frac{1}{B - A_{1} - \epsilon}$, for $r_2 \in [B-2\epsilon, B-\epsilon]$.
\end{assumption}

	This assumption does not conflict any previous estimates in which we do not use many assumptions on $\rho$. There are of course a lot of functions that satisfy this assumption. With this, we can estimate the action:
\begin{equation} \label{est: d5}
\begin{split}
\mathcal{A}_{H_{I,2}+H_{N,1}, \mathcal{L}_{0}, \mathcal{L}_{1}}(\Gamma) &= (1 - r_{2}\rho'(r_2) -\rho(r_2))r_{1}^2 + (1 + r_{2}\rho'(r_2) + \rho(r_2))C\\
&+ \frac{2A}{3} r_{1}(1-\rho(r_2))r_1 + f_{\mathcal{L}_{1}}(\Gamma(1)) - f_{\mathcal{L}_{0}}(\Gamma(0))\\
&\ge (1 - \frac{r_2}{B - A_{1} -1})r_{1}^2 + (1 + 1 - \frac{2\epsilon}{B - A_{1} - \epsilon})C\\
&+ f_{\mathcal{L}_{1}}(\Gamma(1)) - f_{\mathcal{L}_{0}}(\Gamma(0))\\
&\ge (-\frac{B - \epsilon}{B - A_{1} - \epsilon} + (2 - \frac{2\epsilon}{B - A_{1} - \epsilon})C\\
&+ f_{\mathcal{L}_{1}}(\Gamma(1)) - f_{\mathcal{L}_{0}}(\Gamma(0))\\
&\ge (-1 - \frac{A_{1}}{B - A_{1} - \epsilon})A^2 + (2 - \frac{2\epsilon}{B - A_{1} - \epsilon})(A - \frac{3\epsilon}{4})^2\\
&+ f_{\mathcal{L}_{1}}(\Gamma(1)) - f_{\mathcal{L}_{0}}(\Gamma(0))\\
&\gg 0
\end{split}
\end{equation}
for suitable choice of $A_{1}, B$ such that $(2 - \frac{2\epsilon}{B - A_{1} - \epsilon}) + (-1 - \frac{A_{1}}{B - A_{1} - \epsilon})$ is strictly bigger than zero. An easy computation shows that it suffices to require that $B > 2A_{1}$.

\end{enumerate}

%case (e)%
\item Constant chords on level $\partial M \times \{r_1\}$ for $r_1 \ge A$, where $H_{M,1} \equiv C$. For these chords, we have
\begin{equation}
\int_{0}^{1-\rho(r_2)} \gamma^{*}\lambda_{M} = 0.
\end{equation}

	Now there are three sub-cases to consider, depending on the value of $r_2$.

\begin{enumerate}[label=\bfseries (v \alph*):]

%subcase (e1)%
\item $r_2 \in [A, A_1]$, where $\rho(r_2) \equiv 0, \rho'(r_2) \equiv 0$. We have
\begin{equation}
\begin{split}
\mathcal{A}_{H_{I,2}+H_{N,1}, \mathcal{L}_{0}, \mathcal{L}_{1}}(\Gamma) &= H_{M,1}(\gamma(0)) + C + f_{\mathcal{L}_{1}}(\Gamma(1)) - f_{\mathcal{L}_{0}}(\Gamma(0))\\
&= 2C + f_{\mathcal{L}_{1}}(\Gamma(1)) - f_{\mathcal{L}_{0}}(\Gamma(0))\\
&\gg 0.
\end{split}
\end{equation}

%subcase (e2)%
\item $r_2 \in [B-\epsilon, +\infty)$, where $\rho(r_2) \equiv 1, \rho'(r_2) \equiv 0$. We have
\begin{equation}
\begin{split}
\mathcal{A}_{H_{I,2}+H_{N,1}, \mathcal{L}_{0}, \mathcal{L}_{1}}(\Gamma) &= 2C + f_{\mathcal{L}_{1}}(\Gamma(1)) - f_{\mathcal{L}_{0}}(\Gamma(0))\\
&\gg 0.
\end{split}
\end{equation}

%subcase (e3)%
\item $r_2 \in [A_1, B-\epsilon]$. Again, we have
\begin{equation}
\begin{split}
\mathcal{A}_{H_{I,2}+H_{N,1}, \mathcal{L}_{0}, \mathcal{L}_{1}}(\Gamma) &= 2C + f_{\mathcal{L}_{1}}(\Gamma(1)) - f_{\mathcal{L}_{0}}(\Gamma(0))\\
&\gg 0.
\end{split}
\end{equation}
\end{enumerate}

\end{enumerate} \par

	We have finished deforming $H_{M,1} + H_{N,1}$ to $H_{I,2} + H_{N,1}$ whose additional time-one chords involved all have sufficiently positive action. We then perform a symmetric construction in the region $II=\partial M \times [A, +\infty) \times N$ deforming $H_{N,1}$ to $H_{II,2}$, which is constant for $r_2 \ge B$, while the addtional time-one chords all have sufficiently positive action. \par
	The upshot of this second step is that we get a Hamiltonian $H_{I,2} + H_{II,2}$ on $M \times N$ which agrees with the original split Hamiltonian $H_{M, N}$, and is constant equal to $2C$ outside the compact subset $\{r_1 \le B, r_2 \le B\}$. Also, the additional chords compared to the original Hamiltonian $H_{M, N}$ all have sufficiently positive action. \par
	So far, the requirements on the choice of the parameters $A, A_{1}, B$ are that $A$ be sufficiently large, and $B > 2A_{1}$. There might be further constraint coming from the third step to be carried out below, so we wait until finishing that step before we specify the choices. But indeed, we will choose $A_{1}$ and $B$ both proportional to $A$ by suitable constant multiples, in order to carry out the homotopy procedure systematically in section \ref{uniform}. \par

%the third step%
\subsection{The third step}
	We then deform the Hamiltonian $H_{I,2} + H_{II,2}$ to a Hamiltonian $K$ which is admissible for the natural choice of the cylindrical end of $M \times N$ described in section \ref{product manifold}. Namely, we want $K$ to have quadratic growth in the radial coordinate outside a compact set. To help better visualize the picture, we observe that according to the description of the cylindrical end in section \ref{product manifold}, the radial coordinate $r$ is roughly speaking $r_{1} + r_{2}$ (precisely, with little deviation caused by the two constants $\alpha, \beta$ as in section \ref{product manifold}, but which can be chosen to be arbitrarily close to $1$). However we will not use this fact in our argument. \par
	Note the Hamiltonian $H_{I,2} + H_{II,2}$ is constant equal to $2C$ outside the compact set $\{r_1 \le B, r_2 \le B\}$. In particular, this is true on $\Sigma \times [B, +\infty)$. We then deform it to a Hamiltonian $K$ on $\Sigma \times [B, +\infty)$ by a smooth cut-off function such that the following holds:

\begin{enumerate}[label=(\roman*)]

\item $K$ agrees with $H_{I,2} + H_{II,2}$ in the region $\{r \le B\}$. In particular, it agrees with the split Hamiltonian $H_{M, N}$ in the region $\{r_1 \le A - \epsilon, r_2 \le A - \epsilon\}$.

\item $K$ is convex and strictly increasing with respect to the radial coordinate on $\Sigma \times [B, +\infty)$.

\item For $r \ge B+\epsilon$, we have
\begin{equation}
K(z, r) = \frac{1}{5}r^2 + 2C - \frac{1}{5}(B + \frac{\epsilon}{2})^2.
\end{equation}

\item $K$ does not grow too fast in the region $B \le r \le B + \epsilon$. Specifically, we require that for $r \in [B, B + \epsilon]$,
\begin{equation}
0 \le \frac{\partial K}{\partial r} \le \frac{2B}{5}.
\end{equation}
This is possible, since the amount that $K$ increases from $r=B$ to $r=B+\epsilon$ is $\frac{B\epsilon}{5} - \frac{3\epsilon^2}{20}$.

\end{enumerate}

	With such a Hamiltonian $K = K_{b}$ (depending on $b$), we can construct our desired map. \par
	
\begin{lemma}
	For such a $K_{b}$, there is a well-defined homomorphism of modules of truncated Floer cochain groups:
\begin{equation}
R_{b}: CW^{*}_{(-b, a]}(\mathcal{L}_{0}, \mathcal{L}_{1}; H_{M, N}) \to CW^{*}_{(-b, a]}(\mathcal{L}_{0}, \mathcal{L}_{1}; K_{b})
\end{equation}
\end{lemma}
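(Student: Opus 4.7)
The plan is to realize $R_b$ as a continuation map induced by a homotopy of Hamiltonian-almost-complex-structure pairs from $(H_{M,N}, J_{M,N})$ to $(K_b, J)$ that passes, in order, through the intermediate Hamiltonians $H_1$ and $H_{I,2}+H_{II,2}$ produced in the three preceding steps. Concretely, choose smooth $s$-dependent families $\{K_s\}_{s\in\mathbb{R}}$ and $\{J_s\}_{s\in\mathbb{R}}$ equal to $(H_{M,N},J_{M,N})$ for $s\ll 0$ and to $(K_b,J)$ for $s\gg 0$, performing the three deformations on three disjoint $s$-intervals, each supported only in the specific region where the corresponding Step 1--3 construction modifies the Hamiltonian. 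Outside those regions $K_s$ remains of split type or admissible for the natural cylindrical end $\Sigma\times[1,+\infty)$, and $J_s$ remains of product or contact type accordingly, so that a region-by-region maximum principle is available throughout the homotopy.

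Then define $R_b([\gamma_-])$ by counting with signs the rigid, finite-energy solutions $u:\mathbb{R}\times[0,1]\to M\times N$ of the $s$-dependent equation
\[
\partial_s u + J_s(u)\bigl(\partial_t u - X_{K_s}(u)\bigr)=0, \qquad u(s,0)\in\mathcal{L}_0,\quad u(s,1)\in\mathcal{L}_1,
\]
with $\lim_{s\to-\infty}u=\gamma_-$ and $\lim_{s\to+\infty}u=\gamma_+$, and retaining only those terms for which $\mathcal{A}_{K_b}(\gamma_+)\in(-b,a]$. Finiteness of the resulting sum reduces to two points. First, by the detailed action estimates carried out in Steps 1--3, every new time-one chord that the three deformations could introduce has action much larger than $a$ as soon as $A,A_1,B$ are chosen sufficiently large; consequently the $K_b$-chords with action in $(-b,a]$ are in bijection with the $H_{M,N}$-chords in the same window, and there are only finitely many of them. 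Second, for each such pair $(\gamma_-,\gamma_+)$ the zero-dimensional moduli space is compact, as established below.

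The main obstacle is this compactness: since $H_{M,N}$ and its intermediate deformations are not admissible for the natural cylindrical end of $M\times N$, the usual global maximum principle is not directly available. I would handle this in the same spirit as the proof of well-definedness of $HW^{*}(\mathcal{L}_{0},\mathcal{L}_{1};H_{M,N},J_{M,N})$ in Section~\ref{section: wrapped Floer theory in the product}, by combining the continuation action-energy identity
\[
E(u) = \mathcal{A}_{H_{M,N}}(\gamma_-) - \mathcal{A}_{K_b}(\gamma_+) + \int_{\mathbb{R}\times[0,1]}(\partial_s K_s)(u)\,ds\,dt
\]
with region-specific estimates: in $U_2$ and $U_3$ the split structure of $K_s$ allows the component-wise maximum principle to be applied to $\pi_M\circ u$ and $\pi_N\circ u$, while in $U_1$ the admissibility of $K_s$ in the radial coordinate $r$ of $\Sigma\times[1,+\infty)$ gives a direct maximum principle. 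The curvature integral $\int(\partial_s K_s)(u)$ is uniformly bounded by the construction of the homotopy (the $s$-support of $\partial_s K_s$ is compact, and the pointwise oscillation is controlled by the pointwise comparisons in Steps 1--3), while any trajectory whose image escaped to the region of newly created chords would be forced, via the same identity, to be asymptotic to a chord of action $\gg a$, contradicting our asymptotic conditions on $\gamma_\pm$.

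With this $C^0$-bound in place, standard Gromov compactness applies, and transversality follows from an $s$-parametrized version of Lemma~\ref{transversality for strips}. The signed count is therefore finite, and the assignment $[\gamma_-]\mapsto R_b([\gamma_-])$ lands in $CW^{*}_{(-b, a]}(\mathcal{L}_{0},\mathcal{L}_{1};K_b)$ as required, yielding the desired homomorphism of modules. The fact that $R_b$ is a cochain map, as well as the construction of the reverse arrows needed for the rest of the sequence~\eqref{act-res}, will follow by the same strategy applied to standard one-dimensional moduli space analysis and to the opposite homotopy.
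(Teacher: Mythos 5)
The paper's proof of this lemma is much simpler than your continuation-map construction, and the difference is not cosmetic. The map $R_b$ in the paper is \emph{literally the identity on generators}: the action estimates carried out in Steps 1--3 show that every $K_b$-chord that is not already an $H_{M,N}$-chord has action strictly larger than $a$, and therefore falls outside the window $(-b,a]$. Consequently the generators of $CW^{*}_{(-b,a]}(\mathcal{L}_0,\mathcal{L}_1;H_{M,N})$ and of $CW^{*}_{(-b,a]}(\mathcal{L}_0,\mathcal{L}_1;K_b)$ coincide as sets (the chords live in the region where $H_{M,N}=K_b$), and the map $R_b$ is defined by identifying them one-for-one. This is why the lemma is phrased as a homomorphism of modules and not as a cochain map: the cochain-map property is deferred to Lemma~\ref{proof of cochain map}, where it is proven by a monotonicity argument for pseudoholomorphic curves. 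This decoupling is essential because the entire strategy of Section~\ref{iso on cohomology} is to build a zigzag \eqref{act-res} of maps that compose to \emph{inclusions} with respect to the action filtration; that composition property only holds because each $R_b$ is the identity on generators, not because it is any cochain map.

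Your proposal to realize $R_b$ as a continuation map has two problems. First, there is a genuine gap in the compactness step: you assert that the curvature integral $\int(\partial_s K_s)(u)\,ds\,dt$ is uniformly bounded because ``the $s$-support of $\partial_s K_s$ is compact'', but the $s$-support being compact says nothing about the size of $\partial_s K_s$ as a function on $M\times N$, and the deformations in Steps 1--3 change $K_s$ on unbounded regions of $M\times N$ by amounts that grow like $r_i^2$. Without an a priori $C^0$-bound on the image of $u$ the curvature term cannot be controlled, and the region-by-region maximum principle you invoke is precisely what would supply that bound, but to run it you need to know the homotopy keeps the relevant structure at infinity through \emph{all} intermediate $s$-values, which is delicate because the intermediate Hamiltonians are neither split nor admissible for the natural cylindrical end. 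This chicken-and-egg problem is exactly what the paper avoids by not using a continuation map here at all: the only continuation map in the paper's construction appears in $h_b$ (Section~\ref{uniform}) and is associated to a \emph{compactly supported} homotopy, where the curvature term is trivially bounded. Second, even if you succeeded in defining a continuation map $R_b$, it would not land you in the paper's argument: a continuation map is not an inclusion on generators, so the compositions in \eqref{act-res} would no longer be visibly inclusions of truncated complexes, and you would need an entirely different route (e.g.\ a homotopy-inverse argument) to deduce the isomorphism on cohomology.
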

\begin{proof}
	The proof is based on analyzing the action of the $K$-chords, so that the action of additional chords that do not agree with Hamiltonian chords for $H_{I, 2} + H_{II, 2}$ is sufficiently large and therefore does not fall in the action filtration window $(-b, a]$. Thus we are able to define the desired map on truncated Floer complexes, which is basically the identity map (by identifying generators). \par
	There might be additional $K$-chords of action not smaller than $-b$, which are on the level hypersurface $\Sigma \times \{r\}$ for $r$ close to $B$, say $r \in [B, B + 2\epsilon]$. Let $\Gamma$ be such a chord, we estimate its action in the following two cases.
\begin{enumerate}[label=(\roman*)]

\item $r \in [B, B + \epsilon]$. In this case we have that
\begin{equation}
\begin{split}
\mathcal{A}_{K, \mathcal{L}_{0}, \mathcal{L}_{1}}(\Gamma) &\ge -\frac{2B}{5}r + 2C\\
&\ge -\frac{2B}{5}(B + \epsilon) + 2C\\
&= 2(A - \frac{3\epsilon}{4})^2 - \frac{1}{5}(2B^2 + 2B\epsilon).
\end{split}
\end{equation}

\item $r \in [B + \epsilon, B + 2\epsilon]$. In this case we have
\begin{equation}
\begin{split}
\mathcal{A}_{K, \mathcal{L}_{0}, \mathcal{L}_{1}}(\Gamma) &\ge -\frac{2}{5}r^2 + \frac{1}{5}r^2 + 2C - \frac{1}{5}(B + \frac{\epsilon}{2})^2\\
&\ge 2C - \frac{1}{5}r^2 - \frac{1}{5}(B+\frac{\epsilon}{2})^2\\
&\ge 2C - \frac{1}{5}(B + 2\epsilon)^2 - \frac{1}{5}(B+\frac{\epsilon}{2})^2\\
&= 2(A - \frac{3\epsilon}{4})^2 - \frac{1}{5}(2B^2 + 5B\epsilon \frac{17\epsilon^2}{4}).
\end{split}
\end{equation}

\end{enumerate}

	Now we specify the choice of $B$ and $A_{1}$ to make sure the above two estimates are sufficiently positive. Note $2 < \frac{3}{\sqrt{2}} < \sqrt{5}$ and $1 < \frac{3}{4\sqrt{2}} + \frac{1}{2} < \frac{3}{2\sqrt{2}}$. Let us take 
\begin{equation} \label{choice of constants}
\begin{split}
&B = \frac{3}{\sqrt{2}}A,\\
&A_{1} = (\frac{3}{4\sqrt{2}} + \frac{1}{2})A.
\end{split}
\end{equation}
These choices ensure the action of the additional chords are sufficiently positive, and also make the previous estimate \eqref{est: d5} valid. This finishes all the steps in deforming the split Hamiltonian to an admissilble one, so we obtain the desired homomorphism of modules
\begin{equation}\label{act-res b/a}
CW^{*}_{(-b, a]}(\mathcal{L}_{0}, \mathcal{L}_{1}; \pi_{M}^{*}H_{M,1} + \pi_{N}^{*}H_{N,1}) \to CW^{*}_{(-b, a]}(\mathcal{L}_{0}, \mathcal{L}_{1}; K)
\end{equation}
\end{proof}

	Regarding the other homomorphism
\begin{equation}
CW^{*}_{(-b, a]}(\mathcal{L}_{0}, \mathcal{L}_{1}; K) \to CW^{*}_{(-5b, a]}(\mathcal{L}_{0}, \mathcal{L}_{1}; \pi_{M}^{*}H_{M,1} + \pi_{N}^{*}H_{N,1}),
\end{equation}
it is well-defined because of from the fact that $(r_1 + r_2)^2 \ge r_{1}^2 + r_{2}^2$. In more detail, this comes from the fact that within the region $\{r_1 \le A - \epsilon, r_2 \le A - \epsilon\}$, we have not changed the Hamiltonian $\pi_{M}^{*}H_{M,1} + \pi_{N}^{*}H_{N,1})$, and the chords that lie outside this region all have action out of the filtration range, by our choices of constants $A$ and $B = \frac{3}{\sqrt{2}}A, C=(A - \frac{3\epsilon}{4})^2$. \par

\subsection{Almost complex structures}\label{change almost complex structure}
	The matter with almost complex structures is less subtle than that with Hamiltonians. This is a general feature of Floer theory on non-compact manifolds: even though the almost complex structures are required to be of contact type near infinity, there is still plenty of flexibility of perturbing them (for example this is how we can achieve transversality). Suppose that we have chosen regular $J_{M}$ (resp. $J_{N}$) for $H_{M}$ (resp. $H_{N}$) of contact type. According to our construction, $K$ agrees with $H_{M, N}$ for $r_1 \le A - \epsilon, r_2 \le A - \epsilon$, and is still split ($= H_{M,1} + H_{N,1}$ when $r_1 \le A_1, r_2 \le A_1$. We may then choose a regular $J$ for $K$ which is of the form $J_{M, N}$ when $r_1 \le A_1, r_2 \le A_1$, and is of contact type on $\Sigma \times [B + \epsilon, +\infty)$. \par
	In fact, it does no harm even if we require $J$ to be of split type when $r_1 \le B, r_2 \le B$. Regularity results still hold if we have chosen $J_{M}$ and $J_{N}$ generically. But we do not quite need to make such an asumption here. In any case, we do not have to worry too much about almost complex structures, as long as they are compatible with the symplectic form, and of contact type outside a compact set. \par

%rescaling argument%
\subsection{A homotopy argument} \label{uniform}
	There is a minor unsatisfactory point, which is that the function $K$ we obtained, more specifically the number $B$ where the function $K$ starts growing quadratically, depends on the filtration number $b$. Thus, starting from the inclusion $CW^{*}_{(-b, a]}(\mathcal{L}_{0}, \mathcal{L}_{1}; H_{M, N}) \subset CW^{*}_{(-2b, a]}(\mathcal{L}_{0}, \mathcal{L}_{1}; H_{M, N})$, we get different Hamiltonians with respect to which wrapped Floer cochain spaces are defined - one might not be directly seen as a subcomplex of the other. However, from the viewpoint of homotopy invariance of wrapped Floer cohomology, this defect is irrelevant. And that suggests that the following procedure can be done. \par
	To indicate this dependence, let us denote by $B_{b}$ the choice we make for $B$ according to $b$, and denote by $K_{b}$ the Hamiltonian function $K$ which is $\frac{1}{5}r^2 + 2C - \frac{1}{5}(B+\frac{\epsilon}{2})^2$ for $r \ge B + \epsilon$. Also recall in our choices, $A_{1}, B$ are appropriate constant multiples of $A$ as in \eqref{choice of constants}, and $C=(A-\frac{3\epsilon}{4})^2$. In a word, every parameter essentially depends only on $A$, and they have very explicit relations. We should choose $A$ according to $b$ such that $A_{wb} = \sqrt{2w}A_{b}$. To arrive at a situation where we get wrapped Floer complexes that have canonical inclusion relations, we need to deform the Hamiltonians $K_{b}$ in a uniform way to a single Hamiltonian $K$. \par
	The way we construct this homotopy is as follows. Note that the time-$(-\ln B)$ (the minus sign means backward) Liouville flow rescales the Hamiltonian $K_{b}$ to be one that is quadratic outside a small neighborhood of the compact set $\{r \le 1\}$. We define the homotopy $K_{b, t} = \frac{K_{b}}{t^{2} B^{2}} \circ \phi^{-tB}$. Define a homotopy of almost complex structures in a similar way. The continuation map associated to this homotopy gives a cochain homotopy equivalence:
\begin{equation} \label{rescaling the Hamiltonian}
CW^{*}(\mathcal{L}_{0}, \mathcal{L}_{1}; K_{b}, J_{b}) \to CW^{*}(\mathcal{L}_{0}, \mathcal{L}_{1}; K_{b, 1}, J_{b, 1}).
\end{equation}
Note that $K_{b, 1}$ is quadratic outside a neighborhood of $\{r \ge 1\}$, but for different values of $b$ this might behave slightly differently inside of this region. Nonetheless, for each $b$ we can take another compactly-supported homotopy of Hamiltonians to make all $K_{b, 1}$ the same, i.e. independent of $b$. We denote the resulting Hamiltonian by $K$. Similarly, we modify the almost complex structure to obtain $J$ independent of $b$. Since the continuation map associated to compactly-supported homotopy of Hamiltonians/almost complex structures is a cochain homotopy equivalence, its composition with the previous cochain homotopy equivalence \eqref{rescaling the Hamiltonian} yields a cochain homotopy equivalence:
\begin{equation}
h_{b}: CW^{*}(\mathcal{L}_{0}, \mathcal{L}_{1}; K_{b}, J_{b}) \to CW^{*}(\mathcal{L}_{0}, \mathcal{L}_{1}; K, J)
\end{equation} \par

	Moreover, this cochain map increases action of Hamiltonian chords, hence preserves the action filtrations of the form $(-b, a]$ for $a$ fixed at the beginning and large enough for the Floer complex to be independent of $a$. This is because of the action-energy identity (applied to the family of Hamiltonians). This we get cochain homotopy equivalence
\begin{equation}
h_{b}: CW^{*}_{(-b, a]}(\mathcal{L}_{0}, \mathcal{L}_{1}; K_{b}, J_{b}) \to CW^{*}_{(-b,a]}(\mathcal{L}_{0}, \mathcal{L}_{1}; K, J)
\end{equation}
on the truncated wrapped Floer complexes. \par

\subsection{Isomorphism of wrapped Floer cohomologies}\label{iso on cohomology}
	By the previous construction and estimates, we obtain the homomorphisms of modules \eqref{act-res b/a}. In order to prove that the module homomorphism \eqref{act-res b/a} is a cochain map, for which purpose we need to understand how Floer differential and the map \eqref{act-res} affect the action of Hamiltonian chords. We know that going along an inhomogeneous pseudoholomorphic strip (Floer trajectory) decreases the action, the differential increases the action (because we are using cohomology), but there cannot be chords of action greater than $a$. So these two truncated graded modules are indeed cochain complexes with respect to the Floer differentials. Thus we get a diagram:
\begin{equation} \label{cochain map}
\begin{CD}
CW^{*}_{(-b, a]}(\mathcal{L}_{0}, \mathcal{L}_{1}; H_{M, N}, J_{M, N}) @>R>> CW^{*}_{(-b, a]}(\mathcal{L}_{0}, \mathcal{L}_{1}; K_{b}, J_{b})\\
@VVm^{1}V	@VVm^{1}V\\
CW^{*}_{(-b, a]}(\mathcal{L}_{0}, \mathcal{L}_{1}; H_{M, N}, J_{M, N}) @>R>> CW^{*}_{(-b, a]}(\mathcal{L}_{0}, \mathcal{L}_{1}; K_{b}, J_{b})
\end{CD}
\end{equation}

\begin{lemma} \label{proof of cochain map}
	The above diagram \eqref{cochain map} is commutative.
\end{lemma}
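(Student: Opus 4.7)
The plan is to reduce the commutativity of \eqref{cochain map} to a confinement statement for Floer trajectories. The map $R = R_b$ is, as constructed, the identity on generators under the natural bijection between time-one $H_{M,N}$-chords and time-one $K_b$-chords whose action lies in the window $(-b,a]$: by the parameter choices made in the three deformation steps, every such generator lies in the region $\mathcal{U} := \{r_1 \le A-\epsilon,\, r_2 \le A-\epsilon\}$ where $K_b$ coincides with $H_{M,N}$, and by Section \ref{change almost complex structure} the almost complex structures $J_b$ and $J_{M,N}$ can be chosen to coincide on $\mathcal{U}$ as well. Commutativity of the square therefore reduces to the claim that for any pair of generators $\gamma_0, \gamma_1$ in the action window, the moduli spaces $\mathcal{M}(\gamma_0, \gamma_1)$ defined with respect to the two pairs of Floer data give the same count, with the same signs from the same coherent orientation data.

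The two inhomogeneous Cauchy-Riemann equations are literally the same PDE on $\mathcal{U}$, so the bijection of moduli spaces will follow from a confinement statement: every finite-energy Floer trajectory between two such chords has image contained in $\mathcal{U}$. For the $(K_b, J_b)$-side I would use the standard integrated maximum principle on the radial coordinate $r$ of the natural cylindrical end $\Sigma \times [1,+\infty)$; because $K_b$ is convex and grows quadratically outside $\{r \le B\}$ while $J_b$ is of contact type there, any excursion past the level of the chords would contradict the a priori energy bound $E(u) = \mathcal{A}(\gamma_0) - \mathcal{A}(\gamma_1) \le a+b$, using precisely the action estimates established in the second and third deformation steps.

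The harder half will be the $(H_{M,N}, J_{M,N})$-side, which I expect to be the main obstacle: $J_{M,N}$ is only of contact type with respect to the factored ends and not with respect to $\Sigma \times [1,+\infty)$, and the Lagrangians $\mathcal{L}_0, \mathcal{L}_1$ need not split as products, so the projected strips $u_M, u_N$ do not a priori carry clean Lagrangian boundary conditions in the individual factors. The plan is to exploit that the split Floer equation decouples: writing $u = (u_M, u_N)$, each component satisfies the Floer equation for $(H_M, J_M)$ and $(H_N, J_N)$ respectively on the interior of the strip, so the integrated maximum principle applies to $r_1 \circ u_M$ and $r_2 \circ u_N$ separately; a potential interior maximum of $r_i$ is ruled out by the contact-type property of $J_M, J_N$ in each factor, while a boundary excursion into high $r_1$ or $r_2$ forces the corresponding asymptote to sit on a $\Sigma$-level whose action, as computed in the deformation steps, lies well below $-b$, contradicting the action window. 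Once both confinement statements are in place, $m^1 R$ and $R m^1$ enumerate the same trajectories and the diagram commutes.
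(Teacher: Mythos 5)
Your plan — show that $R_b$ is the identity on generators, then reduce commutativity to a confinement statement for Floer trajectories in the region $\mathcal{U}=\{r_1\le A-\epsilon,\ r_2\le A-\epsilon\}$ where the two Floer data coincide — is the right plan and agrees with the paper's at that level. The gaps are in how you argue confinement.

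For the $(K_b,J_b)$ side, the ``standard integrated maximum principle on $r$'' is substantially weaker than what you need. That argument uses the contact-type/quadratic structure of $(J_b,K_b)$, which only holds on $\Sigma\times[B+\epsilon,+\infty)$, so at best it confines the strip below the level $r\approx B$. It says nothing about the large intermediate region between $\mathcal{U}$ and $\{r\ge B\}$, where $J_b$ is merely split (not of contact type with respect to $\Sigma$) and $K_b$ is interpolating. This is precisely where the paper's proof does its real work: because $K_b$ agrees with the split Hamiltonian $H_{M,1}+H_{N,1}$ for $r_1,r_2\le A_1$ and $H_{M,1}\equiv C$ is \emph{constant} on $\partial M\times[A,A_1]$, the $M$-projection of the strip becomes an honest $J_M$-holomorphic curve inside the annulus $\partial M\times[A,A_1]$, and Gromov monotonicity (the unnamed lemma after Lemma~\ref{proof of cochain map}) gives a lower energy bound $\ge cA$ for any such piece traversing the annulus. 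With $A$ large this contradicts the a priori energy bound $a+b$ coming from the action window. You cannot get this from a maximum principle alone, precisely because the Lagrangians $\mathcal{L}_0,\mathcal{L}_1$ are conical with respect to $\Sigma$ and do \emph{not} decompose as products, so the projected strip $u_M$ has no clean Lagrangian boundary condition in $M$ and the boundary case of the maximum principle is unavailable; the monotonicity argument is exactly how the paper routes around that.

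For the $(H_{M,N},J_{M,N})$ side, your sketch contains a step that does not hold: that ``a boundary excursion into high $r_1$ or $r_2$ forces the corresponding asymptote to sit on a $\Sigma$-level whose action \ldots lies well below $-b$.'' A boundary excursion of the strip does not move the asymptote; the trajectory can climb along $\partial Z$ into a high $r_i$-region and return to the same low-level asymptote. So this does not produce the contradiction you want, and as you yourself observe, the projected maps $u_M,u_N$ do not carry Lagrangian boundary data in a single factor, so the boundary maximum principle is again unavailable. You are right that this direction is delicate — and it is worth noting that the paper's proof text addresses explicitly only the $(K_b,J_b)$-confinement, so some version of the $(H_{M,N},J_{M,N})$-confinement still needs to be supplied for the full bijection of rigid moduli spaces — but as written your argument for it does not close.

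In short: same reduction, but your appeal to the maximum principle is insufficient on both sides. The missing ingredient on the $(K_b,J_b)$ side is the monotonicity lower bound for $J_M$-holomorphic curves in the constant-Hamiltonian annulus, which the paper singles out as a separate lemma and which is the crux of the proof.
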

\begin{proof}
	The proof is based on the observation that the map $R$ is an inclusion (usually identity) map and does not change action of the chords. Since running along a Floer trajectory decreases the action, the differential increases the action (because we are using cohomology). It follows that the image of a generator in $CW^{*}_{(-b, a]}(\mathcal{L}_{0}, \mathcal{L}_{1}; H_{M, N}, J_{M, N})$ (which corresponds to a chord of action between $-b$ and $a$) under the Floer's differential $m^1=m^1_{H_{M, N}, J_{M, N}}$ is a $\mathbb{Z}$-linear combination of $H_{M, N}$-chords of action still bigger than $-b$. Moreover, there are no chords of action greater than $a$, so these chords still have action between $-b$ and $a$, which under the action-restriction map $R$ go to $K$-chords of action between $-b$ and $a$. \par
	Suppose that the two $H_{M, N}$-chords $\Gamma_{0}, \Gamma_{1}$ under the map $R$, which are identical to themselves but regarded as two $K$-chords, are connected by a Floer trajectory $(u, v)$ for $(K, J)$. We have to show that they are in fact connected by a uniquely corresponding Floer trajectory for $(H_{M, N}, J_{M, N})$. Thus it suffices to prove that the Floer trajectory for $(K, J)$ is indeed a Floer trajectory for $(H_{M, N}, J_{M, N})$, i.e. the Floer trajectory does not escape from the region $\{r_1 \le A - \epsilon, r_2 \le A - \epsilon\}$, where $(K, J)$ agrees with $(H_{M, N}, J_{M, N})$. Suppose the contrary, namely that the projection of $(u, v)$ to some factor (either $u$ or $v$) escapes outside of the level $A - \epsilon$. Let us suppose this is the case with $u$. Since the almost complex structure $J$ is split when $r_{1} \le A_{1}, r_{2} \le A_{1}$, the part of $u$ where it lies below the level $r_{1} = A_{1}$ satisfies Floer's equation defined by the datum $(H_{M, 1}, J_{M})$. So maximum principle implies that $u$ has to escape to some place where $r_{1} > A_{1}$, where $u$ might not satisfy Floer's equation as the Hamiltonian and almost complex structure are not of split type. \par
	Without loss of generality, we may assume that the Hamiltonian chords $\Gamma_{0}, \Gamma_{1}$ are non-constant, otherwise they are contained in the compact domain $M_{0} \times N_{0}$ and any Floer trajectory cannot at all escape from that (possibly slightly larger) domain. Also, under the genericity assumption, the projections of the two chords $\Gamma_{0}, \Gamma_{1}$ to $M$ are $H_{M}$-chords $\gamma_{0}, \gamma_{1}$, which can be assumed to be non-trivial. Otherwise if they are $H_{M}$-chords in the interior of $M$, then no inhomogeneous pseudoholomorphic strip connecting them can even escape outside the boundary $\partial M$. In that case, there is nothing to prove. \par
	So let us suppose that $\gamma_{0}, \gamma_{1}$ are non-trivial, and correspond to Reeb chords on some level hypersurfaces. This implies that in the place whenever $u$ satisfies Floer's equation, the intersection $u(Z) \cap (\partial M \times \{r_{1}\})$ is either empty or a non-trivial arc. Since we have assumed that $u$ escapes outside of the level $r_{1} = A_{1}$, we know that for every $r_{1} \in [A, A_{1}]$, $u \cap (\partial M \times \{r_1\})$ is a non-trivial arc. In particular, this is the case with $u \cap (\partial M \times \{A\})$ and $u \cap (\partial M \times \{A_{1}\})$. On the other hand, $u \cap \{A \le r_1 \le A_{1}\}$ is a $J_{M}$-holomorphic curve, because the Hamiltonian $H_{M,1}$ is constant there. Recall that $A_{1} = (\frac{3}{4\sqrt{2}} + \frac{1}{2})A$. The following lemma implies that if $A$ is chosen to be large enough, then the energy of $u$ is very large, which is not possible, because the energy of $u$ is bounded by that of the original Floer trajectory $(u, v)$ in $M \times N$ connecting the given two chords with fixed amount of action.
\end{proof}

\begin{lemma}
	Let $J_{M}$ be an almost complex structure on $M$ of contact type over the cylindrical end, and $d > 1$ a constant. Then there exists a constant $c = c(J_{M}, d) > 0$ depending only on the almost complex structure $J_{M}$, the constant $d$, such that the following holds. Let $S$ be a compact connected Riemann surface with boundary and corners $\partial S = \partial_{l} S \cup \partial_{n} S$, where the two boundary portions $\partial_{l} S$ and $\partial_{n} S$ can meet at the corner points. Let $f: S \to M$ be any $J_{M}$-holomorphic curve, which satisfies
\begin{enumerate}[label=(\roman*)]

\item $f(S) \subset \partial M \times [A, dA]$;

\item $f(\partial_{n} S) \cap (\partial M \times \{A\}), f(\partial_{n} S) \cap (\partial M \times \{dA\})$ are both non-empty;

\item There exists a connected component $C$ of $\partial_{l} S$, such that $f(C)$ is an arc in $\partial M \times [A, dA]$, with its two endpoints lying on $\partial M \times \{A\}$ and $\partial M \times \{dA\}$ respectively.

\end{enumerate}
Then we have
\begin{equation}
Area(f) = E_{J_{M}}(f)=\int_{S} \frac{1}{2}|df|_{J_{M}}^2 \ge cA.
\end{equation}
\end{lemma}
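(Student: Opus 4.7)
The plan is to reduce the problem to a fixed compact region via Liouville rescaling, and then invoke a standard compactness/monotonicity argument for $J$-holomorphic curves. Let $\phi_Z^t$ denote the Liouville flow, which in the cylindrical end acts by $\phi_Z^t(x,r) = (x, e^t r)$ and satisfies $(\phi_Z^t)^*\lambda_M = e^t \lambda_M$, hence $(\phi_Z^t)^*\omega_M = e^t \omega_M$. Set $\tilde{f} := \phi_Z^{-\ln A} \circ f : S \to \partial M \times [1, d]$. A direct computation shows that the contact-type condition $\lambda_M \circ J_M = dr$ is preserved under pushforward by the Liouville flow, so $\tilde f$ is $\tilde J$-holomorphic for the contact-type almost complex structure $\tilde J := (\phi_Z^{-\ln A})_* J_M$, and its symplectic area satisfies
\[
\int_S \tilde{f}^* \omega_M \;=\; \int_S f^*\bigl((\phi_Z^{-\ln A})^*\omega_M\bigr) \;=\; A^{-1}\int_S f^* \omega_M \;=\; A^{-1}\, E_{J_M}(f).
\]
Moreover $\tilde f$ inherits the analogues of (i)--(iii) with $A$ and $dA$ replaced by $1$ and $d$, the arc $\tilde f(C)$ now lying in $l \times [1, d]$ where $l = \partial L$.

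With this reduction in hand, it suffices to exhibit a positive constant $c_0 = c_0(J_M, d) > 0$ such that every contact-type-holomorphic map $\tilde f : S \to \partial M \times [1, d]$ satisfying the rescaled hypotheses has $\int_S \tilde f^* \omega_M \ge c_0$; unscaling then gives the desired bound $E_{J_M}(f) \ge c_0 A$. To establish this uniform lower bound I argue by contradiction: a sequence $\tilde f_n$ of such maps with $\int_S \tilde f_n^*\omega_M \to 0$ would, by the standard mean value inequality (Gromov's monotonicity lemma) for $\tilde J$-holomorphic curves with Lagrangian boundary in the compact region $\partial M \times [1, d]$ of bounded geometry, have $\mathrm{diam}(\tilde f_n(S)) \to 0$. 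But by condition (ii), each image $\tilde f_n(S)$ contains points on both levels $\partial M \times \{1\}$ and $\partial M \times \{d\}$, whose distance in the fixed contact-type K\"{a}hler metric on $\partial M \times [1, d]$ is bounded below by a positive constant independent of $n$, a contradiction.

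The main obstacle will be to ensure that Gromov's monotonicity constant can be chosen uniformly over the family of rescaled almost complex structures $\tilde J$ as $A$ varies. This is straightforward from the structure of contact-type almost complex structures on the cylindrical end: $\tilde J$ is determined by its restriction to the contact distribution $\xi \subset T(\partial M)$ (which is identical to the restriction of $J_M$ to $\xi$, since Liouville scaling preserves $\xi$ and acts trivially there) together with the contact-type relation $\lambda_M \circ \tilde J = dr$; hence the family $\{\tilde J\}_{A \ge 1}$ lies in a fixed compact set of contact-type almost complex structures on the compact manifold $\partial M \times [1, d]$, on which the monotonicity lemma applies with a uniform constant. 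A further subtlety worth noting is that the Lagrangian boundary condition for the rescaled problem is the fixed conical Lagrangian $l \times [1, d] \subset \partial M \times [1, d]$, independent of $A$, since $\phi_Z^{-\ln A}(L) \cap (\partial M \times [1, d]) = l \times [1, d]$ by conicality of $L$. This makes the full compactness/monotonicity argument entirely uniform in $A$, yielding the lemma.
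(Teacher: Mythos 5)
Your overall route---Liouville rescaling to a fixed compact region $\partial M \times [1, d]$ followed by Gromov's monotonicity lemma---is the same one the paper cites, and the area computation under rescaling is correct. However, the claim that the rescaled family $\{\tilde J\}_{A \ge 1}$ lies in a fixed compact set of contact-type almost complex structures is not automatic and is the crux of making the constant $c$ genuinely independent of $A$. Writing $\psi_A(x, r) = (x, Ar)$, the restriction $\tilde J|_\xi$ at $(x, r)$ equals $J_M|_\xi$ at $(x, Ar)$; the contact-type condition $\lambda_M \circ J_M = dr$ pins down only the $(\partial_r, R)$-block, while the $\xi$-block is an arbitrary $d\alpha$-compatible complex structure that may vary without control as $r \to \infty$. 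The space of $d\alpha$-compatible complex structures on a fixed symplectic vector space is contractible but not compact, so without an additional hypothesis---for instance $r$-translation invariance of $J_M$ near infinity, or at least uniformly bounded geometry of the associated metric $g_{J_M}$ over the cylindrical end---the family $\{\tilde J\}_{A \ge 1}$ need not be precompact, and the monotonicity constant in Gromov's lemma is not manifestly uniform in $A$. You should either record such a standing hypothesis on $J_M$ or verify the needed uniformity from the choices used in the body of the paper; as written, your ``straightforward from the structure of contact-type almost complex structures'' step is where the argument does not close.

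Separately, the step ``small energy implies small diameter'' is applied to curves whose boundary is only partly Lagrangian. The pieces $\partial_n S$ map to the non-Lagrangian codimension-one hypersurfaces $\partial M \times \{1\}$ and $\partial M \times \{d\}$, and neither the closed-curve nor the Lagrangian-boundary version of the monotonicity lemma applies near those boundary points as stated; without control there, a sequence with energy $\to 0$ need not have diameter $\to 0$. The cleaner argument, which also makes essential use of hypothesis (iii), is to apply the Lagrangian-boundary monotonicity lemma once, at a point $\tilde p$ on the arc $\tilde f(C)$ with $r(\tilde p) = (1+d)/2$; such a point exists by (iii), and lies a uniform distance $\rho_0 > 0$ (depending only on $d$ and the rescaled geometry) from both level sets and from the image of $\partial_n S$, giving $\int_S \tilde f^*\omega_M \ge c\rho_0^2$ directly without any compactness/contradiction argument. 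This also makes explicit the implicit assumption you adopt, namely that $\tilde f(C)$ lies on a conical Lagrangian ($l \times [1, d]$); this is true in the paper's application but is not literally part of the statement, where $f(C)$ is only required to be an arc.
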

\begin{proof}
This essentially follows from Gromov's monotonicity lemma. Alternatively, this can be proved using inverse isoperimetric inequality.
\end{proof}

	Now we utilize some homological algebra technique to finish the proof of Theorem \ref{invariance for wrapped Floer cohomology in the product}. For this, let us organize the last small modifications in a more conceptual form. The action-restriction map is strictly speaking defined to be the composition of the map we constructed on truncated Floer cochain complexes, with the cochain complex isomorphism provided by conjugating by Liouville flow, and then with the continuation map defined by the $C^2$-small homotopy of the Hamiltonian which is supported in the interior of $M \times N$ bounded by $\Sigma$. Summarizing, we have the following diagram (omitting the obvious choices of almost complex structures):
\begin{equation}
\begin{CD}
CW^{*}_{(-b, a]}(\mathcal{L}_{0}, \mathcal{L}_{1}; H_{M, N}) @>R_{b}>> CW^{*}_{(-b, a]}(\mathcal{L}_{0}, \mathcal{L}_{1}; K_{b}) @>h_{b}>> CW^{*}_{(-b, a]}(\mathcal{L}_{0}, \mathcal{L}_{1}; K)\\
@VViV	@VViV	@VViV\\
CW^{*}_{(-2b, a]}(\mathcal{L}_{0}, \mathcal{L}_{1}; H_{M, N}) @>R_{2b}>> CW^{*}_{(-2b, a]}(\mathcal{L}_{0}, \mathcal{L}_{1}; K_{2b}) @>h_{2b}>> CW^{*}_{(-2b, a]}(\mathcal{L}_{0}, \mathcal{L}_{1}; K)\\
@VViV	@VViV	@VViV\\
\cdots
\end{CD}
\end{equation}
where $R_{b}, R_{2b}, \cdots$ are cochain isomorphisms, $h_{b}, h_{2b}, \cdots$ are cochain homotopy equivalences, the vertical arrows are all natural inclusions. By the nature of our construction, we have \par

\begin{lemma}
	The first square strictly commutes, the second square homotopy commutes. 
\end{lemma}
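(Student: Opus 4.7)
The plan is to analyze the two squares separately, exploiting the fact that the maps $R_{b}, R_{2b}$ are essentially identity maps on the level of generators, while $h_{b}, h_{2b}$ are genuine continuation cochain maps for which strict commutativity cannot be expected.

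For the first square, I would begin by recalling that in our construction the Hamiltonian $K_{b}$ agrees with $H_{M,N}$ on the inner region $\{r_{1} \le A_{b} - \epsilon, r_{2} \le A_{b} - \epsilon\}$, and our choice of $A_{b}$ guarantees that every $H_{M,N}$-chord of action greater than $-b$ is contained in that region; moreover every additional $K_{b}$-chord introduced by the three-step deformation was shown to have action significantly greater than $a$ (hence lies outside the window $(-b, a]$). Therefore $R_{b}$ is defined, on the level of generators, as the tautological identification of an $H_{M,N}$-chord with the identical curve regarded as a $K_{b}$-chord. The same applies to $R_{2b}$ with its corresponding $A_{2b}$. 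Since $A_{b} \le A_{2b}$ and $K_{b}$ agrees with $K_{2b}$ (both equal to $H_{M,N}$) on the smaller region, every generator $\Gamma$ of $CW^{*}_{(-b,a]}(\mathcal{L}_{0},\mathcal{L}_{1};H_{M,N})$ is sent by both compositions to the very same chord, viewed in $CW^{*}_{(-2b,a]}(\mathcal{L}_{0},\mathcal{L}_{1};K_{2b})$. Hence the square commutes on the nose.

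For the second square, the maps $h_{b}$ and $h_{2b}$ are constructed as compositions of a Liouville rescaling continuation map with a compactly supported continuation map, each associated to a specific homotopy of Hamiltonian/almost complex structure data from $K_{b}$ (resp.\ $K_{2b}$) to the common target $K$. Thus both $h_{2b} \circ i$ and $i \circ h_{b}$ are continuation cochain maps from $CW^{*}_{(-b,a]}(\mathcal{L}_{0},\mathcal{L}_{1};K_{b})$ to $CW^{*}_{(-2b,a]}(\mathcal{L}_{0},\mathcal{L}_{1};K)$ induced by (different) admissible homotopies interpolating the same endpoint data: the path of rescaled-then-modified Hamiltonians beginning at $K_{b}$ and ending at $K$. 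The strategy is to interpolate these two homotopies by a two-parameter family of homotopies and invoke the standard parametrized moduli space argument: counting rigid inhomogeneous pseudoholomorphic strips in the one-dimensional moduli space over $[0,1]$ produces a cochain homotopy between the two continuation maps, provided all such moduli spaces have the usual compactness and transversality properties.

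The main obstacle, and the point requiring careful verification, is to ensure that the requisite compactness holds uniformly across the two-parameter family. Since our homotopies pass through Hamiltonians that are not uniformly of contact type or uniformly admissible on the cylindrical ends, the action-energy equality must be supplemented by the same kind of geometric estimates carried out in the three deformation steps above, now applied pointwise along the homotopy. The essential point is that at every stage of the two-parameter family the Hamiltonian differs from the split one only on a compact subset of $M \times N$, and the action filtration $(-2b, a]$ continues to detect only chords contained in a uniformly bounded region, so one obtains a priori $C^{0}$-bounds on the Floer trajectories by the same maximum-principle-plus-action argument used in Lemma~\ref{proof of cochain map}. With these bounds in hand the standard continuation homotopy argument yields a cochain homotopy $H : CW^{*}_{(-b,a]}(K_{b}) \to CW^{*-1}_{(-2b,a]}(K)$ with $i \circ h_{b} - h_{2b} \circ i = m^{1} \circ H + H \circ m^{1}$, completing the proof.
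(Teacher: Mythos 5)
Your proof is correct and takes essentially the same approach as the paper's (very terse) argument: strict commutativity of the first square follows because $K_{b}$ and $K_{2b}$ both agree with $H_{M,N}$ on the nested regions that contain all chords in the relevant action window, so both compositions are the tautological identification of generators; homotopy commutativity of the second square is the standard uniqueness-up-to-homotopy of continuation maps, verified by interpolating the two admissible homotopies. Your added discussion of the uniform compactness for the two-parameter family is a sensible fleshing out of what the paper leaves implicit, but the underlying argument is the same.
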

\begin{proof}
	The commutativity of the first square is clear because $K_{2b}$ agrees with $H_{M, N}$ on a larger region where $K_{b}$ agrees with $H_{M, N}$. The homotopy commutativity of the second square is a general feature of continuation maps.
\end{proof}

	Consider the composition $h_{wb} \circ R_{wb}$, which is a cochain homotopy equivalence for every $w = 1, 2, \cdots$. They fit into a diagram of directed systems of cochain complexes, where the maps homotopy commute. Therefore the homotopy directed limited homomorphism exists and is a cochain homotopy equivalence, which we call the action-restriction map and denote by $R$. \par
 
%intertwining multiplication structure%
\subsection{Multiplicative structure} \label{section: proof of preserving multiplicative structure}
	Our last task is to prove Theorem \ref{preserving multiplicative structure}, which says that the action-restriction map $R$ preserves the multiplication structure on the wrapped Floer cohomology. There are two main difficulties. One is that the multiplication adds up the action of the chords plus the energy of the inhomogeneous pseudoholomorphic triangle, therefore yields maps between truncated wrapped Floer complexes with different values of truncation. The other is that there are various choices of Floer data involved. We will give detailed treatment below. \par

	Suppose we are given admissible Lagrangian submanifolds $\mathcal{L}_{0}, \mathcal{L}_{1}, \mathcal{L}_{2} \subset M \times N$. Wrapped Floer cohomology has multiplication structure:
\begin{equation} \label{multiplication map for split Hamiltonian}
\begin{split}
m^{2}&: CW^{*}(\mathcal{L}_{1}, \mathcal{L}_{2}; H_{M, N}, J_{M, N}) \otimes CW^{*}(\mathcal{L}_{0}, \mathcal{L}_{1}; H_{M, N}, J_{M, N})\\
&\to CW^{*}(\mathcal{L}_{0}, \mathcal{L}_{2}; H_{M, N}, J_{M, N}).
\end{split}
\end{equation}
This is defined as the composition
\begin{equation}
\begin{split}
m^{2}&: CW^{*}(\mathcal{L}_{1}, \mathcal{L}_{2}; H_{M, N}, J_{M, N}) \otimes CW^{*}(\mathcal{L}_{0}, \mathcal{L}_{1}; H_{M, N}, J_{M, N})\\
&\to CW^{*}(\phi^{2}\mathcal{L}_{0}, \phi^{2}\mathcal{L}_{2}; \frac{H_{M, N}}{2} \circ \phi^{2}, (\phi^{2})^{*}(J_{M, N})).
\end{split}
\end{equation}
with the isomorphism
\begin{equation}
\begin{split}
&CW^{*}(\phi^{2}\mathcal{L}_{0}, \phi^{2}\mathcal{L}_{2}; \frac{(H_{M, N})}{2} \circ \phi^{2}, (\phi^{2})^{*}(J_{M, N}))\\
&\cong CW^{*}(\mathcal{L}_{0}, \mathcal{L}_{2}; H_{M, N}, J_{M, N}).
\end{split}
\end{equation}
Recall that although the split Hamiltonian $H_{M, N}$ is not admissible in the usual sense on $M \times N$, we still have regularity and compactness for relevant moduli spaces of inhomogeneous pseudoholomorphic triangles, as long as suitable Floer data (of split type) are chosen. The argument is similar to that for defining the wrapped Floer cohomology using this split Hamiltonian, and is essentially because the split Hamiltonian also grows fast enough (but not too fast). To be more precise, we still have the maximum principle for the split Hamiltonian $H_{M, N}$. \par
	More concretely, as before we should choose a family of split Hamiltonians $H_{M, N, S}: S \to \mathcal{H}(M) \times \mathcal{H}(N)$ as part of the Floer data involved for the definition of the multiplication structure, which over the $0$-th strip-like end agrees with the constant family $\frac{H_{M, N}}{4} \circ \phi^{2}$ (constant with respect to the coordinate on $(-\infty, 0]$, and over the $1$-st and $2$-nd strip-like ends agrees with $H_{M, N}$. In order to prove Theorem \ref{preserving multiplicative structure}, this family should be chosen to be "essentially trivial", in the following sense. \par
	
\begin{definition} \label{essentially constant family}
	We say a family of split Hamiltonians $H_{M, N, S} = H_{M, S} + H_{N, S}$ is essentially constant, if the following conditions hold. There exists a smooth function $\rho: S \to [1, 2]$ which takes the value $1$ over the two positive strip-like ends, and $2$ over the negative strip-like ends, such that we can write $H_{M, S, s} = \frac{H_{M}}{\rho(s)^{2}} \circ \phi_{M}^{\rho(s)}$ and $H_{N, S, s} = \frac{H_{N}}{\rho(s)^{2}} \circ \phi_{N}^{\rho(s)}$.
\end{definition}

	It does not harm to consider only these kinds of families of Hamiltonians as part of Floer data for $S$, since transversality of the moduli space of inhomogeneous pseudoholomorphic triangles can be achieved by generically perturbing $J_{S}$ (see Lemma \ref{transversality for strips} and Lemma \ref{transversality for 3-pointed disks}), as long as the family of Hamiltonians is domain-dependent, and each member in this family satisfies non-degeneracy condition on Hamiltonian dynamics and Reeb dynamics, which we assumed at the beginning. The moduli space of inhomogeneous pseudoholomorphic triangles defined using this choice of Floer datum is then regular and can be compatified as before. In particular, we can use it to define the above-mentioned multiplication \eqref{multiplication map for split Hamiltonian}. \par
	Choose the filtration number $a>0$ greater than $\max{(|f_{\mathcal{L}_{0}}| + |f_{\mathcal{L}_{1}}| + |f_{\mathcal{L}_{2}}|)} + \epsilon$. Thus the truncated Floer cochain complexes $CW^{*}_{(-b, a]}(\mathcal{L}_{i}, \mathcal{L}_{j}; H_{M, N}, J_{M, N})$ includes all Hamiltonian chords in the interior of $M \times N$. Regarding the action filtration, we note by the action-energy identity \eqref{action-energy equality for multiplication structure} that it defines the following map on truncated Floer complexes.
\begin{equation}
\begin{split}
m^{2}&: CW^{*}_{(-b, a]}(\mathcal{L}_{1}, \mathcal{L}_{2}; H_{M, N}, J_{M, N}) \otimes CW^{*}_{(-b, a]}(\mathcal{L}_{0}, \mathcal{L}_{1}; H_{M, N}, J_{M, N})\\
&\to CW^{*}_{(-2b, a]}(\mathcal{L}_{0}, \mathcal{L}_{2}; H_{M, N}, J_{M, N}).
\end{split}
\end{equation}
The action of every output chord is still less than or equal to $a$, because there are no chords having higher action by our choice of $H_{M}$ and $H_{N}$. \par
	Similarly, we have multiplication map
\begin{equation}
\begin{split}
m^{2}&: CW^{*}_{(-b, a]}(\mathcal{L}_{1}, \mathcal{L}_{2}; K_{b}, J_{b}) \otimes CW^{*}_{(-b, a]}(\mathcal{L}_{0}, \mathcal{L}_{1}; K_{b}, J_{b})\\
&\to CW^{*}_{(-2b, a]}(\mathcal{L}_{0}, \mathcal{L}_{2}; K_{b}, J_{b}).
\end{split}
\end{equation}
However, we notice a small technical difficulty here: there is only the inclusion
\begin{equation*}
CW^{*}_{(-b, a]}(\mathcal{L}_{0}, \mathcal{L}_{2}; K_{b}, J_{b}) \to CW^{*}_{(-2b, a]}(\mathcal{L}_{0}, \mathcal{L}_{2}; H_{M, N}, J_{M, N}),
\end{equation*}
but that is not defined on $CW^{*}_{(-2b, a]}(\mathcal{L}_{0}, \mathcal{L}_{2}; K_{b}, J_{b})$. The way of resolving this issue is to define a new multiplication map
\begin{equation}
\begin{split}
m^{2}_{b, 2b}&: CW^{*}_{(-b, a]}(\mathcal{L}_{1}, \mathcal{L}_{2}; K_{b}, J_{b}) \otimes CW^{*}_{(-b, a]}(\mathcal{L}_{0}, \mathcal{L}_{1}; K_{b}, J_{b})\\
&\to CW^{*}_{(-2b, a]}(\mathcal{L}_{0}, \mathcal{L}_{2}; K_{2b}, J_{2b}).
\end{split}
\end{equation}
using the cochain homotopy equivalence $h_{b}$. Recall that $h_{b}$ is defined on the whole Floer cochain complexes, not necessarily restricted to the specific action range $(-b, a]$. In particular,
\begin{equation*}
h_{b}: CW^{*}_{(-2b, a]}(\mathcal{L}_{0}, \mathcal{L}_{2}; K_{b}, J_{b}) \to CW^{*}_{(-2b, a]}(\mathcal{L}_{0}, \mathcal{L}_{2}; K, J)
\end{equation*}
is defined, and is also a cochain homotopy equivalence. Consider also the cochain homotopy equivalence
\begin{equation*}
h_{2b}: CW^{*}_{(-2b, a]}(\mathcal{L}_{0}, \mathcal{L}_{2}; K_{2b}, J_{2b}) \to CW^{*}_{(-2b, a]}(\mathcal{L}_{0}, \mathcal{L}_{2}; K, J).
\end{equation*}
Consider the canonical cochain homotopy inverse $g_{2b}$ of $h_{2b}$, which is defined by using the backward homotopy of Hamiltonians that is used to define $h_{2b}$. Then the composition
$$g_{2b} \circ h_{b}: CW^{*}_{(-2b, a]}(\mathcal{L}_{0}, \mathcal{L}_{2}; K_{b}, J_{b}) \to CW^{*}_{(-2b, a]}(\mathcal{L}_{0}, \mathcal{L}_{2}; K_{2b}, J_{2b})$$
is a cochain homotopy equivalence, which gives the desired new multiplication map when composed with the original multiplication map. \par
	The failure of strict commutativity of the multiplication on $CW^{*}_{(-b, a]}(\mathcal{L}_{1}, \mathcal{L}_{2}; H_{M, N}, J_{M, N})$ and the multiplication $m^{2}_{b, 2b}$ is mainly caused by the use of homotopy of Hamiltonians. Fortunately, continuation maps are unique up to cochain homotopy, and any two cochain homotopies are related by a higher order homotopy, canonical up to higher order homotopies. Choosing a smooth deformation from one configuration to the other amounts to a family of geometric data parametrized by an additional parameter besides the domain parameter. Counting rigid elements in the parametrized moduli space defines the desired cochain homotopy:
\begin{equation} \label{cochain homotopy for action-restriction maps}
\begin{split}
R^{2}_{b, 2b}: &CW^{*}_{(-b, a]}(\mathcal{L}_{1}, \mathcal{L}_{2}; H_{M, N}, J_{M, N}) \otimes CW^{*}_{(-b, a]}(\mathcal{L}_{0}, \mathcal{L}_{1}; H_{M, N}, J_{M, N})\\
&\to CW^{*-1}_{(-2b, a]}(\mathcal{L}_{0}, \mathcal{L}_{2}; K_{2b}, J_{2b}),
\end{split}
\end{equation}
between $m^{2}_{b, 2b} \circ (R_{b} \otimes R_{b})$ and $R_{2b} \otimes m^{2}_{H_{M, N}}$. \par
%to be finished%
	The detailed construction of the cochain homotopy $R^{2}_{b, 2b}$ is based on the idea of repeating the construction for each member of the domain-dependent family of Hamiltonians $H_{S}: S \to \mathcal{H}(M) \times \mathcal{H}(N)$ to get a domain-dependent smooth family of admissible Hamiltonians $K_{S}$, such that every $K_{S, s}$ is admissible i.e. is convex, increasing in the radial coordinate over the cylindrical end, and quadratic away from a compact set. It suffices to carry out this construction for an essentially trivial family of split Hamiltonians as before. Over each of the two positive strip-like ends, the family is a translation invariant family which agrees with the split Hamiltonian $H_{M, N}$, and we modify it to the admissible Hamiltonian $K_{b}$ as before and think of that as a translation invariant family over the strip-like end. Over the negative strip-like end, the family is also translation invariant and agrees with $\frac{H_{M, N}}{4} \circ \phi^{2}$, and we modify it to the admissible Hamiltonian $\frac{K_{2b}}{4} \circ \phi^{2}$. As for $s \in S$ not in the strip-like ends, the choice of $K_{S, s}$ is more flexible. We will not list all possible choices, but work with a natural one: we modify $H_{S, s} = \frac{H_{M, N}}{\rho(s)^{2}} \circ \phi^{\rho(s)}$ to $\frac{K_{\rho(s)b}}{\rho(s)^{2}} \circ \phi^{\rho(s)}$ (so in fact, this choice also matches up the previous modification over the strip-like ends). We can view this construction as a family version of action-restriction maps. Let us review some basic properties of the function $K_{\rho(s)b}$. Write $A_{s} = \sqrt{\rho(s)}A, A_{1, s} = \sqrt{\rho(s)}A_{1}, B_{s} = \sqrt{\rho(s)}B$ and $C_{s} = (A_{s} - \frac{3\epsilon}{4})^{2}$. For each $s \in S$, the function $K_{\rho(s)b}$ satisfies the following conditions:
\begin{enumerate}[label = (\alph*)]

\item $K_{\rho(s)b}$ is convex and non-decreasing in the radial coordinate $r$ on $\Sigma \times [1, +\infty)$, and agrees with $H_{S, s}$ on the region $\{r_{1} \le A_{s} - \epsilon, r_{2} \le A_{s} - \epsilon \}$;

\item In the region $\{A_{s} \le r_{1} \le A_{1, s}, A_{s} \le r_{2} \le A_{1, s}\}$, $K_{\rho(s)b}$ is constant equal to $2C_{s}$;

\item $K_{S, s}(z, r) = \frac{1}{5}r^{2} + 2C_{s} - (B_{s} + \epsilon)^{2}$ for $r \ge B_{s} + \epsilon$;

\end{enumerate}\par

	We also modify the $S$-dependent family of almost complex structures of split type $J_{M, N, S} = J_{M, S} \times J_{N, S}$, where for every $s$, $J_{M, S, s}$ and $J_{N, S, s}$ may be assumed to be almost complex structures of contact type, to a family of admissible almost complex structures $J_{S}$, in a similar way. But we allow further perturbations of the almost complex structures within the class of the admissible ones. \par
	In this way, we obtain a new Floer datum for the disk with three boundary punctures, which is used to define the map
\begin{equation*}
R_{2b} \circ m^{2}_{H_{M, N}, J_{M, N}} \circ (R_{b}^{-1} \otimes R_{b}^{-1}),
\end{equation*}
and has the same behavior as the one used to define $m^{2}_{b, 2b}$ over the strip-like ends, except that the latter involves additional compactly-supported homotopies of Hamiltonians. This family of data gives rise to a parametrized moduli space of inhomogeneous pseudoholomorphic triangles, and counting elements in the zero-dimensional part of the parametrized moduli space defines the map \eqref{cochain homotopy for action-restriction maps}. Then a standard gluing argument shows that this is a cochain homotopy between $m^{2}_{b, 2b} \circ (R_{b} \otimes R_{b})$ and $R_{2b} \otimes m^{2}_{H_{M, N}}$. \par
	To finish the proof of Theorem \ref{preserving multiplicative structure}, we need an argument from homological algebra. To save space, let us denote $CW^{*}(\mathcal{L}_{0}, \mathcal{L}_{1}; H_{M, N}, J_{M, N})$ by $CW^{*}(\mathcal{L}_{0}, \mathcal{L}_{1}; H)$, similar for other Lagrangian submanifolds and Hamiltonians. We have the following diagram:
\begin{equation} \label{cochain homotopy commutative diagram}
\begin{CD}
CW^{*}_{(-b, a]}(\mathcal{L}_{1}, \mathcal{L}_{2}; H) \otimes CW^{*}_{(-b, a]}(\mathcal{L}_{0}, \mathcal{L}_{1}; H) @>R^{2}_{b, 2b}>> CW^{*-1}_{(-2b, a]}(\mathcal{L}_{0}, \mathcal{L}_{2}; K_{2b}) \\
@VV i \otimes i V	@VV i V\\
CW^{*}_{(-2b, a]}(\mathcal{L}_{1}, \mathcal{L}_{2}; H) \otimes CW^{*}_{(-2b, a]}(\mathcal{L}_{0}, \mathcal{L}_{1}; H) @>R^{2}_{2b, 4b}>> CW^{*-1}_{(-4b, a]}(\mathcal{L}_{0}, \mathcal{L}_{2}; K_{4b})
\end{CD}
\end{equation}
To prove that the homotopy direct limit of $R^{2}_{b, 2b}$ exists as $b \to +\infty$, it suffices to show that $R^{2}_{b, 2b}$ is compatible with $R^{2}_{2b, 4b}$ under the natural inclusions (all denoted by $i$) with respect to the action filtration, i.e. the above diagram homotopy commutes. \par
	This can be easily seen by writing down all other possible diagrams related to the one above:

\begin{equation}
\xymatrix{
CW^{*}_{(-b, a]}(\mathcal{L}_{1}, \mathcal{L}_{2}; H) \otimes CW^{*}_{(-b, a]}(\mathcal{L}_{0}, \mathcal{L}_{1}; H) \ar[r]^-{m^{2}} \ar[d]^-{R_{b} \otimes R_{b}} \ar[rd]^-{R^{2}_{b, 2b}}_{+1} & CW^{*}_{(-2b, a]}(\mathcal{L}_{0}, \mathcal{L}_{2}; H) \ar[d]^{R_{2b}}\\
CW^{*}_{(-b, a]}(\mathcal{L}_{1}, \mathcal{L}_{2}; K_{b}) \otimes CW^{*}_{(-b, a]}(\mathcal{L}_{0}, \mathcal{L}_{1}; K_{b}) \ar[r]^-{m^{2}_{b, 2b}} \ar[d] & CW^{*}_{(-2b, a]}(\mathcal{L}_{0}, \mathcal{L}_{2}; K_{2b}) \ar[d]\\
CW^{*}_{(-2b, a]}(\mathcal{L}_{1}, \mathcal{L}_{2}; H) \otimes CW^{*}_{(-2b, a]}(\mathcal{L}_{0}, \mathcal{L}_{1}; H) \ar[r]^-{m^{2}} \ar[d]^-{R_{2b} \otimes R_{2b}} \ar[rd]^-{R^{2}_{2b, 4b}}_{+1} & CW^{*}_{(-4b, a]}(\mathcal{L}_{0}, \mathcal{L}_{2}; H) \ar[d]^-{R_{4b}}\\
CW^{*}_{(-2b, a]}(\mathcal{L}_{1}, \mathcal{L}_{2}; K_{2b}) \otimes CW^{*}_{(-2b, a]}(\mathcal{L}_{0}, \mathcal{L}_{1}; K_{2b}) \ar[r]^-{m^{2}_{2b, 4b}} & CW^{*}_{(-4b, a]}(\mathcal{L}_{0}, \mathcal{L}_{2}; K_{4b})
}
\end{equation}
To prove the diagram \eqref{cochain homotopy commutative diagram} is homotopy commutative, we recall that the definition of $R^{2}_{b, 2b}$ (and $R^{2}_{2b, 4b}$) mainly depends on the choice of a homotopy between the above-mentioned data involved in the definition of the two different multiplication maps. Two different choices of homotopies can be connected by a higher homotopy. This higher homotopy then defines a parametrized moduli space, using which we can construct the desired cochain homotopy between $R^{2}_{2b, 4b} \circ (i \otimes i)$ and $i \circ R^{2}_{b, 2b}$. \par

\begin{remark}
	Note this is not a statement about homotopy associativity, i.e. the $A_{\infty}$-structure. Rather, this is similar to the general feature of homotopy method in Floer theory - the resulting continuation maps are unique up to homotopy, and homotopies are unique up to higher homotopies. \par
	However, we can also phrase this in terms of $A_{\infty}$-associativity, by extending the maps to $A_{\infty}$-functors. This will be the topic of \cite{Gao}.
\end{remark}

	We then compose $R^{2}_{b, 2b}$ with the cochain homotopy equivalence
\begin{equation}
h_{2b}: CW^{*}_{(-2b, a]}(\mathcal{L}_{0}, \mathcal{L}_{2}; K_{2b}, J_{2b}) \to CW^{*}_{(-2b, a]}(\mathcal{L}_{0}, \mathcal{L}_{2}; K, J)
\end{equation}
and similarly $R^{2}_{2b, 4b}$ with $h_{4b}$. It is clear that $h_{2b}$ and $h_{4b}$ combined with obvious inclusion maps form a homotopy commutative diagram where the homotopy between the two compositions is also a choice arising in homotopy method in Floer theory. This fact, combined with homotopy commutativity of the diagram \ref{cochain homotopy commutative diagram} implies that the homotopy directed limit of $h_{2b} \circ R^{2}_{b, 2b}$ exists, which gives rise to a map of degree $-1$:
\begin{equation}
R^{2}: CW^{*}(\mathcal{L}_{0}, \mathcal{L}_{1}; H_{M, N}, J_{M, N}) \otimes CW^{*}(\mathcal{L}_{0}, \mathcal{L}_{1}; H_{M, N}, J_{M, N}) \to CW^{*-1}(\mathcal{L}_{0}, \mathcal{L}_{1}; K, J).
\end{equation}
Since the action restriction map is also defined as a homotopy direct limit, and at each finite stage $R^{2}_{b, 2b} \circ h_{2b}$ serves as a cochain homotopy between the apparent maps described as before, it follows by definition of homotopy directed limit that $R^{2}$ is a cochain homotopy between $m^{2}_{K, J} \circ (R \otimes R)$ and $R \otimes m^{2}_{H_{M, N}, J_{M, N}}$ as stated in the statement of Theorem \ref{preserving multiplicative structure}. \par
	
\subsection{Concluding remark} \label{section: concluding remark}
	We finally remark that this construction can be extended to studying higher order multiplications on wrapped Floer cochain complexes among an arbitrary but fixed finite collection of conical Lagrangian submanifolds, in principle without any trouble but requiring more careful work. The issue with the entire wrapped Fukaya category is that there are in principle infinitely many Lagrangian submanifolds to consider, while in our construction, the number $a$ chosen to filter the wrapped Floer cochain complex has to be dependent of the Lagrangian submanifolds involved, because we need to control the contribution of the primitives to the action. It does not seem easy to check the $A_{\infty}$-equation. This is the reason that we have only dealt with the cohomology categories of wrapped Fukaya categories so far, although the definition of the chain-level wrapped Fukaya category is quite standard. \par
	If the Liouville manifold is nondegenerate in the sense of \cite{Ganatra}, meaning that every object of the wrapped Fukaya category is in the idempotent image of a finite collection of Lagrangian submanifolds, then we can just work with this finite collection of Lagrangian submanifolds and use homological algebra to get the desired statement for the entire wrapped Fukaya category, or at least the category of perfect complexes over it. This is an evidence that the general statement that the two models of wrapped Fukaya categories are quasi-isomorphic should be true, which will be investigated in the upcoming work \cite{Gao}. \par

%appendix%
\newpage
\appendix

\section{Orientation}\label{appendix: orientation}

\subsection{Statement of the result}
	This appendix is provided to study orientations on the moduli space of quilted maps that we used in subsection \ref{section:representability} in which we prove that the geometric composition represents the quilted wrapped Floer module, in order for the map $gc$ to be defined over $\mathbb{Z}$. Now as the basic ingredients regarding orientations are well-known, we will not repeat those here. \par
	Let $L' = L \circ_{H_{M}} \mathcal{L}$ be the geometric composition, which is assumed to be a proper embedding. Denote by $\mathcal{N}((\gamma^{0}, \gamma^{1}), e; \theta)$ the moduli space of quilted inhomogeneous pseudoholomorphic maps, which asymptotically converge to generalized chord $(\gamma^{0}, \gamma^{1})$ at the positive quilted end, and to $H_{N}$-chord $\theta$ from $L'$ to itself at the negative strip-like end, and to the unique generalized chord $e$ for $(L, \mathcal{L}, L'$ that corresponds to the unit for wrapped Floer cohomology of $L'$, as in subsection \ref{section:representability}. \par

\begin{proposition}\label{coherent orientation on the moduli space of quilts}
	The moduli space $\mathcal{N}((\gamma^{0}, \gamma^{1}), e; \theta)$ carries a canonical orientation, determined by the isomorphism classes of spin structures on $L, \mathcal{L}$ and $L'$. Moreover, this orientation is consistent with the ones on the moduli spaces of inhomogeneous pseudoholomorphic quilted strips $\mathcal{M}(\gamma^{0}_{1}, \gamma^{1}_{1}), (\gamma^{0}, \gamma^{1}))$ as well as the ones on the moduli spaces of inhomogeneous pseudoholomorphic strips $\mathcal{M}(\theta, \theta_{1})$ with boundary on $(L', L')$, which have interaction with $\mathcal{N}((\gamma^{0}, \gamma^{1}), e; \theta)$ via gluing, giving neighborhoods of the boundary strata of the compactified moduli space of those quilted maps.
\end{proposition}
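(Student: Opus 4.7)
The plan is to reduce to the standard theory of coherent orientations for moduli spaces of inhomogeneous pseudoholomorphic curves with spin Lagrangian boundary conditions in an exact symplectic manifold, by applying the folding construction of Lemma \ref{folding quilt lemma}. A quilted map in $\mathcal{N}((\gamma^{0}, \gamma^{1}), e; \theta)$ folds to an inhomogeneous pseudoholomorphic map from a domain with three boundary punctures into $M^{-} \times N$, whose boundary alternates between $\mathcal{L}$ and the product Lagrangian $L \times L'$. The linearized Cauchy-Riemann operator on the quilt is canonically identified, up to a fixed choice of patch ordering, with the linearized operator of the folded map, so it suffices to orient the folded moduli space.

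First I would equip $L \times L'$ with the product spin structure induced by the chosen spin structures on $L$ and $L'$, and retain the chosen spin structure on $\mathcal{L}$. By the standard construction (Chapter 8 of \cite{FOOO2} and section 11 of \cite{Seidel}), these spin structures determine trivializations of the determinant line bundles of linear Cauchy-Riemann operators with the given Lagrangian boundary conditions, and hence coherent orientations on all moduli spaces of finite-energy solutions to the folded equation. By construction, the orientation lines $|o_{(\gamma^{0}, \gamma^{1})}|$, $|o_{e}|$, and $|o_{\theta}|$ are the orientation lines of the corresponding $H_{M, N}$-chords in $M^{-} \times N$, so the Fredholm index identification for the folded operator directly produces the canonical orientation of $\mathcal{N}((\gamma^{0}, \gamma^{1}), e; \theta)$.

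For the coherence statement, I would invoke the standard gluing axiom for the determinant line bundle. Near a boundary stratum of the form $\mathcal{M}((\gamma^{0}_{1}, \gamma^{1}_{1}), (\gamma^{0}, \gamma^{1})) \times \mathcal{N}((\gamma^{0}_{1}, \gamma^{1}_{1}), e; \theta)$, the linearized operator of a sufficiently long pregluing splits, up to a compact perturbation of exponentially decaying type, as the direct sum of the two factor operators, and the induced isomorphism of determinant lines matches the trivializations coming from the spin structures. The analogous argument applies to the stratum $\mathcal{N}((\gamma^{0}, \gamma^{1}), e; \theta_{1}) \times \mathcal{M}(\theta_{1}, \theta)$, using the folded identification to bring the strip with boundary on $(L', L')$ into the same framework; the relevant spin structure on $L'$ is the one induced by the identification $L' \cong L \circ_{H_{M}} \mathcal{L}$ together with the spin structures on $L$ and $\mathcal{L}$, which under our transversality assumption (the geometric composition is an embedding) gives a well-defined spin structure on the smooth manifold $L \circ_{H_{M}} \mathcal{L}$.

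The main obstacle will be the careful bookkeeping of signs, as the folding introduces both an orientation reversal on the first factor (passing from $M$ to $M^{-}$) and a reordering of patches, and one must check that the resulting sign conventions are consistent with those used to orient the moduli spaces $\mathcal{M}$ of inhomogeneous pseudoholomorphic strips in $M^{-} \times N$. This bookkeeping has been carried out by Wehrheim-Woodward in the compact monotone setting, and the same computation adapts once the folding identification is fixed, since the Hamiltonian perturbation entering the definition of $L \circ_{H_{M}} \mathcal{L}$ only alters the linearized operator by a compact term and thus does not affect the determinant line identifications. A subsidiary verification is that the choice of primitive $g$ in \eqref{primitive for the geometric composition} plays no role in this orientation analysis, which is immediate because spin structures and determinant line trivializations depend only on the underlying smooth data.
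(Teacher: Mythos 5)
Your plan—reduce to the standard orientation theory for exact Lagrangian boundary conditions in $M^-\times N$ by folding, and transfer coherence via a gluing axiom for determinant lines—is the correct one and matches the paper's strategy. However, the step where you assert that a quilted map in $\mathcal{N}((\gamma^0,\gamma^1),e;\theta)$ \emph{folds directly} to a map from a three-punctured disk into $M^-\times N$ is not well-defined and is the main gap.

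The quilted surface $\underline{S}^{gc}$ has patches of \emph{different combinatorial type}: the $M$-patch $S^{gc}_0$ is a two-punctured disk, while the $N$-patch $S^{gc}_1$ is a three-punctured disk. The folding construction (Lemma~\ref{folding quilt lemma}) requires the domains of the two patches to be identified under an orientation-reversing diffeomorphism along the seam, which is possible for quilted strips but fails here: near the extra negative strip-like end the map has only an $N$-component and its boundary lies on $L\circ_{H_M}\mathcal{L}$ and $L'$ inside $N$, not on a product $L\times L'$ or the correspondence $\mathcal{L}$ in $M^-\times N$. So there is no single target manifold into which the whole quilt folds, and the purported three-punctured disk with boundary alternating between $\mathcal{L}$ and $L\times L'$ does not exist. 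Your later sentence about ``bringing the strip with boundary on $(L',L')$ into the same framework'' already gestures at this difficulty; it cannot coexist with the earlier claim that everything lives in $M^-\times N$ after folding.

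The paper resolves this by a two-step construction: first it \emph{removes} the negative boundary puncture from $S^{gc}_1$, so that both patches become two-punctured disks and the quilt genuinely folds to a map $f\colon D\setminus\{z_+,z_-\}\to M^-\times N$ with Lagrangian boundary condition $(\mathcal{L},\,L\times L')$, to which Proposition~11.13 of \cite{Seidel} applies, giving $\det(D_{S,\nu})\cong o_{(\gamma^0,\gamma^1)}^{-1}\otimes o_e$. Then it glues in the half-plane Cauchy--Riemann operator $D_{\tilde H,\mu}$ for the chord $\theta$ (with boundary on $L\circ_{H_M}\mathcal{L}$ in $N$, whose determinant is $o_\theta^{-1}$), and uses the linear gluing formula $\det(D_{S,\nu})\cong\det(D)\otimes\det(D_{\tilde H,\mu})$ to conclude $\Lambda^{\mathrm{top}}T_{(u,v)}\mathcal{N}\cong o_{(\gamma^0,\gamma^1)}^-\otimes o_e\otimes o_\theta$. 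The coherence with broken strips then falls out of the same gluing formula, exactly as you say. So your write-up needs the puncture-removal/reinsertion step inserted before the folding; as written, the ``fold once and be done'' step would fail to produce a Cauchy--Riemann problem in a fixed target.
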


\subsection{The orientation line}
	Given a time-one Hamiltonian chord $\gamma$ from $L_{0}$ to $L_{1}$, there is an associated orientation line $o_{\gamma}$, which is determined by the isomorphism class of the spin structures on $L_{0}$ and $L_{1}$, which we fix at the beginning. The reader is referred to section 11 of \cite{Seidel} for the definitions. \par
	The general theory as in \cite{FOOO2}, and \cite{Seidel} which fits better into our framework, then relates the orientation lines to orientations of moduli spaces of punctured (inhomogeneous) pseudoholomorphic curves to a single symplectic manifold $M$ with asymptotic conditions converging to the given Hamiltonian chords. \par
	Given a generalized chord $(\gamma^{0}, \gamma^{1})$ for a triple $(L, \mathcal{L}, L')$, the associated orientation line is defined as follows. $(\gamma^{0}, \gamma^{1})$ corresponds to a unique time-one Hamiltonian chord in the product $M^{-} \times N$ from $\mathcal{L}$ to $L \times L'$, which is defined as $\Gamma(t) = (\gamma^{0}(1-t), \gamma^{1}(t))$. Now the spin structures on $\mathcal{L}$ and $L \times L'$ determine lifts of their tangent spaces from the Lagrangian Grassmannian to the brane Grassmannian. Choose any path $\tilde{\lambda}: [0, 1] \to \widetilde{\mathcal{LAG}}(T(M^{-} \times N))$ connecting the linear Lagrangian branes which are the lifts of the two Lagrangian subspaces $T_{(\gamma^{0}(1), \gamma^{1}(0))}\mathcal{L}$ and $T_{(\gamma^{0}(0), \gamma^{1}(1))}(L \times L')$. A priori, these Lagrangian subspaces live in different symplectic vector spaces, but can be put into the same one canonically using the parallel transport along the path given by the Hamiltonian chord $\Gamma$ with respect to the symplectic connection on $T(M^{-} \times N)$. Let $\lambda$ be the underlying path in the Lagrangian Grassmannian $\mathcal{LAG}(T(M^{-} \times N))$ connecting $T_{(\gamma^{0}(1), \gamma^{1}(0))}\mathcal{L}$ and $T_{(\gamma^{0}(0), \gamma^{1}(1))}(L \times L')$. \par
	 There is a unique homotopy class of trivialization of the pullback of $T(M^{-} \times N)$ by $\Gamma$ which is compatible with the Calabi-Yau structure, i.e. the trivialization of the canonical bundle $\Lambda^{top}T(M^{-} \times N)$. Let $H$ be the upper half-plane, which we think of as being a one-punctured disk equipped with a negative strip-like end near the puncture. Consider an inhomogeneous pseudoholomorphic map
\begin{equation}
w: H \to M^{-} \times N,
\end{equation}
defined with respect to the split Hamiltonian $H_{M, N}$ and product almost complex structure $-J_{M} \times J_{N}$, which asymptotically converges to $\Gamma$ over the negative strip-like end of $H$. Add an interval at infinity $\{-\infty\} \times [0, 1]$ to compactify $H$. Choose a symplectic trivialization of the pullback bundle $E = w^{*}T(M^{-} \times N)$ over $H$ which extends to the interval at infinity and is compatible with that of $\Gamma^{*}T(M^{-} \times N)$, thought of as a symplectic vector bundle on the interval at infinity. Specify a Lagrangian subbundle $F_{\lambda}$ of $E|_{\partial H}$ by
\begin{equation}
F_{\lambda, s} = \lambda(\psi(s)),
\end{equation}
where $\lambda$ is the path in the Lagrangian Grassmannian as before, while $\psi: \partial H = \mathbb{R} \to [0, 1]$ is a smooth non-decreasing function such that $\psi(s) = 0$ for $s \ll 0$, $\psi(s) = 1$ for $s \gg 1$ and $\psi'(s) > 0$ for all $s$ where $\psi(s) \in (0, 1)$, which can be chosen once-for-all and independent of everything. Denote by $\bar{E}$ the extension of $E$ to the compactification $\bar{H}$ of $H$, and let $\bar{F}$ be the Lagrangian subbundle of $\bar{E}|_{\bar{\partial H}}$ by asymptotic extension of the Lagrangian subbundle $F_{\lambda}$, where $\bar{\partial H}$ is the closure of $\partial H$ in $\bar{H}$, but not $\partial \bar{H}$. \par
	The linearized operator $D_{H, \lambda}$ of the inhomogeneous pseudoholomorphic curve equation for $w$ is then a Cauchy-Riemann operator on the bundle $E$ with Lagrangian boundary condition $F_{\lambda}$, with respect to the pullback almost complex structure $I_{E}$ and the connection $\nabla_{E}$ induced by the Hamiltonian vector field. Now deform the almost complex structure $I_{\bar{E}}$ on the extended bundle $\bar{E}$ to the constant complex structure on the trivial bundle with respect to the chosen trivialization, and deform $\nabla_{\bar{E}}$ to the trivial connection. Then the Cauchy-Riemann operator on the extended bundle $\bar{E}$ can be deformed through Fredholm operators to the standard Dolbeault operator on the trivial bundle over $\bar{H}$ with Lagrangian subbundle $\bar{F}$, now identified with a Lagrangian subbundle of the trivial bundle. For the standard Dolbeault operator we can easily define its determinant line, which should of course be canonically isomorphic to the determinant line of the operator $D_{H, \lambda}$. \par

\begin{definition}
	The orientation line $o_{(\gamma^{0}, \gamma^{1})}$ for the generalized chord $(\gamma^{0}, \gamma^{1})$ is the determinant line $det(D_{H, \lambda})$ of the Cauchy-Riemann operator $D_{H, \lambda}$.
\end{definition}

	This definition of orientation line is independent of choices of $\tilde{\lambda}$, up to canonical isomorphism, as proven in section 11 of \cite{Seidel}. \par
	The orientation line $o_{e}$ for the generalized chord $e$ is defined in a similar way. But let us consider the upper half-plane $\tilde{H}$ thought of as being equipped with a positive strip-like end instead of a negative one. Following the same construction, this would yield the definition of the inverse orientation line $o_{e}^{-}$, i.e. the dual of the orientation line. \par

\subsection{The linearized problem associated to the quilted map}
	We shall now try to adapt the general theory on orientations of moduli spaces of pseudoholomorphic curves to the pseudoholomorphic quilts that we used in subsection \ref{section:representability}. This begins with the study of the linearized operator associated to the inhomogeneous pseudoholomorphic quilted map. \par
	At a point $(u, v) \in \mathcal{N}((\gamma^{0}, \gamma^{1}), e; \theta)$, the tangent space to the moduli space is modeled on the Fredholm complex of a non-degenerate Cauchy-Riemann operator $D$, which is the linearized operator of the Floer's equations for $(u, v)$. More precisely, this Cauchy-Riemann operator acts on the space of pairs of sections of the bundle pair $(u^{*}TM, v^{*}TN)$ which satisfy Lagrangian boundary conditions given by linearizing the Lagrangian boundary conditions $(L, \mathcal{L}, L')$ for the quilted map $(u, v)$, and sends them to pairs of $(0,1)$-forms with values in the bundle pair. For a generic choice of Floer datum, the Fredholm operator is surjective, and therefore
\begin{equation}
\ker(D) \to 0
\end{equation}
is a finite-dimensional reduction and the tangent space is isomorphic to $\ker(D)$. An orientation is a choice of isomorphism class of the top exterior power $\Lambda^{top}\ker D$, i.e. the determinant line $\det(D)$. \par

\subsection{The choice of orientation}
	The orientation lines $o_{(\gamma^{0}, \gamma^{1})}$ and $o_{e}$ associated to the generalized chords $(\gamma^{0}, \gamma^{1})$ and respectively $e$ for $(L, \mathcal{L}, L')$ are defined with respect to the chosen trivializations of the pullbacks of $TM^{-}$ and $TN$. Again, the orientation lines are well-defined and determined by the spin structures on $\mathcal{L}$ and $L \times L'$, where the latter spin structure is the direct product of those on $L$ and $L'$. \par
	Suppose for the moment that we do not have that negative puncture on the second patch of the quilted surface. Then that can be regarded as a quilted disk with one positive quilted end and one negative quilted end. However, we should remember that the boundary of the second patch carries an additional marked point so that there is no translation symmetry on this marked quilted surface. By folding the quilt, we get a map defined on the two punctured disk with two punctures $z_{+}, z_{-}$ equipped with positive/negative strip-like ends to $M^{-} \times N$:
\begin{equation}
f: S = D \setminus \{z_{+}, z_{-}\} \to M^{-} \times N,
\end{equation}
with Lagrangian boundary condition given by the pair $(\mathcal{L}, L \times L')$. Denote by $D_{S, \nu}$ the linearized operator, where $\nu: \partial S \to \mathcal{LAG}(TM)$ is the Lagrangian boundary condition, which is locally constant on the strip-like ends. The last condition says near the puncture $z_{+}$ where the map asymptotically converges to $(\gamma^{0}, \gamma^{1})$, the linear Lagrangian boundary condition $\nu$ is equal to $(T_{(\gamma^{0}(1), \gamma^{1}(0))}\mathcal{L}, T_{(\gamma^{0}(0), \gamma^{1}(1))}(L \times L'))$ on each boundary of the strip-like end, and near $z_{-}$, the linear Lagrangian boundary condition $\nu$ is equal to $(T_{e(0)}\mathcal{L}, T_{e(1)}(L \times L'))$ on each boundary of the strip-like end. Since we have chosen the spin structures, the Lagrangian boundary condition $\nu: \partial S \to \mathcal{LAG}(TM)$ has a preferred lift to the brane Grassmannian $\widetilde{\mathcal{LAG}}(TM)$. \par
	Denote by $\mathcal{R}((\gamma^{0}, \gamma^{1}), e)$ the moduli space of the above inhomogeneous pseudoholomorphic maps $f$. The following proposition follows from the general theory which assigns orientations to inhomogeneous pseudoholomorphic maps from punctured Riemann surfaces (Proposition 11.13 of \cite{Seidel}). \par

\begin{proposition}\label{orientation on the moduli space of folded quilt}
	Let $\bar{C} \cong S^{1}$ be the boundary circle of the compactified surface $\bar{S} = D$, and $z_{+}, z_{-}$ the two punctures. Fix a marked point $p \in \bar{C} \set minus \{z_{+}, z_{-}\}$ mapping to a Lagrangian $\mathcal{L}$ or $L \times L'$. Assuming the moduli space $\mathcal{R}((x, y), e)$ is regular at a point $f$, then
\begin{equation}
\Lambda^{top}(T_{f}\mathcal{R}((\gamma^{0}, \gamma^{1}), e)) \cong \det(D_{S, \nu}) \cong o_{(\gamma^{0}, \gamma^{1})}^{-1} \otimes o_{e}.
\end{equation}
\end{proposition}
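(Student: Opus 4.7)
The plan is to imitate the standard argument of Proposition 11.13 of \cite{Seidel}, adapted to our quilted/folded setting, and decomposes naturally into two steps matching the two isomorphisms in the statement.

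For the first isomorphism, at a regular point $f$ the Fredholm complex of $D_{S,\nu}$ has trivial cokernel, so $\ker D_{S,\nu}=T_{f}\mathcal{R}((\gamma^{0},\gamma^{1}),e)$ and therefore $\Lambda^{top}T_{f}\mathcal{R}((\gamma^{0},\gamma^{1}),e)\cong\det(D_{S,\nu})$ canonically. Nothing beyond the definition of the determinant line of a Fredholm operator is needed here; the role of the marked point $p\in\bar{C}$ is just to kill the would-be automorphism group of the twice-punctured disk, so that $\mathcal{R}$ is cut out as the zero set of the Cauchy-Riemann section without quotienting, and this tangent-space identification is the one naturally compatible with the conventions we use to orient Floer trajectory spaces elsewhere in the paper.

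For the second isomorphism I would use a gluing/linear deformation argument on the compactified surface $\bar S=D$. Near $z_{+}$, glue to $D_{S,\nu}$ the Cauchy-Riemann operator $D_{H,\lambda_{+}}$ on the half-plane with asymptotic Lagrangian data $\bigl(T_{(\gamma^{0}(1),\gamma^{1}(0))}\mathcal{L},\,T_{(\gamma^{0}(0),\gamma^{1}(1))}(L\times L')\bigr)$, but with the ends reversed, i.e.\ with a \emph{positive} strip-like end, so that $\det(D_{H,\lambda_{+}})\cong o_{(\gamma^{0},\gamma^{1})}^{-1}$; near $z_{-}$ glue the analogous half-plane operator $D_{H,\lambda_{-}}$ with a negative end, whose determinant is $o_{e}$. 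Along each strip-like end the boundary conditions fit together by construction since the linear Lagrangian boundary condition $\nu$ is locally constant there and equal to the asymptotic tangent Lagrangians. The standard gluing theorem for determinants of Cauchy-Riemann operators on surfaces with strip-like ends then gives
\[
\det(D_{S,\nu})\otimes\det(D_{H,\lambda_{+}})\otimes\det(D_{H,\lambda_{-}})\cong\det(D_{\bar S,\bar\nu}),
\]
where $D_{\bar S,\bar\nu}$ is a Cauchy-Riemann operator on the closed disk $\bar S=D$ with a Lagrangian loop boundary condition $\bar\nu\colon\partial D\to\mathcal{LAG}(T(M^{-}\times N))$. Rearranging gives $\det(D_{S,\nu})\cong o_{(\gamma^{0},\gamma^{1})}^{-1}\otimes o_{e}\otimes\det(D_{\bar S,\bar\nu})$, and what remains is to trivialize $\det(D_{\bar S,\bar\nu})$ canonically.

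The trivialization of $\det(D_{\bar S,\bar\nu})$ is where the spin structures enter: because $\mathcal{L}$ and $L\times L'$ (with its product spin structure inherited from $L$ and $L'$) both admit brane structures lifting their tangent Lagrangians, and because along $\partial D$ the boundary condition $\bar\nu$ alternates between these two Lagrangians along two arcs with transition data given precisely by the paths $\tilde\lambda_{\pm}$ used in the definition of $o_{(\gamma^{0},\gamma^{1})}$ and $o_{e}$, these brane lifts patch to a brane lift of the loop $\bar\nu$. The induced orientation of $\det(D_{\bar S,\bar\nu})$ is the one coming from the canonical orientation of the moduli of discs with boundary on an oriented, spin Lagrangian (cf.\ section 11 of \cite{Seidel}, or chapter 8 of \cite{FOOO2}); in particular it is independent of the choice of path $\tilde\lambda_{\pm}$ up to homotopy, as the orientation lines $o_{(\gamma^{0},\gamma^{1})}$ and $o_{e}$ themselves are. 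Composing these identifications yields the claimed isomorphism.

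The step I expect to be the main obstacle is precisely the bookkeeping in the previous paragraph: checking that the folding map $(u,v)\mapsto w(s,t)=(u(s,1-t),v(s,t))$ and the corresponding folding of Lagrangian boundary/seam conditions into a pair $(\mathcal{L},L\times L')\subset M^{-}\times N$ is compatible with the sign conventions used to define $o_{(\gamma^{0},\gamma^{1})}$ via the product symplectic manifold, so that no extra sign is picked up. The Koszul-type signs arising from orientation-reversal of the $M$-factor, the product ordering $L\times L'$ versus $L'\times L$, and the choice of positive vs.\ negative strip-like end at each puncture need to be tracked consistently. Once these sign conventions are fixed once-for-all at the level of the product Liouville manifold (as is implicit in Lemma \ref{folding quilt lemma}), the consistency statement with gluing in the boundary strata $\mathcal{M}(\gamma^{0}_{1},\gamma^{1}_{1}),(\gamma^{0},\gamma^{1}))$ and $\mathcal{M}(\theta,\theta_{1})$ reduces to the standard associativity of determinant gluing on Riemann surfaces with strip-like ends, since the degenerations break the quilted disk along an interval in exactly the same way as an ordinary inhomogeneous pseudoholomorphic strip would degenerate.
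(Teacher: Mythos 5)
Your proposal takes the same route the paper does: the paper's ``proof'' of Proposition~\ref{orientation on the moduli space of folded quilt} is the single sentence pointing to Proposition 11.13 of \cite{Seidel}, and what you have written is essentially that citation unfolded: realize $\det(D_{S,\nu})$ via capping the two ends and trivialize the resulting closed-disk determinant using spin structures and brane lifts. The first isomorphism via regularity and the automorphism-killing role of the marked point $p$ are handled correctly, and so is the last step (trivialization of $\det(D_{\bar S,\bar\nu})$ by patching brane lifts of $\mathcal{L}$ and $L\times L'$ along $\tilde\lambda_\pm$).

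There is, however, an inconsistency in the gluing step. You glue at $z_+$ a half-plane carrying a \emph{positive} strip-like end; but $z_+$ is itself a positive end of $S$, and gluing requires opposite end types. The cap at $z_+$ must carry a \emph{negative} strip-like end, and with the paper's convention, which defines $o_{(\gamma^{0},\gamma^{1})}$ precisely as $\det(D_{H,\lambda})$ for $H$ carrying a negative end, this cap has determinant $o_{(\gamma^{0},\gamma^{1})}$, not $o_{(\gamma^{0},\gamma^{1})}^{-1}$. Symmetrically the cap at the negative end $z_-$ must carry a positive strip-like end, so its determinant is $o_{e}^{-1}$ (the paper's $o_e^{-}$), not $o_e$. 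If one feeds your identifications into the gluing relation you wrote, $\det(D_{S,\nu})\otimes\det(D_{H,\lambda_+})\otimes\det(D_{H,\lambda_-})\cong\det(D_{\bar S,\bar\nu})$, one obtains $\det(D_{S,\nu})\cong o_{(\gamma^{0},\gamma^{1})}\otimes o_{e}^{-1}\otimes\det(D_{\bar S,\bar\nu})$, which is the opposite of the claimed answer; with the corrected cap determinants $o_{(\gamma^{0},\gamma^{1})}$ and $o_{e}^{-1}$ the same gluing relation produces $\det(D_{S,\nu})\cong o_{(\gamma^{0},\gamma^{1})}^{-1}\otimes o_{e}\otimes\det(D_{\bar S,\bar\nu})$, as required. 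One further remark: the worry about folding Koszul signs in your final paragraph is beside the point for this particular proposition. $\mathcal{R}((\gamma^{0},\gamma^{1}),e)$ is by definition a moduli space of already-folded maps $f\colon S\to M^{-}\times N$, and the orientation lines at both ends are defined once and for all as determinants of operators in the product; the proposition is then literally a statement about curves in $M^{-}\times N$, with no further folding bookkeeping to do. The interaction with the quilted picture is treated separately, in the corollary after this proposition, where the cap for $\theta$ is glued in.
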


	Now going back to the original quilted surface, we need to take into account the additional negative puncture on the boundary of the second patch, where the quilted map asymptotically converges to the time-one Hamiltonian chord $\theta$ from $L'$ to itself. Again, there is a unique homotopy class of trivialization of the pullback of $TN$ by $\theta$ which is compatible with the trivialization of the canonical bundle $\Lambda^{top}TN$. Consider a smooth map
\begin{equation}
g: \tilde{H} \to N,
\end{equation}
which limits to $\theta$ over the positive strip-like end. The linearized inhomogeneous Cauchy-Riemann operator with respect to the chosen trivialization is denoted by $D_{\tilde{H}, \mu}$, where $\mu: \partial \tilde{H} \to \mathcal{LAG}(TM)$ is the linear Lagrangian boundary condition underlying the chosen path in the brane Grassmannian connecting the lifts of $T_{\theta(0)}L'$ and $T_{\theta(1)}L'$. The determinant $D_{\tilde{H}, \mu}$ is then the inverse orientation line $o_{\theta}^{-}$, which is well-defined and independent of the choice of paths in the brane Grassmannian. \par
	We may glue this map $g$ to the quilted map $(u, v)$ to get a map $f$, and correspondingly the linearized Cauchy-Riemann operators $D$ and $D_{\tilde{H}, \mu}$ also glue together to form the operator $D_{S, \nu}$. Applying the gluing formular for Cauchy-Riemann operators as in section (11c) of \cite{Seidel}, we deduce that
\begin{equation}
\det(D_{S, \nu}) \otimes \cong \det(D) \otimes \det(D_{\tilde{H}, \mu}).
\end{equation}
On the other hand, $\det(D_{\tilde{H}, \mu})$ is by definition the inverse orientation line $o_{\theta}^{-}$ of $\theta$, so we conclude that: \par

\begin{corollary}
	The assumptions are the same as those in Proposition \ref{orientation on the moduli space of folded quilt}. At a smooth point $(u, v)$ of the moduli space $\mathcal{N}((\gamma^{0}, \gamma^{1}), e; \theta)$, we have a preferred isomorphism
\begin{equation}
\Lambda^{top}T_{(u,v)}\mathcal{N}((\gamma^{0}, \gamma^{1}), e; \theta) \cong \det(D) \cong o_{(\gamma^{0}, \gamma^{1})}^{-} \otimes o_{e} \otimes o_{\theta}.
\end{equation}
\end{corollary}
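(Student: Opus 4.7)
The plan is to deduce the corollary directly from the linear gluing formula for determinant lines of Cauchy-Riemann operators, combined with Proposition \ref{orientation on the moduli space of folded quilt} and the definition of the inverse orientation line attached to $\theta$. First I would identify $\Lambda^{top}T_{(u,v)}\mathcal{N}((\gamma^{0}, \gamma^{1}), e; \theta)$ with $\det(D)$: at a regular point the linearized operator $D$ is surjective, so $\ker D$ models the tangent space and $\det(D) = \Lambda^{top}\ker D$ as oriented lines.

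The central step will be a gluing argument. I would take the folded-quilt map $f\colon S \to M^{-} \times N$ obtained by the fold, together with a capping map $g\colon \tilde{H} \to N$ whose positive strip-like end is asymptotic to $\theta$, and concatenate them over the $\theta$-end. The linear gluing formula for Cauchy-Riemann operators (section (11c) of \cite{Seidel}) then provides a canonical isomorphism
\begin{equation*}
\det(D_{S, \nu}) \cong \det(D) \otimes \det(D_{\tilde{H}, \mu}).
\end{equation*}
Substituting $\det(D_{S, \nu}) \cong o_{(\gamma^{0}, \gamma^{1})}^{-1} \otimes o_{e}$ from Proposition \ref{orientation on the moduli space of folded quilt} and $\det(D_{\tilde{H}, \mu}) = o_{\theta}^{-}$ from the definition of the inverse orientation line, and tensoring both sides with $o_{\theta}$ to cancel the inverse factor, yields the desired
\begin{equation*}
\det(D) \cong o_{(\gamma^{0}, \gamma^{1})}^{-1} \otimes o_{e} \otimes o_{\theta}.
\end{equation*}

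The last step is to verify that this isomorphism is canonical, independent of the intermediate choices. For this I would invoke three standard facts: the orientation line $o_{\gamma}$ built from a path in the brane Grassmannian is independent of the path up to canonical isomorphism (section 11 of \cite{Seidel}); the homotopy class of the compatible symplectic trivialization of the chord-pullback bundle is unique; and the linear gluing isomorphism for determinant lines is locally constant in the Fredholm data, so any two compatible capping maps $g$ give the same identification. The product spin structure on $L \times L'$ used to lift the Lagrangian boundary condition of the folded quilt then pins down the isomorphism uniquely.

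The main obstacle I anticipate is the bookkeeping associated with the folding operation that replaces $M$ by $M^{-}$: the orientation reversal in the $M$-factor must be reconciled coherently with the brane lifts of $L$, $\mathcal{L}$, and $L \times L'$, and with the strip-end orientation conventions used in defining $o_{\theta}^{-}$. Once the sign conventions are fixed once and for all, this same bookkeeping delivers the coherence claim of the enclosing Proposition \ref{coherent orientation on the moduli space of quilts}: gluing a quilted strip at the $(\gamma^{0}, \gamma^{1})$-puncture or a strip $\mathcal{M}(\theta, \theta_{1})$ at the $\theta$-puncture reduces to one more application of the determinant-line gluing formula, which rearranges the factors on the right-hand side in exactly the way required for compatibility with the boundary strata of the compactified moduli space.
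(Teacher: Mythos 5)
Your proof takes essentially the same approach as the paper: glue a capping map $g\colon \tilde H \to N$ asymptotic to $\theta$ onto the quilted map, identify the glued operator with $D_{S,\nu}$, invoke the determinant-line gluing formula from section (11c) of \cite{Seidel} to get $\det(D_{S,\nu}) \cong \det(D)\otimes\det(D_{\tilde H,\mu})$, and substitute $\det(D_{S,\nu})\cong o_{(\gamma^0,\gamma^1)}^{-1}\otimes o_e$ and $\det(D_{\tilde H,\mu})= o_\theta^{-}$. The extra attention you give to canonicity of the trivializations and to the coherence claim of Proposition \ref{coherent orientation on the moduli space of quilts} is consistent with the paper's remarks and does not change the argument.
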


	This defines the orientation on the moduli space $\mathcal{N}((\gamma^{0}, \gamma^{1}), e; \theta)$. It is straightforward from the definitions to see that this orientation is consistent with the orientations on the moduli spaces of inhomogeneous pseudoholomorphic quilted strips as well as the one on the moduli space of inhomogeneous pseudoholomorphic strips with boundary on $(L', L')$, which have interaction with $\mathcal{N}((\gamma^{0}, \gamma^{1}), e; \theta)$. For instance, we can see this again using the gluing formula for the Cauchy-Riemann operators. Now the proof of Proposition \ref{coherent orientation on the moduli space of quilts} is complete. \par

%bibliography%

\bibliography{wrapped1}

\newcommand{\etalchar}[1]{$^{#1}$}
\begin{thebibliography}{FOOO09b}

\bibitem[AAE{\etalchar{+}}13]{Abouzaid-Auroux-Efimov-Katzarkov-Orlov}
Mohammed Abouzaid, Denis Auroux, Alexander~I. Efimov, Ludmil Katzarkov, and
  Dmitri Orlov.
\newblock Homological mirror symmetry for punctured spheres.
\newblock {\em J. Amer. Math. Soc.}, 26(4):1051--1083, 2013.

\bibitem[Abo10]{Abouzaid1}
Mohammed Abouzaid.
\newblock A geometric criterion for generating the {F}ukaya category.
\newblock {\em Publ. Math. Inst. Hautes {\'E}tudes Sci.}, (112):191--240, 2010.

\bibitem[AJ10]{Akaho-Joyce}
Manabu Akaho and Dominic Joyce.
\newblock Immersed {L}agrangian {F}loer theory.
\newblock {\em J. Differential Geom.}, 86(3):381--500, 2010.

\bibitem[AS10]{Abouzaid-Seidel}
Mohammed Abouzaid and Paul Seidel.
\newblock An open string analogue of {V}iterbo functoriality.
\newblock {\em Geom. Topol.}, 14(2):627--718, 2010.

\bibitem[Aur07]{Auroux}
Denis Auroux.
\newblock Mirror symmetry and {$T$}-duality in the complement of an
  anticanonical divisor.
\newblock {\em J. G\"okova Geom. Topol. GGT}, 1:51--91, 2007.

\bibitem[FOOO09a]{FOOO1}
Kenji Fukaya, Yong-Geun Oh, Hiroshi Ohta, and Kaoru Ono.
\newblock {\em Lagrangian intersection {F}loer theory: anomaly and obstruction.
  {P}art {I}}, volume~46 of {\em AMS/IP Studies in Advanced Mathematics}.
\newblock American Mathematical Society, Providence, RI; International Press,
  Somerville, MA, 2009.

\bibitem[FOOO09b]{FOOO2}
Kenji Fukaya, Yong-Geun Oh, Hiroshi Ohta, and Kaoru Ono.
\newblock {\em Lagrangian intersection {F}loer theory: anomaly and obstruction.
  {P}art {II}}, volume~46 of {\em AMS/IP Studies in Advanced Mathematics}.
\newblock American Mathematical Society, Providence, RI; International Press,
  Somerville, MA, 2009.

\bibitem[FOOO12]{FOOO3}
K.~{Fukaya}, Y.-G. {Oh}, H.~{Ohta}, and K.~{Ono}.
\newblock {Technical details on Kuranishi structure and virtual fundamental
  chain}.
\newblock {\em arXiv:1209.4410}, pages 1--257, 2012.

\bibitem[Fuk02]{Fukaya1}
Kenji Fukaya.
\newblock Floer homology and mirror symmetry. {II}.
\newblock In {\em Minimal surfaces, geometric analysis and symplectic geometry
  ({B}altimore, {MD}, 1999)}, volume~34 of {\em Adv. Stud. Pure Math.}, pages
  31--127. Math. Soc. Japan, Tokyo, 2002.

\bibitem[{Fuk}15]{Fukaya2}
Kenji {Fukaya}.
\newblock {$SO(3)$-Floer homology of 3-manifolds with boundary 1}.
\newblock {\em arXiv:1506.01435}, pages 1--80, 2015.

\bibitem[{Gan}13]{Ganatra}
Sheel {Ganatra}.
\newblock {\em {Symplectic cohomology and duality for the wrapped Fukaya
  category}}.
\newblock PhD thesis, MIT, 2013.

\bibitem[Gao]{Gao}
Yuan Gao.
\newblock Functors of wrapped {F}ukaya categories from {L}agrangian
  correspondences.
\newblock in preparation.

\bibitem[GHK15]{Gross-Hacking-Keel}
Mark Gross, Paul Hacking, and Sean Keel.
\newblock Mirror symmetry for log calabi-yau surfaces i.
\newblock {\em Publications math{\'e}matiques de l'IH{\'E}S}, 122(1):65--168,
  2015.

\bibitem[GP16]{Ganatra-Pomerleano}
S.~{Ganatra} and D.~{Pomerleano}.
\newblock {A {L}og {P}{S}{S} morphism with applications to {L}agrangian
  embeddings}.
\newblock {\em ArXiv e-prints}, November 2016.

\bibitem[Oan06]{Oancea}
Alexandru Oancea.
\newblock The {K}{\"u}nneth formula in {F}loer homology for manifolds with
  restricted contact type boundary.
\newblock {\em Math. Ann.}, 334(1):65--89, 2006.

\bibitem[{Pas}13]{Pascaleff}
J.~{Pascaleff}.
\newblock {On the symplectic cohomology of log {C}alabi-{Y}au surfaces}.
\newblock {\em ArXiv e-prints}, April 2013.

\bibitem[Sei08]{Seidel}
Paul Seidel.
\newblock {\em Fukaya categories and {P}icard-{L}efschetz theory}.
\newblock Zurich Lectures in Advanced Mathematics. European Mathematical
  Society (EMS), Z{\"u}rich, 2008.

\bibitem[WW10a]{Wehrheim-Woodward2}
Katrin Wehrheim and Chris~T. Woodward.
\newblock Functoriality for {L}agrangian correspondences in {F}loer theory.
\newblock {\em Quantum Topol.}, 1(2):129--170, 2010.

\bibitem[WW10b]{Wehrheim-Woodward1}
Katrin Wehrheim and Chris~T. Woodward.
\newblock Quilted {F}loer cohomology.
\newblock {\em Geom. Topol.}, 14(2):833--902, 2010.

\bibitem[WW12]{Wehrheim-Woodward4}
Katrin Wehrheim and Chris~T. Woodward.
\newblock Floer cohomology and geometric composition of {L}agrangian
  correspondences.
\newblock {\em Adv. Math.}, 230(1):177--228, 2012.

\bibitem[WW15a]{Wehrheim-Woodward5}
Katrin Wehrheim and Chris~T. Woodward.
\newblock {Orientations for pseudoholomorphic quilts}.
\newblock {\em arXiv:1503.07803}, pages 1--56, 2015.

\bibitem[WW15b]{Wehrheim-Woodward3}
Katrin Wehrheim and Chris~T. Woodward.
\newblock {Pseudoholomorphic Quilts}.
\newblock {\em arXiv:0905.1369}, pages 1--34, 2015.

\end{thebibliography}
\bibliographystyle{alpha}

\end{document}